\def \<{\langle}
\def \>{\rangle}
\newcommand{\Z}{\mathbb{Z}}
\newcommand{\R}{\mathbb{R}}
\newcommand{\CP}{\mathbb{C}P}
\newcommand{\RP}{\mathbb{R}P}
\newcommand{\HP}{\mathbb{H}P}
\newcommand{\SU}{\mathrm{SU}}
\newcommand{\U}{\mathrm{U}}
\newcommand{\UU}{\mathrm{U}}
\newcommand{\Sp}{\mathrm{Sp}}
\newcommand{\SO}{\mathrm{SO}}
\newcommand{\OO}{\mathrm{O}}
\newcommand{\Spin}{\mathrm{Spin}}
\newcommand{\F}{\mathrm{F}}
\newcommand{\GG}{G}
\newcommand{\G}{G}
\newcommand{\HH}{H}
\newcommand{\K}{K}
\newcommand{\RS}{S}
\newcommand{\T}{T}
\newcommand{\LL}{L}
\newcommand{\N}{N}
\newcommand{\M}{M}
\newcommand{\X}{X}
\newcommand{\W}{\mathrm{W}}
\newcommand{\D}{\mathrm{D}}
\newcommand{\SP}{\mathbb{S}} 
\newcommand{\PS}{\mathbf{P}} 
\newcommand{\CaP}{\mathbf{Ca}P}
\newcommand{\mytableextraspace}{\addlinespace[.4em]}
\DeclareMathOperator{\trace}{trace}
\DeclareMathOperator{\diag}{diag}
\DeclareMathOperator{\Proj}{Proj}
\DeclareMathOperator{\Isom}{Isom}
\DeclareMathOperator{\Susp}{Susp}
\DeclareMathOperator{\Imag}{Im}
\newtheorem{thm}{Theorem}[section]
\newtheorem{cor}[thm]{Corollary}
\newtheorem{lem}[thm]{Lemma}
\newtheorem{prop}[thm]{Proposition}
\newtheorem{theorem}{Theorem}
\theoremstyle{definition}
\newtheorem{defn}[thm]{Definition}
\newtheorem{rem}[thm]{Remark}
\newtheorem*{ack}{Acknowledgements}
\numberwithin{equation}{section}
\numberwithin{table}{section}
\begin{document}


\author[F.~Galaz-Garc\'ia]{Fernando Galaz-Garc\'ia$^*$}
\address[F.~Galaz-Garc\'ia]{Institut f\"ur Algebra und Geometrie, Karlsruher Institut f\"ur Technologie (KIT), Karlsruhe, Germany.}
\email{galazgarcia@kit.edu}
\thanks{$^{*}$ Supported in part by the Deutsche Forschungsgemeinschaft grant GA 2050 2-1 within the  Priority Program SPP2026 ``Geometry at Infinity''.}


\author[M.~Zarei]{Masoumeh Zarei} 
\address[M.~Zarei]{Department of Pure Mathematics, Faculty of Mathematical Sciences, Tarbiat Modares University, Tehran, Iran and Institut f\"ur Algebra und Geometrie, Karlsruher Institut f\"ur Technologie (KIT), Karlsruhe, Germany.}
\curraddr{Beijing International Center for Mathematical Research, Peking University, Beijing, China}
\email{mzarei@bicmr.pku.edu.cn}


\title[Cohomogeneity one Alexandrov spaces in low dimensions]{Cohomogeneity one Alexandrov spaces in low dimensions}
\date{\today}




\begin{abstract}
We classify closed, simply-connected  cohomogeneity-one Alexandrov spaces in dimensions $5$, $6$ and $7$. We show that every closed, simply-connected smooth $n$-orbifold, $2\leq n\leq 7$, with a cohomogeneity one action is equivariantly homeomorphic to a smooth good orbifold of cohomogeneity one.
\end{abstract}
\subjclass[2010]{53C23, 53C20, 57S10}

\keywords{cohomogeneity one, Alexandrov space, orbifold}

\setcounter{tocdepth}{1}

\maketitle


\tableofcontents

\section{Introduction}

Alexandrov spaces (with curvature bounded from below) are complete length spaces with a lower curvature bound in the triangle comparison sense; they generalize  Riemannian manifolds with a uniform lower sectional curvature bound. Instances of Alexandrov spaces include  Riemannian orbifolds (with a lower sectional curvature bound), orbit spaces of isometric  actions of compact Lie groups on Riemannian manifolds with sectional curvature bounded below, or Gromov-Hausdorff limits of sequences of $n$-dimensional Riemannian manifolds with a uniform lower bound on the sectional curvature. 


In dimensions two and three, the basic topological properties of Alexandrov spaces are fairly well-understood. Indeed,  two-dimensional Alexandrov spaces are topological two-manifolds, possibly with boundary (see \cite[Corollary 10.10.3]{Burago}); closed (i.e.\ compact and without boundary) three-dimensional Alexandrov spaces are either topological three-manifolds or are homeomorphic to quotients of smooth three-manifolds by orientation reversing involutions with isolated fixed points, and closed four-dimensional Alexandrov spaces are homeomorphic to orbifolds (see \cite{GGG2}). In higher dimensions, however, similar general results are lacking and considering spaces with large isometry groups provides a systematic way of studying Alexandrov spaces. This yields manageable families of spaces with a reasonably simple structure but flexible enough  to generate interesting examples on which to test conjectures or carry out geometric constructions. This framework has been successfully used in the smooth category to construct, for instance, Riemannian manifolds satisfying given geometric conditions, such as positive Ricci or sectional curvature (see \cite{De,GVZ,ziller5,GroveZiller}). 

One of the measures for the size of an isometric action of a compact Lie group $G$ on an Alexandrov space $X$ is its \emph{cohomogeneity}, defined as the dimension of the orbit space $X/G$. This quotient space, when equipped with the orbital distance metric, is itself an Alexandrov space with the same lower curvature bound as $X$. From the point of view of cohomogeneity, transitive actions are the largest one can have. These actions preclude any topological or metric singularities: by the work of Berestovski\u\i\  \cite{Ber}, homogeneous Alexandrov spaces are isometric to Riemannian manifolds. The next simplest case to consider is when the orbit space is one-dimensional, i.e.\ when the action is of  \emph{cohomogeneity one}. Alexandrov spaces of cohomogeneity one were first studied in \cite{GS}, where the authors obtained a structure result and classified these spaces (up to equivariant homeomorphism) in dimensions $4$ and below. Simple instances of these spaces are, for example, spherical suspensions of homogeneous spaces $X$ with sectional curvature bounded below by $1$, equipped with the canonical suspension action of the transitive action on $X$.

It was shown in \cite[Proposition 5]{GS} that the orbit space of an isometric cohomogeneity one $\G$-action on a closed, simply-connected Alexandrov space $X$ is homeomorphic to a closed interval $[-1,1]$ and there exist compact Lie subgroups  $H$ and $K^\pm$ of $\G$ such that $\HH\subseteq K^\pm \subseteq \G$ and $K^\pm/H$ are positively curved homogeneous spaces. The group $H$ is the principal isotropy group of the action and the groups $K^\pm$ are isotropy groups of points in the orbits corresponding to the boundary points $\pm 1$ of the orbit space.  The groups $K^\pm$ are called \emph{non-principal isotropy groups} and the orbits $G/K^\pm$ are called \emph{non-principal orbits}. We collect these groups in the quadruple $(G,H,K^{-},K^{+})$, called the \emph{group diagram} of the action. The space $X$ is the union of two bundles whose fibers are cones over the positively curved homogeneous spaces $K^\pm/H$. Conversely, any diagram
$(G,H,K^{-},K^{+})$, with $K^{\pm}/H$ positively curved homogeneous spaces, gives rise to a cohomogeneity one Alexandrov space. In the present article we complete the classification of these spaces in dimensions $5$, $6$ and $7$ (assuming simply-connectedness),  identify which of these spaces are smooth orbifolds, and show that the underlying space of such an orbifold is equivariantly homeomorphic to a good orbifold.

Closed, smooth manifolds of cohomogeneity one have been classified by Mostert \cite{Mostert, Mostert2} and Neumann \cite{Neumann} in dimensions $2$ and $3$, and by Parker \cite{Parker} in dimension $4$, without assuming any restrictions on the fundamental group. In dimensions $5$, $6$ and $7$, Hoelscher \cite{Hoelscher} obtained the equivariant classification of closed smooth cohomogeneity one manifolds assuming simply-connectedness. It is well-known that these manifolds admit invariant Riemannian metrics and are therefore Alexandrov spaces of cohomogeneity one. In the topological category, the corresponding classification results in dimensions at most $7$ follow from combining the smooth  classification with the classification of closed, simply-connected cohomogeneity one topological manifolds with a non-smooth cohomogeneity one action in dimensions at most $7$, obtained in \cite{GGZ}. It was also shown in \cite{GGZ} that closed, simply-connected cohomogeneity one topological manifolds decompose as double cone bundles whose fibers are cones over spheres or the Poincar\'e homology sphere, and hence they admit invariant Alexandrov metrics. Our first main result completes the equivariant classification of closed, simply-connected Alexandrov spaces in dimensions $5$, $6$ and $7$:


\begin{theorem}
\label{T:CLASSIF}
Let $X$ be a closed, simply-connected Alexandrov space of dimension $5, 6$ or $7$ with an (almost) effective cohomogeneity one isometric action of a compact connected Lie group. If the action is not equivalent to a smooth action on a smooth manifold, then it is given by one of the diagrams in Table~\ref{TB:GROUP_DGMS_D5} if $\dim X=5$, Table~\ref{TB:GROUP_DGMS_D6} if  $\dim X=6$, or Tables~\ref{TB:GROUP_DGMS_D7_S3xS3}--\ref{TB:GROUP_DGMS_D7_G2_SU(4)_SU(3)xS1_Spin(7)} if $\dim X=7$.
\end{theorem}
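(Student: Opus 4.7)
My plan is to adapt Hoelscher's strategy from the smooth case \cite{Hoelscher} to the Alexandrov setting, exploiting the rigidity that positive curvature imposes on $K^\pm/H$. By the structure result \cite[Proposition~5]{GS}, the action is encoded by a group diagram $(G,H,K^-,K^+)$ in which each quotient $K^\pm/H$ is a positively curved homogeneous space. In the smooth manifold case the slice theorem forces $K^\pm/H$ to be spheres, and by \cite{GGZ} the converse holds in this dimension range: any diagram with spherical $K^\pm/H$ realizes a smooth action on a smooth manifold. The hypothesis of the theorem is therefore equivalent to requiring that at least one of $K^\pm/H$ be a \emph{non-spherical} positively curved homogeneous space.

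The first technical step is to enumerate those non-spherical possibilities. Since $\dim K^\pm/H \leq n-1 \leq 6$, the Berger--Wallach--Aloff--Wallach--B\'erard-Bergery classification of simply-connected positively curved homogeneous spaces leaves only $\CP^2$, $\CP^3$ and the flag manifold $\SU(3)/T^2$ as candidates. For each of these the admissible pairs $(K^\pm,H)$ realizing the quotient are known (for instance $\CP^2=\SU(3)/\U(2)=\Sp(2)/\Sp(1)\U(1)$ and $\CP^3=\SU(4)/\mathrm{S}(\U(3)\U(1))=\Sp(2)/\Sp(1)\U(1)$). For each dimension $n\in\{5,6,7\}$ I would then list the compact connected Lie groups $G$ satisfying $\dim G=(n-1)+\dim H$ and admitting a pair of closed subgroups $K^\pm\supseteq H$ of one of these types, organized by the identity component of $G$ following Hoelscher's subgroup catalogues.

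To pass from candidate diagrams to actual entries of the tables, I would impose simple-connectedness of $X$ via the standard van~Kampen argument on the double cone-bundle decomposition: $\pi_1(X)=1$ if and only if the images of $\pi_1(K^\pm/H)$ generate $\pi_1(G/H)$. This typically forces $H$ to be connected, or pins down its component group up to a finite ambiguity, and already rules out most of the candidates produced in the enumeration. Finally I would mod out by the natural equivalences on group diagrams---conjugation in $G$, outer automorphisms of $G$, and the swap $K^-\leftrightarrow K^+$---to produce Tables~\ref{TB:GROUP_DGMS_D5}--\ref{TB:GROUP_DGMS_D7_G2_SU(4)_SU(3)xS1_Spin(7)}.

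I expect the main obstacle to lie in the $n=7$ case. There the admissible groups $G$ have rank up to $3$ or $4$, both $K^\pm/H$ can be non-spherical simultaneously, and distinct embeddings of the same abstract subgroup pair into $G$ can yield inequivalent diagrams; consequently the enumeration must be executed with careful bookkeeping (partitioned along the $G$-decomposition reflected in the several $n=7$ tables). A secondary difficulty is to verify, for each surviving diagram, that the resulting space is genuinely not a smooth manifold with a smooth cohomogeneity one action---that being precisely the condition that separates the present theorem from Hoelscher's classification.
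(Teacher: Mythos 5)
Your overall strategy---encode the action in a group diagram $(G,H,K^-,K^+)$ with $K^\pm/H$ positively curved homogeneous, bound the possible groups $G$ by dimension counting, and then run a case-by-case analysis filtered by simple-connectedness (van Kampen on the double cone-bundle decomposition) and by the equivalences on diagrams---is exactly the strategy of the paper. However, there is a genuine gap in your first technical step, and it is fatal to the enumeration. You list the non-spherical candidates for $K^\pm/H$ by appealing to the classification of \emph{simply-connected} positively curved homogeneous spaces, arriving at only $\CP^2$, $\CP^3$ and $\SU(3)/T^2$. But the normal spaces of directions $K^\pm/H$ need not be simply-connected: the correct list must also include all free isometric finite quotients of the simply-connected models. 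Combining the Berger--Wallach--Aloff--Wallach--B\'erard-Bergery--Wilking classification with Wolf's classification of homogeneous space forms (and Synge's theorem in even dimensions), the admissible non-spherical $K^\pm/H$ in dimensions at most $6$ are $\RP^2$, the three- and five-dimensional spherical space forms $\SP^3/\Gamma$ and $\SP^5/\Z_k$, $\RP^4$, $\CP^2$, $\RP^6$, $\CP^3$, $\CP^3/\Z_2$, $\W^6=\SU(3)/T^2$ and $\W^6/\Z_2$.

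The omitted non-simply-connected spaces account for the overwhelming majority of the entries in the tables. Already in dimension $5$, the diagrams realizing $(\RS^3/\Gamma)\ast\SP^1$, $\RP^2\ast\SP^2$, $\RP^2\ast\RP^2$ and $\Susp(\RP^4)$ all have a real projective space or a $3$-dimensional spherical space form as a normal space of directions, and none of them would appear in your enumeration; in dimensions $6$ and $7$ almost every diagram with $G=\RS^3\times\RS^3$ involves $\RP^2$, a lens space, or $\SP^3/\Gamma$ for $\Gamma$ a finite subgroup of $\RS^3$. A related consequence you would miss is that allowing non-simply-connected $K^\pm/H$ forces careful control of the component group of $H$: much of the actual work consists of determining $H=\langle H^-,H^+\rangle$ from the simple-connectedness criterion and the transitive-action classification on space forms, which is considerably more delicate than ``$H$ connected up to a finite ambiguity.'' Your secondary worry (verifying non-smoothability of each surviving diagram) is, by contrast, automatic: the three diagram operations preserve the homeomorphism type of $K^\pm/H$, so a diagram with a non-spherical normal space of directions is never equivalent to a smooth one.
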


We point out that the diagrams $(G,H,K^{-},K^+)$ in Tables~\ref{TB:GROUP_DGMS_D5}--\ref{TB:GROUP_DGMS_D7_G2_SU(4)_SU(3)xS1_Spin(7)} contain, as particular cases, the diagrams of non-smoothable cohomogeneity one actions on closed, simply-connected topological manifolds in \cite{GGZ}; in this special situation the positively curved homogeneous spaces  $K^\pm/H$ are either spheres or the Poincar\'e homology sphere. Compared to the smooth and topological cases, the number of closed, simply-connected cohomogeneity one Alexandrov spaces that are not manifolds  increases substantially, due to the fact that at least one of the positively curved homogeneous spaces $K^\pm/H$ is no longer a sphere or the Poincar\'e homology sphere. In many cases, we can identify the spaces in Theorem~\ref{T:CLASSIF} as joins, suspensions, products or bundles of familiar spaces. Moreover, many of the spaces in Theorem~\ref{T:CLASSIF} are equivariantly homeomorphic to smooth cohomogeneity one orbifolds. Indeed, they admit a double cone bundle decomposition, where the cones are taken over spherical homogeneous spaces; this structure characterizes closed, smooth  orbifolds of cohomogeneity one whose orbit space is a closed interval (see \cite{Gonzalez}).  A natural question, then,  is whether  there exists a \emph{good}  orbifold structure on the underlying topological space $|Q|$ of a given orbifold group diagram, that is, whether $|Q|$ is equivariantly homeomorphic to a quotient of a cohomogeneity one smooth manifold. Our second main result gives a positive answer to this question in dimensions at most $7$:


\begin{theorem}
\label{T:GOOD_ORBIFOLDS}
Every closed, simply-connected smooth orbifold of dimension at most $7$ with an (almost) effective cohomogeneity one smooth action is equivariantly homeomorphic to a good cohomogeneity one smooth  orbifold.
\end{theorem}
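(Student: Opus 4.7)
The plan is to realize every closed, simply-connected smooth cohomogeneity one orbifold $Q$ of dimension at most $7$ as the quotient of a smooth cohomogeneity one $G$-manifold by a finite group acting by $G$-equivariant diffeomorphisms. Writing the orbifold group diagram as $(G,H,K^{-},K^{+})$, the slices $K^\pm/H$ are positively curved homogeneous spherical space forms $S^{n^\pm}/\Gamma^\pm$, and $|Q|$ decomposes as the double disk bundle $G\times_{K^{-}} D(K^{-}/H)\cup_{G/H} G\times_{K^{+}} D(K^{+}/H)$.

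The central step is the construction of a closed subgroup $\tilde H\subseteq H$ that is simultaneously normal in $K^-$ and $K^+$ and such that $K^\pm/\tilde H\cong S^{n^\pm}$ as $K^\pm$-homogeneous spaces. Granted such a $\tilde H$, the diagram $(G,\tilde H,K^{-},K^{+})$ satisfies the standard smoothness conditions and determines a smooth cohomogeneity one $G$-manifold $M$, whose slices are cones on the spheres $K^\pm/\tilde H$. The finite quotient $\Gamma:=H/\tilde H$ acts on the principal orbit $G/\tilde H$ from the right, commuting with the left $G$-action; because $\tilde H$ is also normal in each $K^\pm$, the group $\Gamma$ acts on each slice sphere $K^\pm/\tilde H$ via right translation as well. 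Extending each slice action radially over the disks $D(K^\pm/\tilde H)$ fixing the cone vertex assembles into a smooth, $G$-equivariant action of $\Gamma$ on $M$. Taking the quotient identifies $(G/\tilde H)/\Gamma=G/H$ on the principal part and $D(K^\pm/\tilde H)/\Gamma=D(K^\pm/H)$ on the slices, so $M/\Gamma$ is $G$-equivariantly homeomorphic to $|Q|$ and carries a good orbifold structure.

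The construction of a suitable $\tilde H$ is carried out by a case analysis. In dimensions at most $4$, the non-manifold cohomogeneity one orbifolds are explicit suspensions of low-dimensional spherical space forms, each of which is visibly a quotient of a smooth manifold by a finite group action. In dimensions $5$, $6$ and $7$, the candidate orbifold diagrams are enumerated in Tables~\ref{TB:GROUP_DGMS_D5}--\ref{TB:GROUP_DGMS_D7_G2_SU(4)_SU(3)xS1_Spin(7)} of Theorem~\ref{T:CLASSIF}. In each such diagram the non-principal isotropies are products of classical Lie groups of small rank, and the deck groups $\Gamma^\pm$ of the slices are cyclic, lifting to subgroups contained in the centre of $K^\pm$. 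Accordingly, one may take $\tilde H$ to be the intersection of the kernels of the two natural homomorphisms $H\to\Gamma^\pm$ induced by the slice actions; normality of $\tilde H$ in each $K^\pm$ follows from the centrality of the lifted $\Gamma^\pm$, and the quotients $K^\pm/\tilde H$ are spheres by construction.

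The principal obstacle is the simultaneous compatibility at the two singular orbits: the subgroup required to unfold $K^-/H$ to a sphere need not, a priori, agree with the one that unfolds $K^+/H$. The low-dimensional hypothesis is used precisely here, via the centrality of the cyclic deck groups $\Gamma^\pm$ in $K^\pm$ in every tabulated case, which allows a common choice of $\tilde H$. With $\tilde H$ in hand, verifying that $(G,\tilde H,K^{-},K^{+})$ defines a smooth cohomogeneity one manifold and that its $\Gamma$-quotient reproduces $|Q|$ equivariantly is a direct check, completing the proof.
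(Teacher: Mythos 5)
Your overall strategy --- realizing $|Q|$ as $M/\Gamma$ for a smooth cohomogeneity one $G$-manifold $M$ carrying a commuting finite group action --- is the same as the paper's, but the specific mechanism you propose for producing $M$ does not work. You keep the non-principal isotropy groups $K^{\pm}$ fixed and shrink only $H$ to a subgroup $\tilde H$ normal in both $K^{\pm}$, so that $\Gamma=H/\tilde H$ acts by right translations on $K^{\pm}/\tilde H$. Right translation by $H/\tilde H$ on $K^{\pm}/\tilde H$ is automatically free, so $K^{\pm}/\tilde H\to K^{\pm}/H$ is a $|\Gamma|$-sheeted covering for \emph{both} signs. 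Whenever one slice, say $K^-/H$, is already a simply-connected sphere of dimension at least $2$ while the other has a non-trivial deck group --- e.g.\ the diagram $(\RS^3\times\RS^3,\, \N_{\RS^3}(\RS^1)\times 1,\, \N_{\RS^3}(\RS^1)\times\RS^3,\, \RS^3\times 1)$ realizing $\RP^2\ast\SP^3$ --- the space $K^-/\tilde H$ is forced to be a disjoint union of $|\Gamma|$ copies of that sphere, so $(G,\tilde H,K^-,K^+)$ is not a manifold diagram at all. The actual smooth cover in that example is $\SP^2\ast\SP^3=\SP^6$ with diagram $(\RS^3\times\RS^3,\,\RS^1\times 1,\,\RS^1\times\RS^3,\,\RS^3\times 1)$: the group $K^-$ must shrink along with $H$. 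More generally, a single $\tilde H$ with both quotients $K^{\pm}/\tilde H$ spheres can only exist when $|\Gamma^-|=|\Gamma^+|$, which fails for the majority of the tabulated diagrams.

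The supporting claims are also false as stated: the deck groups $\Gamma^{\pm}$ need not be cyclic, and they need not lift to central subgroups of $K^{\pm}$. Binary dihedral groups $\D^*_{2m}$ and more general finite subgroups of $\RS^3$ occur throughout Tables~\ref{TB:GROUP_DGMS_D6}--\ref{TB:GROUP_DGMS_D7_S3xS3}, and even a cyclic $\Z_k\subset\RS^3$ with $k\geq 3$ is not central in $\RS^3$, so the asserted homomorphisms $H\to\Gamma^{\pm}$ and the normality of their common kernel in $K^{\pm}$ are not available. The paper instead argues diagram by diagram: it computes $\pi_1^{orb}(Q)$ from the stratification (Propositions~\ref{P:Orbifold fundamental group} and \ref{P:Injection_of_Local_Gr}, which also detect the genuinely bad orbifold structures living on the same underlying space when a codimension-two orbit is present), locates a suitable manifold $M$ in Hoelscher's smooth classification --- generally with a \emph{different} diagram, sometimes for a covering group $\tilde G$ of $G$ --- and lets a finite subgroup of $N_{\tilde G}(H)\cap N_{\tilde G}(K^-)\cap N_{\tilde G}(K^+)$ act orbitwise on $M$, supplemented by the non-primitive reductions of Proposition~\ref{P:Good_S_N_Primitive}. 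Some version of this case analysis, with covers whose non-principal isotropy groups differ from $K^{\pm}$, is unavoidable; your uniform construction cannot be repaired without it.
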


As in the smooth and topological cases, the proof of Theorem~\ref{T:CLASSIF} follows from a case-by-case analysis of the possible group actions. 
Using dimension restrictions, one first determines the possible groups that can act.  One then considers each group action individually, taking into account the fact that the groups must satisfy restrictions imposed by the fact that the homogeneous spaces $K^\pm/H$ are positively curved. In this way, one obtains all the possible diagrams $(G,H,K^{-},K^{+})$, which determine the equivariant type of the Alexandrov space. Recognition results for specific types of actions help us identify the topological type of the space.
To prove Theorem~\ref{T:GOOD_ORBIFOLDS}, we first compute the orbifold fundamental group in each case. We then find a manifold with a cohomogeneity one action and a commuting action of the orbifold fundamental group whose quotient induces the diagram of the cohomogeneity one orbifold under consideration. 
\\

Our article is divided as follows. In Section~\ref{S:PRELIM} we collect background material on cohomogeneity one Alexandrov spaces and prove some results we will use in the proof of Theorem~\ref{T:CLASSIF}. The proof of this theorem is contained in Section~\ref{S:PROOF_ALEX_CLASSIF}. Finally, in Section~\ref{S:ORBIFOLDS} we recall some basic facts on orbifolds and prove Theorem~\ref{T:GOOD_ORBIFOLDS}. 

\begin{ack}
The authors would like to thank Wilderich Tuschmann at the Karlsruher Institut f\"ur Technologie (KIT),  Alexander Lytchak and Christian Lange at the Universit\"at K\"oln, and Burkhard Wilking at the Universit\"at M\"unster for their hospitality and for helpful conversations.
\end{ack}


\section{Preliminaries}
\label{S:PRELIM}
In this section,  we collect some background material which we will use in the proof of Theorem~\ref{T:CLASSIF}.

\subsection{Group actions}
 Let $X$ be a topological space and let $x$ be a point in $X$.  Given a topological (left) action $\G\times X\rightarrow X$ of a Lie group $\G$, we let $\G(x)=\{\,gx \mid g\in \G \,\}$ be the \emph{orbit}  of $x$ under the action of $\G$. The \emph{isotropy group} of $x$ is the subgroup $\G_x=\{\, g\in \G \mid gx=x\,\}$. Observe that $\G(x)\approx \G/\G_x$. We will denote the orbit space of the action by $X/G$ and let  $\pi:X\rightarrow X/G$ be the orbit projection map.  The \emph{(ineffective) kernel} of the action is the subgroup $\K=\bigcap_{x\in X}G_x$. The action is \emph{effective} if $\K$  is the trivial subgroup $\{e\}$ of $G$; the action is \emph{almost effective} if $\K$ is finite.

We will say that two $G$-spaces are \emph{equivalent} if they are equivariantly homeomorphic. From now on, we will suppose that $G$ is compact and connected, and assume that the reader is familiar with the basic notions of compact transformation groups (see, for example, Bredon \cite{Bredon}). We will assume all spaces to be connected, unless stated otherwise.  

\subsection{Alexandrov spaces}

A finite (Hausdorff) dimensional length space $(X, d)$ has curvature bounded  below by $k$ if every point $x\in X$ has a neighborhood $U$ such that, for any collection
of four different points $(x_0, x_1, x_2, x_3)$ in $U$, the following condition holds:
\begin{equation*}
 \angle _{x_1x_2} (k) + \angle _{x_2x_3} (k)+ \angle _{x_3x_1} (k) \leq 2\pi.
\end{equation*}
Here, $\angle _{x_ix_j} (k)$, called the \textit{comparison angle}, is the angle at $x_0(k)$ in the geodesic triangle in $M^2_k$, the simply-connected Riemannian $2$-manifold with constant curvature $k$, with  vertices \linebreak $(x_0(k), x_i(k), x_j (k))$, which are the isometric images of $(x_0, x_i, x_j )$. 
An \emph{Alexandrov space} is a complete length space with finite Hausdorff dimension and curvature bounded below by $k$ for some $k\in \mathbb{R}$. Recall that the Hausdorff dimension of an Alexandrov space is an integer and is equal to its topological dimension. The \emph{space of directions} of a general Alexandrov space $X^n$ of dimension $n$ at a point $x$ is, by definition, the completion of the space of geodesic directions at $x$.
We will denote it by $\Sigma_xX^n$. It is a compact Alexandrov space of dimension $n-1$ with curvature bounded below by $1$. We refer the reader to \cite{Burago,BGP} for the basic results on Alexandrov geometry. We will say that an Alexandrov space is \emph{closed} if it is compact and has no boundary.


\subsection{Group actions on Alexandrov spaces}
Let $X$ be an $n$-dimensional Alexandrov space. Fukaya and Yamaguchi proved in \cite[Theorem 1.1]{FY} that $\Isom(X)$, the isometry group of $X$, is a Lie group. Moreover, $\Isom(X)$ is compact, if $X$ is compact and connected (see \cite[p.~370, Satz I]{DW} or 
\cite[Corollary~4.10 and its proof in pp. 46--50]{KN}). As in the Riemannian case, the maximal dimension of $\Isom(X)$ is $n(n+1)/2$ and, if equality holds, $X$ must be isometric to a Riemannian manifold (see \cite[Theorems 3.1 and 4.1]{GGG}).


As for locally smooth actions (see \cite[Ch.~IV, Section 3]{Bredon}), for an isometric action of a compact Lie group $G$ on an Alexandrov space $X$ there also exists a maximal orbit type $G/\HH$  (see \cite[Theorem 2.2]{GGG}). This orbit type is the \emph{principal orbit type} and orbits of this type are called \emph{principal orbits}. A non-principal orbit is \emph{exceptional} if it has the same dimension as a principal orbit. 

The structure of the space of directions in the presence of an isometric action is given by the following proposition. 




\begin{prop}[\protect{\cite[Proposition 4]{GS}}]
\label{normal space}
Let $X$ be an Alexandrov space with an isometric $G$-action and fix $x\in X$ with $\dim(G/G_x) > 0$. Let $S_x\subseteq \Sigma_xX$ be the unit tangent space to the orbit $G(x)\simeq G/G_x$, and let $S_x^\perp=\{ v \in \Sigma_x X : \angle(v,w) = \pi/2\,\,\,  \text{for all } w\in S_x\}$ be the set of normal directions to $S_x$. Then the following hold:
\begin{itemize}
	\item[(1)] The set $S_x^\perp$ is a compact, totally geodesic Alexandrov subspace of $\Sigma_xX$ with curvature bounded below by 1, and the space of directions $\Sigma_xX$ is isometric to the join $S_x\ast S_x^\perp$ with the standard join metric.
	\item[(2)] Either $S_x^\perp$ is connected or it contains exactly two points at distance $\pi$.
\end{itemize}
\end{prop}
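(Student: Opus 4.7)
The plan is to establish the splitting by first identifying $S_x$ as a round unit sphere isometrically embedded in $\Sigma_x X$, and then to invoke a join splitting theorem for Alexandrov spaces of curvature bounded below by $1$ that contain such a round sphere. First I would exploit that orbits of an isometric action of a compact Lie group on an Alexandrov space are smooth Riemannian submanifolds (via, for example, a slice theorem for isometric actions and Berestovski\u\i's theorem on homogeneous Alexandrov spaces). Since $G(x) \cong G/G_x$ is a Riemannian homogeneous space of positive dimension, its tangent space at $x$ is a Euclidean vector space of dimension $\dim(G/G_x)>0$. I would then show that the subset $S_x \subset \Sigma_x X$ of directions tangent to $G(x)$ is isometric to the standard unit sphere in $T_x G(x)$: every $w\in S_x$ is realized by the initial velocity of a geodesic lying in the orbit, so angles between elements of $S_x$ computed from the Alexandrov comparison agree with the Euclidean angles in $T_x G(x)$.

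Once $S_x$ is known to be a round sphere inside the curv $\geq 1$ space $\Sigma_x X$, part (1) follows from the spherical join splitting lemma (due to Petrunin, and also implicit in work of Grove--Markvorsen): if an Alexandrov space $\Sigma$ of curvature $\geq 1$ contains an isometrically embedded round unit sphere $S$, then $\Sigma$ is isometric to the spherical join $S \ast S^\perp$, where $S^\perp$ is the set of points at distance $\pi/2$ from every point of $S$. The spherical law of cosines applied inside $S_x$ shows that any $v\in \Sigma_x X$ realizes its minimal distance to $S_x$ at a unique point $w_0\in S_x$ and that the minimizing geodesic from $w_0$ to $v$ is orthogonal to $S_x$; extending this geodesic to length $\pi/2$ lands in $S_x^\perp$, producing the join structure. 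As a subset characterized by $\pi/2$-equidistance, $S_x^\perp$ is closed, hence compact, and is totally geodesic; Toponogov comparison then gives that $S_x^\perp$ has curvature bounded below by $1$.

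For part (2), one uses the fact that a compact Alexandrov space of curvature $\geq 1$ of positive dimension is connected, and the only way $S_x^\perp$ can fail to be connected is that it be $0$-dimensional. In the join $\Sigma_x X = S_x \ast S_x^\perp$ any two distinct points of $S_x^\perp$ lie at distance exactly $\pi$, so if there were three or more such points one could produce, via the join structure, a geodesic triangle in $\Sigma_x X$ with perimeter larger than $2\pi$, contradicting the curvature $\geq 1$ condition via Toponogov. Hence $S_x^\perp$ either is connected or consists of exactly two antipodal points.

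The main obstacle I expect is the first step: rigorously verifying that $S_x$ embeds into $\Sigma_x X$ as a round sphere. Alexandrov spaces of curvature $\geq 1$ can have quite intricate spaces of directions, and one must carefully use the homogeneity of the orbit together with the isometric nature of the action to ensure that the intrinsic geometry of $G(x)$ matches what $\Sigma_x X$ records infinitesimally. Once this is in place, the join splitting and the dichotomy in (2) follow from standard Alexandrov-geometric tools.
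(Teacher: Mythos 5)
The paper does not actually prove this proposition: it is quoted verbatim from \cite{GS}, so the only available comparison is with the argument given there. Your overall strategy coincides with that argument — identify $S_x$ as a round unit sphere sitting isometrically inside the curvature~$\geq 1$ space $\Sigma_xX$, invoke join rigidity (equivalently, the splitting $T_xX\cong\mathbb{R}^k\times C$ of the nonnegatively curved tangent cone), and deduce the dichotomy in (2) from suspension rigidity — and your treatment of the join lemma, compactness, total geodesy, and the inherited curvature bound is fine.

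The genuine gap sits exactly at the step you yourself flag as the main obstacle, and the justification you offer does not close it. Berestovski\u{\i}'s theorem applies to homogeneous spaces that are themselves Alexandrov spaces; the orbit $G(x)$ with its induced intrinsic metric is a homogeneous metric space, but it is not known a priori to carry any lower curvature bound, so that theorem cannot be invoked to make the orbit a ``Riemannian submanifold.'' More importantly, even granting that $G(x)\simeq G/G_x$ is a smooth manifold (which holds for purely group-theoretic reasons), the sentence ``angles between elements of $S_x$ computed from the Alexandrov comparison agree with the Euclidean angles in $T_xG(x)$'' is precisely the content that must be proved, not a consequence of smoothness: the curves realizing the directions in $S_x$ are the orbit curves $t\mapsto \exp(tY)\cdot x$, which are not geodesics of $X$, and one has to show (i) that each such curve has well-defined forward and backward directions $v_Y^{\pm}$ with $\angle(v_Y^{+},v_Y^{-})=\pi$, so that each produces a line in the tangent cone, and (ii) that the comparison angles between the resulting directions reproduce the Euclidean inner product on a $k$-dimensional subspace. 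This is where the real work in \cite{GS} lies (via differentiability of $t\mapsto d(\exp(tY)x,x)$ at $0$ and the splitting theorem for the tangent cone). A smaller gap of the same kind occurs in part (2): the triangle-perimeter argument rules out three isolated points pairwise at distance $\pi$, but it does not by itself exclude a disconnected $S_x^{\perp}$ having a positive-dimensional component together with an isolated point. The clean argument is that two points of $\Sigma_xX$ at distance $\pi$ force $\Sigma_xX$ to be a spherical suspension with those points as poles, and a pole is the \emph{unique} point at distance $\pi$ from the other pole; hence a disconnected $S_x^{\perp}$ consists of exactly two points.
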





\subsection{Alexandrov spaces of cohomogeneity one}
In this subsection we collect basic facts on cohomogeneity one Alexandrov spaces and prove some preliminary results that we will use in the proof of Theorem~\ref{T:CLASSIF}. For cohomogeneity one actions on smooth or topological manifolds, we refer the reader to \cite{Hoelscher} or \cite{GGZ}, respectively.


\begin{defn}
Let $X$ be a connected $n$-dimensional Alexandrov space with an isometric action of a compact connected Lie group  $G$. The action is of \emph{cohomogeneity one} if the orbit space is one-dimensional or, equivalently, if there exists an orbit of dimension $n-1$. A connected Alexandrov space with an isometric action of cohomogeneity one is a \emph{cohomogeneity one Alexandrov space}.
\end{defn}

Cohomogeneity one Alexandrov spaces were first studied in \cite{GS}. Recall that the orbit space $X/G$ of an Alexandrov space $X$ by an isometric action of a group $G$ with closed orbits is again an Alexandrov space (see \cite[Proposition 10.2.4]{Burago}). Since one-dimensional Alexandrov spaces are topological manifolds, the orbit space of a cohomogeneity one Alexandrov space is homeomorphic to a connected $1$-manifold (possibly with boundary). When the orbit space is homeomorphic to $[-1,1]$, we denote  the isotropy groups corresponding to a point in the orbit mapped to $\pm 1$ by $\K^\pm$. By the Isotropy Lemma (see \cite[Lemma 2.1]{GGG}) and the fact that principal orbits are open and dense, the orbits that project to the interior $(-1,1)$ of the orbit space all have the same isotropy group $\HH$ (up to conjugacy) and $H$ is a subgroup of $K^\pm$. The subgroup $\HH$ is the principal isotropy group of the action and the corresponding orbits are the principal orbits.  Let us now show that $H$ is a proper subgroup of $K^\pm$. It suffices to show that if $\dim K^\pm=\dim H$, then $K^\pm\neq H$. Observe first that, in this case, $S^\perp = \SP^0$ with a transitive action of $K^\pm$ with isotropy $H$. Hence $K^\pm/H = \SP^0$, which shows that $K^\pm \neq H$. We call the orbits mapped to $\pm 1$ \emph{non-principal} orbits.


Let $X$ be a closed cohomogeneity one Alexandrov $G$-space. Since the orbit space $X/G$ must be a compact one-manifold, it  must be either a circle or a closed interval. When $X/G$ is a circle, $X$ is equivariantly homeomorphic to a fiber bundle over $\SP^1$ with fiber a principal orbit $G/H$. In particular, $X$ is a smooth manifold (see \cite[Theorem A]{GS}). Since we are interested in non-manifold Alexandrov spaces, we will focus our attention on the case where $X/G$ is a compact interval. 

A cohomogeneity one $G$-action on a closed Alexandrov space whose orbit space is an interval determines a group diagram 
\begin{equation*}
	(G,H, K^-, K^+),
\end{equation*}
where $K^\pm$ are isotropy subgroups at the non-principal orbits corresponding to the endpoints of the interval, and $H$ is the principal isotropy group of the action. The following theorem determines the structure of closed cohomogeneity-one Alexandrov spaces with orbit space an interval.


\begin{thm}[\protect{\cite[Theorem A]{GS}}]
\label{T:ALEX_STRUCTURE}
Let $X$ be a closed Alexandrov space with an effective isometric $G$-action of cohomogeneity one with principal isotropy $\HH$ and orbit space homeomorphic to $[-1,1]$. Then $X$ is the union of two
fiber bundles over the two singular orbits whose fibers are cones over positively curved homogeneous  spaces, that is, 
\[
X = G/K^-\times_{K^-}C(K^-/H)\bigcup_{G/H} G/K^+\times_{K^+}  C(K^+/H).
\]
 The group diagram of the action is given by $(G, H,  K^-, K^+)$, where $K^\pm/H$ are positively curved homogeneous spaces.
 Conversely, a group diagram $(G, H,  K^-, K^+)$, where $K^\pm/H$ are positively curved homogeneous spaces, determines a cohomogeneity one Alexandrov space.
\end{thm}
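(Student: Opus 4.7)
The plan is to treat the two implications separately. For the forward direction I would use the orbit projection $\pi\colon X\to X/G\cong [-1,1]$ together with the slice theorem for isometric actions on Alexandrov spaces and the join decomposition of the space of directions provided by Proposition~\ref{normal space}. For the converse I would assemble the space from the group diagram and equip it with an invariant Alexandrov metric by gluing two local models.

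For the forward direction, the principal orbit theorem for isometric actions ensures that $\pi^{-1}((-1,1))$ consists entirely of principal orbits of type $G/H$. Fix points $x^{\pm}\in\pi^{-1}(\pm 1)$ and set $K^{\pm}:=G_{x^{\pm}}$; after conjugation, $H\leq K^{\pm}$. By Proposition~\ref{normal space}, the space of directions decomposes as a spherical join
\[
\Sigma_{x^{\pm}}X \;=\; S_{x^{\pm}}\ast S_{x^{\pm}}^{\perp},
\]
where $S_{x^{\pm}}^{\perp}$ is a compact Alexandrov space of curvature $\geq 1$. The Alexandrov slice theorem identifies a tubular neighborhood of the non-principal orbit $G/K^{\pm}$ equivariantly with the twisted bundle $G\times_{K^{\pm}} C(S_{x^{\pm}}^{\perp})$, on which $K^{\pm}$ acts on the fiber via its slice representation on $S_{x^{\pm}}^{\perp}$. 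Because the $G$-quotient of this tube is a half-open interval (a neighborhood of $\pm 1$ in $[-1,1]$), the $K^{\pm}$-action on $S_{x^{\pm}}^{\perp}$ must be transitive with isotropy (conjugate to) $H$. Hence $S_{x^{\pm}}^{\perp}=K^{\pm}/H$ is a positively curved homogeneous space. Since the two tubes around $G/K^-$ and $G/K^+$ cover $X$ by compactness, and their intersection retracts onto a single principal orbit $G/H$, the displayed decomposition follows.

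For the converse, given a diagram $(G,H,K^-,K^+)$ with $K^{\pm}/H$ positively curved homogeneous, I would assemble
\[
X \;:=\; G\times_{K^-} C(K^-/H)\,\cup_{G/H}\, G\times_{K^+} C(K^+/H),
\]
gluing along the common boundary principal orbit. To produce the required metric, endow each $K^{\pm}/H$ with a normal homogeneous metric of curvature $\geq 1$, so that $C(K^{\pm}/H)$ acquires a Euclidean cone metric of curvature $\geq 0$. Combined with a bi-invariant metric on $G$, the associated Riemannian submersion descends to a $G$-invariant Alexandrov metric on each half; one then arranges these metrics to coincide on a cylindrical collar of $G/H$ and glues them by Perelman's doubling/gluing theorem to obtain a global $G$-invariant cohomogeneity one Alexandrov metric on $X$. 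The main obstacle is precisely this last step: matching the two local metrics on a collar of $G/H$ so that the gluing preserves the lower curvature bound, which is what forces the careful normalization of the cone metrics on $C(K^{\pm}/H)$.
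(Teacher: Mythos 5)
This statement is quoted in the paper from \cite{GS} (Theorem~A there) and is not proved in the present article, so the only meaningful comparison is with the original proof in \cite{GS}; your outline follows essentially the same strategy (join decomposition of $\Sigma_{x^\pm}X$, transitivity of $K^\pm$ on $S^\perp_{x^\pm}$ forced by $X/G=[-1,1]$, double cone--bundle decomposition, and construction of the converse by gluing two quotient metrics along $G/H$). The identification $S^\perp_{x^\pm}=K^\pm/H$ via $\Sigma_{x^\pm}X/K^\pm\cong\Sigma_{\pm1}[-1,1]=\{\mathrm{pt}\}$ and the isotropy along a normal geodesic is exactly the argument used there.

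The one genuine gap is your appeal to ``the slice theorem for isometric actions on Alexandrov spaces'' to identify a tube around $G/K^\pm$ with $G\times_{K^\pm}C(S^\perp_{x^\pm})$. No such theorem is available at this level of generality; the equivariant tube structure around the non-principal orbits is precisely the substantive content of the forward direction, and in \cite{GS} it is \emph{derived} (in the cohomogeneity one setting) from Perelman's conical neighborhood/stability theorems together with the transitivity of $K^\pm$ on $S^\perp_{x^\pm}$, rather than quoted. As stated, your argument is circular at this step. Two smaller points on the converse: the gluing theorem you need is Petrunin's (gluing two Alexandrov spaces with curvature $\geq k$ along isometric boundaries), not Perelman's doubling theorem, and it requires only that the two induced intrinsic metrics on the boundary $G/H$ be isometric --- not that the metrics agree on a collar --- so the verification to be done is that the two quotient constructions induce isometric $G$-invariant metrics on $G/H$ (arranged, e.g., by fixing one bi-invariant metric on $G$ and rescaling the cone factors appropriately).
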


We will use the following proposition to identify equivalent actions. 


\begin{prop} [\protect{\cite[Proposition 9]{GS}}]
\label{P:Equivalent_actions}
If a cohomogeneity  one Alexandrov space is given by a group diagram $(G,H,K^{-},K^{+})$, then any of the following operations on the group diagram will result in an equivalent Alexandrov space:
\begin{enumerate}
 	\item Switching $\K^{-}$ and $\K^{+}$,
	\item Conjugating each group in the diagram by the same element of $G$,
	\item Replacing $K^{-}$ with $gK^{-}g^{-1}$  for $g\in N(\HH)_0$.
\end{enumerate}
Conversely, the group diagrams for two equivalent cohomogeneity one, closed Alexandrov space must be mapped to each other by some combination of these three operations.
\end{prop}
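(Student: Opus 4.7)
The plan is to use the structure result of Theorem~\ref{T:ALEX_STRUCTURE}, which reconstructs a closed cohomogeneity one Alexandrov space $X$ as the union of two cone bundles over the non-principal orbits, glued along the principal orbit $G/H$. Thus $X$ as a $G$-space is determined by the diagram $(G,H,K^-,K^+)$ together with the equivariant identification of the two $G/H$-boundaries. The proof then splits into a forward part, in which I construct explicit equivariant homeomorphisms realizing each of the three operations, and a converse part, in which any equivariant homeomorphism between spaces built from two such diagrams is decomposed into a composition of these three operations.

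For the forward direction, each operation is realized by a concrete self-equivalence. Operation (1) is induced by the orientation-reversing involution $t\mapsto -t$ of the orbit space $[-1,1]$, which interchanges the two cone bundles and hence the two non-principal isotropy groups. Operation (2), conjugation of the entire diagram by $g\in G$, is realized by the $G$-equivariant relabelling $[a]_{K}\mapsto [ag^{-1}]_{gKg^{-1}}$ on each orbit, extended compatibly across the two cones and their common boundary $G/H$. For operation (3), given $g\in N(H)_0$, I pick a continuous path $\gamma\colon[0,1]\to N(H)_0$ from $e$ to $g$ together with a cutoff $\varphi\colon[-1,1]\to[0,1]$ equal to $1$ near $t=-1$ and $0$ near $t=+1$, and define an equivariant self-homeomorphism of the regular part $G/H\times(-1,1)$ by $[a,t]\mapsto [a\gamma(\varphi(t)),t]$. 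Near $t=+1$ the map is the identity and extends trivially across the $K^+$-cone; near $t=-1$ it twists the boundary gluing by $g$ so that $K^-$ is replaced by $gK^-g^{-1}$, while $H$ and $K^+$ are unchanged.

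For the converse, suppose $f\colon X_1\to X_2$ is a $G$-equivariant homeomorphism between spaces with diagrams $(G,H_i,K_i^-,K_i^+)$. The induced map on orbit spaces is a self-homeomorphism of $[-1,1]$; if it reverses orientation, apply operation (1) to reduce to the orientation-preserving case. Choose a principal point $x\in X_1$; the isotropy groups $H_1=G_x$ and $H_2=G_{f(x)}$ are conjugate principal isotropies in $G$, so applying operation (2) to the second diagram I may assume $H_1=H_2=H$ and $f(x)$ lies in the same principal $G$-orbit as a chosen basepoint $x'\in X_2$. Then $f$ restricts to a $G$-equivariant self-homeomorphism of the principal orbit $G/H$, which is necessarily right translation by some $n\in N(H)$. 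Tracking how this $n$ propagates across the two cone bundles shows that, after possibly swapping the two ends, $K_1^+$ and $K_2^+$ coincide while $K_2^-=nK_1^-n^{-1}$, matching operation (3) provided $n\in N(H)_0$.

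The main obstacle is the last step of the converse, namely showing that the conjugating element $n\in N(H)$ relating $K_2^-$ to $K_1^-$ must in fact lie in the identity component $N(H)_0$. This uses the cone structure from Theorem~\ref{T:ALEX_STRUCTURE} essentially: any continuous extension of the twist $[a]\mapsto [an]$ of the principal orbit across the cone over $K^-/H$ produces, by tracing the twist as $t\to -1$ along the radial direction of the cone, a path in $N(H)$ from $e$ to $n$, so that $n$ lies in $N(H)_0$. Once this is verified, the remainder of the argument is a formal book-keeping of the reductions above.
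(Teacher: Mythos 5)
The paper does not actually prove this statement: it is imported verbatim from \cite[Proposition 9]{GS}, so there is no in-text argument to compare against. Your outline is the standard one (the same double-cone-bundle bookkeeping used in the smooth case by Hoelscher and carried over to Alexandrov spaces in \cite{GS}), and the forward direction is fine: (1) and (2) are relabellings, and for (3) your cutoff/isotopy construction works precisely because $g\in N(H)_0$ makes right translation by $g$ on $G/H$ equivariantly isotopic to the identity, so the twisted gluing can be absorbed.

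The one place where what you wrote is not just compressed but incorrect as stated is the converse step ``tracking how this $n$ propagates \dots shows that $K_1^+$ and $K_2^+$ coincide while $K_2^-=nK_1^-n^{-1}$.'' Normalizing $f$ at a single interior orbit does not force either endpoint isotropy group to be preserved on the nose: what you actually get is a continuous family $t\mapsto n(t)H\in N(H)/H$ defined by $f(c_1(t))=n(t)\cdot c_2(t)$ along reparametrized normal geodesics, which you must lift to a path $n(t)$ in $N(H)$ and push to the two endpoints; then $K_2^{\pm}$ is conjugate to $K_1^{\pm}$ by $n(\pm 1)$, and \emph{both} ends generically pick up a conjugation. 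The correct conclusion is reached by first applying operation (2) with $n(+1)$ (which fixes $H$ since $n(+1)\in N(H)$) to make the $+$ ends agree, after which the $-$ ends differ by conjugation by $n(+1)n(-1)^{-1}$, an element of $N(H)_0$ because $n(t)$ is a continuous path and $N(H)_0$ is normal in $N(H)$. Your final paragraph contains the right idea for the $N(H)_0$ membership (a path from $e$ obtained by tracing the twist), but the existence and continuity of the lift $n(t)$ --- via the principal $H$-bundle $N(H)\to N(H)/H$ and the identification of equivariant self-maps of $G/H$ with $N(H)/H$ --- is the actual technical content of the converse and should be written out rather than summarized as ``tracking.'' With that repair the argument is complete and agrees with the proof in \cite{GS}.
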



Let $G$ be a compact connected Lie group acting on a closed Alexandrov space  $X$ with cohomogeneity one and let $\pi: X\to  X/G=[0, 1]$
be  the projection map. A minimizing geodesic $\gamma: [0, d]\to X$ between non-principal orbits has the following properties (see \cite[Lemma~2.1]{GGG}):\\
\begin{itemize}
\item it goes through all principal orbits,
\item for all $t\in (0,d)$, $\HH=\G_{c(t)}\subset \G_{c(0)}, \G_{c(d)}$, and
\item the direction of $\gamma$ is horizontal.
\end{itemize}
We set $K^{-}=\G_{c(0)}$ and $K^{+}=\G_{c(d)}$. We call such a geodesic a \emph{normal geodesic}.


\begin{defn}
\label{Primitive Action}
We say that the cohomogeneity one Alexandrov space $X$ is \emph{non-primitive} if
it has some group diagram representation $(G, H,  K^-, K^+)$ for which there is a
proper connected closed subgroup $L\subset G$ with $K^{\pm} \subset L$. It then follows that
$(L, H,  K^-, K^+)$  is a group diagram which determines some cohomogeneity one
Alexandrov space $Y$.
\end{defn}


\begin{prop}[\protect{\cite[p.~96]{GS}}]\label{P:Primitive Action}
Take a non-primitive cohomogeneity one Alexandrov space $X$ with $L$ and $Y$ as in Definition~\ref{Primitive Action}. Then $X$ is equivalent to $(G\times Y)/L$, where $L$ acts on $G\times Y$ by $l\cdot(g,y)=(gl^{-1}, ly)$. 
Hence, there is a fiber bundle
\[
Y\to X\to G/L.
\]
\end{prop}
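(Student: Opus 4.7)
The plan is to introduce the candidate space $Z := (G\times Y)/L$ with a natural $G$-action, verify that $Z$ realizes the same group diagram $(G,H,K^-,K^+)$ as the given $G$-action on $X$, and then invoke the uniqueness implicit in Theorem~\ref{T:ALEX_STRUCTURE} to identify $X$ with $Z$ equivariantly. The bundle statement will fall out of the associated bundle construction.

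First I would observe that the $L$-action $l\cdot(g,y) = (gl^{-1},ly)$ on $G\times Y$ is free, since it is already free on the $G$-factor, so $G\times Y\to Z$ is a principal $L$-bundle and $Z$ is a well-behaved Hausdorff space. The left-translation $G$-action on the first factor commutes with this $L$-action and therefore descends to a continuous $G$-action on $Z$ by $g_0\cdot[g,y] = [g_0g,y]$. To read off the orbit structure, I would use that $G$ acts transitively on the first factor to identify $(G\times Y)/G$ with $Y$, with residual $L$-action equal to the original one, giving
\begin{equation*}
Z/G \;\cong\; (G\times Y)/(G\times L) \;\cong\; Y/L \;\cong\; [-1,1].
\end{equation*}
Thus $Z$ is of cohomogeneity one under $G$.

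Next I would compute the isotropy groups. From $g\cdot[e,y] = [e,y]$ iff there exists $l\in L$ with $(g,y) = (l^{-1},ly)$, i.e.\ $l\in L_y$ and $g = l^{-1}$, one gets $G_{[e,y]} = L_y$. Choosing $y$ along a normal geodesic of $Y$ realizing the $L$-diagram $(L,H,K^-,K^+)$, this reads off the group diagram $(G,H,K^-,K^+)$ for $Z$, which agrees with the diagram of the $G$-action on $X$. Theorem~\ref{T:ALEX_STRUCTURE} presents any cohomogeneity one Alexandrov space as the double mapping cylinder
\begin{equation*}
G/K^-\times_{K^-}C(K^-/H)\bigcup_{G/H} G/K^+\times_{K^+}C(K^+/H),
\end{equation*}
which is determined by the diagram alone up to equivariance (also invoking Proposition~\ref{P:Equivalent_actions}); applied to both $X$ and $Z$, this yields $X \cong Z$ as $G$-spaces.

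Finally, the $L$-equivariant projection $G\times Y\to G$, where $L$ acts on $G$ by right multiplication by $l^{-1}$, descends to a map $Z = G\times_L Y \to G/L$ whose fiber over $[g]$ is $\{[g,y] : y\in Y\}\cong Y$, giving the stated fiber bundle. The only real item of care is the bookkeeping of isotropies at the non-principal orbits and, once $Z/G$ and its diagram are identified, the invocation of the uniqueness-by-diagram property built into Theorem~\ref{T:ALEX_STRUCTURE}; there is no genuine geometric obstacle, since $Z$ is automatically an Alexandrov space via the converse part of that theorem (the groups $K^\pm/H$ being positively curved homogeneous spaces by hypothesis on $X$).
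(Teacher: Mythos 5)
Your argument is correct, but note that the paper itself only cites \cite[p.~96]{GS} for this statement, and the standard proof there is more direct than yours: since $K^{\pm}\subseteq L$, one simply rewrites the decomposition of Theorem~\ref{T:ALEX_STRUCTURE},
\[
X \;\cong\; G\times_{K^-}C(K^-/H)\;\cup_{G/H}\;G\times_{K^+}C(K^+/H)
\;\cong\; G\times_{L}\Bigl(L\times_{K^-}C(K^-/H)\;\cup_{L/H}\;L\times_{K^+}C(K^+/H)\Bigr)
\;=\; G\times_{L}Y,
\]
and the bundle $Y\to X\to G/L$ is immediate. Your route instead computes the isotropy data of $Z=G\times_L Y$ and appeals to uniqueness-by-diagram; this works, but it forces you to justify two things you currently gloss over. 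First, to apply Theorem~\ref{T:ALEX_STRUCTURE} and Proposition~\ref{P:Equivalent_actions} to $Z$ you need $Z$ to be a cohomogeneity one \emph{Alexandrov} space with an \emph{isometric} $G$-action, not merely a topological $G$-space; this is repaired by equipping $G\times Y$ with the product of a bi-invariant metric on $G$ and an $L$-invariant Alexandrov metric on $Y$, so that $L$ acts by isometries and $Z$ inherits an invariant Alexandrov metric. Second, your curve $t\mapsto[e,\gamma(t)]$ must actually be (or be replaceable by) a normal geodesic before its isotropy groups constitute the group diagram; with the quotient metric just described, $(e,\gamma)$ is horizontal and this is fine, but it needs saying. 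The direct associated-bundle computation buys you freedom from both issues, at the cost of being less conceptual; your version makes the role of the diagram transparent but needs the metric on $Z$ made explicit to be complete.
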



\begin{defn}
A cohomogeneity one action of a Lie group $G$ on an Alexandrov space
$X$ is called \emph{reducible} if there is a proper closed normal subgroup of $G$ that acts on $X$ with the same orbits.
\end{defn}

We now recall the following results which describe the reduction or extension of certain cohomogeneity one actions (cf. \cite[Section 1.11]{Hoelscher} and \cite[Section 2]{GS}). These results show why it is natural to consider only non-reducible actions.


\begin{prop}[\protect{\cite[Proposition 11]{GS}}]
\label{P: reduction}
Let $X$ be the cohomogeneity one Alexandrov space given by the group
diagram $(G, H, K^-, K^+)$ and suppose that $G=G_1\times G_2$ with $\Proj_2(H)=G_2$. Then
the subaction of $G_1\times1$ on $X$ is also of cohomogeneity one, has the same orbits as the action of $G$, and has
 isotropy groups $K^{\pm}_1=K^{\pm}\cap (G_1\times 1)$ and $H_1=H\cap (G_1\times 1)$.
\end{prop}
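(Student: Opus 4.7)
The plan is to show that the hypothesis $\Proj_2(H)=G_2$ forces $G_1\times 1$ to act transitively on every $G$-orbit; once the orbits coincide, the cohomogeneity and isotropy statements follow immediately.

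First I would verify the transitivity claim on a principal orbit $G/H$. Pick any point $gH$ with $g=(g_1,g_2)\in G_1\times G_2$; the goal is to find $a_1\in G_1$ with $(a_1,1)H=gH$. This is equivalent to $(g_1^{-1}a_1,g_2^{-1})\in H$. Since $\Proj_2(H)=G_2$, there exists $b_1\in G_1$ such that $(b_1,g_2^{-1})\in H$, and then $a_1=g_1 b_1$ works. Hence $(G_1\times 1)(eH)=G/H$, so $G_1\times 1$ acts transitively on the principal orbit. The same argument applies verbatim to each non-principal orbit $G/K^{\pm}$, because $H\subseteq K^{\pm}$ implies $\Proj_2(K^{\pm})\supseteq \Proj_2(H)=G_2$, so $\Proj_2(K^{\pm})=G_2$ as well.

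Next, since $G_1\times 1$ acts transitively on every $G$-orbit, the $(G_1\times 1)$-orbits coincide with the $G$-orbits. In particular the orbit space $X/(G_1\times 1)$ is homeomorphic to $X/G\cong [-1,1]$, so the $G_1\times 1$ action is itself of cohomogeneity one. Moreover, a normal geodesic $\gamma:[0,d]\to X$ for the $G$-action is still horizontal with respect to the $G_1\times 1$-action (horizontality depends only on the orbits), so it remains a normal geodesic for the subaction.

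Finally, for the identification of isotropy subgroups, observe that if $x\in X$ has $G$-isotropy $G_x$, then its isotropy under the subaction is tautologically $G_x\cap(G_1\times 1)$. Taking $x$ to be a principal point along the normal geodesic yields $H_1=H\cap(G_1\times 1)$, and taking $x=\gamma(0)$ and $x=\gamma(d)$ yields $K^{\pm}_1=K^{\pm}\cap(G_1\times 1)$. No step here is really an obstacle; the only subtlety is making sure one invokes $\Proj_2(H)=G_2$ (rather than merely the projection of $K^{\pm}$) for the principal orbit, and then observing that the same property is automatically inherited by $K^{\pm}$.
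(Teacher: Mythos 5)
Your argument is correct, and it is the standard one: the paper itself does not prove this statement but cites it from \cite{GS} (it is the Alexandrov analogue of Hoelscher's reduction lemma), where the key point is exactly your observation that $\Proj_2(H)=G_2$ is equivalent to $G=(G_1\times 1)H$, so $G_1\times 1$ is transitive on $G/H$ and hence on every orbit, after which the isotropy identifications are tautological. Nothing is missing.
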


For the next proposition we will need the concept of a \emph{normal extension}, which
we now recall.


\begin{defn}
 Let $X$ be a cohomogeneity one Alexandrov space with group diagram \linebreak
$(G_1, H_1, K^-, K^+)$ and let $L$ be a compact, connected subgroup of $N(H_1)\cap
N(K^-)\cap N(K^+)$. Observe that the subgroup $L\cap H_1$ is normal in $L$ and define 
$G_2 := L/(L\cap H_1)$. We can then define an action of $G_1\times G_2$ on $X$ orbitwise by letting
\[
(\hat{g_1}, [l])\cdot g_1(G_1)_x = \hat{g_1}g_1l^{-1}(G_1)_x
\]
on each orbit $G_1/(G_1)_x$ for $(G_1)_x = H_1$ or $K^{\pm}$. Such an extension is called a
\emph{normal extension} of $G_1$.
\end{defn}

\begin{prop}[\protect{\cite[Proposition 12]{GS}}]
A normal extension of $G_1$ describes a cohomogeneity one action
of $G := G_1\times G_2$ on $X$ with the same orbits as $G_1$ and with group diagram
\[
(G_1\times G_2, (H_1\times 1)\Delta L, (K^-\times 1)\Delta L, (K^+ \times 1)\Delta L),
\]
where $\Delta L = \{(l, [l]) : l\in L\}$.
\end{prop}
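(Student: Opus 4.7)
The plan is to construct a continuous $(G_1\times G_2)$-action on $X$ whose orbit-wise restriction agrees with the formula in the definition, and then read off the isotropies. The heart of the matter is to produce a $G_2$-action on $X$ commuting with the original $G_1$-action; combining the two then yields the desired $(G_1\times G_2)$-action.

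First, I would fix a normal geodesic $c\colon [-1,1]\to X$ (in the sense defined just before Definition~\ref{Primitive Action}), so that $(G_1)_{c(t)}=H_1$ for $t\in(-1,1)$ and $(G_1)_{c(\pm 1)}=K^\pm$. Every $x\in X$ has the form $x=g_1\cdot c(t)$ for some $g_1\in G_1$ and some $t\in[-1,1]$, and I would set
\[
\lambda(l)(g_1\cdot c(t)) := g_1 l^{-1}\cdot c(t), \qquad l\in L.
\]
Well-definedness: if $g_1\cdot c(t)=g_1'\cdot c(t)$, then $g_1^{-1}g_1'=j$ lies in $J:=(G_1)_{c(t)}\in\{H_1,K^-,K^+\}$, and the desired equality $g_1 l^{-1}\cdot c(t)=g_1' l^{-1}\cdot c(t)$ is equivalent to $l\, j\, l^{-1}\in J$, which holds by the normalization hypothesis $L\subseteq N(J)$. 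Continuity of $\lambda(l)$ and of the joint map $\lambda\colon L\times X\to X$ is then clear on the principal stratum and extends across $t=\pm 1$ by continuity of the formula and of the $G_1$-action.

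Second, I would verify that $\lambda$ is a left action of $L$ on $X$ commuting with the $G_1$-action: the identities $\lambda(ll')=\lambda(l)\circ\lambda(l')$ and $\lambda(l)(g\cdot x)=g\cdot\lambda(l)(x)$ for $g\in G_1$ are immediate from the definition. The kernel of $\lambda$ consists of those $l\in L$ fixing every $c(t)$; since the isotropies along the normal geodesic intersect in $H_1$, this kernel is exactly $L\cap H_1$. Hence $\lambda$ descends to a continuous action of $G_2=L/(L\cap H_1)$ commuting with the $G_1$-action, and the combined $(G_1\times G_2)$-action $(\hat g_1,[l])\cdot x = \hat g_1\cdot\lambda(l)(x)$ has the same orbits as $G_1$, so it is of cohomogeneity one with orbit space $[-1,1]$.

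Finally, I would compute isotropies at the base points $c(t)$. At such a point with $(G_1)_{c(t)}=J$ (for $J=H_1$ or $K^\pm$), the isotropy of the combined action is
\[
\{(\hat g_1,[l])\in G_1\times G_2 : \hat g_1 l^{-1}\in J\}=\{(jl,[l]) : j\in J,\ l\in L\}=(J\times 1)\cdot\Delta L.
\]
Closure of this set under products and inverses uses exactly $L\subseteq N(J)$. Taking $J=H_1$ and $J=K^\pm$ gives the three isotropies in the asserted group diagram. The main obstacle I expect is verifying well-definedness and continuity of $\lambda(l)$: the point-wise formula depends on an expression $x=g_1\cdot c(t)$ that is non-unique, and the normalization hypothesis $L\subseteq N(H_1)\cap N(K^-)\cap N(K^+)$ is precisely what makes the definition independent of the choice and lets the formula glue continuously across the principal stratum and the two non-principal orbits in the cone-bundle decomposition of $X$.
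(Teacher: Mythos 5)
Your proof is correct and follows essentially the same route as the argument in the cited source \cite[Proposition 12]{GS} (the paper itself states this proposition without proof): define the $L$-action orbitwise along a normal geodesic, use $L\subseteq N(H_1)\cap N(K^-)\cap N(K^+)$ for well-definedness and for closure of $(J\times 1)\Delta L$ under products, identify the kernel with $L\cap H_1$, and read off the isotropies. The only step you leave slightly informal, continuity of $\lambda(l)$ across the non-principal orbits, is most cleanly settled via the decomposition of Theorem~\ref{T:ALEX_STRUCTURE}: on each piece $G_1\times_{K^\pm}C(K^\pm/H_1)$ the map $\lambda(l)$ is $[g,(kH_1,s)]\mapsto [g,(kl^{-1}H_1,s)]$, which is well defined and continuous precisely because $l$ normalizes $H_1$.
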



\begin{prop}[\protect{\cite[Proposition 13]{GS}}]
For $X$ as in Proposition \ref{P: reduction}, the action by $G = G_1\times G_2$ occurs
as the normal extension of the reduced action of $G_1\times 1$ on $X$.
\end{prop}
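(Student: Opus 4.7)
The plan is to realize the $G = G_1 \times G_2$ action as a normal extension of the reduced $G_1 \times 1$ action given by Proposition \ref{P: reduction}. Concretely, I would produce a subgroup $L \subseteq G_1$ together with a surjective homomorphism $\phi \colon L \to G_2$ with kernel $L \cap H_1$, such that $H = (H_1 \times 1)\Delta L$ and $K^{\pm} = (K_1^{\pm} \times 1)\Delta L$ for $\Delta L := \{(l, \phi(l)) : l \in L\}$. The natural candidate is $L := \Proj_1(H)$.

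First I would verify $L \subseteq N(H_1) \cap N(K_1^{-}) \cap N(K_1^{+})$: for $(a, g_2) \in H$ and $(h_1, 1) \in H_1 \times 1 \subseteq H$, conjugation gives $(a, g_2)(h_1, 1)(a, g_2)^{-1} = (a h_1 a^{-1}, 1) \in H \cap (G_1 \times 1) = H_1$, and the same calculation with $K^{\pm}$ in place of $H$ (using $H \subseteq K^{\pm}$) yields the analogous conclusion for $K_1^{\pm}$. Next I would establish $L/(L \cap H_1) \cong G_2$ by showing $H \cap (1 \times G_2)$ is trivial. The subgroup $N_2 := \Proj_2(H \cap (1 \times G_2))$ is normal in $G_2$ because $\Proj_2(H) = G_2$; any $(1, g_2) \in N_2$ commutes with $G_1 \times 1$ and fixes a principal point $x$, hence fixes the whole principal orbit $G_1 \cdot x$, and by normality of $N_2$ one checks that $(1, g_2)$ also fixes $(g_1, g_2') \cdot x$ for every $(g_1, g_2') \in G$. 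Thus $(1, g_2)$ fixes a dense subset of $X$, so acts trivially, and effectivity forces $g_2 = 1$. It follows that $\Proj_1|_H$ is an isomorphism onto $L$, and composing its inverse with $\Proj_2$ defines the desired $\phi \colon L \to G_2$ with kernel $L \cap H_1$.

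By construction $\Delta L = H$, which gives $H = (H_1 \times 1)\Delta L$ at once. For $K^{\pm}$, given $(k, g_2) \in K^{\pm}$, pick $l \in L$ with $\phi(l) = g_2$; then $(k, g_2)(l, g_2)^{-1} = (kl^{-1}, 1) \in K^{\pm} \cap (G_1 \times 1) = K_1^{\pm}$, so $(k, g_2) \in (K_1^{\pm} \times 1)\Delta L$, while the reverse inclusion follows from $K_1^{\pm} \times 1 \subseteq K^{\pm}$ and $\Delta L = H \subseteq K^{\pm}$. Finally, the normal-extension formula $(\hat{g}_1, [l]) \cdot g_1 (G_1)_x = \hat{g}_1 g_1 l^{-1}(G_1)_x$ matches the original $G$-action: since $(l, \phi(l)) \in H$ fixes a principal $x$, we get $(1, g_2) \cdot x = (l^{-1}, 1)\cdot x$ and hence $(g_1, g_2)\cdot x = (g_1 l^{-1}, 1)\cdot x$, and an identical computation applies on the non-principal orbits. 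The most delicate point is the triviality of $H \cap (1 \times G_2)$, together with checking that $L = \Proj_1(H)$ satisfies any connectedness hypothesis implicit in the definition of a normal extension: this is automatic when $H$ is connected, and otherwise one should pass to the identity component and treat the component group separately.
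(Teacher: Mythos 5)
The paper does not actually prove this statement---it is imported verbatim from \cite[Proposition 13]{GS}---so I can only judge your argument on its own terms. Your strategy (take $L=\Proj_1(H)$, show $H\cap(1\times G_2)$ is trivial, use the resulting isomorphism $\phi\colon L/(L\cap H_1)\to G_2$ to identify $H$ and $K^\pm$ with $(H_1\times 1)\Delta L$ and $(K_1^\pm\times 1)\Delta L$, and check the orbitwise formula) is the natural one and does work. Two steps need repair, though. First, in proving $H\cap(1\times G_2)=1$ you conclude that $(1,g_2)$ ``fixes a dense subset of $X$'' after only verifying that it fixes the single orbit $G(x)$; one principal orbit has codimension one and is not dense, so as written this step fails. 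The gap closes from what you have already established: your normality computation shows $1\times N_2$ is normalized by $1\times G_2$ (because $\Proj_2(H)=G_2$) and, trivially, by $G_1\times 1$, so it is a \emph{normal} subgroup of $G$ contained in $H$; a normal subgroup contained in one isotropy group lies in every isotropy group (all of them are conjugates of $H$ or of $K^\pm\supseteq H$), hence in the ineffective kernel, and effectivity gives $N_2=1$. Alternatively, use that the isotropy group equals $H$ at every interior point of a normal geodesic and sweep out $X$ with $G_1\times 1$.

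Second, the definition of a normal extension requires $L$ to be \emph{connected}, and $\Proj_1(H)$ need not be when $H$ is disconnected; your closing remark flags this but does not resolve it. The correct choice is $L=\Proj_1(H_0)$. Since $G_2=\Proj_2(H)$ is connected and is covered by finitely many cosets of the closed subgroup $\Proj_2(H_0)$, that subgroup has finite index, hence is open, hence equals $G_2$; so $\phi|_L$ is still surjective with kernel $L\cap H_1$, and $\Delta L=H_0$. The same coset manipulation you used for $K^\pm$ then gives $H=(H_1\times 1)H_0=(H_1\times 1)\Delta L$ and $K^\pm=(K_1^\pm\times 1)\Delta L$: for $(a,b)\in H$ choose $(a',b)\in H_0$ and note $(a,b)(a',b)^{-1}\in H\cap(G_1\times 1)=H_1\times 1$. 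No separate treatment of the component group is needed. With these two adjustments your proof is complete.
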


By the above propositions, it is natural then to consider only non-reducible actions in the classification.


\subsection{Further tools} The following proposition, whose proof is as in \cite[ Proposition 1.25]{Hoelscher},  yields bounds on the dimension of a Lie group acting by cohomogeneity one in terms of the dimension of a principal isotropy subgroup.


\begin{prop}
 Let $X$ be a closed Alexandrov space of cohomogeneity one with group diagram $(G, H,  K^-, K^+)$. Suppose that $G$ acts non-reducibly on $X$ and that $G$ is the product of groups
\begin{equation*}
\label{P:BOUND_DIMH}
	G=\SU(4)^i\times (G_2)^j\times \Sp(2)^k\times \SU(3)^l\times (\RS^3)^m\times (\RS^1)^n.
\end{equation*}
 Then
\begin{equation*}
	\dim(H)\leq 10i+8j+6k+4l+m.
\end{equation*}
\end{prop}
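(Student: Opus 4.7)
The plan is to adapt the argument of \cite[Proposition 1.25]{Hoelscher} to the Alexandrov setting, with Proposition~\ref{P: reduction} providing the key ingredient: since $G$ acts non-reducibly on $X$, the projection of $H$ onto each simple factor of $G$ must be a proper closed subgroup. Indeed, for each simple (or $\RS^1$-) factor $G_j$ we may write $G = G_j \times G_j'$, and if the projection $\pi_j(H)$ equalled $G_j$, then Proposition~\ref{P: reduction} would produce a cohomogeneity one action of $G_j' \times \{1\}$ on $X$ with the same orbits as the action of $G$, contradicting non-reducibility.

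Once it is known that each $\pi_j(H)$ is a proper closed subgroup of $G_j$, the inclusion $H \subseteq \prod_j \pi_j(H)$ gives
\[
\dim H \le \sum_j \dim \pi_j(H),
\]
so it remains to bound each $\dim \pi_j(H)$ by the maximal dimension of a proper closed subgroup of $G_j$. These maximal dimensions are standard: $10$ for $\SU(4)$ (realized by $\Sp(2)$), $8$ for $G_2$ (by $\SU(3)$), $6$ for $\Sp(2)$ (by $\Sp(1)\times\Sp(1)$), $4$ for $\SU(3)$ (by $\U(2)$), $1$ for $\RS^3$ (by $\RS^1$), and $0$ for $\RS^1$ (whose proper closed subgroups are finite). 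Summing these contributions across the $i+j+k+l+m+n$ factors yields the asserted inequality $\dim(H) \le 10i + 8j + 6k + 4l + m$.

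The only real subtlety is the verification that Proposition~\ref{P: reduction} can be applied factor-by-factor; this works because the direct product factorization exhibits each $G_j$ as a closed normal subgroup of $G$, so the hypothesis of that proposition is genuinely available for every individual factor. The remaining steps are elementary dimension counts together with standard facts about maximal proper closed subgroups of the compact simple Lie groups that appear.
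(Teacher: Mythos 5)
Your proof is correct and follows essentially the same route as the paper, which simply defers to Hoelscher's argument for \cite[Proposition~1.25]{Hoelscher}: non-reducibility together with Proposition~\ref{P: reduction} forces each projection $\pi_j(H)$ to be a proper closed subgroup of the corresponding factor, and the bound then follows from $\dim H\le\sum_j\dim\pi_j(H)$ and the standard maximal proper subgroup dimensions ($10$, $8$, $6$, $4$, $1$, $0$). No gaps.
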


We now state some useful results on the fundamental group of cohomogeneity one Alexandrov spaces. Their proofs follow as in the manifold case (see \cite[Section 1.6]{Hoelscher} and \cite[Section 4]{GGZ}).


\begin{prop}[Corollary to the van Kampen Theorem \protect{\cite[Proposition 1.8]{Hoelscher}}]
\label{fundamental group}
Let $X$ be the closed cohomogeneity one Alexandrov space given by the group diagram $(G, H, K^-, K^+)$ with $\dim(K^{\pm}/H)\geq 1$. Then 
\[
\pi_1(X) \cong \pi_1(G/H)/N^-N^+,
\]
 where 
 \[
 N^{\pm} = \ker\{\pi_1(G/H)\to \pi_1(G/K^{\pm})\} = \mathrm{Im}\{\pi_1(K^{\pm}/H) \to \pi_1(G/H)\}.
 \]
\end{prop}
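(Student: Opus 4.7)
The plan is to adapt the classical Seifert--van Kampen argument used in the smooth case by Hoelscher and in the topological case in \cite{GGZ}; only minor modifications are needed in the Alexandrov setting. The starting point is the double cone bundle decomposition from Theorem~\ref{T:ALEX_STRUCTURE}:
\[
X = B^{-} \cup_{G/H} B^{+}, \qquad B^{\pm} = G/K^{\pm}\times_{K^{\pm}} C(K^{\pm}/H).
\]
First I would thicken each closed piece $B^{\pm}$ to an open neighborhood $U^{\pm}$ by including a collar of the principal orbit $G/H$, so that $X = U^- \cup U^+$ and $U^- \cap U^+$ deformation retracts onto $G/H$. Since the radial contraction of the cone $C(K^{\pm}/H)$ onto its apex is $K^{\pm}$-equivariant, it induces, fiberwise in the associated bundle, a deformation retraction of $U^{\pm}$ onto the singular orbit $G/K^{\pm}$.

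Before applying van Kampen I would check path-connectedness of all three pieces: $G/K^{\pm}$ and $G/H$ are connected since $G$ is, and the hypothesis $\dim(K^{\pm}/H)\geq 1$ together with positive curvature forces $K^{\pm}/H$ to be a connected Riemannian manifold. Seifert--van Kampen then delivers
\[
\pi_1(X) \cong \pi_1(G/K^{-}) *_{\pi_1(G/H)} \pi_1(G/K^{+}),
\]
with amalgamating maps induced by the canonical projections $gH \mapsto gK^{\pm}$.

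Next, from the long exact homotopy sequence of the fibration $K^{\pm}/H \to G/H \to G/K^{\pm}$,
\[
\pi_1(K^{\pm}/H) \to \pi_1(G/H) \to \pi_1(G/K^{\pm}) \to \pi_0(K^{\pm}/H),
\]
combined with the connectedness of $K^{\pm}/H$, I read off that $\pi_1(G/H) \to \pi_1(G/K^{\pm})$ is surjective with kernel $N^{\pm} = \mathrm{Im}\{\pi_1(K^{\pm}/H)\to \pi_1(G/H)\}$, which gives the stated identification of $N^{\pm}$. Finally, when both structure maps out of $A := \pi_1(G/H)$ in the pushout are surjective, an elementary check shows that the pushout of $A/N^- \leftarrow A \to A/N^+$ in the category of groups is $A/(N^-N^+)$, where $N^-N^+$ is automatically normal since both $N^{\pm}$ are. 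This collapses the amalgamated free product to $\pi_1(G/H)/(N^-N^+)$, yielding the conclusion.

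The only points requiring care, beyond what appears in the manifold proofs, are (i) that the collared neighborhoods $U^{\pm}$ genuinely deformation retract onto $G/K^{\pm}$ in the Alexandrov setting --- automatic from the associated bundle construction since the cone retraction is equivariant and fiberwise --- and (ii) the elementary pushout-of-surjections calculation. Neither is a genuine obstacle; the hard part is really just bookkeeping, as no new ingredient beyond Theorem~\ref{T:ALEX_STRUCTURE} and standard algebraic topology enters.
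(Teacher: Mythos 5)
Your argument is correct and is essentially the same as the one the paper relies on: the paper gives no independent proof but cites Hoelscher's Proposition 1.8 and remarks that the proof "follows as in the manifold case," which is precisely the van Kampen argument on the double cone bundle decomposition, combined with the homotopy exact sequence of $K^{\pm}/H \to G/H \to G/K^{\pm}$ and the pushout-of-surjections computation that you carry out.
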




\begin{cor}[\protect{\cite[Corollary 4.4]{GGZ}}]
\label{C:Simply-connected Orbits}
Let $X$ be the closed simply-connected cohomogeneity one Alexandrov space given by the group diagram  $(G, \HH,  \K^-, \K^+)$, with  $\dim (\K^\pm/\HH) \geq 1$, and $\K^-/\HH=\SP^l$, for $l\geq 2$. Then $G/\K^+$ is simply-connected and, if $G$ is  connected, then $\K^+$ is also connected. 
\end{cor}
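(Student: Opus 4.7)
The plan is to exploit the cone-bundle decomposition of $X$ from Theorem~\ref{T:ALEX_STRUCTURE} and apply the Seifert--van Kampen theorem. Let $X_{\pm}$ be small open enlargements of the tubes $G/K^{\pm}\times_{K^{\pm}} C(K^{\pm}/H)$; each deformation retracts onto the singular orbit $G/K^{\pm}$ along the cone fibers, and $X_+\cap X_-$ retracts onto the principal orbit $G/H$. Van Kampen then yields the amalgamated-product presentation
\[
\pi_1(X) \cong \pi_1(G/K^-) *_{\pi_1(G/H)} \pi_1(G/K^+),
\]
which is also the computation underlying Proposition~\ref{fundamental group}.

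The key step is to exploit the hypothesis $K^-/H = \SP^l$ with $l\geq 2$ via the long exact homotopy sequence of the fibration $\SP^l\to G/H\to G/K^-$: since $\pi_1(\SP^l)=\pi_0(\SP^l)=0$, the induced map $\pi_1(G/H)\to \pi_1(G/K^-)$ is an isomorphism. In a pushout of groups $B\leftarrow A\to C$ in which the left arrow is an isomorphism, the amalgamated product collapses to $C$. Combined with $\pi_1(X)=0$, this yields $\pi_1(G/K^+)=0$.

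For the second assertion, with $G$ connected, I would apply the long exact homotopy sequence of the principal bundle $K^+\hookrightarrow G\to G/K^+$:
\[
\pi_1(G/K^+)\to \pi_0(K^+)\to \pi_0(G).
\]
Since $\pi_1(G/K^+)=0$ by the first part and $\pi_0(G)=0$ by hypothesis, exactness gives $\pi_0(K^+)=0$, so $K^+$ is connected.

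The main technical point is verifying the Seifert--van Kampen hypotheses at the singular orbits of the Alexandrov space, but the cone-bundle structure of Theorem~\ref{T:ALEX_STRUCTURE} furnishes the required open neighborhoods with well-behaved deformation retracts, so this is routine. A tempting shortcut via Proposition~\ref{fundamental group} alone would only yield that $\pi_1(G/H)\to \pi_1(G/K^+)$ is the zero map; the long exact sequence of $K^+/H\to G/H\to G/K^+$ would then leave $\pi_1(G/K^+)\cong \pi_0(K^+/H)$, and one would still need to rule out multiple components of the positively curved homogeneous space $K^+/H$ (whose structure group $K^+$ is not a priori connected). The pushout argument above sidesteps this subtlety entirely.
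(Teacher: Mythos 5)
Your proof is correct and follows essentially the same route the paper relies on (the van Kampen computation behind Proposition~\ref{fundamental group} combined with the homotopy exact sequences of the fibrations $K^-/H\to G/H\to G/K^-$ and $K^+\to G\to G/K^+$); phrasing van Kampen as a pushout rather than as the quotient $\pi_1(G/H)/N^-N^+$ is only a repackaging of the same argument. The ``subtlety'' you claim to sidestep---possible disconnectedness of $K^+/H$---is vacuous in this setting, since $K^+/H$ is a positively curved homogeneous space of dimension at least $1$ and is therefore connected by Proposition~\ref{normal space}(2), so the shortcut you dismiss works just as well.
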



\begin{lem}[\protect{\cite[Lemma 1.10]{Hoelscher}}]
\label{L:SIMP_CONN_COND}
Let $X$ be the closed cohomogeneity one Alexandrov space given by the group diagram $(G, H,  K^-, K^+)$. Denote $H^\pm= H\cap K^\pm_0$, and let $\alpha^i_\pm: [0,1]\to \K^\pm_0$ be curves that generate $\pi_1(\K^\pm/H)$, with $\alpha^i_\pm(0)=1\in G$. The space $X$ is simply-connected if and only if 
\begin{enumerate}
\item $H$ is generated as a subgroup by $H^-$ and $H^+$, and
\item $\alpha^{i}_-$ and $\alpha^{i}_+$ generate $\pi_1(G/H_0)$.
\end{enumerate}
\end{lem}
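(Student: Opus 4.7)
The plan is to deduce the lemma directly from Proposition~\ref{fundamental group}, exactly as in the smooth-manifold case of \cite[Lemma 1.10]{Hoelscher}, since the topological structure used (the double mapping cylinder decomposition of Theorem~\ref{T:ALEX_STRUCTURE}) is formally the same as in the manifold setting.

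First, by Proposition~\ref{fundamental group}, $X$ is simply-connected if and only if $N^-N^+=\pi_1(G/H)$, where $N^\pm=\mathrm{Im}(\pi_1(K^\pm/H)\to\pi_1(G/H))$. I would then separate this single equality into two independent conditions by passing to the regular covering $G/H_0\to G/H$, whose deck group is $H/H_0=\pi_0(H)$. From the long exact sequence of the fibration $H\to G\to G/H$ (using that $G$ is connected), I obtain
\[
1\longrightarrow\pi_1(G/H_0)\longrightarrow\pi_1(G/H)\xrightarrow{\;\partial\;}H/H_0\longrightarrow 1.
\]
A subgroup of $\pi_1(G/H)$ fills up the entire group if and only if it surjects onto $H/H_0$ and contains the kernel $\pi_1(G/H_0)$. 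Hence $N^-N^+=\pi_1(G/H)$ is equivalent to the pair of conditions
(A) $\partial(N^-N^+)=H/H_0$, and (B) $\pi_1(G/H_0)\subseteq N^-N^+$.

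Next I would match (A) with condition (1). For a loop in $K^\pm/H$ based at $eH$, lifting to $K^\pm_0$ starting at $1\in G$ produces a path ending in $H\cap K^\pm_0=H^\pm$, and $\partial$ sends the loop to the connected component of that endpoint. Since $K^\pm_0$ is connected, $\partial(N^\pm)$ is exactly the image of $H^\pm$ in $H/H_0$. Because $H_0\subseteq H^\pm$, the subgroups $H^\pm$ generate $H$ if and only if their images generate $H/H_0$, so (A) $\Leftrightarrow$ (1). This is the routine half of the argument.

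The main obstacle is the equivalence (B) $\Leftrightarrow$ (2), because the curves $\alpha^i_\pm$ need not be loops in $G/H_0$: they end at $\alpha^i_\pm(1)\in H^\pm$, which lies in $H_0$ only in favourable cases. The idea is to use condition (1) to assemble the $\alpha^i_\pm$ into genuine loops in $G/H_0$. Concretely, assuming (1), every $h\in H$ admits a word $h=h_1^{\epsilon_1}\cdots h_k^{\epsilon_k}$ with $h_j\in H^-\cup H^+$; concatenating the corresponding curves $\alpha^i_\pm$ (and their inverses) yields a path in $G$ from $1$ to $h$, which projects to a loop in $G/H_0$ precisely when $h\in H_0$. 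The resulting collection of loops lies in $N^-N^+\cap\pi_1(G/H_0)$, and one checks that every loop in $G/H_0$ based at $eH_0$ arises (up to the deck-group action of $H/H_0$) as such a concatenation. Thus $\pi_1(G/H_0)$ is contained in $N^-N^+$ if and only if the elements $\alpha^i_\pm$, read as words, generate $\pi_1(G/H_0)$, which is condition (2). Combining the two equivalences completes the proof.
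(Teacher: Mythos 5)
The paper does not actually write out a proof of this lemma: it is quoted from Hoelscher with the remark that the manifold argument carries over verbatim, resting only on Proposition~\ref{fundamental group} and the homotopy exact sequences, and your reconstruction is exactly that argument --- van Kampen reduces simple connectivity to $N^-N^+=\pi_1(G/H)$, which you split along $1\to\pi_1(G/H_0)\to\pi_1(G/H)\to H/H_0\to 1$ into surjectivity onto $H/H_0$ (equivalent to condition (1), since $\partial(N^\pm)$ is the image of $H^\pm$ and $H_0\subseteq H^\pm$) and containment of $\pi_1(G/H_0)$ (condition (2)). The one sentence to repair is ``one checks that every loop in $G/H_0$ based at $eH_0$ arises \dots as such a concatenation'': read literally, that assertion \emph{is} condition (B) and certainly does not hold unconditionally. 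What is true unconditionally is the converse inclusion, namely that every element of $(N^-N^+)\cap\pi_1(G/H_0)$ is such a closed-up concatenation, because $N^\pm$ is generated by the classes of the $\alpha^i_\pm$ (being the image of $\pi_1(K^\pm/H)$ under a homomorphism); with that correction the equivalence of (B) with (2), given (1), reads off exactly as you conclude.
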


We will use the following results on transitive actions.


\begin{lem}[cf. \protect{\cite[Lemma 4.11]{GGZ}}] 
\label{L:M_TRANS}
Let $G_1$ be a  compact, connected, simply-connected, simple Lie group of dimension $n$. Assume that $G_1$ is, up to a finite cover, the only Lie group that acts transitively and (almost) effectively on a manifold $M$ with isotropy group $H$. Let $G_2$ be a compact, connected Lie group of dimension at most $n-1$. If $G_1\times G_2$ acts transitively on $M$, then the following hold:
\begin{enumerate}
	\item The $G_2$ factor acts trivially on $M$ and
	\item the isotropy group $K$ of the $(G_1\times G_2)$-action is  $H\times G_2$.
\end{enumerate}
\end{lem}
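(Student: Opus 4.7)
The plan is to identify the kernel of the $(G_1\times G_2)$-action using the uniqueness hypothesis, and then exploit the simplicity of $G_1$ to force this kernel to contain $\{e\}\times G_2$.

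First, let $N\trianglelefteq G_1\times G_2$ denote the ineffective kernel of the action on $M$. Then the quotient $\bar G:=(G_1\times G_2)/N$ acts effectively and transitively on $M$. By the uniqueness assumption, $\bar G$ is commensurable with $G_1$ (i.e.\ shares a common finite cover), so in particular
\[
\dim \bar G=\dim G_1=n,\qquad \dim N=\dim G_2.
\]

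Next, I would analyze $N^0$, the identity component. The key algebraic fact is that, since $\mathfrak{g}_1$ is simple, any ideal $\mathfrak{n}\subseteq \mathfrak{g}_1\oplus\mathfrak{g}_2$ splits as $\mathfrak{n}=\mathfrak{a}\oplus\mathfrak{b}$ with $\mathfrak{a}\in\{0,\mathfrak{g}_1\}$ and $\mathfrak{b}$ an ideal of $\mathfrak{g}_2$ (the projection $\pi_1(\mathfrak{n})$ satisfies $[\mathfrak{g}_1,\pi_1(\mathfrak{n})]\subseteq\mathfrak{n}\cap\mathfrak{g}_1$, and simplicity of $\mathfrak{g}_1$ forces dichotomy). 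Lifting to Lie groups, $N^0=A\times B$ with $A\in\{\{e\},G_1\}$ and $B$ a connected closed normal subgroup of $G_2$. The case $A=G_1$ would give $\dim N^0\geq n>\dim G_2$, contradicting $\dim N^0=\dim G_2$. Hence $A=\{e\}$ and $\dim B=\dim G_2$, so $B=G_2$ by connectedness. Consequently $\{e\}\times G_2\subseteq N^0\subseteq N$, which shows that $G_2$ acts trivially on $M$; this proves (1).

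Finally, for (2), fix a basepoint $x_0\in M$ at which the $G_1$-isotropy is exactly $H$ (conjugating if necessary). Since $G_2$ acts trivially by part (1), $(g_1,g_2)\cdot x_0=g_1\cdot x_0$ for every $(g_1,g_2)\in G_1\times G_2$, so the element $(g_1,g_2)$ fixes $x_0$ if and only if $g_1\in H$. Thus $K=H\times G_2$, as required.

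The main obstacle is the first paragraph: one must correctly invoke the uniqueness assumption to conclude $\dim N=\dim G_2$, and then use simplicity of $G_1$ (together with simple-connectedness, which ensures the effective transitive action that $G_1$ provides is really the minimal one up to finite cover) to split $N^0$. Once these structural facts are in place, the dimension bound $\dim G_2\le n-1$ does all the remaining work and the isotropy identification in part (2) is immediate.
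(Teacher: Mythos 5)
Your proposal is correct and follows essentially the same route as the paper: identify the ineffective kernel $N$, use the uniqueness hypothesis to conclude $\dim N=\dim G_2$, and then use simplicity of $G_1$ together with $\dim G_2\le n-1$ to show the projection of $N$ to $G_1$ is trivial, so $N\supseteq 1\times G_2$ and $K=H\times G_2$. Your Lie-algebra splitting of the ideal $\mathfrak{n}$ is just a slightly more careful rendering of the paper's observation that $\Proj_1(L)$ is a connected normal subgroup of the simple group $G_1$.
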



\begin{proof}
Let $L\subseteq G_1\times G_2$ be the kernel of the action of $G_1\times G_2$ on $M$. Then $(G_1\times G_2)/L$ is isomorphic to $G_1$. Hence, $\dim G_2= \dim L$. Since $L$ is a normal and connected subgroup of $G_1\times G_2$, $\Proj_1(L)$ is a normal connected subgroup of $G_1$. Thus $\Proj_1(L)$ is trivial, since $\dim G_2\leq n-1$. As a result, $\LL= 1\times G_2 $ and $K=H\times G_2$. 
\end{proof}


\begin{prop}[\protect{\cite[Ch. 1, \S 5 Proposition 7]{On}}]
\label{P:Trans. of G_0}
Let a Lie group $G$ act  transitively on a manifold $M$. Then $G_0$ acts transitively on any connected component of $M$. In particular, if $M$ is connected, then $G_0$ acts transitively on $M$, and $G=G_0G_x$, for all $x\in M$.
\end{prop}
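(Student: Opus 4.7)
The plan is to exploit the fact that the identity component $G_0$ is an open (hence closed) subgroup of $G$ of finite index, and conclude that $G_0$-orbits in $M$ are both open and closed.

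First, I would fix a point $x \in M$ and use the transitivity of $G$ to identify $M$ with the homogeneous space $G/G_x$ as a smooth manifold (with the projection $\pi: G \to M$, $g \mapsto gx$, a submersion). Under this identification, the $G_0$-orbit of $x$ is $G_0 x = \pi(G_0 G_x)$. Since $G_0$ is open in $G$, the product $G_0 G_x$ is open in $G$, and as $\pi$ is an open map, $G_0 x$ is an open subset of $M$. By translating by elements of $G$, the same argument shows that every $G_0$-orbit in $M$ is open.

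Next, since the $G_0$-orbits partition $M$ into disjoint open sets, each such orbit is also closed (its complement being the union of the remaining open orbits). Consequently, each connected component of $M$ is saturated by $G_0$-orbits, and by connectedness it must coincide with a single $G_0$-orbit. This establishes that $G_0$ acts transitively on every connected component of $M$.

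For the final assertion, assume $M$ is connected so that $M = G_0 x$ for every $x \in M$. Given any $g \in G$, the point $gx$ lies in $M = G_0 x$, so there exists $g_0 \in G_0$ with $gx = g_0 x$; equivalently, $g_0^{-1} g \in G_x$, whence $g \in G_0 G_x$. Since $g \in G$ was arbitrary, $G = G_0 G_x$. I do not anticipate a genuine obstacle here; the only point requiring a bit of care is verifying that the $G_0$-orbits are open in $M$, which follows directly from the openness of $G_0 \subseteq G$ together with the quotient (equivalently submersion) description of $M$.
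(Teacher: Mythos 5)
Your proof is correct; the paper itself gives no proof of this proposition (it is quoted verbatim from Onishchik's book), and your open--closed orbit argument is the standard one: $G_0$ is open in $G$, so $G_0$-orbits are open in $M \cong G/G_x$, hence closed, hence equal to whole connected components, and $G = G_0 G_x$ then follows immediately. The only cosmetic slip is the assertion that $G_0$ has finite index in $G$, which can fail for non-compact Lie groups, but your argument only uses that $G_0$ is open (and that the orbit map is open, which your submersion remark covers), so nothing is affected.
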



 The following  two results  give restrictions on the groups that may act by cohomogeneity one on a closed Alexandrov space.
  The next proposition can be found in \cite[Proposition~1.19]{Hoelscher} for smooth actions. It was proven in \cite{GGZ} in the slightly more general case of topological actions on topological manifolds. The proof for Alexandrov spaces follows as in the topological case \cite[Proposition 4.7]{GGZ}, taking into account that, by the Principal Orbit Theorem for Alexandrov spaces \cite[Theorem 2.2]{GGG}, all principal isotropy groups are conjugate to each other and conjugate to a subgroup of   non-principal isotropy groups.
 
 
\begin{prop}[cf. \protect{\cite[Proposition 4.7]{GGZ}}]
\label{L:DIM_BOUND}
If  a compact connected Lie group $G$ acts (almost) effectively on an Alexandrov space with principal orbits of dimension $k$, then $k\leq \dim G \leq k(k+1)/2$.
\end{prop}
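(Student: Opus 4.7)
The plan is to prove the two bounds separately. The lower bound is immediate: any principal orbit has the form $G/H$ with $\dim(G/H) = k$, so $\dim G \geq k$.

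For the upper bound, I would fix a point $x\in X$ on a principal orbit, with isotropy $H = G_x$. By Proposition~\ref{normal space}, the space of directions splits as a join $\Sigma_x X = S_x \ast S_x^{\perp}$, where the unit tangent sphere $S_x$ to the orbit $G(x) \cong G/H$ is a round sphere $\SP^{k-1}$ (the orbit is a smooth manifold of dimension $k$). The isotropy $H$ acts on $\Sigma_x X$ by isometries and preserves this join decomposition. The crucial step is to show that $H$ acts trivially on $S_x^{\perp}$. Given a direction $v \in S_x^{\perp}$, extend it to a geodesic $\gamma$ in $X$ starting at $x$. For all sufficiently small $t>0$ the point $\gamma(t)$ is principal, so $G_{\gamma(t)}$ is conjugate in $G$ to $H$; on the other hand, by the Isotropy Lemma \cite[Lemma~2.1]{GGG} we have $G_{\gamma(t)} \subseteq H$. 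The Principal Orbit Theorem for Alexandrov spaces \cite[Theorem~2.2]{GGG} then forces $G_{\gamma(t)} = H$. Hence $H$ fixes $\gamma$ pointwise, and in particular fixes $v$; as $v \in S_x^{\perp}$ was arbitrary---using part~(2) of Proposition~\ref{normal space} to handle the exceptional case where $S_x^{\perp}$ is a pair of antipodal points---we conclude that $H$ acts trivially on all of $S_x^{\perp}$.

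Since the $G$-action is almost effective, any $h \in H$ that fixes $\Sigma_x X$ pointwise would fix an entire neighborhood of $x$ and hence lie in the finite ineffective kernel of the action. Combined with the previous paragraph, this yields an almost faithful isometric action of $H$ on $S_x \cong \SP^{k-1}$, i.e.\ an embedding $H/F \hookrightarrow \Isom(\SP^{k-1}) = \OO(k)$ for some finite subgroup $F \leq H$. Therefore $\dim H \leq k(k-1)/2$, and the upper bound follows at once from
\[
\dim G \,=\, \dim H + \dim(G/H) \,\leq\, \frac{k(k-1)}{2} + k \,=\, \frac{k(k+1)}{2}.
\]

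The main obstacle will be verifying that $H$ acts trivially on $S_x^{\perp}$: unlike in the smooth category there is no tube theorem available, so the argument must proceed along normal geodesics using the minimality of the principal isotropy and the Isotropy Lemma. Once this is secured, the dimension count is routine and the proof follows the outline of \cite[Proposition~4.7]{GGZ}, with the join decomposition of Proposition~\ref{normal space} playing the role of the linear slice in the manifold case.
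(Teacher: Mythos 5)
Your overall strategy is the classical smooth-case argument (trivial slice representation at a principal point, then an almost faithful isotropy representation of $H$ on the unit tangent sphere of the orbit), and the dimension count at the end is fine. But the central step --- that $H$ acts trivially on $S_x^{\perp}$ --- is not actually established by what you wrote, for two Alexandrov-specific reasons. First, you ``extend $v\in S_x^{\perp}$ to a geodesic'': in an Alexandrov space not every direction is a geodesic direction ($\Sigma_xX$ is only the \emph{completion} of the space of geodesic directions), and although geodesic directions are dense in $\Sigma_xX$, this does not give density inside the join factor $S_x^{\perp}$, so an isometry argument does not automatically close the gap. Second, the Isotropy Lemma \cite[Lemma~2.1]{GGG} applies to geodesics that \emph{minimize between orbits}; a short geodesic issuing from $x$ perpendicular to $G(x)$ is not known a priori to realize $\dist(G(x),G(\gamma(t)))$, so the inclusion $G_{\gamma(t)}\subseteq H$ is unjustified as stated. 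Both points can be repaired (for instance by running the argument along minimizing geodesics from nearby principal points \emph{down to} the orbit $G(x)$, whose initial directions at the foot point lie in the normal factor by first variation), but as written the proof of the key claim is incomplete. Note also that part~(2) of Proposition~\ref{normal space} does not by itself rule out an element of $H$ \emph{swapping} the two points of $S_x^{\perp}$ in the disconnected case; that, too, has to come out of the geodesic argument.

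You should also be aware that the paper takes a much shorter route, deferring to \cite[Proposition~4.7]{GGZ}: the kernel of the $G$-action on the single homogeneous space $G/H$ is $\bigcap_{g}gHg^{-1}$, which by the Principal Orbit Theorem \cite[Theorem~2.2]{GGG} (all principal isotropy groups are conjugate to $H$) fixes \emph{every} principal orbit pointwise and hence, principal orbits being dense, lies in the finite ineffective kernel. Thus $G$ acts almost effectively and transitively on the $k$-dimensional manifold $G/H$, and $\dim G\leq\dim\Isom(G/H)\leq k(k+1)/2$ for any invariant Riemannian metric. This bypasses the space of directions, the join decomposition and the slice-type analysis entirely; your approach, once patched, proves the sharper intermediate bound $\dim H\leq k(k-1)/2$, but at the cost of the technical issues above.
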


An argument as in the proof of \cite[Proposition 1.18]{Hoelscher} yields the following lemma:


\begin{lem}\label{P:ABELIAN_G}
Let $X$ be a closed, simply-connected Alexandrov space with an (almost) effective cohomogeneity one action of a compact Lie group $G$. Suppose that the following conditions hold:
\begin{itemize}
	\item $G=G_1\times T^m$ and $G_1$ is semisimple;
	\item $G$ acts non-reducibly;
	\item at least one of the homogeneous spaces $K^\pm/H$ is other than standard spheres.
\end{itemize} 
Then, $G_1\neq 1$ and $m\leq 1$. Moreover, if $m=1$, then   one of the homogeneous spaces $K^\pm/H$, say $K^{-}/H$, is a circle  and $K_0^-=H_0\cdot S^-$, where $S^-$ is a circle group with $\Proj_2(S^-)=T^1$ and $K^+_0\subset G_1\times 1$.  
\end{lem}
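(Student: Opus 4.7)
The plan is to follow the strategy of \cite[Proposition 1.18]{Hoelscher}. The argument combines three inputs: non-reducibility constrains $\Proj_2(H_0)$; simply-connectedness via Lemma~\ref{L:SIMP_CONN_COND} forces curves $\alpha^i_\pm$ that generate $\pi_1(K^\pm/H)$ to also generate $\pi_1(G/H_0)$; and a closed positively curved homogeneous space has finite fundamental group unless it is a circle (Myers' theorem). These inputs are played against each other by projecting to $\pi_1(T^m)=\Z^m$. In particular, $G_1\neq 1$: otherwise $G=T^m$ is abelian, so $K^\pm_0/H_0$ would be a subtorus quotient, and the only positively curved such quotients are points and circles, forcing both $K^\pm/H$ to be standard spheres, contradicting the third hypothesis.

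To prove $m\leq 1$, I would first use non-reducibility to conclude that $T_H:=\Proj_2(H_0)=1$. If $1\neq T_H\subsetneq T^m$, pick a subtorus $T'$ complementary to $T_H$; then $N:=G_1\times T'$ is a proper closed normal subgroup of $G$ and $NH=G$ since $\Proj_2(H)\cdot T'\supset T_H\cdot T'=T^m$, contradicting non-reducibility. The case $T_H=T^m$ is ruled out directly by taking $N=G_1\times 1$. Hence $H_0\subset G_1\times 1$, and the projection $G\to T^m$ descends to a fibration $G_1/H_0\to G/H_0\to T^m$, whose long exact sequence yields a surjection $\pi_1(G/H_0)\twoheadrightarrow\pi_1(T^m)=\Z^m$.

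By Lemma~\ref{L:SIMP_CONN_COND} the target $\Z^m$ is generated by the images of the classes $[\alpha^i_\pm]$; since $\Z^m$ is torsion-free, only those $\alpha^i_\pm$ with $K^\pm/H=S^1$ can contribute nontrivially. By the third hypothesis at least one of $K^\pm/H$ fails to be a standard sphere, and hence fails to be a circle; without loss of generality this is $K^+/H$, and so at most the single generator $[\alpha^1_-]$ (present only when $K^-/H=S^1$) contributes. A single element cannot generate $\Z^m$ for $m\geq 2$, forcing $m\leq 1$, with $K^-/H=S^1$ whenever $m=1$.

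It remains to analyze the case $m=1$. Then $K^-_0/H_0$ is a compact connected $1$-dimensional Lie group quotient, hence a circle, so $K^-_0=H_0\cdot S^-$ for some circle $S^-$ meeting $H_0$ in a finite set. Since $H_0\subset G_1\times 1$, the restriction $\Proj_2|_{S^-}\colon S^-\to T^1$ is a homomorphism, and the requirement that $[\alpha^1_-]$ map to a generator of $\pi_1(T^1)=\Z$ forces this restriction to be surjective, so $\Proj_2(S^-)=T^1$. Finally, if one had $\Proj_2(K^+_0)=T^1$, then $K^+_0/H_0$ would fiber over $T^1$, and its fundamental group would, via the long exact sequence, surject onto a finite-index subgroup of $\Z$, hence be infinite; but $K^+_0/H_0$ is a finite cover of the positively curved space $K^+_0/H^+$, hence itself positively curved and so with finite fundamental group by Myers, a contradiction. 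Therefore $\Proj_2(K^+_0)=1$, i.e.\ $K^+_0\subset G_1\times 1$. The main obstacle is the non-reducibility reduction, where one must identify the right proper normal subgroup to preclude and carefully manage the distinction between $H$ and its identity component $H_0$.
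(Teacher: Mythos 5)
Your argument is correct and is essentially the proof the paper intends: the paper offers no details beyond citing Hoelscher's Proposition 1.18, and your write-up is exactly that argument (non-reducibility forces $\Proj_2(H_0)=1$; simply-connectedness via Lemma~\ref{L:SIMP_CONN_COND} together with Bonnet--Myers finiteness of $\pi_1$ for the positively curved spaces $K^\pm/H$ forces the surjection onto $\pi_1(T^m)=\Z^m$ to be carried by a single circle factor, whence $m\leq 1$). The supporting details you supply --- the complementary-subtorus construction in the reducibility step, the splitting $K^-_0=H_0\cdot S^-$, and the finite-cover argument giving $K^+_0\subset G_1\times 1$ --- all check out.
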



\subsection{Special actions and recognition results}

In this subsection we list some special types of cohomogeneity one actions and prove some recognition results that will allow us to identify such actions (cf. \cite[1.21]{Hoelscher}).


\begin{defn}[Product action] 
Let $G_1$ and $G_2$ be  Lie groups such that $G_1$ acts on an Alexandrov space $X$ with cohomogeneity one  and  $G_2$  acts on a homogeneous space $G_2/ L$ transitively.  We call the natural action of $G_1\times G_2$ on  $X\times G_2/L$ given by 
\[
(g_1, g_2)\cdot (x, gL)=(g_1x, g_2gL)
\]
the \emph{product action} of $G_1\times G_2$.
\end{defn}


\begin{prop}
\label{P:Product-action}
Suppose that $G_1$ acts on an Alexandrov space $X$ with cohomogeneity one and with group diagram 
$(G_1,H,K^-,K^+)$, and $G_2$ acts transitively on the homogeneous space $G_2/L$. Then the product action of $G_1\times G_2$ on $X\times G_2/L$ is of cohomogeneity one with group diagram
\begin{equation}\label{Eq_GD}
(G_1\times G_2,  H\times L,  K^-\times L,  K^+\times L).
\end{equation}
Conversely, a cohomogeneity one action of  $G_1\times G_2$ with the above group diagram, and $G_1/K^\pm$ positively curved  homogeneous spaces,  is  equivalent  to a product action of  $G_1\times G_2$ on $X\times G_2/L$, where $X$ is the cohomogeneity one Alexandrov space determined by the diagram 
$(G_1,H,K^-,K^+)$. 
\end{prop}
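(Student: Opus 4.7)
The proof splits into a forward direction that computes the group diagram of the product action by direct inspection of isotropies, and a converse that invokes the uniqueness part of Theorem~\ref{T:ALEX_STRUCTURE} (together with Proposition~\ref{P:Equivalent_actions}) to identify the given action with a product action.

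For the forward direction, let $\gamma:[0,d]\to X$ be a normal geodesic for the $G_1$-action, so that $(G_1)_{\gamma(0)}=K^-$, $(G_1)_{\gamma(d)}=K^+$, and $(G_1)_{\gamma(t)}=H$ for all $t\in(0,d)$. Since $G_2$ acts transitively on $G_2/L$ with isotropy $L$ at the basepoint $eL$, the isotropy of $(G_1\times G_2)$ at $(x,eL)$ is simply the product $(G_1)_x\times L$. Applying this along $t\mapsto(\gamma(t),eL)$ yields the candidate isotropies $K^-\times L$, $H\times L$, and $K^+\times L$ at the three respective types of points. The orbit of $(x,eL)$ is $G_1(x)\times G_2/L$, so the orbit projection factors as $X\times G_2/L \to X/G_1\cong[-1,1]$, confirming that the action is of cohomogeneity one. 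Moreover, the map $K^\pm\to (K^\pm\times L)/(H\times L)$ sending $k\mapsto[(k,e)]$ descends to a $K^\pm$-equivariant homeomorphism $K^\pm/H\xrightarrow{\cong}(K^\pm\times L)/(H\times L)$, so the positive-curvature hypothesis on $K^\pm/H$ transfers to the new non-principal homogeneous spaces. Thus the product action has exactly the group diagram \eqref{Eq_GD}.

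For the converse, suppose $G_1\times G_2$ acts on a closed Alexandrov space $Y$ with cohomogeneity one and group diagram \eqref{Eq_GD}, with $K^\pm/H$ positively curved. By the forward direction, the product action of $G_1\times G_2$ on $X\times G_2/L$, where $X$ is the cohomogeneity one Alexandrov space determined by $(G_1,H,K^-,K^+)$ via Theorem~\ref{T:ALEX_STRUCTURE}, realizes precisely the same group diagram \eqref{Eq_GD}. Since Theorem~\ref{T:ALEX_STRUCTURE} asserts that a group diagram determines a cohomogeneity one Alexandrov space, and Proposition~\ref{P:Equivalent_actions} characterizes when two diagrams yield equivalent actions (here the diagrams coincide verbatim), we conclude that $Y$ and $X\times G_2/L$ are equivariantly homeomorphic.

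The main technical point is the canonical identification $(K^\pm\times L)/(H\times L)\cong K^\pm/H$, which is used twice: to transfer the positive-curvature hypothesis between the two formulations (ensuring each side yields a genuine group diagram in the sense of Theorem~\ref{T:ALEX_STRUCTURE}), and to match the fiber $C((K^\pm\times L)/(H\times L))\cong C(K^\pm/H)$ in the double cone-bundle decomposition. Once this identification is in hand, both sides have the same decomposition $(G_1\times G_2)/(K^\pm\times L)\times_{K^\pm\times L} C(K^\pm/H)\cong G_1/K^\pm\times_{K^\pm}C(K^\pm/H)\times G_2/L$, glued along $G_1/H\times G_2/L$, and the equivariant homeomorphism between $Y$ and $X\times G_2/L$ follows directly from the structure theorem.
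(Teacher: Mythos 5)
Your proof is correct and follows essentially the same route as the paper: the paper also computes the isotropy groups along the curve $\tilde{\gamma}=(\gamma,1)$ in the product (noting it is a normal geodesic for a product metric with $G_2/L$ given an invariant metric), and deduces the converse from the fact that a group diagram determines the cohomogeneity one space up to equivariant homeomorphism via Proposition~\ref{P:Equivalent_actions}. Your additional remarks on the identification $(K^\pm\times L)/(H\times L)\cong K^\pm/H$ and the cone-bundle decomposition are correct elaborations of what the paper leaves implicit.
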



\begin{proof}
It is clear that the product action of $G_1\times G_2$ on $X\times G_2/L$ is of cohomogeneity one. Now we prove that its group diagram is as in \eqref{Eq_GD}. Let $\gamma$ be a normal geodesic   between the non-principal orbits $G_1/K^\pm$ in $X$ giving the group diagram $(G_1,H,K^-,K^+)$. If we fix a $G_2$-invariant metric on $G_2/L$, then, in the product metric on $X\times G_2/L$, the curve $\tilde{\gamma}=(\gamma,1)$ is a shortest geodesic between non-principal orbits. The resulting diagram is
\[
(G_1\times G_2,  H\times L,  K^-\times L,  K^+\times L),
\]
as claimed. The converse follows from Proposition~\ref{P:Equivalent_actions}.
\end{proof}


\begin{defn}[Join action]
Let $G_1$ and $G_2$ be two Lie groups which act on  Alexandrov spaces $X_1$ and $X_2$, respectively. The  action of $G_1\times G_2$ on  $X_1\ast X_2$ is called \textit{join action}, if $G_1\times G_2$ acts on  $X_1\ast X_2$ naturally, i.e. 
\[
(g_1, g_2)\cdot [(x, y, t)]=[(g_1x, g_2y, t)].
\]
\end{defn}


\begin{prop}\label{P:Join-action}
If two Lie groups $G_1$ and $G_2$ act transitively on  positively curved homogeneous spaces $M_1$ and $M_2$ with isotropy groups $H_1$ and $H_2$, respectively, then the join action of  $G=G_1\times G_2$ on  $M_1\ast M_2$ is of cohomogeneity one with the following diagram:

\[
(G_1\times G_2,  H_1\times H_2,  G_1\times H_2,  H_1\times G_2).
\]
Conversely, a cohomogeneity one action of  $G_1\times G_2$ with the above group diagram, and $G_i/H_i$  positively curved  homogeneous spaces, for $i=1,2$, is  equivalent  to the  join action of  $G$ on $(G_1/H_1)\ast (G_2/H_2)$. 
\end{prop}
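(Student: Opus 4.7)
The plan is to prove the forward implication by constructing an explicit normal geodesic on the join whose stabilizers give the asserted group diagram, and then to deduce the converse from the uniqueness of the Alexandrov space associated with a fixed group diagram. The only real subtlety is the correct reading of stabilizers at the two cone points of the join, where one of the two coordinates is collapsed and therefore contributes no constraint on the corresponding group factor.

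For the forward direction, I would equip $M_1 = G_1/H_1$ and $M_2 = G_2/H_2$ with $G_i$-invariant homogeneous metrics of sectional curvature at least $1$, which is possible (after rescaling) since each $M_i$ is a positively curved homogeneous space. Endow $M_1 \ast M_2$ with the standard spherical join metric, under which it becomes a compact Alexandrov space of curvature $\geq 1$ on which $G_1 \times G_2$ acts by isometries. Writing a point as $[(x, y, t)]$ with $t \in [0, \pi/2]$, the map $[(x, y, t)] \mapsto t$ identifies the orbit space with $[0, \pi/2]$, so the action has cohomogeneity one. Fix basepoints $x_0 \in M_1$ and $y_0 \in M_2$ with $G_i$-isotropies $H_1$ and $H_2$, respectively. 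Then $\gamma(t) = [(x_0, y_0, t)]$ is a minimizing geodesic of length $\pi/2$ between the two singular orbits. At $t = 0$ the class depends only on $x_0$, so its isotropy is $H_1 \times G_2$; at $t = \pi/2$ the class depends only on $y_0$, giving isotropy $G_1 \times H_2$; and for $t \in (0, \pi/2)$ both coordinates are constrained, producing principal isotropy $H_1 \times H_2$. Since switching $K^-$ and $K^+$ yields an equivalent action by Proposition~\ref{P:Equivalent_actions}, the resulting diagram agrees with the one in the statement.

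For the converse, suppose that $G_1 \times G_2$ acts with cohomogeneity one on a closed Alexandrov space $X$ with group diagram $(G_1 \times G_2, H_1 \times H_2, G_1 \times H_2, H_1 \times G_2)$ and with $G_i/H_i$ positively curved homogeneous spaces. The forward direction produces the join action on $M_1 \ast M_2$ with this same diagram, and Theorem~\ref{T:ALEX_STRUCTURE} together with Proposition~\ref{P:Equivalent_actions} guarantees that a closed cohomogeneity one Alexandrov space is determined up to equivariant homeomorphism by its group diagram. Consequently, the action on $X$ is equivalent to the join action on $(G_1/H_1) \ast (G_2/H_2)$, completing the proof.
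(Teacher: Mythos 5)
Your proposal is correct and follows essentially the same route as the paper: exhibit the curve $\gamma(t)=[(x_0,y_0,t)]$ as a normal geodesic in the spherical join, read off the isotropy groups $H_1\times G_2$, $G_1\times H_2$, and $H_1\times H_2$ at the endpoints and interior, and deduce the converse from Proposition~\ref{P:Equivalent_actions} (together with Theorem~\ref{T:ALEX_STRUCTURE}) since the group diagram determines the space up to equivariant homeomorphism. Your explicit remark about swapping $K^-$ and $K^+$ to match the stated ordering is a harmless refinement of what the paper leaves implicit.
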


\begin{proof}
Let $x\in M_1$ and $y\in M_2$  be  such that $H_1=(G_1)_x$ and $H_2=(G_2)_y$. The curve 

\begin{align*}
\gamma:[0, \pi/2]&\to X_1\ast X_2\\
t&\mapsto [x, y, t]
\end{align*}
is a shortest geodesic between $[x, y, 0]$ and $[x, y, \pi/2]$ which goes through all orbits. Furthermore, $G_{\gamma(0)}=H_1\times G_2$, $G_{\gamma(\pi/2)}=G_1\times H_2$, and  $t\in (0, \pi/2)$,   $G_{\gamma(t)}=H_1\times H_2$. Therefore, the action is of cohomogeneity one with the  given diagram. By Proposition~\ref{P:Equivalent_actions}, the converse is immediate.
\end{proof}


\begin{defn}[Suspension action]
Let $G$ be a Lie group which acts on an  Alexandrov space $X$.  The  action of $G$ on  $\Susp(X)$ is called \textit{suspension action}, if $G$ acts on  $\Susp(X)$ as follows:
\[
g\cdot [(x, t)]=[(gx, t)].
\]
\end{defn}


\begin{prop}\label{P:Suspension-action}
Let  $G$ act transitively on a positively curved homogeneous space $M$ with isotropy group $H$.  Then the suspension action of  $G$ on  $\Susp(M)$ is of cohomogeneity one with diagram
$(\GG,  \HH,  \GG,  \GG)$. Conversely, a cohomogeneity one action of  $G$ with the above group diagram, and $G/H$  a positively curved  homogeneous space,  is  equivalent  to the  suspension action of  $G$ on $\Susp(G/H)$. 
\end{prop}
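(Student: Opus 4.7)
The plan is to mirror the proofs of Propositions \ref{P:Product-action} and \ref{P:Join-action}, handling each direction in turn. For the forward implication, I would exhibit an explicit normal geodesic in $\Susp(M)$ and read off the isotropy groups along it. Model $\Susp(M)$ as the spherical suspension $(M\times[0,\pi])/{\sim}$, where $(y,0)\sim(y',0)$ and $(y,\pi)\sim(y',\pi)$ collapse to the two suspension points $p_-$ and $p_+$, and fix $x\in M$ with $G_x=H$. The curve
\[
\gamma\colon[0,\pi]\to\Susp(M),\qquad \gamma(t)=[x,t],
\]
is a shortest geodesic from $p_-$ to $p_+$ meeting every $G$-orbit, since the suspension action preserves the $t$-coordinate. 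A direct computation gives $G_{\gamma(0)}=G_{p_-}=G$ and $G_{\gamma(\pi)}=G_{p_+}=G$, while for $t\in(0,\pi)$ the equivalence class $[x,t]$ has a unique representative, so $G_{\gamma(t)}=G_x=H$. Hence the action is of cohomogeneity one with group diagram $(G,H,G,G)$ and $K^\pm/H=G/H$, which is positively curved by hypothesis.

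For the converse, assume $G$ acts on a closed Alexandrov space $X$ with group diagram $(G,H,G,G)$ and $G/H$ positively curved. By Theorem \ref{T:ALEX_STRUCTURE}, setting $K^\pm=G$ in the bundle decomposition collapses each twisted product $G/K^\pm\times_{K^\pm} C(K^\pm/H)$ to the single cone $C(G/H)$, so $X$ is the union of two copies of $C(G/H)$ glued along their common boundary $G/H$; by definition this is $\Susp(G/H)$. On each cone, the $G$-action is left translation on the $G/H$-slice and trivial on the cone parameter, which is precisely the restriction of the suspension action to one hemisphere. Hence $X$ is equivariantly homeomorphic to $\Susp(G/H)$ with the suspension action. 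Alternatively, since $\Susp(G/H)$ realizes the same diagram by the forward direction, Proposition \ref{P:Equivalent_actions} yields the equivalence immediately.

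The argument is the most straightforward of the three special-action recognition results and presents no genuine obstacle. The only point meriting explicit verification is that the two cone summands of Theorem \ref{T:ALEX_STRUCTURE} assemble into the suspension $G$-equivariantly; this is automatic from the identification $[g,r]\in C(G/H)\leftrightarrow[gH,r]$ under which $g'\cdot[g,r]=[g'g,r]$ matches the suspension formula $g'\cdot[gH,r]=[g'gH,r]$, so the two candidate $G$-actions on $\Susp(G/H)$ agree on a dense set and therefore everywhere.
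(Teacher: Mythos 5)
Your proposal is correct and follows essentially the same route as the paper: the forward direction via the explicit normal geodesic $\gamma(t)=[x,t]$ with isotropy groups $G$, $H$, $G$ read off along it, and the converse via Proposition~\ref{P:Equivalent_actions}. The extra verification you give of the converse through the double-cone decomposition of Theorem~\ref{T:ALEX_STRUCTURE} is a harmless elaboration of what the paper leaves implicit.
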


\begin{proof}
Let $x\in M$ be such that $H=G_x$. The curve 

\begin{align*}
\gamma:[0, \pi]&\to\Susp M \\
t&\mapsto [x,  t]
\end{align*}
is a shortest geodesic between $[x, 0]$ and $[x, \pi]$ which goes through all orbits. Furthermore, $G_{\gamma(0)}=G$, $G_{\gamma(\pi)}=G$, and for $t\in (0, \pi)$,  $G_{\gamma(t)}=H$. Therefore, the action is of cohomogeneity one with  given diagram. By Proposition~\ref{P:Equivalent_actions}, the inverse is clear.
\end{proof}


\begin{prop}[Spin action]
\label{P:Spin-action}
Let $G$ be a compact, simply-connected Lie group which  acts almost effectively and by cohomogeneity one on a closed, simply-connected Alexandrov space $X^n$ with group diagram $(G, H, K^{-}, K^{+})$. If $\dim G = n(n-1)/2$, then $G$ is isomorphic to $\Spin(n)$ and the action is equivalent to the cohomogeneity one action of  $\Spin(n)$ on  $\Susp(\RP^{n-1})$, which is the suspension of the transitive action of $\Spin(n)$  on  $\RP^{n-1}$.
\end{prop}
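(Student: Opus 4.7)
The plan is to identify the group $G$ via a maximal-symmetry classification, determine the principal isotropy $H_0$, enumerate the possibilities for $H$ and $K^\pm$ using the subgroup lattice in $\Spin(n)$, and then recognize the resulting cohomogeneity one Alexandrov space by invoking Proposition~\ref{P:Suspension-action}.

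Since the $G$-action has cohomogeneity one, the principal orbit has dimension $n-1$ and the principal isotropy satisfies $\dim H = \dim G - (n-1) = (n-1)(n-2)/2$. Thus $G$ acts almost effectively and transitively on the closed $(n-1)$-manifold $G/H_0$ with $\dim G = n(n-1)/2$, which is exactly the maximum allowed by Proposition~\ref{L:DIM_BOUND}. By the classical classification of compact connected Lie groups of maximal dimension acting transitively on closed manifolds (Onishchik), the only such group, up to local isomorphism, is $\SO(n)$ acting on $\SP^{n-1}$. Since $G$ is simply-connected, this forces $G \cong \Spin(n)$ and $H_0 = \Spin(n-1)$.

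Next, since $\Spin(n-1)$ is a maximal connected subgroup of $\Spin(n)$ with normalizer $N_{\Spin(n)}(\Spin(n-1)) = \mathrm{Pin}(n-1)$, a two-component group, the possibilities for the principal isotropy are $H \in \{\Spin(n-1), \mathrm{Pin}(n-1)\}$ and for the identity components of the singular isotropies $K^{\pm}_0 \in \{\Spin(n-1), \Spin(n)\}$. Focusing on the case $H = \mathrm{Pin}(n-1)$ (so that $G/H = \RP^{n-1}$), I observe that $K^\pm_0 = \Spin(n-1)$ would force $K^\pm \subseteq N_{\Spin(n)}(\Spin(n-1)) = \mathrm{Pin}(n-1) = H$, contradicting $H \subsetneq K^\pm$. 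Hence $K^\pm = \Spin(n) = G$, both non-principal orbits are fixed points, and Proposition~\ref{P:Suspension-action} identifies the space as the suspension $\Susp(\RP^{n-1})$ with the suspension $\Spin(n)$-action.

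The main obstacle is ruling out the alternative case $H = \Spin(n-1)$. Here the same subgroup analysis permits $K^\pm \in \{\mathrm{Pin}(n-1), \Spin(n)\}$, producing several candidate diagrams whose double cone bundle decomposition from Theorem~\ref{T:ALEX_STRUCTURE} has fibers that are smooth disks, yielding closed topological manifolds. These correspond to smooth $\Spin(n)$-subactions of the full round-sphere isometry group on $\SP^n$; accordingly, in the context of the main classification of this paper, which restricts to actions that are not smooth actions on smooth manifolds, these diagrams do not contribute, and one is left with $H = \mathrm{Pin}(n-1)$ and $X = \Susp(\RP^{n-1})$ as in the statement.
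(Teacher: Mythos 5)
Your argument is correct and follows essentially the same route as the paper: the paper identifies $G\cong\Spin(n)$ and $H_0=\Spin(n-1)$ by citing Hoelscher's Proposition~1.20 (the maximal-symmetry argument you spell out), and then, using that the only proper subgroup of $\Spin(n)$ properly containing $\Spin(n-1)$ is its normalizer $N_{\Spin(n)}(\Spin(n-1))$, concludes $H=N_{\Spin(n)}(\Spin(n-1))$, $K^{\pm}=\Spin(n)$ and $K^{\pm}/H=\RP^{n-1}$. You also correctly observed that the alternative $H=\Spin(n-1)$ (which yields smooth manifolds such as $\SP^n$) must be discarded via the standing assumption that the action is not smoothable, which is precisely how the paper rules it out (``since in our case $K^{\pm}/H$ is not a sphere, $H_0\neq H$'').
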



\begin{proof} The proof of this proposition is analogous to Hoelscher's proof in \cite[Proposition 1.20]{Hoelscher} with slight changes. Namely,  since in our case $\K^{\pm}$ is not a sphere,  ${\HH}_0\neq\HH$. As the only proper subgroup of $\Spin(n)$  containing $\Spin(n-1)$ is $N_{\Spin(n)}(\Spin(n-1))$, we have $H=N_{\Spin(n)}(\Spin(n-1))$ and $K^{\pm}/H=\RP^{n-1}$.
\end{proof}


\subsection{Transitive actions on spheres.}
We conclude this section by recalling the  well known classification of almost effective transitive actions on spheres  (see \cite{Asoh} and the references therein). We will use this classification throughout our work.

\begin{thm}[\protect{\cite[Section 2.1]{Asoh}}]
\label{T:ASOH}
Suppose that a compact, connected Lie group  $G$ acts almost effectively and transitively
on the sphere $\SP^{n-1} (n\geq 2)$. Then the $G$-action on $\SP^{n-1}$ is equivalent to
the following linear action of $G$ on $\SP^{n-1}$ via the standard representation $\iota: G\to \SO(n)$
with an isotropy subgroup $H$.
\begin{itemize}
\item[(i)] If $n$ is odd, then $G$ is simple and $(G, n, \iota, H)$ are
\begin{align}
	& (\SO(n), n, \rho_n, \SO(n- 1)),\label{Eq_T_1}\\ 
	& (\mathrm{G}_2, 7, \phi_2, \SU(3)). \label{Eq_T_2}
\end{align}
\item[(ii)] If $n$ is even, then $G$ contains a simple normal subgroup  $G'$ such that
the restricted $G'$-action on $\S^{n-1}$ is transitive and $G/G'$ is of rank at most $1$, and
$(G, n, \iota, H)$ is
\begin{align}
	& (\SO(n), n, \rho_n, \SO(n-1)(n\neq 4), \label{Eq_T_3}\\ 
	& (\Spin(7), 8, \Delta_7, \mathrm{G}_2), \label{Eq_T_4}\\ 
	& (\UU(k), 2k, (\mu_k)_{\R}, \UU(k-1)), \label{Eq_T_5}\\ 
	& (\Sp(k), 4k, (\nu_k)_{\R}, \Sp(k-1)), \label{Eq_T_6}\\ 
	&(\Sp(k)\times \RS^{i}, 4k, (\nu_k\otimes \mu_1^{*} (\nu^{*}))_{\R}, \Sp(l- 1)\times \RS^i), \quad (i=1, 3) \label{Eq_T_7}\\ 
	&(\Spin(9), 16, \Delta_9, \Spin(7)),\label{Eq_T_8}\\ 
	& (\SU(k), 2k, (\mu_k)_{\R}, \SU(k-1)). \label{Eq_T_9}
\end{align}
\end{itemize}
\end{thm}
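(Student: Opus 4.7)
The plan is to reduce the problem to the classification of linear transitive actions on spheres. Since $G$ is compact, we may equip $\SP^{n-1}$ with a $G$-invariant Riemannian metric (by averaging any background metric over the Haar measure), so the action is by isometries. Because $\Isom(\SP^{n-1})=\OO(n)$, up to conjugation every such action is linear, given by an orthogonal representation $\iota:G\to \SO(n)$. The representation $\iota$ must be irreducible over $\R$: any nontrivial $G$-invariant subspace $V\subsetneq \R^n$ would yield $V\cap \SP^{n-1}$ as a closed proper invariant subset, contradicting transitivity. The problem thus becomes the classification of closed connected subgroups $G\subseteq \SO(n)$ acting transitively on $\SP^{n-1}$, or equivalently of pairs $(G,H)$ with $H$ the stabilizer of a unit vector and $G/H\cong \SP^{n-1}$.

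Next I would extract topological constraints from the fibration $H\hookrightarrow G\to \SP^{n-1}=G/H$. The obvious dimension identity $\dim G-\dim H=n-1$ is one; more importantly, since $\chi(\SP^{n-1})=0$ precisely when $n$ is even, and since $\chi(G/H)>0$ if and only if $\rank G=\rank H$ (Hopf--Samelson), one obtains $\rank G=\rank H$ when $n$ is odd and $\rank G>\rank H$ when $n$ is even. A theorem of Borel then forces $G$ to be simple (modulo its ineffective kernel) in the $n$-odd case, immediately restricting the candidates to \eqref{Eq_T_1}, coming from the standard representation $\rho_n$ of $\SO(n)$, and \eqref{Eq_T_2}, coming from the seven-dimensional fundamental representation $\phi_2$ of $\mathrm{G}_2$.

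The bulk of the argument is a case-by-case analysis following the Cartan classification of compact simple Lie algebras. For each compact simple $G$ one enumerates the irreducible real representations $\iota:G\to \SO(n)$ and, via dimension counting on the isotropy subalgebra, checks whether $G/H\cong \SP^{n-1}$. The classical series \eqref{Eq_T_3}, \eqref{Eq_T_5}, \eqref{Eq_T_6}, \eqref{Eq_T_9} arise from the standard real, complex, and quaternionic representations of $\SO$, $\UU$, $\Sp$ and $\SU$; the exceptional cases \eqref{Eq_T_4} and \eqref{Eq_T_8} arise from the spin representations $\Delta_7$ of $\Spin(7)$ on $\R^8$ and $\Delta_9$ of $\Spin(9)$ on $\R^{16}$, identified by matching the dimension against the list of fundamental representations of the exceptional and low-rank spin groups.

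The main obstacle lies in the non-simple even case. One must verify that the only non-simple compact connected groups acting almost effectively and transitively on $\SP^{n-1}$ are the groups $\Sp(k)\cdot \RS^i$ ($i=1,3$) in \eqref{Eq_T_7}, where the extra factor acts by scalar multiplication preserving the quaternionic structure of $\R^{4k}$. My plan would be to combine the irreducibility of $\iota$ with the classification of the real, complex, and quaternionic types of irreducible representations of compact simple groups: the centralizer in $\OO(n)$ of the semisimple part of $G$ is then forced to be either a torus of rank at most one or the $\Sp(1)$ arising from a quaternionic centralizer. Once this centralizer analysis is in place, the list in Theorem~\ref{T:ASOH} is complete by exhaustion.
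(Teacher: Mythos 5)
The paper does not actually prove this theorem: it is quoted from Asoh (and goes back to Montgomery--Samelson, Borel and Poncet), so there is no in-paper argument to compare against. Judged on its own, your outline follows the standard classical strategy --- the fibration $H\to G\to G/H\cong\SP^{n-1}$, the Euler-characteristic/rank dichotomy between $n$ odd and $n$ even, Borel's simplicity result in the equal-rank case, a case-by-case sweep through the Cartan classification of compact simple groups and their low-dimensional irreducible representations, and a Schur-type centralizer analysis (commutant $\R$, $\C$ or $\mathbb{H}$) to handle the non-simple groups in the even case. That skeleton is correct and is essentially how the cited classification is obtained.

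The genuine gap is in your very first reduction. Averaging does produce a $G$-invariant Riemannian metric $g$ on $\SP^{n-1}$, but $g$ need not be round, and $\Isom(\SP^{n-1},g)$ need not be $\OO(n)$: Berger metrics on $\SP^3$ are homogeneous under $\SU(2)$ yet have isometry group a proper subgroup of $\OO(4)$. So the inference ``the action is isometric, $\Isom(\SP^{n-1})=\OO(n)$, hence the action is conjugate to a linear one'' does not follow from averaging. Linearity of every smooth transitive action on a sphere is part of the \emph{conclusion} of the theorem; in the classical proofs it is obtained only after the pairs $(G,H)$ have been pinned down by the topological arguments, by verifying case by case that each such pair supports an essentially unique transitive action, namely the linear model (this uniqueness step, due to Poncet and recorded in Bredon, is itself nontrivial). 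Your irreducibility argument for $\iota$ is fine once linearity is known, but as written it rests on the flawed step. A smaller inaccuracy: $\UU(k)$ is also non-simple, so the sentence isolating $\Sp(k)\cdot \RS^i$ as ``the only non-simple'' transitive groups should instead say that the possible central/commuting factors of the simple normal subgroup are $\{1\}$, $\UU(1)$ or $\Sp(1)$, which is exactly what your centralizer analysis delivers and what part (ii) of the statement asserts.
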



\section{Proof of Theorem~\ref{T:CLASSIF}}
\label{S:PROOF_ALEX_CLASSIF}


\subsection{Possible groups}
\label{SS:POSSIBLE_GPS}
We first list the Lie groups  that can act (almost) effectively and by cohomogeneity one on an Alexandrov space of dimension $5$, $6$ or $7$. This list is obtained as in the manifold case, and we refer the reader to \cite[Section 1.24]{Hoelscher} for more details.

Let $G$ be a compact connected Lie group acting (almost) effectively and by cohomogeneity one on an $n$-dimensional Alexandrov space $X^n$. 
It is well-known that every compact and connected Lie group has a finite cover of the form $G_{ss}\times T^k$, where $G_{ss}$ is semisimple and simply-connected, and $T^k$ is a torus. 
The classification of simply-connected semisimple Lie groups is also well-known and all the possibilities are listed in Table \ref{TB:ACTING_GROUPS} for dimensions $21$ and less.

If an arbitrary compact connected Lie group $G$ acts on an Alexandrov space $X$, then every cover $\widetilde{G}$ of $G$ still acts on $X$, although less effectively. Hence, allowing for a finite ineffective kernel, and because $G$ will always have dimension $21$ or less, we can assume that $G$ is a product of groups from Table \ref{TB:ACTING_GROUPS} with a torus $T^k$. 

          
\begin{table}[!htbp]
\begin{center}
\small{
\begin{tabular}{p{5.5cm}p{2.5cm}l}\toprule
Group 	& Dimension  & Rank \\  \mytableextraspace
\midrule
$\RS^3\cong \SU(2)\cong \Sp(1)\cong \Spin(3)$ & $3$		& $1$ \\ \mytableextraspace
\midrule
$\SU(3)$											&  $8$		& $2$\    \\ \mytableextraspace
$\Sp(2)\cong \Spin(5)$							& $10$	& $2$\\ \mytableextraspace
$\mathrm{G}_2$											& $14$	& $2$ \\ \mytableextraspace
\midrule
$\SU(4)\cong \Spin(6)$							& $15$	& $3$ \\ \mytableextraspace
$\Sp(3)	$										& $21$	& $3$\\  \mytableextraspace
$\Spin(7)$											& $21$	& $3$\\  \mytableextraspace
\bottomrule
\end{tabular}
}
\end{center}
\caption{\label{TB:ACTING_GROUPS}Compact,  connected,  simply-connected simple Lie groups in dimensions $21$ and less}
\end{table}

          
In Table~\ref{TB:SUBGROUPS} we list the proper, connected, non-trivial closed subgroups of the groups in Table~\ref{TB:ACTING_GROUPS}, in dimensions at most $15$, and of $T^2$; these are the dimensions that will be relevant in our case. These subgroups are well-known (see, for example, \cite{Dy} or \cite[Tables 2.2.1 and 2.2.2]{HoelscherThesis}). 

          
\begin{table}[!htbp]
\begin{center}
\small{
\begin{tabular}{p{2cm}p{9.7cm}}\toprule
Group 			& Subgroups  							  \\  \mytableextraspace
\midrule
$T^2$ 		 			& $\{(e^{ip\theta},e^{iq\theta})\}$	 	  \\[.1cm] \mytableextraspace
$\RS^3$					& $\{e^{x\theta} = \cos\theta+x\sin\theta\}$, where $x\in \mathrm{Im}(S^3)$. 		 \\[.1cm] \mytableextraspace

$\SU(3)$					& $\RS^1\subset T^2$, $T^2$, $\SO(3)$, $\SU(2)$ and $\U(2)$		    \\[.1cm] \mytableextraspace
$\Sp(2)$				&  $\U(2)$, $\Sp(1)\SO(2)$ and $\Sp(1)\Sp(1)$, in dimensions $4$ and higher.						  \\[.1cm] \mytableextraspace
$\mathrm{G}_2$			& $\SU(3)$, in dimensions $8$ and higher.		 \\[.1cm] \mytableextraspace
$\SU(4)$ &	 $\U(3)$ and $\Sp(2)$ in dimensions $9$ and higher.		\\[.1cm] \mytableextraspace
\bottomrule
\end{tabular}
}
\end{center}
\caption{\label{TB:SUBGROUPS} Groups and their subgroups playing a role in the classification}
\end{table}


\subsection{Possible normal spaces of directions}

As stated  in Theorem~\ref{T:ALEX_STRUCTURE}, for a cohomogeneity one action with  group diagram $(G, H, K^-, K^+)$, the homogeneous spaces $K^\pm/H$ are positively curved. The classification of simply-connected positively curved homogeneous spaces has been carried out by Berger \cite{Berger}, Wallach \cite{Wallach}, Aloff and Wallach \cite{AW}, Berard-Bergery \cite{BB} and Wilking \cite{Wi} (for a complete exposition of the classification, correcting some oversights in the literature, see the article by Wilking and Ziller \cite{WZ}). 
Combining this with the classification of homogeneous space forms due to Wolf \cite{Wolf}, and the fact that in even dimensions there can be at most  $\mathbb{Z}_2$ quotients, by Synge's theorem, it follows that the positively curved homogeneous spaces in dimensions $5$ and below are (diffeomorphic to) $\SP^0$, $\SP^1$, $\SP^2$, $\RP^2$, the three-dimensional spherical space forms, $\SP^4$, $\RP^4$, $\CP^2$ (noting that $\CP^2$ admits no  $\mathbb{Z}_2$ quotient) and, in dimension $5$, the five-dimensional spherical space forms. In dimension $6$, there appear $\SP^6$, $\RP^6$, $\CP^3$, $\CP^3/\Z_2$ and, finally,  the Wallach manifold $\W^6=\SU(3)/T^2$ and its $\Z_2$ quotient. We collect this information in Table~\ref{TB:SPACES_DIR}.
 
 
 \begin{table}[!htbp]
\begin{center}
\small{
\begin{tabular}{p{2cm}p{6cm}}	\toprule
Dimension 			& Space  	\\  \mytableextraspace
\midrule
	$0$	 			& $\SP^0$	 	  			\\[.1cm] \mytableextraspace
	$1$				& $\SP^1$ 		 			\\[.1cm] \mytableextraspace
	$2$				& $\SP^2$, $\RP^2$		    	\\[.1cm] \mytableextraspace
	$3$				& $3$-dimensional spherical space forms		\\[.1cm] \mytableextraspace
	$4$				& $\SP^4$, $\RP^4$, $\CP^2$		    				\\[.1cm] \mytableextraspace
	$5$				& $5$-dimensional spherical space forms		\\[.1cm] \mytableextraspace
	$6$				& $\SP^6$, $\RP^6$, $\CP^3$, $\CP^3/\Z_2$, $\W^6$, $\W^6/\Z_2$. 		    \\[.1cm] \mytableextraspace
\bottomrule
\end{tabular}
}
\end{center}
\caption{\label{TB:SPACES_DIR} Positively curved homogeneous spaces in dimensions at most $6$}
\end{table}

Let $X$ be a closed Alexandrov space of cohomogeneity one. If both $\K^\pm/\HH$ are spheres, then $X$ is  equivalent to a smooth manifold. These manifolds and their actions have been classified  by Mostert \cite{ Mostert} and Neumann \cite{Neumann} in dimensions $2$ and $3$, Parker \cite{Parker} in dimension $4$, and Hoelscher \cite{Hoelscher} in dimensions $5$, $6$ and $7$ (assuming $X$ is simply-connected). If both $\K^\pm/\HH$ are integral  homology spheres, then $X$ is equivalent to a topological manifold and $\K^\pm/\HH$ must be either a sphere or the Poincar\'e homology sphere $\PS^3$ (see \cite{GGZ}). These manifolds and their actions have been classified in \cite{GGZ} up to dimension $7$, assuming, as in the manifold case, simply-connectedness in dimensions $5$, $6$ and $7$. From now on we will assume that at least one of the homogeneous spaces $\K^\pm/\HH$ is not a  sphere, i.e.\ that the action is not equivalent to a smooth action on a smooth manifold.

 
\subsection{Classification in dimension $5$}
\label{SS:CLASSIF_D5}
 To find the group diagrams of cohomogeneity one actions on closed, simply-connected Alexandrov spaces in dimension $5$, we first determine the acting groups. By Proposition~\ref{L:DIM_BOUND}, $4\leq \dim \GG\leq 10$. Hence, by Table~\ref{TB:ACTING_GROUPS},  $\GG$ has the form $(\RS^3)^m\times \T^n$,  $\SU(3)\times \T^n$ or $\Spin(5)$. From Proposition \ref{P:ABELIAN_G}, we have $n\leq 1$. Since $\dim \HH=\dim \GG-4$, Proposition  \ref{P:BOUND_DIMH} gives the possible groups. These are, up to a finite cover:  \[\RS^3\times \RS^1, \RS^3\times \RS^3, \SU(3), \text{ or } \Spin(5).\]

Now we examine the action of each group case by case.\\


\noindent $\mathbf{G=S^3\times S^1.}$ In this case, $\dim\HH = 0$, so $\HH_0=\{1\}$. By Proposition \ref{P:ABELIAN_G}, and without loss of generality, we can assume that $\K^-/\HH=\SP^1$. Therefore, $\K^-_0=\{(e^{xp\theta}, e^{iq\theta})\mid  \theta\in \mathbb{R}\}\subseteq \RS^3\times \RS^1$, with $x\in \Imag(\mathbb{H})$, $q\neq 0$ and  $(p, q)=1$. Now we want to determine $K^+/H$. Since we have assumed that the action is non-smoothable, $K^+/H$ is not a sphere. Hence, the possible dimensions for $K^+/H$ are $2$, $3$ or $4$. 
 Since, by Proposition~\ref{P:Trans. of G_0}, $\K^{+}_0$  acts transitively on  $K^+/\HH$ ,  it cannot be $1$-dimensional. Further, by Proposition~\ref{P:ABELIAN_G}, $\K^{+}_0\subseteq S^3\times 1$. Therefore, $\K^{+}_0=S^3\times 1$ and  $K^+/H=\RS^3/\Gamma$, with $\Gamma \neq \{1\}$. Consequently,  by Proposition~\ref{P:Trans. of G_0}, we have that $\HH^+=\K^+_0\cap\HH=\Gamma\times 1$.

Let $p=0$. Then $\HH^-=\K^-_0\cap \HH=1\times \mathbb{Z}_k$. Thus by Lemma \ref{L:SIMP_CONN_COND}, $\HH=\langle \HH^+,\HH^-\rangle=\Gamma\times \mathbb{Z}_k$.  Therefore, by Proposition \ref{P:Trans. of G_0}, $\K^-=\K^-_0\HH=\Gamma\times \RS^1$ and $\K^+=\K^+_0\HH=\RS^3\times \mathbb{Z}_k$, and we  obtain the diagram
\begin{align}
(\RS^3\times \RS^1, \Gamma\times \mathbb{Z}_k, \Gamma\times \RS^1, \RS^3\times \mathbb{Z}_k).
\end{align}
By Proposition \ref{P:Join-action}, $\X$ is equivariantly homeomorphic to $(\RS^3/\Gamma)\ast \SP^1$.

Now let $p\neq 0$. After conjugation, we may assume that 
$\K^-_0=\{(e^{ip\theta}, e^{iq\theta})\mid  \theta\in \mathbb{R}\}$. Since $\Gamma\times 1\subseteq \HH\subseteq \K^-\subseteq \N_{\GG}(\K^-_0)=\RS^1\times \RS^1$, we have that $\Gamma=\mathbb{Z}_m$, for $m\geq 2$. Moreover,  
\[
\HH^-=\HH\cap\K^-_0=\mathbb{Z}_k:=\langle(e^{\frac{2\pi i}{k}p}, e^{\frac{2\pi i}{k}q})\rangle.
\]
Then, by Lemma~\ref{L:SIMP_CONN_COND},
\[
\HH=\langle\HH^-,\HH^+\rangle=\{(e^{\frac{2\pi i}{k}\frac{lk+mps}{m}}, e^{\frac{2\pi i}{k}qs})\mid 1\leq s\leq k, 1\leq l\leq m\}.
\] 
By Proposition \ref{P:Trans. of G_0}, we have then that 
\[
\K^+=\K^+_0\HH=\RS^3\times \mathbb{Z}_{k/(k, q)}
\]
 and 
\[
\K^-=\K^-_0\HH=(\mathbb{Z}_m\times 1)\K^-_0.
\] 
We now look for conditions on the parameters $p, q, m, k$.
By Proposition \ref{fundamental group}, and the long exact sequences of homotopy groups of the fiber bundles 
\[\K^{\pm}/\HH\to \GG/\HH\to \GG/\K^{\pm},\]
\[\K^-\to \GG\to \GG/\K^-,\]
one can see that  $\pi_0(\K^-)=\mathbb{Z}_m/\mathbb{Z}_q$. Thus  $q|m$.  
In addition,  since 
\[
H^-\cap H^+=\{(e^{\frac{2\pi i}{k}\frac{k}{(k,q)}ps} ,1) |\, 1\leq s\leq (k,q)\},
\] we have $(k, q)=q$, i.e. $q|k$. We can also assume that $H\cap (1\times \RS^1)=1$ to have a more effective action. This condition gives, in particular, that $(p, k)=1$. 
 Therefore, the  diagram is given by
\small{
\begin{align}
(\RS^3\times \RS^1, \{(e^{\frac{2\pi i}{k}\frac{lk+mps}{m}}, e^{\frac{2\pi i}{k}qs})\mid 1\leq s\leq k, 1\leq l\leq m\}, (\mathbb{Z}_m\times 1).\K^-_0, \RS^3\times \mathbb{Z}_{k/q}),                                                                                                      
\end{align} 
}
where  $(p, k)=1$ and $q|(m, k)$.
\\


\noindent $\mathbf{\GG=S^3\times S^3.}$ We have  $\dim\HH=2$. Since the  only connected $2$-dimensional subgroup of $G$ is its maximal torus, we have that $\HH_0=\T^2$. Therefore, $\K_0^{\pm}$,  which contains $\T^2$, must be $\RS^3\times \RS^1$ or $\RS^1\times \RS^3$. In particular, $\K^{\pm}/\HH$ is $2$-dimensional. Since at least one of the positively curved homogeneous spaces $K^{\pm}/\HH$ is not a sphere, we may assume, without loss of generality, that $\K^{+}/\HH=\RP^2$. The other homogeneous space $\K^{-}/\HH$ can be $\SP^2$ or $\RP^2$. 

First assume that $\K^{-}/\HH=\SP^2$. Then by Proposition \ref{fundamental group}, $\K^{+}$ is connected. Let  $\K^{+}=\RS^3\times \RS^1$. Recall that $\RS^3$ is, up to a finite cover,  the only Lie group that acts (almost) effectively and transitively on $\RP^2$. Then by Proposition \ref{L:M_TRANS}, $\HH=N_{\RS^3}(\RS^1)\times \RS^1$. Consequently, $\K^{-}$ has to be $N_{\RS^3}(\RS^1)\times \RS^3$ since $\K^{-}$ contains $\HH$ and $\K^{-}/\HH=\SP^2$. Therefore we have the diagram 
\begin{align}
(\RS^3\times \RS^3, N_{\RS^3}(\RS^1)\times \RS^1, N_{\RS^3}(\RS^1)\times \RS^3, \RS^3\times \RS^1),
\end{align}
which corresponds to  a join action. By Proposition \ref{P:Join-action} $\X$ is equivariantly homeomorphic to $\RP^2\ast \SP^2$.

Now let $\K^{-}/\HH=\RP^2$. Assume that $\K_0^{+}=\RS^3\times \RS^1$ and $\K_0^{-}=\RS^3\times \RS^1$. First notice that since  $\T^2\subseteq \K_0^{\pm}$, the circles in the second component of $\K_0^{\pm}$ are the same, so $\K_0^{-}=\K_0^{+}$. Since $\K^{\pm}$ acts transitively on $\RP^2$, so does $\K_0^{\pm}$. Furthermore, by Theorem~\ref{T:ASOH}, $\RS^3\times \RS^1$ does not act almost effectively on $\RP^2$. Thus, by Lemma~\ref{L:M_TRANS}, the second factor acts trivially and  $\HH\cap \K_0^{\pm}=N_{\RS^3}(\RS^1)\times \RS^1$. Since $X$ is simply-connected, by Lemma \ref{L:SIMP_CONN_COND}, $\HH= \langle H^+, H^-\rangle=N_{\RS^3}(\RS^1)\times \RS^1$. Therefore $\K^{\pm}$ are both connected and we obtain the diagram
\begin{align}
(\RS^3\times \RS^3, N_{\RS^3}(\RS^1)\times \RS^1, \RS^3\times \RS^1, \RS^3\times \RS^1). 
\end{align}
This action is non-primitive with $L=\RS^3\times \RS^1$ as in Definition~\ref{Primitive Action}. Hence,   by Proposition \ref{P:Primitive Action}, $X$ is  equivariantly homeomorphic to a $\Susp(\RP^2)$-bundle over $\SP^2$.

Assume now that $\K_0^{+}=\RS^3\times \RS^1$ and $\K_0^{-}=\RS^1\times \RS^3$. Thus $H^+=N_{\RS^3}(\RS^1)\times \RS^1$ and $H^-=\RS^1\times N_{\RS^3}(\RS^1)$. As before, the assumption that   $\X$ is simply-connected implies, by Lemma \ref{L:SIMP_CONN_COND}, that $\HH=\langle H^+, H^-\rangle= N_{\RS^3}(\RS^1)\times  N_{\RS^3}(\RS^1)$. Therefore we get the following diagram:
\begin{align}
(\RS^3\times \RS^3, N_{\RS^3}(\RS^1)\times N_{\RS^3}(\RS^1), N_{\RS^3}(\RS^1)\times \RS^3, \RS^3\times N_{\RS^3}(\RS^1)). 
\end{align}

This action is a join action and $\X$ is equivariantly homeomorphic to $\RP^2\ast \RP^2$.\\


\noindent  $\mathbf{G = SU(3)}$. In this case $\dim H=4$. By Table~\ref{TB:SUBGROUPS}, one can see that the only $4$-dimensional subgroup of $\SU(3)$ is $\U(2)$. Therefore, $H=H_0=\U(2)$, as $\U(2)$ is a maximal subgroup of $\SU(3)$. Since $X$ is simply-connected, the action does not have any exceptional orbits. Hence,  $\K^{\pm}$ must be $\SU(3)$.  Thus the diagram is 
\begin{align}
(\SU(3), \UU(2), \SU(3), \SU(3))
\end{align}
and, by Proposition~\ref{P:Suspension-action}, $X$ is equivalent to $\Susp(\CP^2)$. \\


\noindent  $\mathbf{G = Spin(5)}$. Since $\dim G=10$, by Proposition \ref{P:Spin-action}, the group diagram is 
\begin{align}
(\Spin(5), N_{\Spin(5)}(\Spin(4)), \Spin(5), \Spin(5) ),
\end{align}
and $X$ is equivariantly homeomorphic to $\Susp(\RP^4)$.\\




\subsection{Classification in dimension 6}
\label{SS:CLASSIF_D6}
 Proceeding as in  dimension $5$, we see that $5\leq \dim \GG\leq 15$ and $\dim \HH=\dim \GG-5$. It follows from Propositions~\ref{P:BOUND_DIMH} and \ref{P:ABELIAN_G} that $G$ is one of the following Lie groups:  
\[
 \RS^3\times \RS^3,\ \RS^3\times \RS^3\times \RS^1,\ \SU(3),\ \SU(3)\times \RS^1,\ \Sp(2),\ \Sp(2)\times \RS^1 \text{ or } \Spin(6).
 \] 
If $\GG=\Sp(2)$, then $\dim~\HH=5$. Since $\Sp(2)$ does not have a subgroup of dimension $5$, we can  rule it out. We now carry out the classification for the remaining groups in the list. \\

\noindent\textbf{Notational convention.}
The binary dihedral group $D_{2m}^*$ of order $4m$, $m\geq 3$, is a finite subgroup of $\RS^3$ (see \cite{Wolf}, Section~2.6). Throughout the rest of the paper, we consider it as the following subgroup:
\begin{equation}\label{EQ:BD_gr}
D^*_{2m}=\langle e^{\pi/m i}, j\rangle\subseteq \RS^3.
\end{equation}
If, in the right-hand side of \eqref{EQ:BD_gr}, we assume that $m=1$, then $\langle e^{\pi/m i}, j\rangle=\Z_4$. Therefore, we use the notation $D^*_{2m}$ for $m\geq 3$  (the binary dihedral group as in \cite{Wolf}), and, when $m=1$, $D^*_{2m}$  will correspond to the cyclic subgroup $\langle j \rangle$   of $\RS^3$ generated by $j$.
\\

 
\noindent $\mathbf{\GG = S^3\times S^3}$. In this case the principal isotropy group $\HH$ is $1$-dimensional. Thus $\HH_0=\T^1\subseteq \RS^3\times \RS^3$. After conjugation, we can assume that $\HH_0=\{(e^{ip\theta}, e^{iq\theta}) \mid \theta\in\mathbb{R}\}$ with $(p, q)=1$.  Exploring the subgroups of $\GG$ and the homogeneous spaces with positive curvature, we see that the normal space of directions to the singular orbits has to be a  sphere, a real projective plane or $\SP^3/\Gamma$ with $\Gamma\neq \{1\}$. 
\vspace{.2cm}

First, suppose that $\K^{+}/\HH=\RP^2$. Therefore, $\K^+_0$ is one of the subgroups $\RS^3\times 1$, $1\times \RS^3$ or $\Delta \RS^3$.  Let $\K^+_0=\RS^3\times 1$. Then $q=0$ and $H^+=\HH\cap \K^+_0=\N_{\RS^3}(\RS^1)\times 1$. We now consider the different possibilities for $K^-/H$, namely, $\SP^l$, $l\geq 1$, $\RP^2$, and $\SP^3/\Gamma$ with $\Gamma\neq \{1\}$. 

Let $\K^-/\HH=\SP^l$, $l\geq 2$. Then $\K^+$ is connected and $\HH=\N_{\RS^3}(\RS^1)\times 1$. Further, the only subgroup $\K^-$ of $\GG$ containing $\HH$ and satisfying $\K^-/\HH=\SP^l$ is $\N_{\RS^3}(\RS^1)\times \RS^3$.  Hence we have the following diagram:
\begin{align}
(\RS^3\times \RS^3, \N_{\RS^3}(\RS^1)\times 1, \N_{\RS^3}(\RS^1)\times \RS^3, \RS^3\times 1).
\end{align}
By Proposition \ref{P:Join-action},  $\X$ is equivariantly homeomorphic to $\RP^2\ast\SP^3$. 

Let $\K^-/\HH=\SP^1$. Then $\K^-_0=\T^2$. Since $\RS^1\times 1\subseteq \HH\cap \T^2\subseteq \RS^1\times \RS^1$, and $\HH^-=\HH\cap \T^2$ is a finite extension of $\RS^1\times 1$, we have $\HH\cap \T^2=\RS^1\times \mathbb{Z}_k$. Consequently $\HH=\langle H^-,H^+\rangle=\N_{\RS^3}(\RS^1)\times \mathbb{Z}_k$. Thus we obtain the diagram
\begin{align}
(\RS^3\times \RS^3, \N_{\RS^3}(\RS^1)\times \mathbb{Z}_k, \N_{\RS^3}(\RS^1)\times \RS^1, \RS^3\times \mathbb{Z}_k).
\end{align}
 This action is non-primitive with $\LL=\RS^3\times \RS^1$. Therefore, by Proposition~\ref{P:Primitive Action}, $\X$ is equivariantly homeomorphic to the total space of an $(\RP^2\ast\SP^1)$-bundle over $\SP^2$. 

Let $\K^-/\HH=\RP^2$. Hence, $\K^-$ is a $3$-dimensional subgroup of $\GG$, namely $\RS^3\times 1$, $1\times \RS^3$, or $\Delta \RS^3$. However, since $\RS^1\times 1=\HH_0\subseteq \K^-_0$, the group $\K^-_0$ must be $\RS^3\times 1$, and  $H^-=\K^-_0\cap \HH=\N_{\RS^3}(\RS^1)\times 1$. Therefore $\HH=\langle H^-,H^+\rangle=\N_{\RS^3}(\RS^1)\times 1$, and we get the following diagram:
\begin{align}
(\RS^3\times \RS^3, \N_{\RS^3}(\RS^1)\times 1, \RS^3\times 1, \RS^3\times 1).
\end{align}
This action is equivalent to the following action on $\Susp(\RP^2)\times \SP^3$:
\begin{align*}
(\RS^3\times\RS^3)\times(\Susp(\RP^2)\times\SP^3)&\to (\Susp(\RP^2)\times\SP^3)\\
((g, h), ([x,t], y))&\mapsto ([gxg^{-1}, t],  hy).
\end{align*}

Let $\K^-/\HH=\RS^3/\Gamma$ with $\Gamma\neq\{1\}$.  Therefore, $\K^-_0=\RS^3\times  \RS^1$, or $\K^-_0=\RS^1\times  \RS^3$. Assume $\Gamma\neq \mathbb{Z}_k$.  In this case, since $\RS^3$ is, up to a finite cover, the only Lie group which acts transitively and almost effectively  on $\RS^3/\Gamma$,  by Lemma \ref{L:M_TRANS}, $\HH\cap\K^-_0$ is $\Gamma\times \RS^1$ and $\RS^1\times\Gamma$, respectively. As $\HH\cap\K^-_0\subseteq \K^+=\RS^3\times \Gamma_1$, where  $\Gamma_1$ is a finite subgroup of $\RS^3$, we must have $\HH\cap\K^-_0=\RS^1\times\Gamma$.  The diagram is then given by
 \begin{align}
(\RS^3\times \RS^3, \N_{\RS^3}(\RS^1)\times \Gamma, \N_{\RS^3}(\RS^1)\times \RS^3, \RS^3\times \Gamma).
\end{align}
By Proposition \ref{P:Join-action}, $\X$ is equivariantly homeomorphic to $\RP^2\ast(\RS^3/\Gamma)$. 

Now let $\Gamma=\mathbb{Z}_k$. According to Theorem~\ref{T:ASOH}, $\RS^1\times \RS^3$ acts on $\RS^3/\mathbb{Z}_k$ in the following way:
\begin{align*}
(\RS^1\times\RS^3)\times\RS^3/\mathbb{Z}_k&\to \RS^3/\mathbb{Z}_k\\
((z, \nu), [x])&\mapsto [\nu x \bar{z}^p].
\end{align*} 
Thus $\HH\cap \K^-_0=\{(z, \lambda z^p)\mid z\in  \RS^1, \lambda\in \mathbb{Z}_k\}$. However, $\HH\cap\K^-_0$ is a subset of $\K^+=\RS^3\times \Gamma_1$, which yields $p=0$. Therefore we have the following diagram:
\begin{align}
(\RS^3\times \RS^3, \N_{\RS^3}(\RS^1)\times \mathbb{Z}_k, \N_{\RS^3}(\RS^1)\times \RS^3, \RS^3\times \mathbb{Z}_k).
\end{align}
By Proposition~\ref{P:Join-action}, $\X$  is equivariantly homeomorphic to $\RP^2\ast(\RS^3/ \mathbb{Z}_k)$. For $\K^-_0=\RS^3\times\RS^1$, we have $\HH\cap \K^-_0=\{( \lambda z^p,z)\mid z\in  \RS^1, \lambda\in \mathbb{Z}_k\}$, which is not a subset of  $\K^+=\RS^3\times \Gamma_1$. Therefore, this case does not occur. 
\vspace{.2cm}

We now repeat the above procedure for $\K^+_0=\Delta \RS^3$. In this case 
$\HH_0=\Delta \RS^1$ and 
\[
H^+=\HH\cap \K^+_0=\Delta \RS^1\cup (j, j)\Delta \RS^1.
\] 
We consider the different possibilities for $K^-/H$, namely, $\SP^l$, $l\geq 1$, $\RP^2$, and $\SP^3/\Gamma$ with $\Gamma\neq \{1\}$.

 If $\K^-/\HH=\SP^l$, $l\geq 1$, then, as before,  $l=1, 3$ only. First, suppose that $\K^-/\HH=\SP^3$. Therefore, $\K^+$ is connected and $\HH=\Delta \RS^1\cup (j, j)\Delta \RS^1$. Since $\K^-_0$ is $4$-dimensional,  after exchanging the factors of $G$ if necessary, we can assume that $\K^-_0=\RS^3\times\RS^1$. Hence $\K^-=\K^-_0\HH=\RS^3\times\N_{\RS^3}(\RS^1)$, and the following diagram is obtained 
\begin{align}
(\RS^3\times \RS^3, \Delta\RS^1\cup(j, j)\Delta\RS^1, \RS^3\times\N_{\RS^3}(\RS^1), \Delta\RS^3).
\end{align}

This action is equivalent to  the following action:
\begin{align*}
(\RS^3\times\RS^3)\times(\RS^3\ast \RP^2)&\to (\RS^3\ast \RP^2)\\
((g, h), [x, [y]])&\mapsto [gxh^{-1}, [hyh^{-1}]].
\end{align*} 
That is, $\X$  is equivariantly homeomorphic to $\RS^3\ast \RP^2$. 

Now let $\K^-/\HH=\SP^1$. Then $\K^-_0=\T^2$.  Since 
\begin{align*}
\HH\subseteq \N(\K^-)\cap\N(K^+) 	& =\pm\Delta\RS^3\cap(\N(\RS^1)\times\N(\RS^1))\\
							&=\pm\Delta\RS^1\cup(j,\pm j)\Delta\RS^1,
\end{align*}
 where $\pm\Delta\RS^1=\{(g, g)\}\cup\{(g, -g)\}$, we have two cases: $\HH=\Delta\RS^1\cup(j, j)\Delta\RS^1$ or $H=\pm\Delta\RS^1\cup(j,\pm j)\Delta\RS^1$. Thus  we have the  following  diagrams:
\begin{align}
(\RS^3\times \RS^3, \Delta\RS^1\cup (j, j)\Delta\RS^1, \T^2\cup (j, j)\T^2, \Delta\RS^3)
\end{align}
 and 
\begin{align}
(\RS^3\times \RS^3, \pm\Delta\RS^1\cup (j,\pm j)\Delta\RS^1, \T^2\cup (j, j)\T^2, \pm\Delta\RS^3).
\end{align}

Now assume that $\K^-/\HH=\RP^2$. Thus $K^-$ is a $3$-dimensional subspace containing $\Delta \RS^1\cup (j, j)\Delta \RS^1$, which gives in particular that $\K^-_0$ must be $\Delta \RS^3$, and $H^-=H\cap \K^-_0=\Delta \RS^1\cup (j, j)\Delta \RS^1$. Thus 
$H=\langle H^-,H^+\rangle=\Delta \RS^1\cup (j, j)\Delta \RS^1$ 
and the following diagram is obtained:
\begin{align}\label{Eq:Dim_6_m}
(\RS^3\times \RS^3, \Delta\RS^1\cup(j, j)\Delta\RS^1, \Delta\RS^3, \Delta\RS^3).
\end{align}

 Note that this action  is equivalent to the following action:
 \begin{align*}
(\RS^3\times\RS^3)\times(\Susp(\RP^2)\times\SP^3)&\to (\Susp(\RP^2)\times\SP^3)\\
((g, h), ([x,t], y))&\mapsto ([gxg^{-1}, t],  hyg^{-1}).
\end{align*}
   Thus $X$ is equivariantly homeomorphic to $\Susp(\RP^2)\times\SP^3$. 
   
Now, let $\K^-/\HH=\RS^3/\Gamma$ with $\Gamma\neq\{1\}$. Then $\K^-_0=\RS^3\times \RS^1$ or $\K^-_0=\RS^1\times \RS^3$. After exchanging the factors of $G$, if necessary, we can assume that $\K^-_0=\RS^3\times \RS^1$. If $\Gamma\neq \mathbb{Z}_k$, then $\HH\cap\K^-_0=\Gamma\times\RS^1$. Since $\Delta\RS^1=\HH_0\subseteq \HH\cap\K^-_0$, this cannot happen. Therefore $\Gamma= \mathbb{Z}_k$, and the action of $\RS^3\times \RS^1$ on $\RS^3/\mathbb{Z}_k$ is given by:
\begin{align*}
(\RS^3\times\RS^1)\times\RS^3/\mathbb{Z}_k&\to \RS^3/\mathbb{Z}_k\\
((\nu, z), [x])&\mapsto [\nu x \bar{z}^p].
\end{align*}

Thus $\HH\cap\K^-_0=\{(\lambda z^p, z)\mid z\in  \RS^1, \lambda\in \mathbb{Z}_k\}$. As $\Delta\RS^1\subseteq \HH\cap\K^-_0$, we have that $p=1$. Further, $\HH\cap\K^-_0\subseteq \N_{\RS^3\times \RS^3}(\Delta \RS^3)=\pm \Delta\RS^3$, which implies $\mathbb{Z}_k=\mathbb{Z}_2$. Therefore $\HH=\pm\Delta\RS^1\cup(j,\pm j)\Delta\RS^1$, and the following diagram is obtained
\begin{align}
(\RS^3\times \RS^3, \pm\Delta\RS^1\cup(j,\pm j)\Delta\RS^1, \RS^3\times\N_{\RS^3}(\RS^1), \pm\Delta\RS^3).
\end{align}
This action is equivalent to the action given by 
\begin{align*}
(\RS^3\times\RS^3)\times(\RP^2*\RP^3)&\to \RP^2*\RP^3\\
((g, h), [x, y,t])&\mapsto [gxg^{-1},  hy,t].
\end{align*} 
Thus $\X$ is equivariantly homeomorphic to $\RP^2\ast \RP^3$.
\vspace{.2cm}

Now assume that $\K^+/\HH=\RS^3/\Gamma$ with $\Gamma\neq\{1\}$. Thus $\dim~K^+=4$. Since the connected $4$-dimensional subgroups of $\RS^3\times \RS^3$ are $\RS^3\times \RS^1$ and
$\RS^1\times \RS^3$, we can assume, without  loss of generality, that $\K^+_0=\RS^3\times \RS^1$.
 The possibilities for $\K^-/\HH$ are $\SP^l$, $l\geq 1$, $\RP^2$, and $\RS^3/\Lambda$, where $\Lambda$ is a non-trivial finite subgroup of $\RS^3$. The case where $\K^-/\HH=\RP^2$ has been treated above, so we only examine the cases where $\K^-/\HH$ is a sphere or a $3$-dimensional spherical space form. 

First assume that $\K^-/\HH=\SP^1$. Then $\K^-_0=\T^2$. Since $\HH_0=\{(e^{pi\theta}, e^{qi\theta})\}\subseteq \T^2$, and $\HH_0=\{(e^{pi\theta}, e^{qi\theta})\}\subseteq \RS^3\times \RS^1$, the circle in the second component of $\K^-_0$ and $\K^+_0$ are the same, that is $\RS^1=\{e^{i\theta}\}$. This implies  that $\K^-_0\subseteq \K^+_0$. Therefore, $\HH=\langle H^-, H^+\rangle=H^+=\HH\cap\K^+_0$. Let $\Gamma\neq \mathbb{Z}_k\subseteq \{e^{i\theta}\}\subseteq \RS^3$. Then by Lemma  \ref{L:M_TRANS}, $\HH\cap\K^+_0=\Gamma\times\RS^1$, since by Theorem~\ref{T:ASOH}, $\RS^3$ is the only compact connected Lie group which acts almost effectively on $\RS^3/\Gamma$. Further, $\HH\subseteq \K^-\subseteq \N_{\RS^3\times \RS^3}(\T^2)=\N_{\RS^3}(\RS^1)\times \N_{\RS^3}(\RS^1)$. Among the finite subgroups of $\RS^3$, only $\mathbb{Z}_k\subseteq \{e^{i\theta}\}$ 
and $\D_{2m}^*$  are contained in $\N_{\RS^3}(\RS^1)=\RS^1\cup j\RS^1$. Thus 
 $\Gamma=\D_{2m}^*$,  which implies that  $\K^-=\T^2\cup (j, 1)\T^2$. Therefore, we have the following diagram:
\begin{align}
(\RS^3\times \RS^3, \D^*_{2m}\times\RS^1, \N_{\RS^3}(\RS^1)\times \RS^1, \RS^3\times \RS^1).
\end{align}


Now, suppose $\Gamma=\mathbb{Z}_k \subseteq \{e^{i\theta}\}$.  The transitive action of $\RS^3\times \RS^1$  on $\RS^3/\mathbb{Z}_k$ gives  
\[
\HH^+=\K^+_0\cap\HH=\{(e^{ip\theta}\lambda, e^{i\theta})\mid \theta\in \mathbb{R}, \lambda\in\mathbb{Z}_k\}.
\]
Thus we have the following diagram:
\begin{align}
(\RS^3\times \RS^3, \{(e^{ip\theta}\lambda, e^{i\theta})\mid \theta\in \mathbb{R}, \lambda\in\mathbb{Z}_k\}, \T^2, \RS^3\times \RS^1).
\end{align}

Assume now that $\K^-/\HH=\SP^l$,  $l\geq 2$. Hence by Corollary~\ref{C:Simply-connected Orbits}, $\K^+$ is connected.  First assume that $\Gamma\neq \mathbb{Z}_k$. As a result $\HH=\Gamma\times \RS^1$. For $l=2$, the only possibility for $\K^-_0$ is $1\times \RS^3$. Then we obtain the following diagram:
 \begin{align}
(\RS^3\times \RS^3, \Gamma\times \RS^1, \Gamma\times \RS^3, \RS^3\times \RS^1).
\end{align}
This action is equivalent to the join action on $(\RS^3/\Gamma)\ast\RS^2$. 

For $l\geq3$, there are no subgroups of $\GG$ such that $\K^-/\HH=\SP^l$; therefore we need not consider these cases.

Now let $\Gamma=\mathbb{Z}_k$. Then the isotropy subgroup of the transitive action of $\RS^3\times\RS^1$ on $\RS^3/\mathbb{Z}_k$ would be $\{(e^{ip\theta}\lambda, e^{i\theta})\mid \theta\in \mathbb{R}, \lambda\in\mathbb{Z}_k\}$. Assume that $l=2$. Then $\K^-_0=1\times \RS^3$ or $\K^-_0=\Delta\RS^3$. Therefore, $p=0$, or $p=1$, respectively. If $p=1$, then $\mathbb{Z}_k=\mathbb{Z}_2$ since $\HH\subseteq \N(\Delta\RS^3)=\pm\Delta\RS^3$. Thus we have the  following diagrams corresponding to $p=0$ and $p=1$, respectively:
 \begin{align}
(\RS^3\times \RS^3, \mathbb{Z}_k\times \RS^1, \mathbb{Z}_k\times \RS^3, \RS^3\times \RS^1),
\end{align}
and 
\begin{align}
(\RS^3\times \RS^3, \pm\Delta\RS^1, \pm\Delta\RS^3, \RS^3\times \RS^1).
\end{align}

The first action is equivalent to the join action on $(\RS^3/\mathbb{Z}_k)\ast\SP^2$. The second one is the join action on $\RP^3\ast\RS^2$  given by
\begin{align*}
(\RS^3\times\RS^3)\times(\RP^3*\SP^2)&\to \RP^3*\SP^2\\
(g, h)\cdot[x, y,t]&= [gxh^{-1},  hyh^{-1},t].
\end{align*}

For $l\geq 3$, there are no subgroups of $\GG$ such that $\K^-/\HH=\SP^l$.

Assume now that $\K^-/\HH=\RS^3/\Lambda$ with $\Lambda$ a non-trivial subgroup of $S^3$. Therefore, $\K^-_0=\RS^3\times \RS^1$ or $\K^-_0=\RS^1\times \RS^3$. First assume that $\K^-_0=\RS^3\times \RS^1$. Note that according to the classification of the transitive actions on $3$-dimensional space forms, $q\neq 0$, which gives that the circles in the second component of $K^{\pm}_0$ are the same, so $\K^-_0=\K^+_0$, and $\HH=\K^+_0\cap \HH=\K^-_0\cap \HH$. Thus $\Gamma=\Lambda$. Consequently, for $\Gamma\neq \mathbb{Z}_k$, we have the following diagram:
\begin{align}
(\RS^3\times \RS^3, \Gamma\times \RS^1, \RS^3\times \RS^1, \RS^3\times \RS^1).
\end{align}
This action is equivalent to the product action on $\Susp(\RS^3/\Gamma)\times\SP^2$. If $\Gamma=\mathbb{Z}_k$, the following diagram is obtained:
\begin{align}
(\RS^3\times \RS^3, \{((e^{ip\theta}\lambda, e^{i\theta})\mid \theta\in \mathbb{R}, \lambda\in\mathbb{Z}_k)\}, \RS^3\times \RS^1, \RS^3\times \RS^1).
\end{align}
For $p=0$, this action is equivalent to the product action on $\Susp(\RS^3/\mathbb{Z}_k)\times\SP^2$, and for $p\neq 0$, it is non-primitive. In particular, in the preceding diagram,
 if $\mathbb{Z}_k=\mathbb{Z}_2$ and $p=1$, then the action is as follows:
\begin{align*}
(\RS^3\times\RS^3)\times(\Susp(\RP^3)\times\SP^2)&\to\Susp(\RP^3)\times\SP^2\\
(g, h)\cdot([x, t], y)&= ([gxh^{-1}, t],  hyh^{-1}).
\end{align*}

Now let $\K^-_0=\RS^1\times \RS^3$. Hence $\HH_0=\Delta \RS^1$, and both $\Gamma$ and $\Lambda$ are  cyclic subgroups of $\RS^3$, say $\Z_k$ and $Z_l$, respectively. Then we have $H^+=\{((e^{i\theta}\lambda, e^{i\theta})\mid \theta\in \mathbb{R}, \lambda\in\mathbb{Z}_k)\}$ and $H^-=\{((e^{i\theta}\lambda, e^{i\theta})\mid \theta\in \mathbb{R}, \lambda\in\mathbb{Z}_l)\}$. Hence  $H^+$ and  $H^-$ are subgroups of both $K_0^+$ and $K^-_0$, which gives that $H=\langle H^+, H^-\rangle\subseteq K^{\pm}_0$. It follows then from Proposition~\ref{P:Trans. of G_0} that $K^+=K^+_0H=K^+_0$ and $K^-=K^-_0H=K^-_0$. Thus    $H^-=H=H^+$ and, in particular, $\Gamma=\Lambda$. The diagram is then given by
\begin{align}
(\RS^3\times \RS^3, \{((e^{i\theta}\lambda, e^{i\theta})\mid \theta\in \mathbb{R}, \lambda\in\mathbb{Z}_k)\}, \RS^1\times \RS^3, \RS^3\times \RS^1).
\end{align}
\vspace{.15cm}


\noindent  $\mathbf{\GG = S^3\times S^3\times S^1}.$ In this case, $\dim~\HH=2$ and $\HH_0\subseteq \RS^3\times \RS^3\times 1$, since the action is non-reducible. As the maximal torus of $\RS^3\times \RS^3$ is the only $2$-dimensional subgroup of $\RS^3\times \RS^3$, we have  $\HH_0=\T^2$. Further, by Proposition \ref{P:ABELIAN_G}, $\K^-/\HH=\SP^1$, and $\K^+_0\subseteq \RS^3\times \RS^3\times 1$. As a result, $\K^-_0=\T^3$. Since $\T^2=\HH_0\subseteq \K^+_0\subseteq \RS^3\times \RS^3\times 1$, we have $\K^+_0=\RS^3\times \RS^1$, $\K^+_0=\RS^1\times \RS^3$, or $\K^+_0=\RS^3\times \RS^3$. However, $\RS^3\times \RS^3$ does not act transitively on a $4$-dimensional  homogeneous space with positive curvature (see \cite{WZ}). Therefore, $\K^+_0=\RS^3\times \RS^1$ or $\K^+_0=\RS^1\times \RS^3$. Without loss of generality, we can assume that $\K^+_0=\RS^3\times \RS^1$. Thus $\K^+/\HH=\RP^2$. By the classification of the transitive actions on spheres, $\RS^3$, up to a finite cover, is the only Lie group which acts transitively and almost effectively on $\RP^2$. Therefore, $\K^+_0\cap\HH=\N_{\RS^3}(\RS^1)\times\RS^1\times 1$, and we obtain the following diagram:
\begin{align}
(\RS^3\times \RS^3\times \RS^1, \N_{\RS^3}(\RS^1)\times \RS^1\times \mathbb{Z}_k , \N_{\RS^3}(\RS^1)\times \RS^1\times \RS^1, \RS^3\times \RS^1\times \mathbb{Z}_k).
\end{align} 
By Proposition~\ref{P:Product-action}, this action is equivalent to the product  action on   $(\RP^2\ast\SP^1)\times \SP^2$.
\\


\noindent $\mathbf{\GG=SU(3)}.$ In this case, $\dim\HH=3$. Thus the only possibilities for $H_0$ are $\SO(3)$ and $\SU(2)$. If $\HH_0=\SO(3)$, then $\K^{\pm}=\SU(3)$ since $\SO(3)$ is a maximal connected subgroup and there are no exceptional orbits. This cannot happen since there are no  homogeneous spaces with positive curvature with an $\SU(3)$-action and $\SO(3)$ as the isotropy group (see \cite{WZ}). Hence $\HH_0=\SU(2)$. The  subgroups of $\GG$ which contain $\HH_0$ properly are $\UU(2)$ and $\SU(3)$. As $\UU(2)/\HH=\SP^1$, at least one of the singular isotropy groups, say $\K^+$, is equal to $\SU(3)$. Therefore, $\dim~\K^+/\HH=5$, which gives that $\K^+/\HH=\RS^5/\mathbb{Z}_k$. The classification of the transitive actions on spheres then shows that $\HH=\mathrm{S}(\UU(2)\mathbb{Z}_k)$. Depending on whether  $\K^-=\UU(2)$ or $\SU(3)$, we have the following two diagrams:
\begin{align}
(\SU(3), \mathrm{S}(\UU(2)\mathbb{Z}_k) , \UU(2), \SU(3)),
\end{align}
\begin{equation}\label{EQ: SU(3)-action, 2}
(\SU(3), \mathrm{S}(\UU(2)\mathbb{Z}_k) , \SU(3), \SU(3)).
\end{equation}
The space determined by diagram \eqref{EQ: SU(3)-action, 2} is equivalent to  $\Susp(\SP^5/\mathbb{Z}_k)$.
\\


\noindent $\mathbf{\GG=SU(3)\times S^1}.$ In this case, $\dim\HH=4$. By Proposition \ref{P:ABELIAN_G}, $\HH_0, \K^+_0\subseteq \SU(3)\times 1$ and $\K^-/\HH=\SP^1$. Therefore $\HH_0=\UU(2)\times 1$, $\K^+_0=\SU(3)\times 1$, and $\K^-_0=\UU(2)\times \RS^1$. Hence the following diagram is obtained:
\begin{align}
(\SU(3)\times\RS^1, \UU(2)\times\mathbb{Z}_k, \UU(2)\times \RS^1, \SU(3)\times\mathbb{Z}_k).
\end{align}
By Proposition \ref{P:Join-action}, the space determined by this diagram is equivalent to $\CP^2\ast\SP^1$. 
\\


\noindent $\mathbf{\GG=Sp(2)\times S^1}.$ In this case, $\dim\HH=6$. As above, by Proposition \ref{P:ABELIAN_G}, $\HH_0, \K^+_0\subseteq \Sp(2)\times 1$, and $\K^-/\HH=\SP^1$. Therefore $\HH_0=\Sp(1)\Sp(1)\times 1$, and  $\K^+_0=\Sp(2)\times 1$, for $\Sp(1)\Sp(1)$ is a  maximal connected subgroup of $\Sp(2)$. Thus $\dim~\K^+/\HH=4$, and therefore $\K^+/\HH=\RP^4$ (note that the other positively curved homogeneous space in dimension $4$ is $\CP^2$, which does not admit an $\Sp(2)$-transitive action (see \cite[Table~B]{WZ})). Hence we get the following diagram
\begin{align}
(\Sp(2)\times\RS^1, \N_{\Sp(2)}(\Sp(1)\Sp(1)))\times\mathbb{Z}_k, \N_{\Sp(2)}(\Sp(1)\Sp(1))) \times\RS^1, \Sp(2)\times\mathbb{Z}_k).
\end{align}
By Proposition \ref{P:Join-action}, $\X$ is equivalent to  $\RP^4\ast\SP^1$. 
\\


\noindent $\mathbf{\GG=Spin(6)}.$ In this case, since  $\dim~\GG=15=(6)(6-1)/2$, by Proposition~\ref{P:Spin-action}, we obtain the  diagram
\begin{align}
(\Spin(6), \N_{\Spin(6)}(\Spin(5)), \Spin(6), \Spin(6)).
\end{align}
and $\X$ is equivariantly  homeomorphic to $\Susp(\RP^5)$.\\




\subsection{Classification in dimension 7} 
\label{SS:CLASSIF_D7}
By Proposition \ref{L:DIM_BOUND}, we have $6\leq \dim \GG\leq 21$ and $\dim \HH=\dim \GG-6$. As before, Propositions  \ref{P:BOUND_DIMH} and \ref{P:ABELIAN_G} give us the possible acting groups:  
\begin{align*}
&\RS^3\times \RS^3,\ \RS^3\times \RS^3\times \RS^1,\ \SU(3),\ \RS^3\times \RS^3\times \RS^3,\ \SU(3)\times \RS^1,\  \Sp(2),\\ & \SU(3)\times \RS^3,\ \Sp(2)\times \RS^3,\ \GG_2,\ \SU(4),\ \SU(4)\times \RS^1,\ \Spin(7).
\end{align*} 
Now we examine each group case by case.
\\


\noindent $\mathbf{G=S^3\times S^3}.$ In this case, $\dim\HH=0$. Having looked at the classification of homogeneous spaces with positive curvature, and the subgroups of $\RS^3\times \RS^3$, one can see that the only  homogeneous spaces  with positive curvature  that can happen as the normal space of directions of singular orbits  are $3$-dimensional spherical space forms. 

Assume that $\K^+/\HH= \RS^3/\Gamma$, with $\Gamma$ a nontrivial finite subgroup of $\RS^3$. Then $\K^+$ is $3$-dimensional and, as a result, $\K^+_0$ can be $\RS^3\times 1$, $1\times \RS^3$ or  $\Delta_{g_0}\RS^3=\{(g, g_0gg_0^{-1})\mid g\in \RS^3\}$, for some fixed $g_0\in \RS^3$. 
\vspace{.2cm}

Suppose first  that $\K^+_0=\RS^3\times 1$. Then $\HH\cap\K^+_0=\Gamma\times 1$. Furthermore, $\K^-/\HH$ is one of the spaces $\SP^1$, $\SP^3$, or $\RS^3/\Lambda$ with $\Lambda$ a nontrivial finite subgroup of $\RS^3$. 

First assume that $\K^-/\HH=\SP^1$. Thus $\K^-_0=\{(e^{xp\theta}, e^{yq\theta})\mid  \theta\in \mathbb{R}, x, y\in \Imag (\mathbb{H})\cap \RS^3\}$. If $p=0$, then we have the diagram
 \begin{align}
(\RS^3\times \RS^3, \Gamma\times\mathbb{Z}_k, \Gamma\times \RS^1, \RS^3\times \mathbb{Z}_k), 
\end{align}
and $\X$ is equivariantly  homeomorphic to the total space of an $(\RS^3/(\Gamma\ast\SP^1))$-bundle over $\SP^2$. 

If $q=0$, then 
\[
\Gamma\times 1\subseteq \HH\subseteq N_{\RS^3\times\RS^3}(\RS^1\times 1)= N_{\RS^3}(\RS^1)\times \RS^3,
\]
 which implies that $\Gamma=\Z_k$ or $\Gamma= \D^{*}_{2m}$. For $\Gamma=\Z_k$, $H^+\subseteq K^-_0$. Therefore, $H=\langle H^+, H^-\rangle\subseteq K^-_0$, which gives, by Proposition~\ref{P:Trans. of G_0}, that $K^-=K^-_0$. Thus we get the following diagram:
\begin{align}
(\RS^3\times \RS^3, \mathbb{Z}_k\times 1, \RS^1\times 1, \RS^3\times 1).                                                                                                       
\end{align}
This action is equivalent to the product action of  $\RS^3\times \RS^3$ on $\X^4\times \SP^3$, where $\X^4$ is the $4$-dimensional Alexandrov space with the following diagram (see \cite{GS}):
 \[
(\RS^3, \mathbb{Z}_k, \RS^1, \RS^3).                                                                                                       
\]
Indeed, $X^4$ is equivariantly homeomorphic to $\CP^2/\Z_k$ (for more details see Subsection~\ref{SS: fixed point orbifolds}, Diagram \eqref{D:ORBI_ITEM_1}). 

For $\Gamma= \D^{*}_{2m}$, we have $\K^-=N_{\RS^3}(\RS^1)\times 1$, and we obtain the following diagram:
\begin{align}
(\RS^3\times \RS^3, \D^{*}_{2m}\times 1, N_{\RS^3}(\RS^1)\times 1, \RS^3\times 1).                                                                                                       
\end{align}
Similarly, this action is equivalent to the product action of  $\RS^3\times \RS^3$ on $\X^4\times \SP^3$, where $\X^4$ is given by 
\[
(\RS^3, \D^{*}_{2m}, N_{\RS^3}(\RS^1), \RS^3).                                                                                                       
\] 
Again, $X^4$ is equivariantly homeomorphic to $\CP^2/\Z_m$ (for more details see Subsection~\ref{SS: fixed point orbifolds}, Diagram \eqref{D:ORBI_ITEM_2}). 

If $pq\neq 0$, then,  since \[\Gamma\times 1\subseteq \HH\subseteq N_{\RS^3\times\RS^3}(\{(e^{xp\theta}, e^{yq\theta})\})=\{(e^{x\theta}, e^{y\phi})\}\cup (z, w)\{(e^{x\theta}, e^{y\phi})\},\]
where $z\in x^{\perp}\cap \Imag (\mathbb{H})\cap \RS^3$ and $w\in y^{\perp}\cap \Imag (\mathbb{H})\cap \RS^3$,
 we have $\Gamma= \mathbb{Z}_k$. Also, without loss of generality, we may assume that $\K^-_0=\{(e^{ip\theta}, e^{iq\theta})\}$. Therefore, we get the following diagram:
\begin{align}
\label{EQ:7, diag. 1}
(\RS^3\times \RS^3, \{(e^{\frac{lk+mps}{km}2\pi i}, e^{\frac{2\pi psi}{k}})\mid 1\leq s\leq k, 1\leq l\leq m\},  (\mathbb{Z}_m\times 1)\K^-_0, \RS^3\times \mathbb{Z}_{k/(k,q)}),
\end{align}
where $(k, q)=(q, m)$.

Now, assume that $\K^-/\HH=\SP^3$. As a result, $\K^+$ is connected and $\HH=\Gamma\times 1$. On the other hand, $\K^-_0$ is a $3$-dimensional subgroup containing $\Gamma\times 1$. Therefore, there are two possibilities:  $\K^-_0=1\times\RS^3$, and  $\K^-_0=\Delta_{g_0}\RS^3$. If $\K^-_0=1\times\RS^3$, then we obtain the following  diagram:
 \begin{align}
(\RS^3\times \RS^3, \Gamma\times 1, \Gamma\times \RS^3, \RS^3\times 1).                                                                                                       
\end{align}
By Proposition~\ref{P:Join-action}, $\X$ is equivariantly  homeomorphic to $\RS^3\ast (\RS^3/\Gamma)$.  

Now let  $K^-_0=\Delta_{g_0}\RS^3$. Since  $1\times\RS^3\subseteq \N(\HH)_0$,  by Proposition~\ref{P:Equivalent_actions} we can conjugate $\K^-$ by $(1, g_0^{-1})$ without changing the spaces. Moreover, $K^-\subseteq N(\Delta_{g_0}\RS^3)=\pm \Delta_{g_0}\RS^3$, so we can assume that $g_0=1$. Now, since $\K^-/\HH$ is simply-connected, the number of connected components of $\K^-$ and $\HH$ are the same. Since $\HH\neq 1$, and $\K^-$ has at most two components, we conclude that $\Gamma=\mathbb{Z}_2$. Thus, we get the following diagram:
\begin{align}
(\RS^3\times \RS^3,\mathbb{Z}_2\times 1, \pm \Delta\RS^3, \RS^3\times 1).                                                                                                       
\end{align}
This action is equivalent to the  following  action on $\RP^3\ast\SP^3$:
 \begin{align*}
(\RS^3\times\RS^3)\times(\RP^3\ast\SP^3)&\to \RP^3\ast\SP^3\\
(g, h)\cdot[x, y,t]& = [gxh^{-1},  hy,t].
\end{align*} 

If $\K^-/\HH=\RS^3/\Lambda$, then $K^-_0$ is equal to one of the subgroups  $\RS^3\times\ 1$,    $1\times \RS^3$ or $\Delta_{g_0}\RS^3$. First assume that $\K^-_0=\RS^3\times\ 1$. Then 
\[
\Gamma\times 1=\K^+_0\cap\HH=\K^-_0\cap\HH=\Lambda\times 1.
\]
Therefore, $\Gamma=\Lambda$ and by Lemma~\ref{L:SIMP_CONN_COND}, $H=\langle H^+, H^-\rangle=\Gamma\times 1$ and we obtain the diagram
 \begin{align}
(\RS^3\times \RS^3, \Gamma\times 1, \RS^3\times 1, \RS^3\times 1).                                                                                                       
\end{align}
This action is equivalent to the product action on $\Susp(\RS^3/\Gamma)\times \RS^3$. 

Now let $\K^-_0=1\times\ \RS^3$. In this case, $\Gamma\times 1=\K^+_0\cap\HH$, and $\K^-_0\cap\HH=\Lambda\times 1$, so by Lemma~\ref{L:SIMP_CONN_COND}, $\HH=\Gamma\times \Lambda$. Hence, we get the following diagram:
 \begin{align}
(\RS^3\times \RS^3, \Gamma\times \Lambda, \Gamma\times \RS^3, \RS^3\times \Lambda).                                                                                                       
\end{align}
Proposition~\ref{P:Join-action} implies that $\X$ is  equivariantly  homeomorphic to $(\RS^3/\Gamma)\ast(\RS^3/\Lambda)$.

  Finally, suppose that $\K^-_0=\Delta_{g_0}\RS^3$. Since $\Gamma\times 1\subseteq \K^-\subseteq \N(\Delta_{g_0}\RS^3)=\pm\Delta_{g_0}\RS^3$, and $\Gamma\neq 1$, then $\K^-$ has to be $\pm\Delta_{g_0}\RS^3$, and $\Gamma =\mathbb{Z}_2$. Also, the classification of transitive actions on spheres gives us that $\K^-_0\cap\HH=\Delta_{g_0}\Lambda$. Therefore $\HH=\pm \Delta_{g_0}\Lambda$, and the following diagram is obtained:
 \begin{equation}\label{EQ: 7, 2}
(\RS^3\times \RS^3, \pm \Delta_{g_0}\Lambda, \pm\Delta_{g_0}\RS^3, \RS^3\times g_0\Lambda g_0^{-1}).                                                                                                       
\end{equation}
According to Proposition~\ref{P:Equivalent_actions} and Equation~\eqref{EQ: 7, 2}, we can assume that  $g_0=1$. This action is equivalent to the
  following action and $\X$ is  then equivariantly  homeomorphic to  $\RP^3\ast(\RS^3/\Lambda)$:
 \begin{align*}
(\RS^3\times\RS^3)\times(\RP^3\ast\RS^3/\Lambda)&\to \RP^3\ast(\RS^3/\Lambda)\\
(g, h)\cdot[x, y, t]&= [gxh^{-1},  hy, t].
\end{align*}


\vspace{.2cm}
Now assume that $\K^+_0=\Delta_{g_0}\RS^3$. Thus $\HH\cap\K^+_0=\Delta_{g_0}\Gamma$. As before, $\K^-/\HH$ can be a circle, a $3$-sphere, or a $3$-dimensional spherical space form. 

First suppose that $\K^-/\HH=\SP^1$. Therefore,  $\K^-_0=\{(e^{xp\theta}, e^{yq\theta})\}$ and, after conjugation, we can assume that it is equal to $\K^-_0=\{(e^{ip\theta}, e^{iq\theta})\}$. Since $\K^+\subseteq N(\K^+_0)=\pm\Delta_{g_0} (\RS^3)$, there are two possibilities for $\K^+$: either $\K^+=\Delta_{g_0}\RS^3$ or $\K^+=\pm\Delta_{g_0} \RS^3$. 

Assume that $\K^+=\Delta_{g_0} \RS^3$. Therefore $\HH=\Delta_{g_0}\Gamma$. Let $q=0$. Then $K^-_0 = S^1\times 1$ and 
\[
\K^-=\K^-_0\HH=\{(za, g_0ag_0^{-1}) \mid a\in \Gamma,\ z\in S^1\,\}.
\]
Since $\Proj_1(\K^-)\subseteq \RS^1\cup j\RS^1$, we have $\Gamma=\mathbb{Z}_k$, or $\Gamma=\D^*_{2m}$. Thus $\K^-$ is equal to $\RS^1\times g_0\mathbb{Z}_k g_0^{-1}$ or $(\RS^1\times 1)\Delta_{g_0}\D^*_{2m}$, respectively. By conjugating the subgroups by $(1, g_0^{-1})$, we have the following diagrams:
  \begin{align}
  \label{D:D7_D2m_A}
(\RS^3\times \RS^3,  \Delta\mathbb{Z}_k,  \RS^1\times \mathbb{Z}_k, \Delta\RS^3),                                                                                                      
\end{align}
 \begin{align}
  \label{D:D7_D2m_B}
(\RS^3\times \RS^3,  \Delta\D^*_{2m},  (\RS^1\times 1)\Delta\D^*_{2m}, \Delta\RS^3).                                                                                                   
\end{align}

If $p=0$, we have,  similarly,
\begin{align}
(\RS^3\times \RS^3,  \Delta\mathbb{Z}_k,   \mathbb{Z}_k\times\RS^1, \Delta\RS^3),                                                                                                      
\end{align}
\begin{align}
(\RS^3\times \RS^3,  \Delta\D^*_{2m},   (1\times\RS^1)\Delta\D^*_{2m}, \Delta\RS^3).                                                                                                   
\end{align}
Observe that these two diagrams are the same as Diagrams~\eqref{D:D7_D2m_A} and \eqref{D:D7_D2m_B} up to exchanging the factors of $G$.

Now assume that $pq\neq 0$. Then $N(\K^-_0)=\{(e^{i\theta}, e^{i\phi})\}\cup \{(je^{i\theta}, je^{i\phi})\}$. Thus $\K^-=\{(e^{ip\theta}, e^{iq\theta})\}$ or $\K^-=\{(e^{ip\theta}, e^{iq\theta})\}\cup \{(je^{ip\theta}, je^{iq\theta})\}$. If $\K^-=\{(e^{ip\theta}, e^{iq\theta})\}$,  then $\Gamma=\mathbb{Z}_k$. We have 
\[
\{(a, g_0ag_0^{-1}) \mid a\in \mathbb{Z}_k\}=\Delta_{g_0}\mathbb{Z}_k=\{(e^{\frac{2\pi i}{k}p}, e^{\frac{2\pi i}{k}q})\}.
\]
Therefore, 
\[
g_0e^{\frac{2\pi i}{k}p}g_0^{-1}=e^{\frac{2\pi i}{k}q}.
\]
Since $e^{\frac{2\pi i}{k}p}$ and $e^{\frac{2\pi i}{k}q}$ are  elements  in the maximal torus $S^1=\{e^{\theta i\mid \theta\in\mathbb{R}}\}\subseteq S^3$, by \cite[Proposition~4.53]{Knapp}, they are conjugate in the Weyl group $W(S^3, S^1)=\{S^1, jS^1\}$. Thus one of the following cases occurs:
\begin{itemize}
\item
$g_0\in \{e^{i\theta}\}$ and $e^{\frac{2\pi i}{k}p}=e^{\frac{2\pi i}{k}q}$. Consequently $k|(p-q)$, and if $k$ is even, then $p,q$ are odd. By conjugating the isotropy groups by $(1, g_0^{-1})$, we have $\K^+=\Delta\RS^3$,   $\K^-=\{(e^{ip\theta}, e^{iq\theta})\}$ and $\HH=\Delta\mathbb{Z}_k$. 

\item 
 $g_0\in \{je^{i\theta}\}$ and $e^{\frac{2\pi i }{k}p}=e^{-\frac{2\pi i}{k}q}$  which gives $k|(p+q)$, and if $k$ is even, then $p$, $q$ are odd. Again, by conjugating the isotropy groups by $(1, g_0^{-1})$, we have $\K^+=\Delta\RS^3$,   $\K^-=\{(e^{ip\theta}, e^{-iq\theta})\}$ and $\HH=\Delta\mathbb{Z}_k$. 

\end{itemize}

Summing up, we have the following diagrams:
  \begin{align}
(\RS^3\times \RS^3,  \Delta\mathbb{Z}_k,  \{(e^{ip\theta}, e^{iq\theta})\}, \Delta\RS^3),                                                                                                      
\end{align}
where $k|(p-q)$, and if $k$ is even, then $p$, $q$ are odd, and we get
 \begin{align}
(\RS^3\times \RS^3,  \Delta\mathbb{Z}_k,  \{(e^{ip\theta}, e^{-iq\theta})\}, \Delta\RS^3),                                                                                                      
\end{align}
where, $k|(p+q)$, and if $k$ is even, then $p$, $q$ are odd.

If $K^-=\{(e^{ip\theta}, e^{iq\theta})\}\cup \{(je^{ip\theta}, je^{iq\theta})\}$,  similar arguments as above give rise to the following diagrams with the same conditions, respectively:
  \begin{align}
(\RS^3\times \RS^3,  \Delta\D^*_{2m},  \{(e^{ip\theta}, e^{iq\theta})\}\cup \{(je^{ip\theta}, je^{iq\theta})\}, \Delta\RS^3),
\end{align}
 \begin{align}
(\RS^3\times \RS^3,  \Delta\D^*_{2m},  \{(e^{ip\theta}, e^{-iq\theta})\}\cup \{(je^{ip\theta}, je^{-iq\theta})\}, \Delta\RS^3).
\end{align}

If $\K^+=\pm\Delta_{g_0}\RS^3$, then $\HH=\pm\Delta_{g_0}\mathbb{Z}_k$ or $\HH=\pm\Delta_{g_0}\D^*_{2m}$. By the same argument, we obtain the following diagrams:
 \begin{align}\label{D:D7_D2m_C}
(\RS^3\times \RS^3,  \pm\Delta\mathbb{Z}_k,  \RS^1\times \mathbb{Z}_k, \pm\Delta\RS^3),                                                                                                      
\end{align}
\begin{align}\label{D:D7_D2m_D}
(\RS^3\times \RS^3,  \pm\Delta\D^*_{2m},  (\RS^1\times 1)\Delta\D^*_{2m}, \pm\Delta\RS^3),                                                                                                 
\end{align}
  \begin{align}
(\RS^3\times \RS^3,  \pm\Delta\mathbb{Z}_k,   \mathbb{Z}_k\times\RS^1, \pm\Delta\RS^3),                                                                                                      
\end{align}
\begin{align}
(\RS^3\times \RS^3,  \pm\Delta\D^*_{2m},   (1\times\RS^1)\Delta\D^*_{2m}, \pm\Delta\RS^3).                                                                                                   
\end{align}
Observe that the last two diagrams are the same as Diagrams~\eqref{D:D7_D2m_C} and \eqref{D:D7_D2m_D} up to exchanging the factors of $G$.

In the following diagrams, $p$ is odd and $q$ is even, so $k$ has to be odd:
 \begin{align}
(\RS^3\times \RS^3,  \pm\Delta\mathbb{Z}_k,  \{(e^{ip\theta}, e^{iq\theta})\}, \pm\Delta\RS^3),                                                                                                      
\end{align}
\begin{align}
(\RS^3\times \RS^3,  \pm\Delta\mathbb{Z}_k,  \{(e^{ip\theta}, e^{-iq\theta})\}, \pm\Delta\RS^3),                                                                                                      
\end{align}
  \begin{align}
(\RS^3\times \RS^3,  \pm\Delta\D^*_{2m},  \{(e^{ip\theta}, e^{iq\theta})\}\cup \{(je^{ip\theta}, je^{iq\theta})\}, \pm\Delta\RS^3),
\end{align}
  \begin{align}
(\RS^3\times \RS^3,  \pm\Delta\D^*_{2m},  \{(e^{ip\theta}, e^{-iq\theta})\}\cup \{(je^{ip\theta}, je^{-iq\theta})\}, \pm\Delta\RS^3).
\end{align}

Now assume that $\K^-/\HH=\SP^3$. Therefore,  $\K^+=\K^+_0=\Delta_{g_0}\RS^3$  and $\HH=\Delta_{g_0}\Gamma$. Further, $\K^-_0$ is equal to $\RS^3\times 1$,  $1\times \RS^3$ or  $\Delta_{g_1}\RS^3$. For  $K^-_0=\RS^3\times 1, 1\times \RS^3$, we have  
 \begin{align}
(\RS^3\times \RS^3,  \Delta\Gamma,  \RS^3\times \Gamma, \Delta\RS^3),                                                                                                      
\end{align}
  \begin{align}
(\RS^3\times \RS^3,  \Delta\Gamma,   \Gamma\times\RS^3, \Delta\RS^3).
\end{align}
 
If $\K^-_0=\Delta_{g_1}\RS^3$, since $\K^-/\HH$ is simply-connected, $\pi_0(\K^-)=\pi_0(\HH)$. The number of connected components of $\K^-$ is at most 2, for $\K^-\subseteq \N(\K^-_0)=\pm \Delta_{g_1}\RS^3$. Thus $\HH=\langle (-1, 1) \rangle$. But then $\HH$ is not  a subgroup of $\K^+$. Therefore, this case cannot occur.  

Now, let $\K^-/\HH=\SP^3/\Lambda$ with $\Lambda$ a non-trivial subgroup of $S^3$. Again, we have three possibilities for $\K^-_0$: $\RS^3\times 1,\, 1\times \RS^3, \, \Delta_{g_1}\RS^3$. For $\K^-_0=\RS^3\times 1,\, 1\times \RS^3$, we obtain a diagram  equivalent to diagram \eqref{EQ: 7, 2} by Proposition \ref{P:Equivalent_actions}. Hence, suppose that $\K^-_0=\Delta_{g_1}\RS^3$.  If $K^+=\Delta_{g_0}\RS^3$, then $\HH=\Delta_{g_0}\Gamma$, so  $\K^-_0\cap\HH=\Delta_{g_1}\Lambda\subseteq \Delta_{g_0}\Gamma$ which implies $g_0^{-1}g_1\in C_{S^3}(\Lambda)$, where

\begin{equation}\label{EQ: 7, 3}
C_{S^3}(\Lambda)=\left\{ \begin{array}{ll}
\{\pm 1\} & \mbox{if $\Lambda\neq \mathbb{Z}_k$}, \\
\{e^{i\theta} \mid \theta\in \R\}& \mbox{if $\Lambda=\mathbb{Z}_k$}.
\end{array} 
\right.
\end{equation}
Moreover, $\{(-a, g_1ag_1^{-1})\mid a\in \Lambda\}\nsubseteq \Delta_{g_0}\Gamma$. Therefore, $\K^-=\Delta_{g_0z}\RS^3$, 
$\Delta_{g_1}\Lambda=\Delta_{g_0}\Gamma$, where $z\in C(\Gamma)$. If $\Gamma\neq \Z_k$, then by \eqref{EQ: 7, 3}, $z=\pm 1$ and in particular $\K^-=\Delta_{g_0}\RS^3$. Hence, after conjugating all subgroups by $(1, g_0^{-1})$, we obtain an equivalent diagram given by 
\begin{equation}\label{Eq_GD_7_1}
  (\RS^3\times \RS^3,  \Delta\Gamma,  \Delta\RS^3, \Delta\RS^3).                                                                                                   
  \end{equation} 
  If $\Gamma=\Z_k$, then we first conjugate all subgroups by $(1, g_0^{-1})$. Then, since $(1, z) \in N(H)_0$ by \eqref{EQ: 7, 3}, we can conjugate $\K^-$ by $(1, z)$ to obtain  diagram \eqref{Eq_GD_7_1}.

This action is equivalent to the following action on $\Susp(\RS^3/\Gamma)\times \SP^3$:
 \begin{align*}
(\RS^3\times\RS^3)\times(\Susp(\RS^3/\Gamma)\times \SP^3)&\to \Susp(\RS^3/\Gamma)\times \SP^3\\
(g, h)\cdot([x, t], y)&= ([gx, t],  gyh^{-1}).
\end{align*}

Now assume that $\K^+=\pm \Delta_{g_0}\RS^3$. Then, $\K^-=\pm\Delta_{g_1}\RS^3$ and $\pm\Delta_{g_0}\Gamma=H=\pm\Delta_{g_1}\Lambda$. Thus, we have $\Gamma=\Lambda$ and $g_1=g_0z$, for some $z\in N(\Gamma)$. We claim that if $z\in C_{S^3}(\Gamma)$, then this case cannot happen, since we have assumed that $X$ is simply-connected. Indeed, if $z\in C_{S^3}(\Gamma)$ then $\Delta_{g_0}\Gamma=\Delta_{g_0z}\Gamma=\Delta_{g_1}\Gamma$, and therefore, $H^-=H^+$. Since $X$ is simply-connected, by Proposition~\ref{fundamental group}, $H=\langle H^-, H^+\rangle=\Delta_{g_0}\Gamma$, which is a contradiction. Assume now that $z\notin C_{S^3}(\Gamma)$. A direct computation shows that either $\Gamma=\D^*_{2m}$, with $z=i$ or $z=j$, where in the latter case $2|m$, or $\Gamma=\Z_k$, with $z=j$ and $4|k$. As a result, we have the following diagram:
 \begin{align}
(\RS^3\times \RS^3,  \pm\Delta\Gamma,  \pm\Delta_z\RS^3, \pm\Delta\RS^3),                                                                                                      
\end{align}
where $\Gamma$ and $z$ are as above, respectively.
\\

\noindent $\mathbf{\GG=S^3\times S^3\times S^1}$. In this case, $\dim\HH=1$. By Proposition~\ref{P:ABELIAN_G}, $\K^-/\HH=\SP^1$ and both $\K^+_0$ and $\HH_0$ are subgroups of $\RS^3\times\RS^3\times 1$. Thus we can assume that $\HH_0=\{(e^{ip\theta}, e^{iq\theta})\}\times 1$ and $\Proj_3(\K^-_0)=\RS^1$. Since $\K^+\subseteq \RS^3\times\RS^3\times 1$, an examination of the subgroups of $\RS^3\times\RS^3$ shows that the only possibilities for $\K^+/\HH$ are $\RP^2$ and $\RS^3/\Gamma$ with $\Gamma$ finite and non-trivial.

 First assume that $\K^+/\HH=\RP^2$. Therefore, $\dim\K^+=3$. The possibilities for $K^+_0$ are $\RS^3\times 1\times 1,$ $1\times\RS^3\times 1$ and $\Delta_{g_0}\RS^3\times 1$. 
 
 Let $K^+_0=\RS^3\times 1\times 1$. Then $\HH_0=\RS^1\times 1\times 1$, $\HH\cap\K^+_0=\N(\RS^1)\times 1\times 1$, and $\K^-_0=\RS^1\times \T^1$, where $\T^1\subseteq \RS^3\times\RS^1$.  Since $\N(\RS^1)$ is  a maximal subgroup of $\RS^3$, $\HH\cap\K^-_0=\RS^1\times \mathbb{Z}_k$, where $\mathbb{Z}_k\subseteq \T^1$. Therefore, we obtain the following diagram:
   \begin{align}
(\RS^3\times \RS^3\times \RS^1, \N(\RS^1)\times\mathbb{Z}_k, \N(\RS^1)\times\T^1, \RS^3\times \mathbb{Z}_k).
\end{align}
By Proposition~\ref{Primitive Action}, this action is a non-primitive action with $\LL=\RS^3\times \RS^1\times\RS^1$ and $\X$ is equivariantly  homeomorphic to the total space of an $(\RP^2\ast\SP^1)$-bundle over $\SP^2$. Note that the action of $\RS^3\times \RS^1\times\RS^1$ on $\RP^2\ast\SP^1$ is in fact the normal extension of the action of $\RS^3\times \RS^1$ on $\RP^2\ast\SP^1$.  

Let $K^+_0=1\times S^3\times 1$. This case only differs from the previous one by an isomorphism of $G$ which exchanges the factors. Therefore the analysis is analogous to the one in the preceding case.

Now let $\K^+_0=\Delta\RS^3\times 1$. As a result,  $\HH_0=\Delta\RS^1\times 1$, $K^+_0\cap \HH=\N_{\Delta\RS^3}(\Delta\RS^1)\times 1$ and
\begin{align*}
\K^-_0&=(\Delta\RS^1\times 1)\{(e^{ia\theta}, e^{ib\theta}, e^{ic\theta})\}\\
&=\{(e^{i(\phi+a\theta)}, e^{i(\phi+b\theta)}, e^{ic\theta})\}\\
&= \{(e^{i\phi}, e^{i\phi}e^{ip\theta}, e^{ic\theta})\}.
\end{align*}
Since the action is non-reducible, $\Proj_3(\HH\cap\K^-_0)$ is a proper subgroup of $\RS^1$, namely $\Proj_3(\HH\cap\K^-_0)=\mathbb{Z}_k$. Therefore, $\HH\cap\K^-_0=\{(e^{i\phi}, e^{i\phi}e^{\frac{2\pi pli}{k}}, e^{\frac{2\pi cli}{k}})\mid 1\leq l\leq k\}$. The long exact sequence of homotopy groups corresponding to the fiber bundle 
\[
\K^-\to \GG\to \GG/\K^-
\]
shows that $\pi_0(\K^-)=\pi_1(\GG/\K^-)/\mathbb{Z}_c$ (note that $\pi_0(\K^-)$ is not trivial since $K^+_0\cap \HH=N_{\Delta\RS^3}(\Delta\RS^1)\times 1\subseteq \K^-$). By Proposition~\ref{fundamental group}, $\pi_1(\GG/\K^-)=\mathbb{Z}_2$ as $\X$ is simply-connected. Thus $c=1$. On the other hand, $\Z_k\subseteq \N(K^+_0)$, which gives $k|2p$, and since we can assume $\HH\cap (1\times 1\times \RS^1)=1$ to have a more effective action, $k=1, 2$. Therefore,   we obtain the following diagram:
  \begin{align}
(\RS^3\times \RS^3\times \RS^1, N_{\Delta\RS^3}(\Delta\RS^1)\mathbb{Z}_k, \T^2\cup (j, j, 1)\T^2,  \Delta\RS^3\mathbb{Z}_k),                                                                                                    
\end{align}
where $\T^2=\{(e^{i\phi}, e^{i\phi}e^{ip\theta}, e^{i\theta}) \mid \phi,\theta\in\mathbb{R}\}$, and $k=1, 2$. 
\\

Now let $\K^+/\HH=\RS^3/\Gamma$ with $\Gamma$ finite and non-trivial. Therefore,  $\K^+_0=\RS^3\times\RS^1\times 1$ or   $\K^+_0=\RS^1\times\RS^3\times 1$. 

Suppose that $\K^+_0=\RS^3\times\RS^1\times 1$ and $\Gamma=\mathbb{Z}_k$. Then $\HH\cap\K^+_0=\{(e^{\frac{2\pi li}{k}}e^{p\theta i}, e^{\theta i}, 1) \mid 1\leq l\leq k\}$. Since $\X$ is simply-connected, by Proposition \ref{fundamental group}, and the exact sequences of homotopy groups related to the fiber bundles 
\[\K^{\pm}/\HH\to \GG/\HH\to \GG/\K^{\pm},\]
\[\K^-\to \GG\to \GG/\K^-,\]
one can see that $\pi_0(\K^-)=\mathbb{Z}_k/\mathbb{Z}_c$. Therefore, $c|k$ and the following diagram is obtained:
  \begin{align}
(\RS^3\times \RS^3\times \RS^1, \{(e^{\frac{2\pi ari}{m}}e^{\frac{2\pi li}{k}}e^{p\theta i}, e^{\theta i}, e^{\frac{2\pi cri}{m}})\mid 1\leq r\leq m,1\leq l\leq k\},\\ \{(e^{\frac{2\pi li}{k}}e^{ip\theta}e^{ia\phi}, e^{i\phi}, e^{ic\theta}), 1\leq l\leq k\},  \RS^3\times\RS^1\times\mathbb{Z}_{\frac{k}{c}}).\notag
\end{align}

Now let $\Gamma\neq\mathbb{Z}_k$. By Lemma~\ref{L:M_TRANS}, $\K^+_0\cap\HH=\Gamma\times \RS^1\times 1$. Therefore,  $\HH_0=1\times \RS^1\times 1$  and, by Lemma~\ref{P:ABELIAN_G}, 
\begin{align*}
\K^-_0 	& =\HH_0\{(e^{ia\theta}, e^{ib\theta}, e^{ic\theta})\}\\
		& =\{(e^{ia'\theta}, e^{i\phi}, e^{ic'\theta})\},
\end{align*} for some integers $a'$, $c'$. If $a'=0$, then we have the following diagram:
  \begin{align}
(\RS^3\times \RS^3\times \RS^1, \Gamma\times\RS^1\times\mathbb{Z}_k, \Gamma\times\T^2,  \RS^3\times\RS^1\times\mathbb{Z}_k).
\end{align}
By Proposition~\ref{P:Product-action},  this action is a product action  and $\X$ is equivariantly  homeomorphic to $(\RS^3/\Gamma\ast\SP^1)\times\SP^2$.  If $a'\neq 0$, then $N(\K^-)=\{(e^{ia'\theta}, e^{i\phi}, e^{ic'\theta})\}\cup\{(e^{ia'\theta}, je^{i\phi}, e^{ic'\theta})\}$ which implies that $\Gamma=\mathbb{Z}_k$. Thus, this case cannot happen. 
\\


\noindent $\mathbf{\GG=S^3\times S^3\times S^3}$. In this case, $\dim\HH=3$. Since the action is non-reducible, $\Proj_i(\HH)\subsetneq \RS^3$, $i=1,2,3$. Therefore, $\HH_0$ must be a maximal torus of $\GG$. Further, by considering the subgroups of $\GG$ containing $\HH$, we only have $\RP^2$ and  $\SP^2$ as the normal spaces of directions of singular orbits.   

Assume, without loss of generality, that $\K^+/\HH=\RP^2$. Then there are two possibilities for $\K^-/\HH$, namely, $\K^-/\HH=\SP^2$ or $\K^-/\HH=\RP^2$.

Let $\K^-/\HH=\SP^2$. Therefore, $\K^+$ is connected and, after exchanging the factors of $G$ if necessary, we can assume that $\K^+=\K^+_0=\RS^3\times \T^2$. Thus $\HH=\HH\cap\K^+_0=\N_{\RS^3}(\RS^1)\times \T^2$. Also, since $\K^-/\HH$ is simply-connected, $\pi_0(\K^-)=\pi_0(\HH)=\mathbb{Z}_2$,  and their components intersect each other. Hence $\K^-=\N_{\RS^3}(\RS^1)\times \RS^3\times \RS^1$, and we get the following diagram:
 \begin{align}
(\RS^3\times \RS^3\times \RS^3, \N_{\RS^3}(\RS^1)\times \T^2, \N_{\RS^3}(\RS^1)\times \RS^3\times \RS^1,  \RS^3\times\T^2).
\end{align}
By Proposition~\ref{P:Product-action},  this action is a product action  and $\X$ is equivariantly  homeomorphic to $(\RP^2\ast\SP^2)\times\SP^2$.

 Now let $\K^-/\HH=\RP^2$.  Thus $\K^-_0$ is equal to  $\RS^3\times \T^2$ or $\RS^1\times \RS^3\times \RS^1$. If $\K^-_0=\RS^3\times \T^2$, then $\HH=\HH\cap\K^-_0=\HH\cap \K^+_0=\N_{\RS^3}(\RS^1)\times \T^2$, and the following diagram is obtained:
\begin{align}
(\RS^3\times \RS^3\times \RS^3, \N_{\RS^3}(\RS^1)\times \T^2, \RS^3\times\T^2,  \RS^3\times\T^2).
\end{align}
By Proposition~\ref{P:Product-action},  this action is a product action  and $\X$ is equivariantly  homeomorphic to $\Susp(\RP^2)\times(\SP^2\times \SP^2)$.
 If $\K^-_0=\RS^1\times \RS^3\times \RS^1$, then $\HH\cap\K^-_0=\RS^1\times  \N_{\RS^3}(\RS^1)\times\RS^1$, and so $\HH=\N_{\RS^3}(\RS^1)\times\N_{\RS^3}(\RS^1)\times\RS^1$. Hence we have the diagram
\begin{align}
(\RS^3\times \RS^3\times \RS^3, \N_{\RS^3}(\RS^1)\times\N_{\RS^3}(\RS^1)\times\RS^1, \N_{\RS^3}(\RS^1)\times \RS^3\times \RS^1,  \RS^3\times\N_{\RS^3}(\RS^1)\times \RS^1).                                                                                                      
\end{align}
By Proposition~\ref{P:Product-action},  this action is a product action  and $\X$ is equivariantly  homeomorphic to $(\RP^2\ast\RP^2)\times\SP^2$. 
\\



\noindent $\mathbf{G=SU(3)}$. In this case, $\dim\HH=2$, and so $\HH_0=\T^2$. The subgroups containing $\T^2$ are $\UU(2)$ and $\SU(3)$. 
\vspace{.2cm}

Assume first that $\K^{\pm}=\SU(3)$. Then the two following diagrams occur:
\begin{align}
(\SU(3), \T^2, \SU(3), \SU(3)),
\end{align}
\begin{align}
(\SU(3), \T^2\mathbb{Z}_2, \SU(3), \SU(3)),
\end{align}
The space  $\X$ is equivariantly  homeomorphic to $\Susp(\W^6)$ or $\Susp(\W^6/\mathbb{Z}_2)$, respectively, where $\W^6=\SU(3)/\T^2$ is the Wallach manifold.
\vspace{.2cm}

Suppose now that $\K^+=\SU(3)$ and  $\K^-_0=\UU(2)$. Since $\UU(2)$ is a maximal subgroup of $\SU(3)$, $\K^-=\K^-_0=\UU(2)$. We also have   $\N_{\UU(2)}(\T^2)/\T^2=\mathbb{Z}_2$, which gives $\HH=\T^2$ or $\T^2\mathbb{Z}_2$. As a result, the two following diagrams are obtained:
\begin{align}
(\SU(3), \T^2, \UU(2), \SU(3)),
\end{align}
\begin{align}
(\SU(3), \T^2\mathbb{Z}_2, \UU(2), \SU(3)).
\end{align}

Finally, assume that $\K^{\pm}=U(2)$ (up to conjugation in $\GG$). Let $\T^2=\diag(\SU(3))$. If $\K^{\pm}$ contains this $\T^2$, then it must
be a conjugate of $\UU(2)$ by an element of the  group $\N(\T^2)/\T^2$. Therefore, there are two possibilities for the pair $\K^+, \K^-$ up
to conjugacy of $G$: $\mathrm{S}(\UU(1)\UU(2))$, and $\mathrm{S}(\UU(2)\UU(1))$ (see \cite[Case~$4_7$]{Hoelscher}). On the other hand, since $\UU(2)/\HH_0=\SP^2$, $\HH$ must be $\T^2\mathbb{Z}_2$. However, $\mathrm{S}(\UU(1)\UU(2))\cap \mathrm{S}(\UU(2)\UU(1))=\T^2$, so $\K^-$, $\K^+$ should be the same. Thus we obtain the following diagram:
 \begin{align}
 (\SU(3), \T^2\mathbb{Z}_2, \UU(2), \UU(2)).
\end{align}
This action is a non-primitive action, and $\X$ is equivariantly  homeomorphic to the total space of  a 
$\Susp(\RP^2)$-bundle over  $\CP^2$.
\\


\noindent $\mathbf{\GG=SU(3)\times S^1}$. In this case, $\dim\HH=3$. By Proposition \ref{P:ABELIAN_G}, $\HH$, and $\K^+_0\subseteq \SU(3)\times 1$, $\K^-/\HH=\SP^1$. Since $\HH$ is $3$-dimensional, $\HH_0$ must be $\SO(3)\times 1$ or $\SU(2)\times 1$. If $\HH_0=\SO(3)\times 1$, then $\K^+$ has to be $\SU(3)\times 1$ since there is no exceptional  orbit. However, the classification of positively curved homogeneous spaces shows that this cannot occur. Therefore, $\HH_0=\SU(2)\times 1$. Since $\K^+/\HH$ is not a sphere, $\K^+=\SU(3)\times 1$, and $\K^+/\HH=\SP^5/\mathbb{Z}_k$. Thus $\K^+_0\cap \HH=\mathrm{S}(\U(2)\mathbb{Z}_k)$. On the other hand, $\K^-$ is a $4$-dimensional subgroup of $\GG$ containing $\mathrm{S}(\U(2)\mathbb{Z}_k)$ whose projection to $\RS^1$ is $\RS^1$. Hence it has to be $\mathrm{S}(\U(2)\mathbb{Z}_k)\times \RS^1$. As a result, we get the following diagram, and $\X$  is equivariantly  homeomorphic to $(\SP^5/\mathbb{Z}_k)\ast\SP^1$:
\begin{align}
(\SU(3)\times \RS^1, \mathrm{S}(\U(2)\mathbb{Z}_k)\times \mathbb{Z}_l, \mathrm{S}(\UU(2)\mathbb{Z}_k)\times \RS^1, \SU(3)\times \mathbb{Z}_l).
\end{align}


\noindent $\mathbf{\GG=SU(3)\times S^3}$.  In this case, $\dim\HH=5$. Since $\Proj_2(\HH_0)\subsetneq \RS^3$, we have $H_0=\UU(2)\times \RS^1$. Thus  $\UU(2)\subseteq\Proj_1(\K^{\pm}_0)\subseteq \SU(3)$ and $\RS^1\subseteq\Proj_2(\K^{\pm}_0)\subseteq \RS^3$. Since   $\UU(2)$ is a maximal subgroup of $\SU(3)$, and $\RS^1$ is a maximal connected subgroup of $\RS^3$, $\Proj_1(\K^{\pm}_0)=\UU(2)$ or $\SU(3)$, and $\Proj_2(\K^{\pm}_0)=\RS^1$ or $\RS^3$. But $\GG/\HH$ is not homeomorphic to a positively curved homogeneous space, so $\K^{\pm}$ are proper subgroups of $G$. Moreover, $\dim\K^{\pm}>\dim\HH$, for $\X$ does not have an exceptional orbit. Therefore, we have the following cases: $\K^{\pm}_0=\SU(3)\times S^1$, $\K^+_0=\SU(3)\times S^1$ and $\K^-_0=\U(2)\times S^3$, and $\K^{\pm}_0=\U(2)\times S^3$.
\vspace{.2cm}

Assume first that $\K^{\pm}_0=\SU(3)\times S^1$. Since $\UU(2)$ is maximal, $\HH=\HH_0=\UU(2)\times \RS^1$, and we obtain the following diagram:
 \begin{align}
 (\SU(3)\times \RS^3, \UU(2)\times \RS^1, \SU(3)\times\RS^1, \SU(3)\times\RS^1).
\end{align}
By Proposition~\ref{P:Product-action},  this action is a product action  and $\X$ is equivariantly  homeomorphic to $\Susp(\CP^2)\times\SP^2$. 
\vspace{.2cm}

Suppose now that $\K^+_0=\SU(3)\times S^1$ and $\K^-_0=\UU(2)\times S^3$. In this case, $\K^-/\HH$ can be either $\SP^2$ or $\RP^2$. Therefore, we have the following diagrams, respectively,
\begin{align}
(\SU(3)\times \RS^3, \UU(2)\times \RS^1, \SU(3)\times\RS^1, \UU(2)\times \RS^3),
\end{align}
\begin{align}
(\SU(3)\times \RS^3, \UU(2)\times \N_{\RS^3}(\RS^1), \SU(3)\times\N_{\RS^3}(\RS^1), \UU(2)\times \RS^3).
\end{align}
and $\X$ is equivariantly  homeomorphic to $\CP^2\ast\SP^2$  or  $\CP^2\ast\RP^2$, respectively.
\vspace{.2cm}

Finally, suppose that $\K^{\pm}_0=\UU(2)\times S^3$. Since $\K^{\pm}_0=\UU(2)\times \RS^3$ is a maximal subgroup of $\SU(3)\times \RS^3$, $\K^{\pm}=\K^{\pm}_0$. Thus $\K^{\pm}/\HH$  has to be $\RP^2$, and consequently, we get the following diagram:
\begin{align}
(\SU(3)\times \RS^3, \UU(2)\times \N_{\RS^3}(\RS^1), \UU(2)\times \RS^3, \UU(2)\times \RS^3).
\end{align}
By Proposition~\ref{P:Product-action},  this action is a product action  and $\X$ is equivariantly  homeomorphic to $\Susp(\RP^2)\times\CP^2$. \\

\noindent $\mathbf{\GG=Sp(2)}$.  In this case, $\dim\HH=4$. Hence $\HH_0=\UU(2)_{\max}$ (the maximal subgroup of $\Sp(2)$) or $\HH_0=\Sp(1)\SO(2)$. If $\HH_0=\UU(2)_{\max}$, then $\K^{\pm}$ have to be $\Sp(2)$, which is impossible, for $\Sp(2)/\UU(2)_{\max}$ does not admit a positively curved metric (see~\cite{WZ}). Thus $\HH_0=\Sp(1)\SO(2)$.  Since the only proper subgroup of $\GG$ containing $\HH_0$ is $\Sp(1)\Sp(1)$, we have the following cases: $\K^{\pm}_0=\Sp(2)$, $\K^+=\Sp(2)$ and $\K^-_0=\Sp(1)\Sp(1)$, and $\K^{\pm}_0=\Sp(1)\Sp(1)$.
\vspace{.2cm}

Assume first that $\K^{\pm}_0=\Sp(2)$. Therefore we have the following diagrams:
 \begin{align}
 (\Sp(2), \Sp(1)\SO(2), \Sp(2), \Sp(2)),
\end{align}
\begin{align}
(\Sp(2), \Sp(1)\SO(2)\mathbb{Z}_2, \Sp(2), \Sp(2)).
\end{align}
and $\X$ is equivariantly  homeomorphic to $\Susp(\CP^3)$ or to $\Susp(\CP^3/\mathbb{Z}_2)$, respectively.
\vspace{.2cm}

Suppose now that $\K^+=\Sp(2)$ and $\K^-_0=\Sp(1)\Sp(1)$. The space $\K^+/\HH$ is equal to either $\CP^3$ or $\CP^3/\mathbb{Z}_2$, so $\HH=\Sp(1)\SO(2)$‌ or $\HH=\Sp(1)\SO(2)\mathbb{Z}_2$, respectively. Therefore, we obtain the following diagrams:
 \begin{align}
 (\Sp(2), \Sp(1)\SO(2), \Sp(1)\Sp(1), \Sp(2)),
\end{align}
\begin{align}
(\Sp(2), \Sp(1)\SO(2)\mathbb{Z}_2, \Sp(1)\Sp(1), \Sp(2)).
\end{align}

\vspace{.2cm}
Finally, assume that $\K^{\pm}_0=\Sp(1)\Sp(1)$ (up to a conjugation). Since $ \K^{\pm}_0$ both contain $\HH_0=\Sp(1)\SO(2)$, they should be equal,  so $\HH\cap\K^+_0=\HH\cap\K^-_0$, which in turn implies that $\K^{\pm}$ is connected. On the other hand, $\K^{\pm}/\HH$ should be a positively curved homogeneous space not homeomorphic to a sphere. Therefore $\HH=\Sp(1)\SO(2)\mathbb{Z}_2$, and we get the following diagram:
\begin{align}
(\Sp(2), \Sp(1)\SO(2)\mathbb{Z}_2, \Sp(1)\Sp(1), \Sp(1)\Sp(1)).
\end{align}
This action is a non-primitive action with $\LL=  \Sp(1)\Sp(1)$, and $\X$ is equivariantly  homeomorphic to the total space of a $\Susp(\RP^2)$-bundle over   $\SP^4$.\\


\noindent  $\mathbf{\GG=Sp(2)\times S^3}$. Since the action is non-reducible, $\Proj_2(\HH_0)\subsetneq \RS^3$. Since $\dim\HH=7$, and the highest  dimension of  a proper subgroup of $\Sp(2)$ is $6$, $\Proj_2(\HH_0)$ must be equal to $\RS^1$. Thus $\HH_0=\Sp(1)\Sp(1)\times\RS^1$. Maximality of $\RS^1$ in $\RS^3$, and of $\Sp(1)\Sp(1)$ in $\Sp(2)$, gives rise to the following cases: $\K^{\pm}_0=\Sp(1)\Sp(1)\times S^3$, $\K^+_0=\Sp(1)\Sp(1)\times S^3$ and $\K^-_0=\Sp(2)\times S^1$, and $\K^{\pm}_0=\Sp(2)\times S^1$.
\vspace{.2cm}

Assume first that  $\K^{\pm}_0=\Sp(1)\Sp(1)\times S^3$. Since $\HH\cap\K^-_0=\HH\cap\K^+_0$, we have $\HH=\HH\cap\K^-_0=\HH\cap\K^+_0\subseteq \K^{\pm}_0$. Therefore, $\K^{\pm}$ are connected, and we get the following diagram:
 \begin{align}
 (\Sp(2)\times\RS^3, \Sp(1)\Sp(1)\times\N_{\RS^3}(\RS^1), \Sp(1)\Sp(1)\times\RS^3, \Sp(1)\Sp(1)\times\RS^3).
\end{align}
By Proposition~\ref{P:Product-action},  this action is a product action  and $\X$ is equivariantly  homeomorphic to  $\Susp(\RP^2)\times\SP^4$.
\vspace{.2cm}

Suppose now that $\K^+_0=\Sp(1)\Sp(1)\times S^3$ and $\K^-_0= \Sp(2)\times S^1$. We have the following diagrams:
 \begin{align}
 (\Sp(2)\times\RS^3, \Sp(1)\Sp(1)\times\N_{\RS^3}(\RS^1), \Sp(1)\Sp(1)\times\RS^3, \Sp(2)\times\N_{\RS^3}(\RS^1)),
\end{align}
corresponding to the join action of $\Sp(2)\times\RS^3$ on $\SP^4\ast\RP^2$;
 \begin{align}
 (\Sp(2)\times\RS^3, \Sp(1)\Sp(1)\mathbb{Z}_2\times\RS^1, \Sp(1)\Sp(1)\mathbb{Z}_2\times\RS^3, \Sp(2)\times\RS^1),
\end{align}
corresponding to the join action of $\Sp(2)\times\RS^3$ on $\SP^2\ast\RP^4$; and
 \begin{align}
 (\Sp(2)\times\RS^3, \Sp(1)\Sp(1)\mathbb{Z}_2\times\N_{\RS^3}(\RS^1), \Sp(1)\Sp(1)\mathbb{Z}_2\times\RS^3, \Sp(2)\times\N_{\RS^3}(\RS^1)).
\end{align}
which corresponds to the join action of $\Sp(2)\times\RS^3$ on $\RP^4\ast\RP^2$.
\vspace{.2cm}

Finally, assume that $\K^{\pm}_0=\Sp(2)\times S^1$.  In this case $\K^{\pm}$ are connected and $\HH= \Sp(1)\Sp(1)\mathbb{Z}_2\times\RS^1$. Thus we obtain the following diagram:
 \begin{align}
 (\Sp(2)\times\RS^3, \Sp(1)\Sp(1)\mathbb{Z}_2\times\RS^1, \Sp(2)\times \RS^1, \Sp(2)\times \RS^1).
\end{align}
By Proposition~\ref{P:Product-action},  this action is a product action  and $\X$ is equivariantly  homeomorphic to  $\Susp(\RP^4)\times\SP^2$.\\

\noindent  $\mathbf{\GG=\GG_2}$. Since $\dim\HH$ has to be $8$, we have $\HH=\SU(3)$. Thus $\K^{\pm}=\mathrm{G}_2$, for $\SU(3)$ is a maximal connected subgroup of $\mathrm{G}_2$ and there are no exceptional orbits. As a result, we have the following diagram and $\X$ is  equivariantly  homeomorphic to $\Susp(\RP^6)$:
 \begin{align}
 (\mathrm{G}_2, \N_{\mathrm{G}_2}(\SU(3)), \mathrm{G}_2, \mathrm{G}_2).
\end{align}
\\


\noindent  $\mathbf{\GG=SU(4)}$. In this case, $\dim\HH=9$, so $\HH_0=\UU(3)$. Because $\UU(3)$ is a maximal subgroup of $\GG$, we have $\K^{\pm}=\SU(4)$, and the following diagram is obtained:
 \begin{align}
 (\SU(4), \UU(3), \SU(4), \SU(4)).
\end{align}
This space is equivariantly  homeomorphic to $\Susp(\CP^3)$. 
\\


\noindent $\mathbf{\GG=SU(4)\times S^1}$. 
In this case,  $\dim\HH=10$. We have  $\K^+_0 ,\HH_0\subseteq \SU(4)\times 1$ and $\K^-/\HH=\SP^1$ by Proposition~\ref{P:ABELIAN_G}. Therefore, $\K^+_0=\SU(4)\times 1$,  $\HH_0=\Sp(2)\times 1$, $\K^-_0=\Sp(2)\times\RS^1$, and we get the following diagram:
 \begin{align}
 (\SU(4)\times \RS^1, (\Sp(2)\mathbb{Z}_2)\times\mathbb{Z}_k, (\Sp(2)\mathbb{Z}_2)\times\RS^1, \SU(4)\times\mathbb{Z}_k).
\end{align}
This action is equivalent to  the join action of $\SU(4)\times \RS^1$ on $\RP^5\ast\SP^1$.
\\


\noindent $\mathbf{\GG=Spin(7)}$. 
In this case, since $\dim\GG=21=(6\times 7)/2$€Œ, by Proposition~\ref{P:Spin-action}, we have 
\begin{align}
(\Spin(7), \N_{\Spin(7)}(\Spin(6)), \Spin(7), \Spin(7)),
\end{align}
and $\X$ is equivariantly  homeomorphic to $\Susp(\RP^6)$. 
\\

This concludes the proof of Theorem~\ref{T:CLASSIF}. \hfill $\square$


\section{Proof of Theorem~\ref{T:GOOD_ORBIFOLDS}}
\label{S:ORBIFOLDS}

In this section we prove Theorem~\ref{T:GOOD_ORBIFOLDS}, i.e.~we show that the underlying space of a closed, simply-connected cohomogeneity one smooth orbifold in dimension at most $7$ is  equivariantly homeomorphic to a good orbifold.  First, we recall basic definitions and facts about orbifolds (for more details see, for example, \cite{Adem}, \cite{Davis}).


\begin{defn}
\label{DEF:ORBIFOLD}
An \emph{orbifold chart} on a topological space $X$ is a quadruple $(\tilde{U}, G, U, \pi)$, where
\begin{itemize}
\item $U$ is an open subset of $X$,\\
\item $\tilde{U}$ is open in $\R^n$ and $G$ is a finite group of diffeomorphisms of $\tilde{U}$,\\
\item $\pi :\tilde{U}\to U$ is a map which can be factored as $\pi=\bar{\pi}p$, where $p:\tilde{U}\to\tilde{U}/G$
is the orbit map and $\bar{\pi}:\tilde{U}/G \to U$ is a homeomorphism.
\end{itemize}
\end{defn}

For $i = 1, 2$, suppose that $(\tilde{U_i}, G_i, U_i, \pi_i)$ are orbifold charts on $X$. The charts
are \emph{compatible} if given points $\tilde{u_i}\in \tilde{U_i}$ with $\pi_1(\tilde{u_1})=\pi_2(\tilde{u_2})$, there is a diffeomorphism
$h$ from a neighborhood of $\tilde{u_1}$ in $\tilde{U_1}$ onto a neighborhood of $\tilde{u_2}$ in $\tilde{U_2}$ so
that $\pi_1=\pi_2 h$ on this neighborhood.  An \emph{orbifold atlas} on $X$ is a collection $(\tilde{U_i}, G_i, U_i, \pi_i)$ of
compatible orbifold charts which cover $X$.


\begin{defn}\label{D:Orb. Atlas}
Let $(\tilde{U_i}, G_i, U_i, \pi_i)_{i\in I_1}$ and $(\tilde{U_i}, G_i, U_i, \pi_i)_{i\in I_2}$ be orbifold atlases over
a given topological space $X$. We say that they \emph{define the same orbifold structure} on
$X$ if the union atlas $(\tilde{U_i}, G_i, U_i, \pi_i)_{i\in I_1\cup I_2}$ satisfies the compatibility condition in Definition~\ref{DEF:ORBIFOLD}.
\end{defn}

Definition \ref{D:Orb. Atlas} determines an equivalence relation on the set of orbifold atlases over a
given topological space $X$.


\begin{defn}
 An $n$-dimensional \emph{smooth orbifold}, denoted by $Q$, is a second-countable,
Hausdorff topological space $|Q|$, called the \emph{underlying topological} space of $Q$, together with
an equivalence class of orbifold atlases on $Q$.
\end{defn}


\begin{defn}
Let $q\in |Q|$ and  $(\tilde{U}, G, U, \pi)$ be 
any local chart around $q=\pi(x)$.  We define the \emph{local group} at $q$ as
\[ G_q= \{g\in G  \,| \,gx = x\}.\]
This group is uniquely determined up to conjugacy in $G$.
\end{defn}

The notion of local group is used  to define the singular set of the orbifold.


\begin{defn}
For an orbifold $Q$, we define its \emph{singular set} as
\[S_Q =\{q\in Q\,|\, G_q \neq 1\}.\]
We call  $R_Q =Q\setminus S_Q$ the \emph{regular set} of $Q$.
\end{defn}


\begin{defn}
A \emph{covering orbifold} of an orbifold $Q$ is an orbifold $\tilde{Q}$ with a
projection map $\mathsf{p}\colon|\tilde{Q}| \to |Q|$ between the underlying spaces, such that each point $q\in |Q|$
has a neighborhood $U=\tilde{U}/G$ (where $\tilde{U}$ is an open subset of $\R^n$) for which each 
component  of $\mathsf{p}^{-1}(U)$ is homeomorphic to $\tilde{U}/G_i$, where $G_i\subseteq G$ is some subgroup. The
homeomorphism must respect the projections.
\end{defn}


\begin{defn}
 An orbifold is \emph{good} (or \emph{developable}) if it has some covering orbifold which is
a manifold. Otherwise it is \emph{bad} (or \emph{non-developable}).
\end{defn}


\begin{defn}
An orbifold $Q$  is called \emph{simply-connected} if it is connected and does not admit a
non-trivial orbifold covering, i.e. if $\mathsf{p}:|\tilde{Q}| \to |Q| $ is a covering with $|\tilde{Q}|$ connected,
then $\mathsf{p}$ is a homeomorphism.
\end{defn}


\begin{prop}[\protect{\cite[Proposition~13.2.4.]{Thurston}}]
Any connected orbifold $Q$ admits a simply-connected orbifold covering
$\pi\colon\tilde{Q} \to Q$. This has the following universal property:
if
$q \in |Q|-S_Q$ is a base point for $Q$ and $\tilde{q}$ is a base point for $\tilde{Q}$ which projects to $q$, then
 for
any other covering orbifold
\[
\pi'\colon\tilde{Q}' \to Q 
\]
with base point $\tilde{q}'$, there is a lift $\sigma\colon \tilde{Q}\to \tilde{Q}'$ of $\pi'$ to a covering map of
$\tilde{Q}'$.
\end{prop}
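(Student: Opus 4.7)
The plan is to adapt the classical path–space construction of the universal cover from algebraic topology to the orbifold setting. First, fix a basepoint $q \in |Q| \setminus S_Q$ in the regular part. Define an \emph{orbifold path} from $q$ to $q'$ as a continuous path in $|Q|$ together with, whenever the path enters a local orbifold chart $(\tilde{U}_i, G_i, U_i, \pi_i)$ around a singular point, a specific continuous lift of that segment to $\tilde{U}_i$; consecutive lifts in overlapping charts are identified up to the compatibility diffeomorphisms that come with the orbifold atlas. Declare two such orbifold paths with the same endpoints to be equivalent if they are connected by a homotopy built from homotopies of lifts inside charts, together with the actions of the local groups $G_i$ on the chosen lifts. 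The equivalence classes of orbifold loops at $q$ form the orbifold fundamental group $\pi_1^{\mathrm{orb}}(Q,q)$.

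Next, define $|\tilde{Q}|$ to be the set of equivalence classes of orbifold paths in $Q$ starting at $q$, with endpoint map $\mathsf{p}\colon |\tilde{Q}| \to |Q|$. Topologize and equip $\tilde{Q}$ with an orbifold atlas chart by chart: over a chart $(\tilde{U}_i, G_i, U_i, \pi_i)$ of $Q$, the preimage $\mathsf{p}^{-1}(U_i)$ decomposes as a disjoint union of pieces, each of the form $\tilde{U}_i / G_i'$ for a subgroup $G_i' \leq G_i$ determined by the classes of orbifold loops that become trivial after adjoining the element of $G_i$ realized by a loop around the corresponding singular stratum. Compatibility of these charts along chart overlaps follows from the cocycle condition on the original atlas.

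Then verify the three required properties. That $\mathsf{p}\colon \tilde{Q}\to Q$ is a covering orbifold is immediate from the local description. Simple-connectedness of $\tilde{Q}$ follows by construction: an orbifold loop at the class of the constant path in $\tilde{Q}$ projects to an orbifold loop in $Q$ whose class in $\pi_1^{\mathrm{orb}}(Q,q)$ is trivial (this is precisely the endpoint-equivalence defining $|\tilde{Q}|$), and tracing the definitions gives that it is null-homotopic upstairs. For the universal property, given another covering $\pi'\colon\tilde{Q}'\to Q$ with basepoint $\tilde{q}'$ above $q$, define $\sigma\colon\tilde{Q}\to\tilde{Q}'$ by sending an equivalence class $[\gamma]$ of orbifold paths starting at $q$ to the endpoint in $|\tilde{Q}'|$ of the unique orbifold lift of $\gamma$ starting at $\tilde{q}'$; well-definedness on equivalence classes follows because the orbifold homotopy lifting property, applied chart-by-chart, shows that orbifold-homotopic paths have endpoint-equal lifts.

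The main obstacle is setting up the notions of \emph{orbifold path} and \emph{orbifold homotopy} through the singular set $S_Q$ in a way that is simultaneously coarse enough for the endpoint-class topological space $|\tilde{Q}|$ to inherit a workable orbifold atlas, and fine enough for $\pi_1^{\mathrm{orb}}$ to enjoy the correct universal property. Concretely, one must show that the equivalence class of the lift at the end of a path is independent of the choice of orbifold charts used along the way, which reduces to verifying a cocycle condition on the transition diffeomorphisms of the chosen atlas, and one must argue that trivialities imposed by local group actions propagate consistently between charts. Once this bookkeeping is handled, the remaining arguments parallel the manifold case.
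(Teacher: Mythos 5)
The paper does not prove this proposition; it is quoted verbatim from Thurston's notes (Proposition 13.2.4) as background, so there is no in-paper argument to compare against. Your path-space construction is the standard Haefliger-style route (orbifold paths carrying chart lifts and transition germs, equivalence classes of loops giving $\pi_1^{\mathrm{orb}}$, the universal cover as the space of endpoint classes), and it is a legitimate way to prove the statement; Thurston's own argument is organized somewhat differently, but both ultimately rest on the same local picture in which the component of $\mathsf{p}^{-1}(U_i)$ containing a given class is $\tilde{U}_i/G_i'$ with $G_i'$ the kernel of $G_i\to\pi_1^{\mathrm{orb}}(Q,q)$ --- which is exactly what makes Davis's criterion (Proposition 4.14 in the paper's numbering) work.

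That said, as a proof your sketch has two genuine soft spots. First, everything is delegated to the unresolved definition of orbifold path and orbifold homotopy through $S_Q$: you must record the germ of a local-group element at each chart switch and allow homotopies to move these germs, and only then can you verify that the sets $\tilde{U}_i/G_i'$ assemble into a second-countable Hausdorff space with a compatible atlas and that the endpoint map is an orbifold covering in the sense used here (each component of a preimage is $\tilde{U}/G_i$ for a \emph{subgroup} $G_i\le G$, with the homeomorphisms respecting projections). You name this obstacle but do not discharge it, and it is the entire technical content of the proposition. Second, the paper's definition of a simply-connected orbifold is ``admits no nontrivial orbifold covering,'' not ``$\pi_1^{\mathrm{orb}}=1$.'' Deducing the former from the latter requires the covering-space Galois correspondence for orbifolds (unique path lifting plus the classification of connected pointed coverings by subgroups of $\pi_1^{\mathrm{orb}}$), which is part of what is being built; you should either prove no-nontrivial-coverings directly for your $\tilde{Q}$ or note that the universal property you establish in the last step already forces it (any connected covering of $\tilde{Q}$ receives a lift from $\tilde{Q}$ itself and hence is an isomorphism). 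A useful sanity check that your construction must pass: for a teardrop, $\pi_1^{\mathrm{orb}}$ is trivial, $G_i'=G_i$ at the cone point, and your $\tilde{Q}$ is $Q$ itself --- a simply-connected orbifold that is not a manifold --- so the construction must not implicitly assume the local groups inject.
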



\begin{defn}
 The \emph{orbifold fundamental group} $\pi_1^{orb}(Q)$ of an orbifold $Q$ is the group
of deck transformations of the universal orbifold cover $\pi:\tilde{Q} \to Q$.
\end{defn}

The following proposition gives a necessary and sufficient condition for an orbifold to be good in terms of its local groups (see \cite[Page~7]{Davis}). For each $q\in |Q|$, let $G_q$ denote
the local group at $q$.  We can identify $G_q$ with the orbifold fundamental group of a neighborhood
of the form $\tilde{U}_q/G_q$ where $\tilde{U}_q$ is a ball in some linear representation. We say that $G_q$ is
the \emph{local fundamental group} at $q$. The inclusion of the neighborhood into $Q$ induces
a homomorphism $G_q\to \pi_1^{orb}(Q)$.


\begin{prop}[\protect{\cite[Proposition~1.18]{Davis}}]
\label{P:Injection_of_Local_Gr}
An orbifold $Q$ is good if and only if  each local group injects into the orbifold fundamental group, i.e. for each $q\in |Q|$, the homomorphism $G_q\to \pi_1^{orb}(Q)$ is injective.
\end{prop}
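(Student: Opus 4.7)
The plan is to reduce the statement to a characterization of when the universal orbifold cover $\tilde{Q}$ is a manifold, and then analyze that condition locally. First I would establish the reformulation: \emph{$Q$ is good if and only if $\tilde{Q}$ is a manifold}. One direction is immediate, since if $\tilde{Q}$ is a manifold, it is itself the desired manifold cover. For the converse, if $\mathsf{p}\colon M\to Q$ is any manifold orbifold-cover, then the universal property of $\tilde{Q}$ provides a factorization $\tilde{Q}\to M\to Q$; the composition $\tilde{Q}\to M$ is an orbifold cover of a manifold, and from the definition of orbifold cover the local groups of $\tilde{Q}$ must then be (conjugate to) subgroups of the trivial local groups of $M$, so $\tilde{Q}$ itself is a manifold.

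Next, I would analyze the local structure of the universal cover. Fix $q\in |Q|$ with an orbifold chart $U_q=\tilde{U}_q/G_q$, where $\tilde{U}_q$ is a connected open ball. By the definition of an orbifold covering, each connected component $V$ of the preimage $\mathsf{p}^{-1}(U_q)\subseteq \tilde{Q}$ is homeomorphic to $\tilde{U}_q/H$ for some subgroup $H\leq G_q$, and the local group of $\tilde{Q}$ at a preimage $\tilde{q}\in V$ of $q$ is exactly this $H$. Therefore $\tilde{Q}$ is a manifold if and only if $H=1$ for every $q\in |Q|$ and every component $V$ of $\mathsf{p}^{-1}(U_q)$.

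The key step is to identify the subgroup $H$ arising in the previous paragraph as the kernel of the canonical homomorphism $G_q\to \pi_1^{orb}(Q)$. This comes from the Galois-type correspondence between orbifold covers of $Q$ and conjugacy classes of subgroups of $\pi_1^{orb}(Q)$: the universal cover $\tilde{Q}$ corresponds to the trivial subgroup, and restricting $\tilde{Q}\to Q$ over the neighborhood $U_q$ (whose orbifold fundamental group is $G_q$, since $\tilde{U}_q$ is a simply-connected manifold) produces an orbifold cover of $U_q$ classified precisely by $\ker(G_q\to \pi_1^{orb}(Q))$. Hence each component of $\mathsf{p}^{-1}(U_q)$ has the form $\tilde{U}_q/\ker(G_q\to \pi_1^{orb}(Q))$. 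Chaining the equivalences, $Q$ is good iff $\tilde{Q}$ is a manifold iff $\ker(G_q\to \pi_1^{orb}(Q))$ is trivial for every $q\in|Q|$ iff each local group injects into $\pi_1^{orb}(Q)$.

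The main obstacle is justifying the identification of $H$ with $\ker(G_q\to \pi_1^{orb}(Q))$: this rests on the orbifold analogue of the classification of coverings by subgroups of the fundamental group, together with the fact that $\pi_1^{orb}(U_q)=G_q$ since $\tilde{U}_q$ is a connected simply-connected manifold. Both facts are standard in the orbifold literature (see for instance the references cited in the excerpt), but invoking them cleanly requires care about base points and conjugacy. Everything else reduces to routine covering-space bookkeeping.
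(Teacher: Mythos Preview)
Your outline is correct and is the standard argument for this fact. Note, however, that the paper does not supply its own proof of this proposition: it is stated with a direct citation to \cite[Proposition~1.18]{Davis} and used as a black box. So there is no ``paper's proof'' to compare against; your sketch simply fills in what the cited reference contains, and it does so accurately.
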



\begin{defn}
In an orbifold $Q$, a \emph{stratum} of type $(H)$ is the subspace of
$|Q|$ consisting of all points with local group isomorphic to $H$.
\end{defn}

\begin{prop}[\protect{\cite[Remark 1.23, and p. 9]{Davis}}]
\label{P:Strata_Complement}
Let  $Q_{(2)}$ denote the complement
of the strata of codimension $>2$ of an orbifold $Q$. Then $\pi_1^{orb}(Q)=\pi_1^{orb}(Q_{(2)})$.
\end{prop}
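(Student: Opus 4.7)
The plan is to adapt the classical topological fact that removing a subspace of codimension at least three from a manifold does not change its fundamental group. Let $j\colon Q_{(2)}\hookrightarrow Q$ denote the inclusion; the goal is to show that the induced homomorphism $j_{\ast}\colon\pi_1^{orb}(Q_{(2)})\to \pi_1^{orb}(Q)$ is an isomorphism.

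For surjectivity, I would represent an element of $\pi_1^{orb}(Q)$ by an orbifold loop $\gamma$ based at a regular point $q_0\in R_Q\subseteq Q_{(2)}$. Since every stratum removed to form $Q_{(2)}$ has topological codimension at least three, a transversality argument applied locally in orbifold charts perturbs $\gamma$, through an orbifold homotopy, to a loop that avoids these strata and therefore lies entirely in $Q_{(2)}$.

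For injectivity, I would start with an orbifold loop $\gamma$ in $Q_{(2)}$ that is orbifold-nullhomotopic in $Q$, realized by an orbifold map from a two-dimensional disk into $Q$. Using transversality once more, and exploiting that $2 + \mathrm{codim}(S) \leq 2 + (n-3) < n$ for any stratum $S$ of codimension at least three, the disk can be perturbed---keeping its boundary $\gamma$ fixed---to miss all such strata, producing an orbifold nullhomotopy in $Q_{(2)}$.

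The main obstacle is making the transversality step rigorous in the orbifold category, since the strata carry orbifold structures rather than being plain submanifolds. The standard workaround is to work in an orbifold chart $(\tilde{U}, G, U, \pi)$, where each stratum of codimension $c$ lifts to a closed $G$-invariant submanifold of $\tilde{U}$ of the same codimension, apply $G$-equivariant general position there, and then patch these local perturbations together via a locally finite cover by orbifold charts and a suitable orbifold partition of unity. An alternative, and perhaps cleaner, route is to invoke the presentation of $\pi_1^{orb}$ arising from a CW-decomposition adapted to the stratification and to the local groups at the codimension-two strata (compare Thurston's exposition); the statement is then immediate, since the strata of codimension greater than two contribute neither generators nor relations.
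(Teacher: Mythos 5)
The paper offers no proof of this proposition; it is quoted directly from Davis's lecture notes, and the argument behind that citation is exactly the general-position argument you give. Your dimension counts are the right ones ($1+(n-3)<n$ to push a loop off the deleted strata, $2+(n-3)<n$ to push a bounding disk off them), and you correctly identify the only real technical point, namely that transversality must be carried out for orbifold loops and homotopies (Haefliger paths together with their local lifts), which is handled by $G$-equivariant general position inside each chart $(\tilde U,G,U,\pi)$, where the strata lift to $G$-invariant submanifolds of the stated codimension. Two small cautions. First, your proposed "cleaner" alternative — reading the result off a presentation of $\pi_1^{orb}$ adapted to the stratification, as in Theorem~5.5 of Watts (Theorem~\ref{T:Recovering_Fund_Group} in the paper) — is circular in most expositions, since that presentation theorem is itself a consequence of (or equivalent to) the statement being proved; it should not be used here. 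Second, if you want to avoid point-set transversality for orbifold maps altogether, the cleanest rigorous route is via covering spaces: orbifold covers of $Q$ and of $Q_{(2)}$ correspond bijectively, because in a linear chart $\tilde U/G_q$ the set $\tilde U\setminus\tilde U_{(2)}$ has codimension $\geq 3$, so $\tilde U_{(2)}$ is still simply connected and $\pi_1^{orb}(\tilde U_{(2)}/G_q)=G_q=\pi_1^{orb}(\tilde U/G_q)$, whence every cover of $Q_{(2)}$ extends uniquely over the deleted strata. Either completion is fine; your outline is sound.
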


We state the proof of the following theorem as in \cite{Watts}, since it has an algorithm to recover the fundamental group of an orbifold from the fundamental group of the regular part.
\begin{thm}[\protect{\cite[Theorem 5.5]{Watts}}]
\label{T:Recovering_Fund_Group}
Let $Q$ be a connected orbifold. Then a presentation
for the orbifold fundamental group can be constructed using the topology, stratification,
and the orders of points in codimension $2$ strata.
\end{thm}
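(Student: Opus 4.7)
The plan is to construct a concrete presentation of $\pi_1^{orb}(Q)$ via an orbifold van Kampen argument, after first reducing to the codimension-$2$ skeleton. Indeed, Proposition~\ref{P:Strata_Complement} gives $\pi_1^{orb}(Q)=\pi_1^{orb}(Q_{(2)})$, so we may assume from the outset that every singular stratum of $Q$ has codimension at most~$2$. In particular, strata of codimensions $\geq 3$ (and hence their orders) make no contribution, which already matches the statement of the theorem.

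Next I would analyze the local picture at each codimension-$2$ stratum $S$. The normal slice representation realizes the local group $G_S$ as a finite subgroup of $\mathrm{O}(2)$; since reflections would create codimension-$1$ fixed strata (of reflector type, which we may assume are absent after the reduction above), $G_S$ must be cyclic of some order $n_S$. A tubular neighborhood of $S$ is then orbifold-modeled on $S\times(D^2/\Z_{n_S})$, whose orbifold fundamental group is generated by $\pi_1(S)$ together with a meridian $\mu_S$ encircling $S$, satisfying the single local relation $\mu_S^{n_S}=1$.

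The main step is then an orbifold Seifert--van Kampen argument. Cover $Q_{(2)}$ by the regular part $R_Q$ together with disjoint open tubular neighborhoods $U_S$ of the codimension-$2$ strata. Each intersection $R_Q\cap U_S$ is a fiber bundle over $S$ with punctured-disc fiber, and its ordinary fundamental group is generated by the meridian $\mu_S$. Gluing via the orbifold van Kampen theorem yields
\[
\pi_1^{orb}(Q) \;\cong\; \pi_1(R_Q) \Big/ \bigl\langle\!\langle \mu_S^{n_S} : S \text{ a codimension-}2\text{ stratum}\bigr\rangle\!\rangle,
\]
which is exactly a presentation built from the topology of $R_Q$, the stratification of $Q$, and the orders $n_S$ of the codimension-$2$ local groups.

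The main obstacle is to set up Seifert--van Kampen cleanly in the orbifold category: one must verify that the orbifold fundamental group of each $U_S$ is precisely the amalgam of $\pi_1(S)$ with $\Z_{n_S}$ generated by the meridian, that the inclusions $R_Q\cap U_S\hookrightarrow U_S$ introduce no relations beyond $\mu_S^{n_S}=1$, and that no hidden global relations emerge from the gluing. Once this local computation of orbifold fundamental groups of disc-bundle neighborhoods is granted, the reduction to codimension $\leq 2$ guarantees that the algorithm above captures $\pi_1^{orb}(Q)$ completely.
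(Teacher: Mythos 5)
Your reduction to $Q_{(2)}$ via Proposition~\ref{P:Strata_Complement} only discards strata of codimension at least $3$; it does \emph{not} remove codimension-$1$ (reflector) strata, so your claim that these ``may be assumed absent after the reduction above'' is unjustified. This is where the argument genuinely breaks for the theorem as stated: for a general connected orbifold the local group of a codimension-$2$ stratum lying in the closure of reflector walls is dihedral rather than cyclic, and $\pi_1(R_Q)$ is then the wrong base group for your quotient presentation. The paper's proof (quoting Watts) handles exactly this: it starts from $\pi_1(\hat Q)$, where $\hat Q$ is the union of the codimension-$0$ \emph{and} codimension-$1$ strata, adjoins an order-two generator $\beta_{i,\mu}$ for each homotopy class of path into each reflector stratum, imposes $\alpha_j^{k}=1$ only for codimension-$2$ strata not adjacent to reflectors, and imposes the dihedral relations $(\beta_{i,\mu}\beta_{i',\mu'})^{k}=1$ for corner strata of order $2k$. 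None of these steps (1) and (3) appear in your presentation, so your formula $\pi_1^{orb}(Q)\cong\pi_1(R_Q)/\langle\!\langle \mu_S^{n_S}\rangle\!\rangle$ fails whenever $Q$ has mirror strata; for instance, a disc with mirror boundary has orbifold fundamental group $\Z_2$, while your formula returns the trivial group.

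In the absence of codimension-$1$ strata, however, your orbifold Seifert--van Kampen argument is sound: every codimension-$2$ local group is then forced to be cyclic, $\hat Q=R_Q$, and your presentation coincides with step (2) of the paper's algorithm. Since the singular set of a cohomogeneity one orbifold consists of non-principal orbits of codimension at least $2$, this is the only case the paper actually uses, so you have proved the relevant special case by a more conceptual route (a tubular-neighborhood cover plus van Kampen) than the direct algorithmic construction cited from Watts. But as a proof of the theorem for arbitrary connected orbifolds it has a genuine gap at the reflector strata.
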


\begin{proof}[Proof \textrm{(See \cite{Watts})}]
Let $\hat{Q}$ be the differential subspace of $Q$ consisting of
codimension $0$ and codimension $1$ strata. Fix a base point $q$ in the codimension $0$ stratum.
Let $G$ be the (topological) fundamental group of $\hat{Q}$ with respect to $q$.
\begin{itemize}
\item[(1)] For each codimension $1$ stratum $S_i$, and for each homotopy class $\mu$ of paths starting
at $q$ and ending in $S_i$ attach a generator $\beta_{i, \mu}$ to $G$ with relation $\beta_{i, \mu}^2=1$. \\
\item[(2)] For each codimension $2$ stratum $T_j$ not in the closure of a codimension $1$ stratum,
let $\alpha_j$ be an element of $G$ represented by a loop starting at $q$ and going around $T_j$.
Then add the relation $\alpha_j^k=1$
 to $G$ where $k$ is the order of any point in $T_j$.\\
\item[(3)] For each codimension $2$ stratum $R$ in the closure of a codimension $1$ stratum, for each
 pair of codimension $1$ strata $S_i$, $S_i'$ with $R$ in their closures, and for each pair $\beta_{i, \mu}, \beta_{i', \mu'}$ (where $\mu\neq \mu'$) as constructed in Item (1) above, add the relation $(\beta_{i, \mu}\beta_{i', \mu'})^k=1$,
where $2k$ is the order of any point in $R$.
\end{itemize}
The resulting group is the orbifold fundamental group of $Q$.
\end{proof}

The structure of closed cohomogeneity one smooth orbifolds is given by the following theorem.

\begin{thm}[\protect{\cite[Theorem 2.1]{Gonzalez}\label{T:C1 orbifold structure}}]
Let $Q$ be a closed, connected, smooth orbifold with an (almost) effective
smooth action of a compact, connected Lie group $G$ with principal isotropy group $H$. If
the action is of cohomogeneity one, then the orbit space $Q/G$ is homeomorphic to a circle
or to a closed interval and the following statements hold.
\begin{itemize}
\item[(i)] If the orbit space is a circle, then $Q$ is equivariantly diffeomorphic  to a $G/H$-bundle
over a circle with structure group $N(H)/H$, where $N(H)$ is the normalizer of $H$ in
$G$. In particular, $Q$ is a manifold and its fundamental group is infinite.\\
\item[(ii)] If the orbit space is homeomorphic to an interval, say $[-1,1]$, then:
\begin{itemize}
\item[(a)] there are two non-principal orbits, $\pi^{-1}(\pm 1)=G/K^{\pm}$ where $\pi: Q\to Q/G$ is
the natural projection and $K^{\pm}$ is the isotropy group of the $G$-action at a point
in $\pi^{-1}(\pm 1)$.\\
\item[(b)] The orbifold singular set of $Q$ is either empty, a non-principal orbit or both
non-principal orbits.\\
\item[(c)] The orbifold $Q$ is equivariantly diffeomorphic to the union of two orbifiber bundles
over the two non-principal orbits whose fibers are cones over spherical space forms, that is,
\[
Q\simeq G\times_{K^-} C(\SP^-/\Gamma^-)\cup_{G/H}G\times_{K^+} C(\SP^+/\Gamma^+),
\]
where $S^{\pm}$ denotes the round sphere of dimension $(\dim~Q -\dim~G/K^{\pm} -1)$ and $\Gamma^{\pm}$
is a finite group acting freely and by isometries on $S^{\pm}$. The action is determined
by a group diagram $(G, H, K^-, K^+)$ with group inclusions $H\leq K^{\pm}\leq G$ and
where $K^{\pm}/H$ are spherical space forms $\SP^{\pm}/\Gamma^{\pm}$.\\
\item[(d)] Conversely, a group diagram $(G, H, K^-, K^+)$ with group inclusions $H\leq K^{\pm}\leq G$ and where $K^{\pm}/H$ are spherical space forms, determines a cohomogeneity one
orbifold as in part (c).
\end{itemize}
\end{itemize}
\end{thm}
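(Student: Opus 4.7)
The plan is to leverage the slice theorem for smooth actions on orbifolds together with the one-dimensional structure of the orbit space. First I would show that $Q/G$, as a topological space, is a compact connected one-dimensional manifold (possibly with boundary), hence homeomorphic to $S^1$ or $[-1,1]$. Compactness and connectedness follow from those of $Q$ together with continuity of the orbit projection $\pi$; one-dimensionality is the cohomogeneity one hypothesis. For the manifold structure I would apply the slice theorem for smooth $G$-orbifolds: near an orbit $G\cdot q$ with $G$-isotropy $K_q$ and orbifold local group $\Gamma_q$, a saturated neighborhood of $G\cdot q$ is equivariantly diffeomorphic to $G\times_{K_q}(V_q/\Gamma_q)$, where $V_q$ is a linear slice carrying commuting actions of $K_q$ and $\Gamma_q$. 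The local model of $Q/G$ at $\pi(q)$ is then $(V_q/\Gamma_q)/K_q$, which is either an open interval (if the orbit is principal) or a half-open interval; either way $Q/G$ is a topological $1$-manifold with boundary, forcing $Q/G\in\{S^1,[-1,1]\}$.

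For case (i), when $Q/G\cong S^1$, every orbit is principal because $S^1$ has no boundary points. Since nearby orbits in the slice direction are all principal, $K_q$ acts trivially on $V_q$, and any non-trivial $\Gamma_q$ would introduce a boundary stratum in $Q/G$; hence all orbifold local groups are trivial and $Q$ is a smooth manifold. A standard monodromy construction then identifies $Q$ with the mapping torus of a $G$-equivariant diffeomorphism of $G/H$, which is realized by left multiplication by an element of $N(H)$; this yields the $G/H$-bundle over $S^1$ with structure group $N(H)/H$ and infinite fundamental group.

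For case (ii), the two boundary points of $Q/G=[-1,1]$ correspond to non-principal orbits $G/K^{\pm}$, giving (a). At a non-principal orbit, the slice $V^{\pm}$ is a $K^{\pm}$-representation whose unit sphere $S^{\pm}$ carries an effective commuting action of $K^{\pm}\times\Gamma^{\pm}$; by the slice theorem the $K^{\pm}$-action restricts transitively on $S^{\pm}/\Gamma^{\pm}$ with isotropy conjugate to $H$, so $K^{\pm}/H\cong S^{\pm}/\Gamma^{\pm}$ is a spherical space form. The tubular neighborhood of $G/K^{\pm}$ is therefore the associated cone bundle $G\times_{K^{\pm}}C(S^{\pm}/\Gamma^{\pm})$, and gluing the two tubular neighborhoods along their common principal boundary $G/H$ yields the decomposition in (c). Part (b) follows because principal orbits carry trivial orbifold local group (by the same obstruction argument as in case (i)), so the singular set is concentrated on the non-principal orbits, each of which may or may not carry a non-trivial $\Gamma^{\pm}$.

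For the converse (d), given a group diagram $(G,H,K^{-},K^{+})$ with $K^{\pm}/H$ spherical space forms $S^{\pm}/\Gamma^{\pm}$, I would build $Q$ as the double mapping cylinder $G\times_{K^{-}}C(K^{-}/H)\cup_{G/H}G\times_{K^{+}}C(K^{+}/H)$ and equip it with the smooth orbifold atlas induced by the slice data on each side. The main technical obstacle is verifying compatibility of the two smooth orbifold atlases across the common principal boundary $G/H$ and checking that the orbifold local groups at the cone tips are exactly $\Gamma^{\pm}$; this reduces to checking that the two slice representations agree on the overlap, which follows from the standard equivariant collar-neighborhood / gluing argument in the category of smooth orbifolds. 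Once compatibility is established, the natural $G$-action on the double mapping cylinder is smooth, of cohomogeneity one, and realizes the given diagram.
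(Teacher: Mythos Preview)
The paper does not prove this theorem; it is quoted from \cite{Gonzalez} (the attribution is in the theorem header) and used as a black box in Section~\ref{S:ORBIFOLDS}. There is therefore no ``paper's own proof'' to compare your proposal against.

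That said, your outline is essentially the expected argument and matches the strategy of the cited reference: slice theorem for smooth $G$-orbifolds, classification of compact $1$-manifolds for the orbit space, and a double mapping cylinder decomposition in the interval case. One point deserves more care. You model the tube around a non-principal orbit as $G\times_{K_q}(V_q/\Gamma_q)$ with \emph{commuting} actions of $K_q$ and $\Gamma_q$ on the slice, and then conclude that $K^\pm$ acts transitively on $S^\pm/\Gamma^\pm$ with isotropy $H$. In the orbifold setting the interaction between the $G$-isotropy and the orbifold local group is not automatically a direct product of commuting actions; rather, the correct local model involves an extension (the ``orbifold isotropy''), and one must argue that in cohomogeneity one this extension untwists so that the normal sphere of directions is genuinely $K^\pm/H$ as a homogeneous space. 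Your sketch asserts this rather than deriving it. Similarly, in part~(b) your claim that a non-trivial $\Gamma_q$ at a principal point would force a boundary stratum in $Q/G$ is correct, but the reason is that the slice is one-dimensional and a non-trivial finite group acting effectively on a line must contain a reflection; you should say this explicitly. With these two refinements the argument goes through.
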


Note that when the normal space of directions to a singular orbit is $\SP^1$, i.e. when the orbit is of codimension $2$, there are different orbifold structures on the underlying topological space corresponding to the diagram $(G, H, K^-, K^+)$. Namely, the local group at a point in this singular orbit can be trivial, or a finite cyclic group. For example, consider the group diagram $(\RS^1, 1, \RS^1, \RS^1)$ of a topological action  of $\RS^1$ on $\SP^2$. There are at least three orbifold structures on $\SP^2$: the usual smooth structure, the teardrop structure, and the rugby ball structure. 

When there is no   orbit of codimension $2$, or we choose the local group at a point on the orbit of codimension $2$ to be trivial, then we can obtain the orbifold fundamental group of the orbifold $Q$ given by the group diagram $(G, H, K^-, K^+)$ as follows:


\begin{prop}\label{P:Orbifold fundamental group}
Let $Q$ be a cohomogeneity one orbifold given by the group diagram $(G, H, K^-, K^+)$. 
\begin{itemize}
\item[(i)] If the normal spaces of directions at both singular orbits are not spheres, then  $\pi_1^{orb}(Q)=\pi_1(G/H)$.\\
\item[(ii)] If there is an orbit, say $G/K^-$, such that $K^-/H$  is a sphere, and the local group at a point in this orbit is trivial, then $\pi_1^{orb}(Q)=\pi_1(G/K^-)$. In particular, if $G$ is simply-connected, then $Q$ is a bad orbifold.
\end{itemize}
\end{prop}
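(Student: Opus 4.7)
The overall strategy is to combine Proposition~\ref{P:Strata_Complement} with the cone bundle decomposition from Theorem~\ref{T:C1 orbifold structure}, starting from the following numerical observation: the singular orbit $G/K^\pm$ has codimension $\dim(K^\pm/H)+1$ in $Q$, and a non-sphere spherical space form $K^\pm/H = S^\pm/\Gamma^\pm$ must have dimension at least $2$ (a one-dimensional spherical space form is already a circle). Hence, whenever $K^\pm/H$ is not a sphere, the singular orbit $G/K^\pm$ is a codim-$\geq 3$ orbifold stratum and can be pruned without affecting $\pi_1^{orb}$.

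For part (i) this gives the result at once: both singular orbits are codim-$\geq 3$ strata and exhaust the orbifold singular set, so $Q_{(2)} = Q\setminus(G/K^-\cup G/K^+) \cong G/H\times(-1,1)$ carries trivial orbifold structure and deformation retracts onto the principal orbit $G/H$, yielding $\pi_1^{orb}(Q) = \pi_1(G/H)$.

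For part (ii) the triviality of the local group at $G/K^-$ places this orbit inside the regular part. The substantive case is when $K^+/H$ is not a sphere, so that $G/K^+$ is the unique codim-$>2$ singular stratum and Proposition~\ref{P:Strata_Complement} gives $\pi_1^{orb}(Q) = \pi_1(Q\setminus G/K^+)$. Using the cone bundle structure, the complement $Q\setminus G/K^+$ deformation retracts onto the smooth disk bundle $G\times_{K^-}C(K^-/H)$ by collapsing the punctured cone fibers of $G\times_{K^+}(C(K^+/H)\setminus\{*\})$ radially onto their common boundary $G/H$; a second retraction onto the zero section produces $\pi_1(G/K^-)$. If instead $K^+/H$ is also a sphere with trivial local group, then $Q$ is a manifold and $\pi_1^{orb}(Q)=\pi_1(Q)=\pi_1(G/K^-)$ by direct application of Proposition~\ref{fundamental group}, using the fact that $\pi_1$-classes from the sphere $K^+/H$ bound disks in the disk bundle over $G/K^+$.

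For the ``in particular'' assertion, assume $G$ simply-connected; the long exact sequence of $K^-\to G\to G/K^-$ reduces $\pi_1(G/K^-)$ to $\pi_0(K^-)$, which is trivial when $K^-$ is connected (and finite in any case). The local group $\Gamma^+$ at a point of $G/K^+$ is nontrivial (otherwise $Q$ is already a manifold and there is nothing to prove), and Proposition~\ref{P:Injection_of_Local_Gr} requires $\Gamma^+$ to inject into $\pi_1^{orb}(Q) = \pi_0(K^-)$. A meridian loop of $G/K^+$, traced through the explicit retraction of $Q\setminus G/K^+$ onto $G/K^-$ described above, maps trivially to $\pi_0(K^-)$, so the injection fails and $Q$ is bad. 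The main subtlety is the final map-chasing step for disconnected $K^-$; it follows transparently from the cone bundle retraction, and otherwise the argument reduces to the standard observation that $\pi_1^{orb}(Q)=1$ forbids any nontrivial local group.
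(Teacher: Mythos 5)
Your computation of $\pi_1^{orb}(Q)$ in parts (i) and (ii) follows exactly the paper's route: a non-sphere spherical space form $K^\pm/H$ has dimension at least $2$, so the corresponding orbit is a stratum of codimension at least $3$, Proposition~\ref{P:Strata_Complement} lets you delete it, and $Q_{(2)}$ is identified with $G/H\times(-1,1)$ in case (i) and with the disk bundle $G\times_{K^-}C(K^-/H)$ in case (ii). That part is correct and is essentially the paper's proof. However, your extra sub-case of (ii) in which $K^+/H$ is also a sphere with trivial local group is wrong: by Proposition~\ref{fundamental group}, $\pi_1(Q)=\pi_1(G/H)/N^-N^+$, and killing $N^+$ as well as $N^-$ changes the answer unless $N^+\subseteq N^-$, which fails precisely when $K^+/H=\SP^1$ (e.g.\ $(T^2,1,\SP^1\times 1,1\times\SP^1)$ gives $\SP^2\times\SP^2$ with $\pi_1=0$, while $\pi_1(G/K^-)=\Z$). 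The paper avoids this by taking (ii) to mean that the \emph{other} orbit is a genuine singular stratum of codimension at least $3$; you should restrict to that reading rather than claim the formula in the degenerate case.

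The genuine gap is in your treatment of the ``in particular'' assertion, which the paper in fact does not prove inside this proposition. Your key step is that a meridian loop of $G/K^+$ ``maps trivially to $\pi_0(K^-)$.'' Trace it through: the meridian is a loop in $K^+/H\subseteq G/H$, and under $\pi_1(K^+/H)\to\pi_1(G/H)\to\pi_1(G/K^-)\cong\pi_0(K^-)$ a generator lifting to a path in $K^+_0$ from $e$ to $h\in H\cap K^+_0$ lands on the class $[h]\in\pi_0(K^-)$. There is no reason for $h$ to lie in $K^-_0$ when $K^-$ is disconnected, so the composite $\Gamma^+\to\pi_1^{orb}(Q)$ need not be trivial, and it can even be injective: for the diagram $(\SP^3,Q_8,\D^*_8,\SP^3)$ the map is the inclusion $Q_8\hookrightarrow \D^*_8=\pi_0(K^-)$, and the resulting orbifold is the \emph{good} orbifold $\SP^4/\D^*_8$. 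So the badness conclusion cannot be derived by your map-chasing in the disconnected case; it holds when $K^-$ is connected (then $\pi_1^{orb}(Q)=0$ and any nontrivial local group fails to inject), and in the paper's remaining applications it is verified by showing $\Gamma^+$ does not embed in $\pi_0(K^-)$ at all. You should either add a connectedness hypothesis or replace the ``transparent'' map-chase by the case-by-case injectivity check.
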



\begin{proof}
We first prove part (i). If the normal spaces of directions at both singular orbits are not spheres, then the regular subset of the orbifold coincides with the union of the regular orbits. Since by assumption, the codimension of each of the singular orbits is at least $3$, we have $Q_{(2)}=R_{Q}$. Thus, by Proposition \ref{P:Strata_Complement}, $\pi_1^{orb}(Q)=\pi_1(R_Q)$. Since $R_Q=G/H\times (-1, 1)$, we have $\pi_1^{orb}(Q)=\pi_1(G/H)$.

Now we prove part (ii). In this case the regular part of the orbifold consists of the space of regular orbits and the singular orbit with sphere as its normal space of directions. The singular orbit with non-sphere normal space of directions is the other stratum of the orbifold, whose codimension is at least $3$. Therefore, $Q_{(2)}=R_Q=G\times_{K^-}D^-$. By the long exact sequence of homotopy groups of the disk bundle 
\[
\D^-\to G\times_{K^-}D^-\to G/K^-,
\]
we have $\pi_1^{orb}(Q)=\pi_1(G/K^-)$.
\end{proof}


\subsection{Cohomogeneity one orbifolds with exactly one fixed point.}
\label{SS: fixed point orbifolds}
In this subsection we prove that if the group acting on $Q$ is simply-connected and the action has exactly one fixed point, then there exists a good orbifold structure on the underlying topological space.

Let $G$ be a compact simply-connected Lie group which acts almost effectively, non-reducibly and with cohomogeneity one on a smooth simply-connected orbifold $Q$  with group diagram $(G, H, K, G)$. Assume that $K\subsetneq G$ so that the  action of $G$ on $Q$ has only one fixed point. Since the normal space of directions is a spherical space form, $G/H=\SP^{n-1}/\Gamma$, for $\Gamma$ a finite subgroup of  $\SO(n)$. On the other hand, since $G/H$ is a principal orbit, $G$ acts on $\SP^{n-1}/\Gamma$ almost effectively. Therefore, we can lift the almost effective action of $G$ on $\SP^{n-1}/\Gamma$  to an almost effective and transitive action of $G$ on $\SP^{n-1}$. These actions are well known and are given in Theorem~\ref{T:ASOH}.

Note that in the group diagram of the $G$-action on $Q$, the group  $K$ is a proper subgroup of $G$ which cannot be a finite extension of $H$ either, because there are no exceptional orbits. Thus we can rule out the actions \eqref{Eq_T_1}--\eqref{Eq_T_4} in Theorem~\ref{T:ASOH}, for in these cases $H$ is a maximal connected subgroup of $G$. Further, since we assume that the action is non-reducible, we rule out  cases  \eqref{Eq_T_5} and  \eqref{Eq_T_7}. in Theorem~\ref{T:ASOH}. Thus, we need only consider the actions \eqref{Eq_T_6},  \eqref{Eq_T_8} and \eqref{Eq_T_9} in Theorem~\ref{T:ASOH}. 

For each of the corresponding groups we will find the possible cohomogeneity one diagrams and explore the existence of a good orbifold structure on the underlying topological space determined by the diagram. 
Our strategy is as follows. Observe first that a cohomogeneity one action on an orbifold $Q$ lifts to a cohomogeneity one action on its universal orbicover. Therefore, if $Q$ is a good cohomogeneity one orbifold with finite orbifold fundamental group, then its universal orbicover $\tilde{Q}$ is a compact cohomogeneity one smooth manifold and, provided that $\dim\tilde{Q}\leq 7$, it must be one of those listed in \cite{Hoelscher}. In our case we will compute the orbifold fundamental group of $Q$. To do so, we need to pay especial attention to the local groups of $Q$. Then we will show that the underlying topological space $|Q|$ corresponds to some diagram given by the quotient  of a cohomogeneity one action on a manifold. 
\\


\noindent $\mathbf{\GG=Sp(n)}$. This case corresponds to action \eqref{Eq_T_6} in Theorem~\ref{T:ASOH}. We need the following Lemma:


\begin{lem}[\protect{\cite[Lemma 2.1]{TT}}]
\label{L:Sp(n)_SUBGPS}
Assume $n\geq 3$. Let $K$ be a closed connected subgroup of $\Sp(n)$
such that $\dim\Sp(n)/K\leq {4n-1}$. Then, up to an inner automorphism of $\Sp(n)$, $K$ coincides
with one of
\[
\Sp(n-1), \, \U(1)\times \Sp(n-1),\, \Sp(1)\times \Sp(n-1)\text{ or } \Sp(n),
\]
embedded in the standard way.
\end{lem}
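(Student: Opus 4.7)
The plan is to use the hypothesis together with the classification of maximal subgroups of $\Sp(n)$ to narrow $K$ down in two descending steps. Writing $\dim\Sp(n)=n(2n+1)$, the codimension bound becomes $\dim K\geq (n-1)(2n-1)$, so $K$ is forced to sit very close to the top of $\Sp(n)$.

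First I would assume $K\neq \Sp(n)$, so that $K$ is contained in some proper maximal connected subgroup $L\leq\Sp(n)$. By the Borel--de Siebenthal classification together with Dynkin's list of irreducible maximal subgroups, the proper maximal connected subgroups of $\Sp(n)$ are, up to conjugation, the groups $\Sp(k)\cdot\Sp(n-k)$ for $1\leq k\leq n/2$, the unitary subgroup $\U(n)$, and a short list of irreducible embeddings of the form $\Sp(a)\otimes\OO(b)$ or $\Sp(a)\otimes\Sp(b)$ with $ab=n$. A direct computation gives $\dim\Sp(n)/(\Sp(k)\cdot\Sp(n-k))=4k(n-k)$, which is at most $4n-1$ only for $k=1$; likewise $\dim\Sp(n)/\U(n)=n^2+n>4n-1$ for $n\geq 3$, and the irreducible embeddings have Lie algebras of dimension bounded above by a quadratic in $n$ with leading coefficient strictly less than $2$, so they also fail the bound for $n\geq 3$. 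Hence the only maximal connected subgroup we need to consider is $L=\Sp(n-1)\cdot\Sp(1)$, and we may assume $K\subseteq L$.

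Next I would classify the connected subgroups of $L$ of codimension at most $3$ in $L$; since $\dim L=(n-1)(2n-1)+3$, this is exactly the residual codimension bound $4n-1$ in $\Sp(n)$. Passing to the finite cover $\Sp(n-1)\times\Sp(1)$ and working at the Lie algebra level, I would invoke Goursat's lemma: every subalgebra $\mathfrak{k}\subseteq\mathfrak{sp}(n-1)\oplus\mathfrak{sp}(1)$ is determined by ideals $\mathfrak{k}_i\lhd\mathfrak{p}_i\leq \mathfrak{g}_i$ together with an isomorphism $\mathfrak{p}_1/\mathfrak{k}_1\cong\mathfrak{p}_2/\mathfrak{k}_2$, and satisfies $\dim\mathfrak{k}=\dim\mathfrak{p}_1+\dim\mathfrak{k}_2$. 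The simplicity of $\mathfrak{sp}(n-1)$ and of $\mathfrak{sp}(1)$ severely restricts the admissible ideals, and the constraint $\dim\mathfrak{k}\geq(n-1)(2n-1)$ forces $\mathfrak{p}_1=\mathfrak{sp}(n-1)$ in every scenario with a non-trivial common quotient; but then $\mathfrak{k}_1\in\{0,\mathfrak{sp}(n-1)\}$, and for $n\geq 3$ neither of the two resulting quotients $\mathfrak{sp}(n-1)/\mathfrak{k}_1$ can be realised as a quotient of a subalgebra of $\mathfrak{sp}(1)$. Hence $\mathfrak{k}=\mathfrak{k}_1\oplus\mathfrak{k}_2$ is a direct sum, and the dimension bound then forces $\mathfrak{k}_1=\mathfrak{sp}(n-1)$ and $\mathfrak{k}_2\in\{0,\mathfrak{u}(1),\mathfrak{sp}(1)\}$.

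The main obstacle is the Goursat step, specifically ruling out the diagonal graph subalgebras; the hypothesis $n\geq 3$ enters precisely here, because for $n=2$ one has $\mathfrak{sp}(n-1)\cong\mathfrak{sp}(1)$ and honest diagonals do appear. Once the list of Lie subalgebras is in hand, integrating back up and transferring through the quotient $\Sp(n-1)\times\Sp(1)\to\Sp(n-1)\cdot\Sp(1)\subseteq\Sp(n)$ yields, up to an inner automorphism of $\Sp(n)$, exactly the four standardly embedded subgroups $\Sp(n-1)$, $\U(1)\times\Sp(n-1)$, $\Sp(1)\times\Sp(n-1)$ and $\Sp(n)$ claimed in the statement.
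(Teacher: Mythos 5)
The paper does not prove this lemma at all: it is imported verbatim as a citation to \cite[Lemma 2.1]{TT}, so there is no in-paper argument to compare against. Your plan is a correct, self-contained route to the statement. The two-step reduction is sound: the codimension hypothesis forces $\dim K\geq (n-1)(2n-1)$; among the maximal connected subgroups of $\Sp(n)$ only $\Sp(1)\times\Sp(n-1)$ survives the bound (since $\dim\Sp(n)/(\Sp(k)\times\Sp(n-k))=4k(n-k)$ exceeds $4n-1$ for $k\geq 2$, $\dim\Sp(n)/\U(n)=n^2+n>4n-1$ for $n\geq 3$, and the tensor-product and simple irreducible subgroups are far too small); and inside $\Sp(1)\times\Sp(n-1)$ the Goursat analysis works because $\mathfrak{sp}(n-1)$ has no proper subalgebra of codimension $\leq 3$ when $n\geq 3$ (the minimal codimension is $4(n-2)\geq 4$), which pins $\mathfrak{p}_1=\mathfrak{sp}(n-1)$, kills the diagonal graphs by simplicity, and leaves exactly $\mathfrak{k}_2\in\{0,\mathfrak{u}(1),\mathfrak{sp}(1)\}$. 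Two small inaccuracies worth fixing, neither of which affects the conclusion: first, $\Sp(a)\otimes\Sp(b)$ preserves an orthogonal (not symplectic) form on $\mathbb{H}^a\otimes_{\mathbb{H}}\mathbb{H}^b\cong\R^{4ab}$, so it is a maximal subgroup of $\SO(4ab)$ rather than of $\Sp(ab)$; the correct tensor-product maximal subgroups of $\Sp(n)$ are $\Sp(a)\cdot\SO(b)$ with $ab=n$ (all excluded by dimension in any case). Second, the block-diagonal embedding $\Sp(1)\times\Sp(n-1)\hookrightarrow\Sp(n)$ is already injective, so there is no finite cover to pass through; you can run the Goursat argument directly on the subgroup itself.
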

 
Since $H_0\subsetneq K_0\subsetneq G$ and $H_0=\Sp(n-1)$, Lemma~\ref{L:Sp(n)_SUBGPS} implies that $K_0=\U(1)\times \Sp(n-1)$ or $\Sp(1)\times \Sp(n-1)$. Recall that $K\subset N_{G}(K_0)$. Since we have
\[
N_{\Sp(n)}(\U(1)\times \Sp(n-1))=N_{\Sp(1)}(\U(1))\times \Sp(n-1)\]
and  
\[
N_{\Sp(n)}(\Sp(1)\times \Sp(n-1))=\Sp(1)\times \Sp(n-1),
\]
we must have that $K$ is $N_{\Sp(1)}(\U(1))\times \Sp(n-1)$, $\Sp(1)\times \Sp(n-1)$ or $\U(1)\times \Sp(n-1)$. Recall now that $H$ is a finite extension of $H_0=\Sp(n-1)\subset \Sp(n)$. Using the fact that $H\subsetneq K$, we get the following diagrams:  

\begin{align}
	& (\Sp(n), \Sp(n-1)\times \Z_k, \Sp(n-1)\times \U(1), \Sp(n)), \label{D:ORBI_ITEM_1}	\\ 
	& (\Sp(n), \Sp(n-1)\times \D_{2m}^*, \Sp(n-1)\times N_{\Sp(1)}(\U(1)), \Sp(n)),\label{D:ORBI_ITEM_2}\\ 
	& (\Sp(n), \Sp(n-1)\times \Gamma, \Sp(n-1)\times \Sp(1), \Sp(n)), \text{ where $\Gamma$ is a finite subgroup of $\Sp(1)$}.\label{D:ORBI_ITEM_3} 
\end{align}
Now we explore the orbifold structures on the underlying topological space of each diagram.
\\


\noindent \textbf{Diagram \eqref{D:ORBI_ITEM_1}.}  Let  $\CP^{2n}=\SP^{4n+1}/S^1$, where $\SP^{4n+1}\subseteq \mathbb{C}\times \mathbb{H}^n$, $\mathbb{C}=\{a+ib \mid a,b \in \mathbb{R}\}\subseteq \mathbb{H}$, and $S^1\subseteq \mathbb{C}$ acts on $\SP^{4n+1}$ by right multiplication. Then we define the actions of  $\Sp(n)$ and $\Z_k$ on $\CP^{2n}$ as 
\begin{equation}\label{ACTION_SP(n_CPn)}
A\cdot[x_0, x_1, \ldots, x_n]=[x_0, A(x_1, \ldots, x_n)],\ A\in \Sp(n),
\end{equation} 
and 
\[
\lambda\cdot[x_0, x_1, \ldots, x_n]=[\lambda x_0, x_1, \ldots, x_n],\ \lambda\in \Z_k,
\]
 respectively. The action of $\Sp(n)$ is of cohomogeneity one (see \cite[Proposition~1.23]{Hoelscher}) and commutes with the $\Z_k$-action. Therefore,  $\Sp(n)$ acts on $\CP^{2n}/\Z_k$ with cohomogeneity one with group diagram \eqref{D:ORBI_ITEM_1} above. Hence by Proposition~\ref{P:Equivalent_actions}, $Q$ is equivariantly homeomorphic to $\CP^{2n}/\Z_k$. 
Note that in this case one of the normal space of directions is $\SP^{4n-1}/\Z_k$ and the other one is $\SP^1$. Therefore, we have a codimension $2$ stratum, namely, the orbit $G/K$. If we choose the local group on this orbit to be trivial, then by Proposition~\ref{P:Orbifold fundamental group}, $\pi_1^{orb}(Q)=\pi_1(G/K)=0$. Therefore, $\CP^{2n}/\Z_k$ admits a bad orbifold structure. If we choose the local group at points on $G/K$ to be $\Z_k$, then $\CP^{2n}$ is the universal orbicover of $\CP^{2n}/\Z_k$, giving that $Q$ is a good orbifold.
\\


\noindent \textbf{Diagram \eqref{D:ORBI_ITEM_2}.} We consider the cohomogeneity one action of $\Sp(n)$ on $\CP^{2n}$ defined in \eqref{ACTION_SP(n_CPn)}, and then define the action of $D^{*}_{2m}=\langle e^{\pi i/m}, j\rangle$ via
\[
e^{\pi i/m}\cdot[x_0, x_1, \ldots, x_n]= [e^{\pi i/m}x_0, x_1, \ldots, x_n],
\]
\[
j\cdot[x_0, x_1, \ldots, x_n]=[-jx_0j, x_1j, \ldots, x_nj].
\]
By the definition of $\Sp(n)$, the action of $\Sp(n)$ and the action of $D^*_{2m}$ on $\CP^{2n}$ commute. Thus we have a cohomogeneity one action of $\Sp(n)$ on $\CP^{2n}/D^{*}_{2m}$ with the same diagram as \eqref{D:ORBI_ITEM_2}. By Proposition~\ref{P:Equivalent_actions}, $Q$ is  equivariantly homeomorphic to $\CP^{2n}/D^{*}_{2m}$. Again, we can choose two different orbifold structures on $\CP^{2n}/D^{*}_{2m}$. First we let the local group at points on the orbit $G/K$ be trivial. Then by Proposition~\ref{P:Orbifold fundamental group}, $\pi_1^{orb}(Q)=\pi_1(G/K)=\Z_2$. Since $D^{*}_{2m}$ cannot be embedded in $\Z_2$, $\CP^{2n}/D^{*}_{2m}$ admits a bad orbifold structure by Proposition~\ref{P:Injection_of_Local_Gr}. Now, assume that the local group at the points on  the orbit $G/K$ is $\Z_m$. Then $\CP^{2n}$ is the universal orbicover of $\CP^{2n}/D^{*}_{2m}$.
\\


\noindent \textbf{Diagram \eqref{D:ORBI_ITEM_3}.} In this case, the local groups at both non-principal orbits are $\Gamma$, and there are no strata of codimension $2$.  Therefore, we only have one cohomogeneity one orbifold structure on the underlying topological  space of this diagram. Also, the regular subset of the orbifold coincides with the union of principal orbits. Hence, by Proposition~\ref{P:Orbifold fundamental group}, $\pi_1^{orb}(Q)=\pi_1(G/H)=\Gamma$. We consider the following two actions on $\HP^n$. For $(x_0: x_1: \ldots: x_n)\in\HP^n$, $A\in \Sp(n)$, and $\gamma\in \Gamma\subseteq \Sp(1)$,  define
\[
A\cdot(x_0: x_1: \ldots: x_n)= (x_0: A(x_1, \ldots, x_n)),
\]
\[
\gamma\cdot(x_0: x_1: \ldots: x_n)=(x_0\gamma: x_1: \ldots: x_n).
\]
These two actions commute, and  give us a cohomogeneity one action of $\Sp(n)$
on $\HP^n/\Gamma$ with group diagram \eqref{D:ORBI_ITEM_2}. Thus by Proposition~\ref{P:Equivalent_actions}, $Q$ is  equivariantly homeomorphic to  $\HP^n/\Gamma$. Since there is only one cohomogeneity one orbifold structure on $Q$, it is in fact diffeomorphic to $\mathbb{HP}^n/\Gamma$, which is a good orbifold.
\\


\noindent  $\mathbf{\GG=\SU(n)}$. This case corresponds to action \eqref{Eq_T_9} in Theorem~\ref{T:ASOH}. The only  connected proper  subgroup of $G$ containing $\SU(n-1)$ is $\mathrm{S}(\U(n-1)\U(1))$ (see, for example, \cite[Case II in p.~631]{GroveZiller}). 
Since $\mathrm{S}(\U(n-1)\U(1))$ is maximal in $G$, it does not have a finite extension in $G$. Therefore we only have the following group diagram:
\begin{align}\label{Eq_GD_SU(n)}
(\SU(n), \mathrm{S}(\U(n-1)\Z_k), \mathrm{S}(\U(n-1)\U(1)), \SU(n)).
\end{align}
We define  actions of $\SU(n)$ and $\Z_k$ on $\CP^n$ as follows:
\[
A\cdot(x_0: x_1:\dots: x_n)= (x_0: A(x_1, \ldots, x_n)),\, \, A\in \SU(n), \,(x_0: x_1: \cdots: x_n)\in \CP^n,
\]
\[
\lambda\cdot(x_0: x_1: \cdots: x_n)=(\lambda x_0: x_1: \cdots: x_n),\, \, \lambda\in \Z_k, \,(x_0: x_1: \cdots: x_n)\in \CP^n.
\]
Clearly, these two actions commute. As a result, we have a cohomogeneity one action on $\CP^n/\Z_k$ with group diagram \eqref{Eq_GD_SU(n)}. Therefore, by Proposition~\ref{P:Equivalent_actions}, $Q$ is equivariantly homeomorphic to $\CP^n/\Z_k$. Since the orbit $\SU(n)/\mathrm{S}(\U(n-1)U(1))$ is of codimension $2$, we can choose different orbifold structures on $\CP^n/\Z_k$. If we let the local group at this orbit  be trivial, then by Proposition~\ref{P:Orbifold fundamental group}, $\pi_1^{orb}(Q)=0$. Hence $\CP^n/\Z_k$ is a simply-connected orbifold. We now choose the local group to be $\Z_k$. Then $\CP^n$ is the universal orbicover of $\CP^n/\Z_k$, that is, in this case $\CP^n/\Z_k$ is a good orbifold.
\\


 \noindent $\mathbf{\GG=\Spin(9)}$. This case corresponds to action \eqref{Eq_T_8} in Theorem~\ref{T:ASOH}.  We have the following diagram:
 \begin{align}\label{Eq_GD_Spin(9)}
  (\Spin(9), N_{\Spin(9)}(\Spin(7)), \Spin(8), \Spin(9)).
  \end{align}
 Since the codimension of the non-principal orbits  of this action is not $2$, we have only one cohomogeneity one orbifold structure on the underlying topological  space. The local groups of both orbits are $\Z_2$ and, by Proposition~\ref{P:Orbifold fundamental group}, $\pi_1^{orb}(Q)=\pi_1(G/H)=\Z_2$. Now, we consider 
the following  action of $\Spin(9)$ on  the Cayley plane $\CaP^2$ (for more details see \cite{Iwata}). \\
Let $\mathfrak{J}$ be
the set of all $3\times 3$ Hermitian matrices over the Cayley number field $\mathbf{Ca}$. A matrix $A\in \mathfrak{J}$ has the form 
\[X(\xi, \mu)=\left( \begin{array}{ccc}
\xi_1 & u_3 & \bar{u}_2 \\
\bar{u}_3 & \xi_2 & u_1 \\
u_2 & \bar{u}_1 & \xi_3 \end{array} \right),\] 
where $\xi_i\in \R$ and $u_i\in \mathbf{Ca}$, for $i=1, 2, 3$. 

Let
\begin{align*}
& E_1=\left( \begin{array}{ccc}
1 & 0 & 0 \\
0 & 0 & 0 \\
0 & 0 & 0 \end{array} \right),
  &E_2=\left(\begin{array}{ccc}
0 & 0 & 0 \\
0 & 1 & 0 \\
0 & 0 & 0  \end{array} \right),
\qquad& E_3=\left(\begin{array}{ccc}
0 & 0 & 0 \\
0 & 0 & 0 \\
0 & 0 & 1  \end{array} \right), \\
&F_1^{u}=\left( \begin{array}{ccc}
0 & 0 & 0 \\
0 & 0 & \bar{u} \\
0 & u & 0 \end{array} \right),
 &F_2^{u}=\left(\begin{array}{ccc}
0 & 0 & \bar{u} \\
0 & 0 & 0 \\
u & 0 & 0 \end{array} \right), 
\qquad &F_3^{u}=\left( \begin{array}{ccc}
0 & u & 0\\
\bar{u} & 0 & 0 \\
0 & 0 & 0
\end{array} \right).
\end{align*}

Then, the set $\{E_1, E_2, E_3, F_1^{e_i}, F_2^{e_i}, F_3^{e_i} \mid i = 0, 1,\ldots, 7\}$ constitutes an $\mathbb{R}$-basis
of $\mathfrak{J}$, where $\{e_1,\ldots,e_7\}$ is the standard basis of $\mathbf{Ca}$. The
Jordan product  is defined on $\mathfrak{J}$ by
\[
X\circ Y=\frac{1}{2}[XY+YX], \quad X, Y \in  \mathfrak{J}.
\]
An $\mathbb{R}$-isomorphism $\varphi: \mathfrak{J}\to \mathfrak{J}$ is called an \emph{automorphism} of $\mathfrak{J}$, if it preserves the Jordan product, i.e.
\[
\varphi(X\circ Y)=\varphi(X)\circ \varphi(Y),
\]
for all $X, Y \in \mathfrak{J}$. It is well-known that the group of automorphisms of
$\mathfrak{J}$ is the exceptional Lie group $\F_4$ . The Cayley projective plane $\CaP^2$,
defined by
\[
\{X\in \mathfrak{J}|\quad X\circ X=X, \trace X=1\},
\] 
is identified with the left coset space $\F_4/\Spin(9)$, where
\[
\Spin(9)=\{\varphi\in \F_4 \mid  \varphi(E_1)=E_1\}.
\]
$\Spin(9)$ contains
\[
\Spin(8)=\{\varphi\in \F_4\mid \varphi(E_i)=E_i, \, i=1, 2, 3\}
\]
and $\Spin(8)$ contains
\[
\Spin(7)=\{\varphi\in \Spin(8)\mid \varphi(F^1_3)=F^1_3\}.
\]
Through the inclusion $\Spin(9)\subseteq \F_4$, $\Spin(9)$ acts on $\CaP^2$ with cohomogeneity one (see, for example \cite[Example~1]{Iwata}). The orbits of the action are given by 
\[
A_s=\{X(\xi, \mu)\in \CaP^2\mid \xi_1=s\},
\]
for $0\leq s\leq 1$. In particular,  the following hold:
\begin{itemize}
\item $A_1=\{E_1\}$ is a fixed point.
\item $A_0$ is an $8$-dimensional sphere. The isotropy group at $E_2$ is $\Spin(8)$.
\item $A_s$, for $0< s<1$, is the principal orbit which is a $15$-dimensional sphere. The isotropy group at $(E_1+E_2+F_3^1)$ is $\Spin(7)$.
\end{itemize}
Now define an $\mathbb{R}$-linear transformation $\sigma$ of $\mathfrak{J}$ by
\[
\sigma \left( \begin{array}{ccc}
\xi_1 & u_3 & \bar{u}_2 \\
\bar{u}_3 & \xi_2 & u_1 \\
u_2 & \bar{u}_1 & \xi_3 \end{array} \right)=\left( \begin{array}{ccc}
\xi_1 & -u_3 & -\bar{u}_2 \\
-\bar{u}_3 & \xi_2 & u_1 \\
-u_2 & \bar{u}_1 & \xi_3 \end{array} \right).
\]
Then $\sigma\in Z(\Spin(9))$ and $\sigma^2=1$ (see, for example, \cite[Section 2.9]{Yo}). 
Thus $\sigma$ is an involution commuting with the $\Spin(9)$ action on $\CaP^2$ described above. This gives a cohomogeneity one action of $\Spin(9)$ on $\CaP^2/\Z_2$ with group diagram \eqref{Eq_GD_Spin(9)}. Therefore, by Proposition~\ref{P:Equivalent_actions}, $Q$ is equivariantly homeomorphic to $\CaP^2/\Z_2$. Since $Q$ admits only one orbifold structure compatible with the cohomogeneity one action, $Q$ is indeed equivariantly diffeomorphic to $\CaP^2/\Z_2$, i.e. $Q$ is a good orbifold.


\subsection{Orbifold structures on non-primitive cohomogeneity one spaces}
We now  inquire whether or not  one can endow an underlying space of  a   non-primitive diagram whose primitive part admits a good structure, with a good structure.  The next proposition shows that it is indeed the case under certain mild restrictions. 


\begin{prop}
\label{P:Good_S_N_Primitive}
Let $(G, H, K^-, K^+)$ be the group diagram of a non-primitive cohomogeneity one action, where the underlying topological space $|Q|$ is homeomorphic to an orbifold. Let  $L\subset G$ be as in Definition \ref{Primitive Action}, and $(L, H, K^-, K^+)$ be the primitive diagram such that its underlying space is $L$-equivariantly homeomorphic to a global quotient, say $M/\Gamma$. Let $\tilde{L}$ be a covering group of $L$ which acts on $M$ as a lifting action of the $L$-action on $M/\Gamma$.  If $\tilde{L}$ is a subgroup of $G$, then $|Q|$ admits a good orbifold structure. 
 \end{prop}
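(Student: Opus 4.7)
The plan is to realize $|Q|$ as a quotient $\tilde{Q}/\Gamma$ with $\tilde{Q}$ a smooth cohomogeneity one $G$-manifold and $\Gamma$ a finite group acting smoothly on $\tilde{Q}$ and commuting with $G$; pulling back a smooth atlas from $\tilde{Q}$ along the quotient map then yields a good orbifold structure on $|Q|$. The ingredients are exactly what the hypotheses provide: the cohomogeneity one $\tilde{L}$-manifold $M$, the covering $\tilde{L}\to L$ with finite central kernel $\Gamma$, and the given embedding $\tilde{L}\subseteq G$.

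First I would apply Proposition~\ref{P:Primitive Action} to identify $|Q|$ with $(G\times|Y|)/L$, where $|Y|$ is the primitive cohomogeneity one orbifold determined by $(L,H,K^{-},K^{+})$ and, by hypothesis, $|Y|$ is $L$-equivariantly homeomorphic to $M/\Gamma$. Using the inclusion $\tilde{L}\subseteq G$, I would then set
\[
\tilde{Q}\;:=\;G\times_{\tilde{L}}M,
\]
where $\tilde{L}$ acts diagonally on $G\times M$ by $\tilde{l}\cdot(g,m)=(g\tilde{l}^{-1},\tilde{l}m)$. Since $\tilde{L}$ is a compact Lie subgroup of $G$ acting freely on $G$ by right translation, $\tilde{Q}$ is a smooth manifold, and the induced left $G$-action is of cohomogeneity one (its orbit space being $M/\tilde{L}$, which is one-dimensional since $M$ is cohomogeneity one under $\tilde{L}$). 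The centrality of $\Gamma$ in $\tilde{L}$ ensures that the assignment $\gamma\cdot[g,m]:=[g,\gamma m]=[g\gamma,m]$ is a well-defined smooth $\Gamma$-action on $\tilde{Q}$ that commutes with the $G$-action.

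The crux is the $G$-equivariant identification $\tilde{Q}/\Gamma\cong (G\times(M/\Gamma))/L=|Q|$, which I would establish by exhibiting a common description of both sides as a quotient of $G\times M$. On one side, the equivalence relation is generated by the diagonal $\tilde{L}$-orbits together with the right translations by $\Gamma\subseteq\tilde{L}$ on $G$; on the other side, it is generated by the $\Gamma$-orbits on the $M$-factor together with the diagonal $L$-action (lifted back to $\tilde{L}$ via the covering map). Matching these two descriptions uses the centrality of $\Gamma$ and the compatibility of the embeddings $L\hookrightarrow G$ and $\tilde{L}\hookrightarrow G$ with the covering $\tilde{L}\to L$. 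I expect this reconciliation of the two embeddings---and the attendant bookkeeping between right translation by $\tilde{L}$ on $G$ and the induced $L$-action on $M/\Gamma$---to be the principal obstacle. Once the identification is in place, the presentation $|Q|\cong\tilde{Q}/\Gamma$ exhibits $|Q|$ as a global quotient of a smooth manifold by a finite group and therefore carries a good orbifold structure, completing the proof.
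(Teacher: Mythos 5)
Your overall strategy --- form $\tilde{Q}=G\times_{\tilde{L}}M$, let $\Gamma$ act on $\tilde{Q}$ commuting with the $G$-action, and identify $\tilde{Q}/\Gamma$ with $|Q|$ --- is exactly the paper's. However, you have misread what $\Gamma$ is. You take $\Gamma$ to be the finite \emph{central kernel} of the covering $\tilde{L}\to L$, acting on $M$ through $\tilde{L}$, and you use this to justify well-definedness via $\gamma\cdot[g,m]=[g,\gamma m]=[g\gamma,m]$. In the proposition, $\Gamma$ is the deck group of $M\to M/\Gamma$, an a priori separate finite group of diffeomorphisms of $M$ that need not lie in $\tilde{L}$ at all; in the intended applications (e.g.\ Corollary~\ref{C:Good_S_N_Primitive_J}, where $\tilde{L}=L=L_1\times L_2$, so the covering is trivial, while $\Gamma=\Gamma_1\times\Gamma_2$ acts nontrivially on $\SP^{n_1}\ast\SP^{n_2}$) your centrality argument has nothing to apply to. The correct justification, which the paper uses, is that the lifting hypothesis forces the $\tilde{L}$- and $\Gamma$-actions on $M$ to commute (conjugation by the connected group $\tilde{L}$ must act trivially on the finite deck group), and then letting $\Gamma$ act trivially on the $G$-factor is well defined on $G\times_{\tilde{L}}M$.

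The second problem is that the identification $\tilde{Q}/\Gamma\cong G\times_{L}(M/\Gamma)$, which you correctly single out as the crux, is left as an acknowledged ``principal obstacle'' rather than proved --- and your proposed route of directly matching two quotient presentations of $G\times M$ is genuinely delicate when $\ker(\tilde{L}\to L)\neq 1$, since $\tilde{L}$ and $L$ are then different subgroups of $G$ and right translation by $\tilde{l}$ versus by $\rho(\tilde{l})$ do not obviously agree up to the $L$-relation. The paper avoids this bookkeeping entirely: it records the diagram $(G,\tilde{H},\tilde{K}^-,\tilde{K}^+)$ of the $G$-action on $G\times_{\tilde{L}}M$, fixes an invariant metric and a normal geodesic $c$ realizing that diagram, observes that $\pi\circ c$ is a normal geodesic of $(G\times_{\tilde{L}}M)/\Gamma$ whose isotropy groups are precisely $H$, $K^-$, $K^+$, and then concludes by the uniqueness of the cohomogeneity one space attached to a group diagram (Theorem~\ref{T:ALEX_STRUCTURE} together with Proposition~\ref{P:Equivalent_actions}). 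As written, your argument stops short of the actual content of the proof.
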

 
 
 \begin{proof}
 By Proposition~\ref{P:Primitive Action}, $Q$ is equivariantly homeomorphic to 
$G\times_{L}(M/\Gamma)$. 
 Since $\tilde{L}$ is a covering group of $L$ which acts on $M$ as a lifting action of $L$-action on $M/\Gamma$, the actions of $\tilde{L}$ and $\Gamma$ commute. 
 Let $(\tilde{L}, \tilde{H},\tilde{K}^-, \tilde{K}^+)$ be the diagram of the $\tilde{L}$-action on $M$.
The cohomogeneity one manifold given by the group diagram $(G, \tilde{H},\tilde{K}^-, \tilde{K}^+)$ is equivariantly diffeomorphic to $G\times_{\tilde{L}}M$. Extend the action  of $\Gamma$ on $M$ to $G\times_{\tilde{L}}M$ by letting it act trivially on $G$. This is well-defined since the actions of $\tilde{L}$ and $\Gamma$ commute. We endow $G\times_{\tilde{L}}M$ with an invariant Riemannian metric and we assume that  $c(t)$  is a normal geodesic of $G\times_{\tilde{L}}M$ giving  diagram $(G, \tilde{H},\tilde{K^-}, \tilde{K^+})$. Then $\pi\circ c$ is a normal geodesic of $(G\times_{\tilde{L}}M)/\Gamma$  giving the diagram $(G, H, K^-, K^+)$, where $\pi$ is the canonical projection.   Therefore, $G\times_{L}(M/\Gamma)$ is equivariantly homeomorphic to $(G\times_{\tilde{L}}M)/\Gamma$.
 \end{proof}


\begin{cor}
\label{C:Good_S_N_Primitive_J}
Let $(G, H, K^-, K^+)$ be the group diagram of a non-primitive cohomogeneity one action, where the underlying topological space $|Q|$ is homeomorphic to an orbifold. Let  $L=L_1\times L_2\subset G$ be as in Definition \ref{Primitive Action}. If $L$ acts on  $(\SP^{n_1}/\Gamma_1)\ast (\SP^{n_2}/\Gamma_2)$ via the natural join action with group diagram $(L, H, K^-, K^+)$, where $L_i$ acts on $\SP^{n_i}$ and its action  commutes with the action of $\Gamma_i$, $i=1, 2$, then $|Q|$ admits a good orbifold structure.
 \end{cor}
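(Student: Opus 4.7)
The plan is to reduce Corollary~\ref{C:Good_S_N_Primitive_J} to a direct application of Proposition~\ref{P:Good_S_N_Primitive} with the particularly clean choice $\tilde L = L$, $M=\SP^{n_1}\ast\SP^{n_2}$ and $\Gamma=\Gamma_1\times\Gamma_2$. Since the hypothesis already places $L=L_1\times L_2$ inside $G$, no non-trivial cover of $L$ need be invoked, and the whole argument should amount to checking that the data $(\tilde L, M, \Gamma)$ satisfy the two formal requirements of that proposition.

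First, I would identify the primitive underlying space as a global quotient of a manifold by verifying, directly from the definition of the join, the homeomorphism
\[
\bigl(\SP^{n_1}\ast\SP^{n_2}\bigr)\big/(\Gamma_1\times\Gamma_2)\;\cong\;\bigl(\SP^{n_1}/\Gamma_1\bigr)\ast\bigl(\SP^{n_2}/\Gamma_2\bigr),
\]
where $\Gamma_1\times\Gamma_2$ acts on $M=\SP^{n_1}\ast\SP^{n_2}\approx\SP^{n_1+n_2+1}$ by the natural diagonal join action. The key observation is that at each of the two cone points of the join one of the two factors collapses, so that the corresponding $\Gamma_i$ acts trivially at that level and the quotient restricts to the expected $\SP^{n_{3-i}}/\Gamma_{3-i}$ there.

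Next, I would set up the lifted action. Because each $L_i$-action on $\SP^{n_i}$ commutes with $\Gamma_i$ by hypothesis, the natural join action of $L=L_1\times L_2$ on $M$ commutes with the diagonal $\Gamma$-action, so the $L$-action on $M$ descends to $M/\Gamma$ and the quotient map is $L$-equivariant. A routine check shows that this induced action is exactly the natural join action of $L$ on $(\SP^{n_1}/\Gamma_1)\ast(\SP^{n_2}/\Gamma_2)$ appearing in the statement and realizing the primitive diagram $(L, H, K^-, K^+)$. With this, the hypotheses of Proposition~\ref{P:Good_S_N_Primitive} are fulfilled, and the proposition produces the required good orbifold structure on $|Q|$, realized as $(G\times_{\tilde L} M)/\Gamma$.

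The only delicate point, rather than a genuine obstacle, is the bookkeeping needed to make sure that three group diagrams agree: the diagram $(L,H,K^-,K^+)$ given in the statement, the diagram of the lifted $L$-action on $M=\SP^{n_1}\ast\SP^{n_2}$, and the diagram of the induced quotient action on $M/\Gamma$. Because the $L$-action on $M$ is itself a join action on a join of spheres, all three diagrams can be read off explicitly from the transitive $L_i$-actions on the $\SP^{n_i}$ and their intersections with $H$ and $K^\pm$, so their compatibility is straightforward to confirm and the proof then closes by invoking Proposition~\ref{P:Good_S_N_Primitive}.
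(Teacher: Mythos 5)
Your proposal is correct and is essentially identical to the paper's own proof: the paper also applies Proposition~\ref{P:Good_S_N_Primitive} with $\tilde{L}=L_1\times L_2$, taking $M=\SP^{n_1}\ast\SP^{n_2}$ with $\Gamma_1\times\Gamma_2$ acting by the join action. Your additional verification of the homeomorphism $(\SP^{n_1}\ast\SP^{n_2})/(\Gamma_1\times\Gamma_2)\cong(\SP^{n_1}/\Gamma_1)\ast(\SP^{n_2}/\Gamma_2)$ and of the commuting actions merely spells out details the paper leaves implicit.
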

 
 
 \begin{proof}
 By Proposition~\ref{P:Primitive Action}, $|Q|$ is equivariantly homeomorphic to 
$G\times_{L}((\SP^m/\Gamma_1)\ast (\SP^n/\Gamma_2))$. 
 Define the action of $\Gamma_1\times\Gamma_2$  on $\SP^{n_1}\ast\SP^{n_2}$ as the join action, and apply  Proposition \ref{P:Good_S_N_Primitive} with $\tilde{L}=L_1\times L_2$.
 \end{proof}

 We have a similar situation when there is a suspended action instead of the join action.


\begin{cor}\label{C:Good_S_N_Primitive_S}
Let $(G, H, K^-, K^+)$ be the group diagram of a non-primitive cohomogeneity one action, where the underlying topological space $|Q|$ is homeomorphic to an orbifold. Let  $L\subset G$ be as in Definition \ref{Primitive Action}. If $L$ acts on  $\Susp(\SP^n/\Gamma)$ by the suspended action with group diagram $(L, H, K^-, K^+)$, where $L$ acts on $\SP^n$ and its action  commutes with the action of $\Gamma$,  then $|Q|$ admits a good orbifold structure.
 \end{cor}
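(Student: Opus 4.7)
The plan is to reduce Corollary \ref{C:Good_S_N_Primitive_S} to Proposition \ref{P:Good_S_N_Primitive}, mirroring the proof of Corollary \ref{C:Good_S_N_Primitive_J}. First I would invoke Proposition \ref{P:Primitive Action} to realize the underlying space as
\[
|Q|\cong G\times_L \Susp(\SP^n/\Gamma).
\]
Next, I would promote the $\Gamma$-action on $\SP^n$ to an action on the suspension $\Susp(\SP^n)=\SP^{n+1}$ by fixing the two cone points, via $\gamma\cdot[x,t]:=[\gamma\cdot x,t]$. This is a smooth action for which the natural identification $\Susp(\SP^n)/\Gamma\cong\Susp(\SP^n/\Gamma)$ is equivariant.

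Since, by hypothesis, the $L$-action on $\SP^n$ commutes with the $\Gamma$-action, the suspended $L$-action on $\SP^{n+1}$ commutes with the suspended $\Gamma$-action, so the $L$-action on $\SP^{n+1}$ is precisely a lifting of the suspended $L$-action on $\Susp(\SP^n/\Gamma)$ in the sense required by Proposition \ref{P:Good_S_N_Primitive}. I would therefore apply that proposition with $M=\SP^{n+1}$, with $\tilde L = L\subset G$ itself, and with the $\Gamma$-action just constructed. It then outputs the equivariant homeomorphism
\[
|Q|\cong (G\times_{L}\SP^{n+1})/\Gamma,
\]
which exhibits $|Q|$ as the quotient of a smooth cohomogeneity one manifold by a finite group; this is precisely the defining property of a good orbifold structure.

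The only point requiring mild verification is that the suspended $L$-action on $\SP^{n+1}$ qualifies as a lifting action in the hypothesis of Proposition \ref{P:Good_S_N_Primitive}, i.e.\ that quotienting by $\Gamma$ recovers the original suspended $L$-action with diagram $(L,H,K^-,K^+)$; this is immediate from the formulas above, since the $L$- and $\Gamma$-actions are defined levelwise on $[x,t]$. Consequently I do not anticipate any substantial obstacle: the corollary is essentially Proposition \ref{P:Good_S_N_Primitive} specialized to the suspension setting, and the argument proceeds in complete analogy with the join case handled in Corollary \ref{C:Good_S_N_Primitive_J}.
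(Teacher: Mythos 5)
Your proposal is correct and follows exactly the paper's own argument: identify $|Q|$ with $G\times_L\Susp(\SP^n/\Gamma)$ via Proposition~\ref{P:Primitive Action}, suspend the $\Gamma$-action to $\Susp(\SP^n)$, and apply Proposition~\ref{P:Good_S_N_Primitive} with $\tilde L=L$. The extra verification you flag (that the suspended actions commute levelwise and descend correctly) is a reasonable elaboration of what the paper leaves implicit.
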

 
 
 \begin{proof}
By Proposition~\ref{P:Primitive Action}, $|Q|$ is equivariantly homeomorphic to 
$G\times_{L}\Susp(\SP^n/\Gamma)$.
Similarly, define the action of $\Gamma$ on $\Susp (\SP^n)$ as a suspended action and apply Proposition~\ref{P:Good_S_N_Primitive} with $\tilde{L}= L$.
 \end{proof}

 
\subsection{Proof of Theorem~\ref{T:GOOD_ORBIFOLDS}} In what follows we exploit the classification of cohomogeneity one actions on smooth manifolds in low dimensions (see \cite{Hoelscher}) to find possible smooth coverings for our orbifolds. Since the cohomogeneity one action on an orbifold lifts to a cohomogeneity one action on the orbifold universal covering, taking as departure point the information that we get from the group diagram about the orbifold  fundamental group and the local groups at non-principal points, we look for a finite  group acting on the  cohomogeneity one   smooth manifolds commuting with the acting group. In the cases that we have a representative of the group diagram and the action, we describe such actions; otherwise we use the following general argument.

 
 Let $(\tilde{G}, H, K^-, K^+)$ be a group diagram for a cohomogeneity one action on a smooth manifold $M$, and let $\Gamma\subseteq N=N_{\tilde{G}}(H)\cap N_{\tilde{G}}(K^-)\cap N_{\tilde{G}}(K^+)$ be a finite subgroup of $\tilde{G}$. The group $\Gamma$ acts on $M$ \emph{orbitwise}, i.e. $\gamma\cdot gL=g\gamma^{-1}L$, for $L=H, K^-, K^+$. This action is well-defined and  commutes with the $\tilde{G}$-action. Let $\rho\colon \tilde{G}\to G$ be a  group covering. If, in addition, $\{\Phi^g| \, g\in \ker~\rho\}\subseteq \Gamma$, where $\Phi^g$ is the action map, then  $G$ acts on $M/\Gamma$ with cohomogeneity one (cf. \cite[Proposition~3.2]{ZKA}). If $c(t)$ is a normal geodesic of the cohomogeneity one action on $M$ used to determine a group diagram, then $\pi\circ c$ is a normal geodesic for $M/\Gamma$. Thus we can find the group diagram of $M/\Gamma$ using the group diagram of $M$.


 \begin{rem}
 \label{REM:DEF_OF_N}
 In the sequel, we let $N=N_{\tilde{G}}(H)\cap N_{\tilde{G}}(K^-)\cap N_{\tilde{G}}(K^+)$, where $(\tilde{G}, H, K^-, K^+)$ is the group diagram of the smooth manifold under consideration. We also refer to the orbitwise action defined above only as the \emph{orbitwise action}, without mentioning  the explicit action.
 \end{rem}

 Now, we prove Theorem~\ref{T:GOOD_ORBIFOLDS} case by case. 
 
 \begin{rem}
 \label{REM:GOOD_ORBIFOLD_ACTIONS}
 Note that if the action is a join, suspension, product, or a non-primitive action (as in Corollaries~\ref{P:Good_S_N_Primitive}, \ref{C:Good_S_N_Primitive_J}, and  \ref{C:Good_S_N_Primitive_S}), or has a fixed point, then it is already clear  what the good structure should be. Therefore, we only explore a good structure on the spaces corresponding to actions not equivalent to these actions. 
\end{rem}

 

 \subsection*{Dimensions~$2$ and $3$.} By Perelman's Conical Neighborhood Theorem, and the fact that the only closed $1$-dimensional Alexandrov space is a circle, any $2$-dimensional Alexandrov space is a topological manifold. Therefore, the  $2$-sphere is the only closed, $2$-dimensional simply-connected Alexandrov space. In dimension $3$, the $3$-sphere and $\Susp(\RP^2)$ are the only closed, simply-connected Alexandrov spaces of cohomogeneity one (see \cite{GS}). Clearly, $\Susp(\RP^2)$ has a good orbifold structure.
 
 
 \subsection*{Dimension $4$}
  In dimension $4$, every space of directions is homeomorphic either to $\Susp(\RP^2)$ or to a spherical space form (see \cite{GGG} or \cite{HS}). Hence every $4$-dimensional Alexandrov space is homeomorphic to an orbifold. In this dimension, however, not every closed orbifold is good. For example, the so-called \emph{weighted complex projective spaces} are bad $4$-dimensional orbifolds (see \cite[p.~27]{Adem}). Closed, $4$-dimensional Alexandrov spaces of cohomogeneity one were equivariantly classified in \cite{GS}. In Table \ref{TB:GROUP_DGMS_D4} we have collected the diagrams corresponding to the simply connected spaces. Observe that every action in the table is a join, suspension or one-fixed-point action. Therefore, by Remark \ref{REM:GOOD_ORBIFOLD_ACTIONS} above, the underlying space of each diagram admits a good structure.

 \subsection*{Dimension~$5$}
 The only diagram that we should consider in dimension $5$ is
\begin{align}\label{D_Dim_5_GS}
(\RS^3\times \RS^1, \{(e^{\frac{lk+mps}{km}2\pi i}, e^{\frac{2\pi qsi}{k}})\}, (\mathbb{Z}_m\times 1)\K^-_0, \RS^3\times \mathbb{Z}_{k/q}),
\end{align}
where
$\K^-_0=\{(e^{ip\theta}, e^{iq\theta}) \mid \theta\in\mathbb{R}\}$ and $q|(m,k)$ and $(p, k)=1$.  In this case, the local groups at the points on $G/K^+$ are $\Z_m$, and we assume the local groups at the points on $G/K^-$ to be trivial. Thus, by Proposition~\ref{P:Orbifold fundamental group}, $\pi_1^{orb}(Q)=\pi_1(G/K^-)=\Z_m$.
Now consider the following diagram coming from the smooth classification (see \cite[Case $1_5A$]{Hoelscher}):
\begin{align}\label{D_Dim_5_M}
(\RS^3\times \RS^1, \Z_k, \{(e^{ip\theta}, e^{i\theta})\}, (\RS^3\times 1)\mathbb{Z}_{k}),
\end{align}
where $(p, k)=1$. The explicit description  of this action is as follows (see \cite[Diagram ($Q^5_C$) in p.\ 172]{Hoelscher}), where we consider $\SP^5$ as the unit  sphere in $\mathbb{H}\times \mathbb{C}$:
\begin{align*}
(\RS^3\times \RS^1)\times \SP^5&\to \SP^5\\
((g, z),(x, w))&\mapsto (gx\bar{z}^p, z^kw).
\end{align*}
Since in Diagram~\eqref{D_Dim_5_GS}, $(p, k)=1$, we can choose the same $p, k$ in Diagram~\eqref{D_Dim_5_M}. Let 
\[
\Gamma=\Z_m\times 1\subseteq \{e^{i\theta}\}\times \RS^1=N_G(H)\cap N_G(K^-)\cap N_G(K^+),
\]
 and let
\begin{align*}
\rho\colon \RS^3\times \RS^1&\to\RS^3\times \RS^1\\
(x, z)&\mapsto (x, z^q),
\end{align*}
 be a group covering of $\RS^3\times \RS^1$. Since $q|(m,k)$, it is easy to see that $\{\Phi^g \mid \, g\in \ker~\rho\}\subseteq \mathbb{Z}_m\times 1$. Therefore, $\RS^3\times \RS^1$ acts on $\SP^5/\Z_m$ as follows:
 \[(g, h)\cdot[(x, w)]=[(g\cdot x, \tilde{h}\cdot w)],\]
 where $(g, h)=\rho(g, \tilde{h})$.
  The isotropy groups of this action are precisely the ones in Diagram~\ref{D_Dim_5_GS}. Therefore, $Q$ is equivariantly diffeomorphic to $\SP^5/\Z_m$.

  
 \subsection*{Dimension~6} 
 In this dimension we have six cases to consider.
\\

 \noindent \textbf{Case 6.1.} For the diagram
 \[
 (\RS^3\times \RS^3, \Delta\RS^1\cup(j, j)\Delta\RS^1, \T^2\cup (j, j)\T^2, \Delta\RS^3),
 \]
  the local group at the points on $(\RS^3\times \RS^3)/\Delta\RS^3$ is $\Z_2$ and we let the local group at the points on the other non-principal orbit be trivial. Then,  by Proposition~\ref{P:Orbifold fundamental group}, $\pi_1^{orb}(Q)=\pi_1(G/K^-)=\Z_2$.
 Consider the counterpart diagram in the smooth classification (\cite[Case $2_6C$]{Hoelscher}, for $n=1$):
  \[
  (\RS^3\times \RS^3, \Delta\RS^1, \T^2, \Delta\RS^3).
  \]
  This action is equivalent to the natural action of $G=\SO(4)\subseteq \SO(5)$ on $M=\SO(5)/(\SO(2)\SO(3))$. 
  
   Let 
  \[
c(t)=\left( \begin{array}{ccccc}
\cos t & 0 & 0& 0 &\sin t \\
0 & 1 & 0& 0 &0 \\
0 & 0 & 1& 0 &0\\
0 & 0 & 0& 1&0\\
-\sin t & 0 & 0& 0 &\cos t
\end{array} \right),
\]
for $0\leq t\leq \pi/2$, and let $L=\SO(2)\SO(3)$. Then $c(t)L$ is a normal geodesic 
 in $M$  (see, for example, \cite[p.~88]{HoelscherThesis}) and the corresponding isotropy groups are as follows:
  \[
G_{c(0)L}=\left( \begin{array}{c|c|c}
\SO(2)  & 0 & 0 \\
\hline
0 & \SO(2) &0 \\
\hline
0 & 0 & 1
\end{array} \right),
\]
  \[
G_{c(\pi/4)L}=\left( \begin{array}{c|c|c}
\mathrm{Id} & 0 & 0 \\
\hline
0 & \SO(2) &0 \\
\hline
0 & 0 & 1
\end{array} \right),
\]
\[
G_{c(\pi/2)L}=\left( \begin{array}{ccccc}
a_{1} & 0 & a_2& a_3 &0 \\
0 & 1 & 0& 0 &0 \\
b_1 & 0 & b_2& b_3 &0\\
c_1 & 0 & c_2& c_3&0\\
0 & 0 & 0& 0 &1
\end{array} \right),
\]
where the latter is isomorphic to $\SO(3)$. Let 
\begin{equation}
\label{EQ:ORB_D6.1_SIGMA}
\sigma=\left( \begin{array}{ccccc}
-1 & 0 & 0& 0 &0 \\
0 & 1 & 0& 0 &0 \\
0& 0 & -1& 0 &0\\
0 & 0 & 0& 1&0\\
0 & 0 & 0& 0 &1
\end{array} \right).
\end{equation}
Then $\sigma^2=1$ and $\sigma\in N_G(G_{c(t)L})$, for all $t$. Hence $\sigma$ acts on $M$ orbitwise, fixing $G(c(\pi/2)L)$, and commutes with the $G$-action. As a result, $G$ acts on $M/\Z_2$ with 
\begin{align*}
	G_{[c(0)L]} & =G_{c(0)L}\cup \sigma G_{c(0)L},\\
	G_{[c(\pi/4)L]} & =G_{c(\pi/4)L}\cup \sigma G_{c(\pi/4)L},\\
	G_{[c(\pi/2)L]}& =G_{c(\pi/2)L}
\end{align*}
 as isotropy groups. Therefore $Q$ is equivariantly diffeomorphic to $M/\Z_2$.
\\


\noindent \textbf{Case 6.2.} This case corresponds to diagram
\begin{align}\label{D:Dim_6_GS_1}
(\RS^3\times \RS^3, \pm\Delta\RS^1\cup(j,\pm j)\Delta\RS^1, \T^2\cup (j, j)\T^2, \pm\Delta\RS^3).
\end{align}
Here, depending on the local group that we choose for the codimension $2$ orbit, we have different manifold diagrams. 

Assume first that the local group at the points of the codimension $2$ orbit is $\Z_2$. Then  
\[
\pi_1(R_Q)=\pi_1(G/H)=\Z_2\oplus \Z_2.
\] 
Thus, by Theorem~\ref{T:Recovering_Fund_Group}, $\pi_1^{orb}(Q)=\Z_2\oplus \Z_2$, since the local group of the codimension $2$ stratum is $\Z_2$. 

Now consider the diagram and the manifold in the Case 6.1. Let $\sigma$ be as in \eqref{EQ:ORB_D6.1_SIGMA} and let 
\[
\tau=\left( \begin{array}{ccccc}
-1 & 0 & 0& 0 &0 \\
0 & 1 & 0& 0 &0 \\
0& 0 & 1& 0 &0\\
0 & 0 & 0& 1&0\\
0 & 0 & 0& 0 &1
\end{array} \right).
\]
Then $\{id, \sigma, \tau,\sigma\tau\} = \Z_2\oplus \Z_2$.

Let $\tau$ act on $M$ by 
\[
\tau\cdot xc(t)L=\tau xc(t)\tau L, 
\]
where $x$ is an element of $\SO(5)$.
 Note that $\tau c(t)\tau L=c(t)L$ if and only if $t=0$. That is, $\sigma$ fixes the orbit $G(c(\pi/2)L)$ and $\tau$ fixes $G(c(0)L)$. The action of $\Z_2\oplus \Z_2$ on $M$ commutes with the action of $G$. Thus $G$ acts on $M/(\Z_2\oplus \Z_2)$ with the same diagram as \eqref{D:Dim_6_GS_1}. That is, $Q$ is equivariantly diffeomorphic to $M/(\Z_2\oplus \Z_2)$.
 
Now let the local group of the codimension $2$ orbit be trivial. Then by Proposition \ref{P:Orbifold fundamental group}, 
\[
\pi_1^{orb}(Q)=\pi_1(G/K^-)=\Z_2.
\]
Here, we consider the following diagram in the smooth classification (\cite[Table~F]{GWZ}):
\[
(\SO(4), \Z_2\SO(2), \SO(2)\SO(2), \OO(3)).
\] 
This is the diagram of  the natural action of $\SO(4)$ on $\CP^3=\SU(4)/\U(3)$. 
Let
\[
\alpha=\left( \begin{array}{cccc}
1 & 0 & 0& 0  \\
0 & -1 & 0& 0  \\
0& 0 & 1& 0 \\
 0 & 0& 0 &-1
\end{array} \right)
\]
and note that $\alpha\in N_G(H)\cap N_G(K^-)\cap N_G(K^+)$.
The element $\alpha$ acts orbitwise on $\CP^3$ and its action commutes with the $\SO(4)$-action. Since  $\alpha\in \OO(3)$, it fixes the orbit $\SO(4)/\OO(3)\approx \RP^3$ and it does not fix any other point (see \cite{Masuda}). Consequently, $\SO(4)$ acts on $\CP^3/\Z_2$ with the same diagram as \eqref{D:Dim_6_GS_1}. Therefore, $Q$ is equivariantly diffeomorphic to $\CP^3/\Z_2$. 
 
 Note that this argument in particular shows that $(\SO(5)/\SO(2)\SO(3))/(\Z_2\oplus \Z_2)$ is homeomorphic to $\CP^3/\Z_2$, since they are both the underlying spaces of the same diagram. 
 \\

 
 \noindent \textbf{Case 6.3.} This case corresponds to  diagram
 \begin{align}\label{D:Dim_6_GS_2}
 (\RS^3\times \RS^3, \D^*_{2m}\times\RS^1, \N_{\RS^3}(\RS^1)\times \RS^1, \RS^3\times \RS^1).
 \end{align}
 If we let the local group of the codimension $2$ orbit be trivial, then by Proposition~\ref{P:Orbifold fundamental group}, $\pi_1^{orb}(Q)=\Z_2$. The local group of the other singular orbit is $D^*_{2m}$, which gives rise to a bad structure by Proposition~\ref{P:Injection_of_Local_Gr}.  Now assume that the local group of the codimension $2$ orbit is $\Z_m$. The  action of $\RS^3\times\RS^3$ on $\SP^2\times\CP^2$ given by
 \begin{align*}
 (\RS^3\times\RS^3)\times (\SP^2\times\CP^2)&\to \SP^2\times\CP^2\\
 (g, h)\cdot(x, y)&= (gxg^{-1}, hy).
 \end{align*}
  yields the smooth  diagram we need, i.e. 
 \[
  (\RS^3\times \RS^3, 1\times\RS^1, \RS^1\times \RS^1, \RS^3\times \RS^1).
  \]
 
 We let $D^*_{2m}=\langle e^{\frac{\pi}{m}i}, j\rangle$ act on $\CP^2$ via
 \[
 e^{\frac{\pi}{m}i}\cdot(z_0: z_1: z_2)=(e^{\frac{\pi}{m}i} z_0: z_1: z_2),
 \]
 \[
 j\cdot(z_0: z_1: z_2)=(\bar{z}_0: -\bar{z}_2: \bar{z}_1).
 \]
 This action commutes with the cohomogeneity one action of $\RS^3\approx\left( \begin{array}{c|c}
1 & 0   \\
\hline
0 &\SU(2) 
\end{array} \right)$
on $\CP^2$ and fixes only the  orbit with isotropy group $\RS^3$. If we take the action of $D^*_{2m}$ on $\SP^2$ to be trivial, then the actions of $\RS^3\times\RS^3$ and $D^*_{2m}$ commute. Therefore, $\RS^3\times\RS^3$ acts on $\SP^2\times(\CP^2/D^*_{2m})$ with the same diagram as \eqref{D:Dim_6_GS_2}. 
 \\
 

 
 \noindent \textbf{Case 6.4.} In this case, we have the  non-primitive diagram
 \begin{align}\label{D:Dim_6_GS_3}
 (\RS^3\times \RS^3, \{(e^{ip\theta}\lambda, e^{i\theta})\mid \theta\in \mathbb{R}, \lambda\in\mathbb{Z}_k\}, \T^2, \RS^3\times \RS^1).
\end{align}
The corresponding primitive diagram is 
\begin{align}\label{D:Dim_6_GS_Non_P_1}
(\RS^3\times \RS^1, \{(e^{ip\theta}\lambda, e^{i\theta})\mid \theta\in \mathbb{R}, \lambda\in\mathbb{Z}_k\}, \T^2, \RS^3\times \RS^1),
\end{align}
which corresponds to an action of $\RS^3\times \RS^1$ on $\CP^2/\Z_k$. More precisely, $\RS^3\times \RS^1$ acts on $\CP^2$ as follows (see \cite[Subsection~3.4]{H}):
 \begin{align*}
 (\SU(2)\times\RS^1)\times \CP^2&\to \CP^2\\
 (A, z)\cdot (z_0: z_1: z_2)&\mapsto \diag(1, z^p A)(z_0: z_1: z_2).
 \end{align*}
 Let $\Z_k\subseteq \RS^1$ act on $\CP^2$ by 
 \[
 \gamma\cdot(z_0: z_1: z_2)=(\gamma z_0: z_1: z_2).
 \]
 The group $\Z_k$ fixes both a point, $(z: 0: 0)$, and $\CP^1=\{(0: z_1: z_2)\}$ (cf. \cite[Chapter VII, Section 3]{Bredon}), which are indeed the non-principal orbits of the cohomogeneity one action of $\RS^3\times\RS^1$. Further, the action of $\Z_k$ commutes with the action of $\RS^3\times \RS^1$. Therefore, $\RS^3\times \RS^1$ acts on $\CP^2/\Z_k$ with the same diagram as \eqref{D:Dim_6_GS_Non_P_1}. As a result, the underlying space, $|Q|$, of Diagram~\ref{D:Dim_6_GS_3}, is equivariantly homeomorphic to the total space of a $(\CP^2/\Z_k)$-bundle over $\SP^2$. Thus by Proposition~\ref{P:Good_S_N_Primitive}, $|Q|$ admits a good structure. In fact, if we choose the local group at the codimension $2$ orbit to be $\Z_k$, 
then by Theorem~\ref{T:Recovering_Fund_Group}, $\pi_1^{orb}(Q)=\Z_k$, and the good structure is just the natural good structure on 
\[ 
(\RS^3\times \RS^3)\times_{(\SU(2)\times\RS^1)}(\CP^2/\Z_k).
\]
 Note that if
   we let the local group at the codimension $2$ orbit  be trivial, then by Proposition~\ref{P:Orbifold fundamental group}, $\pi_1^{orb}(Q)=0$ since $\K^-=\T^2$ is connected. In this case, $Q$ is a bad orbifold.  
 \\

 
 \noindent \textbf{Case 6.5.} The diagram of this case is also a non-primitive diagram:
 \begin{align}\label{D:Dim_6_GS_4}
 (\RS^3\times \RS^3, \{(e^{ip\theta}\lambda, e^{i\theta})\}, \RS^3\times \RS^1, \RS^3\times \RS^1).
\end{align}
The corresponding primitive diagram is 
 \begin{align}\label{D:Dim_6_GS_Non_P_2}
 (\RS^3\times \RS^1, \{(e^{ip\theta}\lambda, e^{i\theta})\}, \RS^3\times \RS^1, \RS^3\times \RS^1),
\end{align}
which is equivalent to an action of $\RS^3\times \RS^1$ on $\SP^4/\Z_k$. Namely,  let $\Psi\colon\RS^3\times\RS^3\to \SO(4)\subseteq \SO(5)$ be the double cover of $\SO(4)$ and define the action of $\RS^3\times \RS^1$ on $\SP^4$ as follows (see \cite[Subsection~3.5]{H}):
\begin{align*}
&\RS^3\times \RS^1\times \SP^4\to \SP^4\\
& ((g, z), x)\mapsto \diag(1, \Psi(g, z^p))x.
 \end{align*}
 Let $\Z_k=1\times \Z_k\subseteq \RS^3\times\RS^1$ act  on $\SP^4$ via $(1, h)\cdot x=\diag(1, \Psi(1, h))x$. This action commutes with the $(\RS^3\times \RS^1)$-action and fixes  the non-principal orbits, which are two points. Therefore, $\RS^3\times \RS^1$ acts on $\SP^4/\Z_k$ with the same diagram as \eqref{D:Dim_6_GS_Non_P_2}. This means that the underlying space, $|Q|$, of  Diagram \eqref{D:Dim_6_GS_4} is equivariantly homeomorphic to the total space of an $(\SP^4/\Z_k)$-bundle over $\SP^2$. Thus, by Proposition~\ref{P:Good_S_N_Primitive}, $Q$ is a good orbifold and $\pi_1^{orb}(Q)=\Z_k$, by Proposition~\ref{P:Orbifold fundamental group}. 
 \\

  \noindent \textbf{Case 6.6.} In this case we have the following group diagram:
 \begin{align}\label{D:Dim_6_GS_5}
 (\RS^3\times \RS^3, \{(e^{i\theta}\lambda, e^{i\theta})\}, \RS^1\times \RS^3, \RS^3\times \RS^1).
 \end{align}
 We will show that $|Q|$ is homeomorphic to $\CP^3/\Z_k$. 
 
 Consider  the  cohomogeneity one action of $\RS^3\times \RS^3\approx \SU(2)\SU(2)\subseteq \SU(4)$ on $\CP^3$ (see \cite[Page~172, $Q^6_D$]{Hoelscher}) with  group diagram
 \begin{align}\label{D:Dim_6_Manifold_1}
 (\RS^3\times \RS^3, \Delta \RS^1, \RS^1\times \RS^3, \RS^3\times \RS^1).
 \end{align}
Now define an action  of $\Z_k\subseteq \RS^1$ on $\CP^3$ as follows:
\[
\lambda\cdot(z_0: z_1: z_2: z_3)=(\lambda z_0: \lambda z_1: z_2: z_3).
\]
Note that $\mathrm{Fixed}(\CP^3, \Z_k)=\{(z_0: z_1 : 0 : 0)\}\sqcup\{(0: 0 : z_2 : z_3)\}=\CP^1\sqcup\CP^1$ (see \cite[Chapter VII, Section 3]{Bredon}), which correspond to the non-principal orbits. Moreover, the action commutes with the $(\RS^3\times \RS^3)$-action on $\CP^3$, which consequently gives an $(\RS^3\times \RS^3)$-action on $\CP^3/\Z_k$ with the same diagram as \eqref{D:Dim_6_GS_5}. Therefore, $|Q|$ is homeomorphic to $\CP^3/\Z_k$. In this case, we do not have a codimension $2$ orbit and the local groups at both non-principal orbits are $\Z_k$. Thus by Proposition~\ref{P:Orbifold fundamental group},  $\pi_1^{orb}(Q)=\Z_k$. Therefore, the smooth manifold given by the diagram \eqref{D:Dim_6_Manifold_1}, i.e.\ $\CP^3$, is its universal orbicover. Hence,  $Q$ is equivariantly diffeomorphic to $\CP^3/\Z_k$ with the natural good structure.


\subsection*{Dimension $7$.}  Unlike dimension $6$, in which there exists a diffeomorphism type classification  of closed, smooth, simply-connected cohomogeneity one manifolds, there  is no such classification in dimension $7$. Therefore, here we do not give an explicit description  of smooth universal covers of our good orbifolds. In fact, in most cases, we only use the smooth \emph{equivariant} classification and show that a finite subgroup of $N_G(H)\cap N_G(K^-)\cap N_G(K^+)$ acts on the smooth manifold given by the diagram  $(G, H, K^-, K^+)$, commutes with the given $G$-action, and has the \emph{desired} fixed point set.
\\

  \noindent \textbf{Case 7.1.} In this case we have the following  non-primitive  diagram:
 \begin{align}\label{D:Dim_7_GS_1}
(\RS^3\times \RS^3, \{(e^{\frac{lk+mps}{km}2\pi i}, e^{\frac{2\pi qsi}{k}})\}, (\mathbb{Z}_m\times 1)\K^-_0, \RS^3\times \mathbb{Z}_{k/(k,q)}),
 \end{align}
 where $(m, q)=(k, q)$. The corresponding primitive diagram is 
  \begin{align}\label{D:Dim_7_GS_Non_P_1}
(\RS^3\times \RS^1, \{(e^{\frac{lk+mps}{km}2\pi i}, e^{\frac{2\pi qsi}{k}})\}, (\mathbb{Z}_m\times 1)\K^-_0, \RS^3\times \mathbb{Z}_{k/(k,q)}).
 \end{align}
Consider the  diagram
\begin{equation}\label{D:Dim_7_Manifold_1}
(\RS^3\times \RS^3, \{(e^{\frac{2\pi pli}{k}}, e^{\frac{2\pi qli}{k}})\}, \{(e^{p\theta i}, e^{q\theta i})\}, \RS^3\times \mathbb{Z}_k),
\end{equation}
from the smooth classification \cite[Case $1_7B$, $N_C^7$]{Hoelscher}, with $(k, q)=1$. This is also a non-primitive diagram  with
 \begin{equation} \label{D:Dim_7_Manifold_Non_P_1}
 (\RS^3\times \RS^1, \{(e^{\frac{2\pi pli}{k}}, e^{\frac{2\pi qli}{k}})\}, \{(e^{p\theta i}, e^{q\theta i})\}, \RS^3\times \mathbb{Z}_k),
 \end{equation}
 as its primitive diagram. This primitive diagram corresponds to  an $(\RS^3\times \RS^1)$-action on $\SP^5/\Z_q$.

Let the local group at the orbit of codimension $2$ be trivial. Then, by Proposition~\ref{P:Orbifold fundamental group}, 
 \[
 \pi_1^{orb}(Q)=\pi_1(G/K^-)=\Z_{m/(m, q)}.
 \]
Thus $Q$ is good if and only if $(m, q)=1$, as the local group $\Z_m$ should inject into the orbifold fundamental group by Proposition~\ref{P:Injection_of_Local_Gr}. Indeed, if $(m, q)=1=(q, k)$, we can choose Diagram~\eqref{D:Dim_7_Manifold_Non_P_1} with the same parameters as \eqref{D:Dim_7_GS_Non_P_1}. Let 
\[
\Gamma=\Z_m\times 1\subseteq \RS^1\times\RS^1\subseteq N_G(H)\cap N_G(K^-)\cap N_G(K^+).
\]
Then $\Gamma$ acts orbitwise on $\SP^5/\Z_q$ and commutes with the  $(\RS^3\times \RS^1)$-action on $\SP^5/\Z_q$. 
Therefore,  $\RS^3\times \RS^1$ acts on $(\SP^5/\Z_q)/\Z_m$ with the same diagram as \eqref{D:Dim_7_GS_Non_P_1}. By Proposition~\ref{P:Good_S_N_Primitive}, $Q$ is a good orbifold.  

Now suppose that the local group at the orbit of codimension $2$ is $\Z_k$. Let 
\begin{align} \label{D:Dim_7_Manifold_Non_P_1_Special}
(G, H, K^-, K^+) = (\RS^3\times \RS^1, 1, \{(e^{p\theta i}, e^{q\theta i})\}, \RS^3\times 1) 
 \end{align}
and let  $\Gamma =\{(e^{\frac{lk+mps}{km}2\pi i}, e^{\frac{2\pi qsi}{k}})\}$. Note that
\begin{align*}
\Gamma 
		& =\Z_m\Z_k\\
		& \subseteq \RS^1\times\RS^1\\
		&\subseteq N_G(H)\cap N_G(K^-)\cap N_G(K^+).
\end{align*}
 Hence $\Gamma$ acts orbitwise on the manifold given by $(G, H, K^-, K^+)$ and commutes with the $G$-action.
 Thus $G$ acts on $(\SP^5/\Z_q)/(\Z_m\Z_k)$ with the same diagram as \eqref{D:Dim_7_GS_Non_P_1}. By Proposition~\ref{P:Good_S_N_Primitive}, $Q$  admits a good structure. 
 \\

 
 \noindent \textbf{Case 7.2.} The diagram for this case is 
 \begin{align}\label{D:Dim_7_GS_2}
 (\RS^3\times \RS^3, \Delta\mathbb{Z}_k, \RS^1\times \mathbb{Z}_k, \Delta\RS^3).
 \end{align}
 
Let the local group of the  codimension $2$ orbit be trivial. Then, by Proposition~\ref{P:Orbifold fundamental group}, $\pi^{orb}_1(Q)=\Z_k$. Consider the  diagram
\begin{align}\label{D:Dim_7_Manifold_2}
 (\RS^3\times \RS^3, \Z_n, \{(e^{ip\theta}, e^{iq\theta})\}, \Z_n\Delta\RS^3),
 \end{align}
 of the smooth classification (see \cite[Case $1_7B$, $P_D^7$]{Hoelscher}),
 with $n=1$ and $p, q$ arbitrary, or $n=2$ and $p$ even or $q$ even (but not both). In our case we assume $n=1$ and $q=0$.  Recall that 
 $N=N_{G}(H)\cap N_{G}(K^-)\cap N_{G}(K^+)$, where  $(G, H, K^-, K^+)$ is the group diagram of the smooth manifold we are currently considering. For $\Gamma=\Delta \Z_k\subseteq N$, we let $\Gamma$ act on $M$ orbitwise. Then we get an action on $M/\Gamma$ giving rise to  Diagram  \eqref{D:Dim_7_GS_2}. Therefore, $Q$ admits  a good structure. Note that if the local group of the orbit of codimension $2$  is $\Z_l$, where $(l, k)=1$, then by Proposition~\ref{P:Orbifold fundamental group}, $Q$ is simply-connected and, therefore, bad.
 \\

 
 \noindent \textbf{Case 7.3.} Here we consider the following diagram:
 \begin{align}\label{D:Dim_7_GS_3}
 (\RS^3\times \RS^3, \Delta D^*_{2m}, (\RS^1\times 1)\Delta D^*_{2m}, \Delta\RS^3).
 \end{align}
 If we let the local group of the orbit of codimension $2$ be trivial, then by Proposition~\ref{P:Orbifold fundamental group}, $\pi^{orb}_1(Q)=D_{2m}^*$. For  Diagram~\eqref{D:Dim_7_Manifold_2}, we let $\Delta D_{2m}^*\subseteq N$ act on $M$ orbitwise. Then $\RS^3\times \RS^3$ acts on $M/D_{2m}^*$ with the same diagram as \eqref{D:Dim_7_GS_3}. Thus, $Q$ is good. 
 \\

 
  \noindent \textbf{Case 7.4.} In this case we have  diagram
 \begin{align}\label{D:Dim_7_GS_4}
 (\RS^3\times \RS^3, \Delta\mathbb{Z}_k, \{(e^{ip\theta}, e^{iq\theta})\}, \Delta\RS^3)
 \end{align}
 with  $k|(p-q)$ and, if $k$ is even, then $p$ and $q$ are odd.
Suppose first that the local group at the codimension $2$ orbit is trivial. Then by Proposition~\ref{P:Orbifold fundamental group}, $\pi_1^{orb}(Q)=0$, i.e. $Q$ is simply-connected and is  therefore bad. 
Now let  the local group be $\Z_k$. Then $\pi_1(R_Q)=\pi_1(G/H)=\Z_k$, which gives, in particular, that $\pi_1^{orb}(Q)=\Z_k$ by Proposition~\ref{T:Recovering_Fund_Group}. Consider the smooth  diagram~\eqref{D:Dim_7_Manifold_2} above with the same $p, q$ as diagram~\eqref{D:Dim_7_GS_4}. Let $\Gamma=\Delta Z_k\subseteq N\cap \{(e^{ip\theta}, e^{iq\theta})\}$ act orbitwise on $M$. Then $\RS^3\times \RS^3$ acts on $M/\Gamma$ giving the same diagram as \eqref{D:Dim_7_GS_4}. Hence, $Q$ is a good orbifold. 
 \\

 
 \noindent \textbf{Case 7.5.} We have the diagram
 \begin{align}\label{D:Dim_7_GS_5}
 (\RS^3\times \RS^3, \Delta\mathbb{Z}_k, \{(e^{ip\theta}, e^{-iq\theta})\}, \Delta\RS^3)
 \end{align}
 with  $k|(p+q)$ and, if $k$ is even, then $p$ and $q$ are odd. The same argument as in the preceding case gives the result.
 \\


 
 \noindent \textbf{Case 7.6.} In this case we consider the following diagram:
 \begin{align}\label{D:Dim_7_GS_6}
(\RS^3\times \RS^3, \Delta D^*_{2m}, \{(e^{ip\theta}, e^{iq\theta})\}\cup \{(je^{ip\theta}, je^{iq\theta})\}, \Delta\RS^3),
 \end{align}
 where  $m|(p-q)$ and, if $m$ is even, then $p$ and $q$ are odd. Consider Diagram~\eqref{D:Dim_7_Manifold_2} with $n=1$. If the local group at the codimension $2$ orbit is trivial, then, by Proposition~\ref{P:Orbifold fundamental group}, $\pi_1^{orb}(Q)=\Z_2$, and hence $Q$ is a bad orbifold since $D_{2m}^*$ does not inject into $\Z_2$. Assume then that the local group at the codimension $2$ orbit is $\Z_m$. Choose $\Gamma=\Delta D_{2m}^*=\langle ((e^{\frac{\pi ip}{m}}, e^{\frac{\pi iq}{m}}), (j, j))\rangle\subseteq N$, and act on $M$ orbitwise. Then we have a $\RS^3\times \RS^3$-action on $M/\Gamma$ with the same diagram as \eqref{D:Dim_7_GS_5}. Thus $Q$ is a good orbifold.
 \\
 
 
  \noindent \textbf{Case 7.7.} Here we consider the diagram
 \begin{align}\label{D:Dim_7_GS_7}
(\RS^3\times \RS^3, \Delta\D^*_{2m}, \{(e^{ip\theta}, e^{-iq\theta})\}\cup \{(je^{ip\theta}, je^{-iq\theta})\}, \Delta\RS^3)
 \end{align}
with  $m|(p+q)$ and, if $m$ is even, then $p$ and $q$ are odd. The same argument as above gives the result.
  \\

  
   \noindent \textbf{Case 7.8.} We now explore  good structures on the following diagram:
 \begin{align}\label{D:Dim_7_GS_8}
(\RS^3\times \RS^3, \pm\Delta\mathbb{Z}_k, \RS^1\times \mathbb{Z}_k, \pm\Delta\RS^3). 
 \end{align}
 If the local group at the orbit of codimension $2$  is trivial, then by Proposition~\ref{P:Orbifold fundamental group}, $\pi_1^{orb}(Q)=\Z_k$.  Consider Diagram~\eqref{D:Dim_7_Manifold_2} with $n=2$ and $q=0$, and choose $\Gamma=\Delta \Z_k\subseteq N$, which acts orbitwise on $M$. Then $\RS^3\times \RS^3$ acts on $M/\Gamma$ giving the same diagram as \eqref{D:Dim_7_GS_8}. Therefore $Q$ is a good orbifold. 
 
 If the orbit of codimension $2$ has $\Z_2$ as its local group,  consider Diagram~\eqref{D:Dim_7_Manifold_2} with $n=1$ and $q=0$, and choose  $\Gamma=\Delta\Z_k\cdot\langle (-1, 1)\rangle \subseteq N$. Thus $\RS^3\times \RS^3$ acts on $M/\Gamma$ with the same diagram as \eqref{D:Dim_7_GS_8}, i.e. $Q$ is good. 
 \\
 
 
    \noindent \textbf{Case 7.9.} We consider the following diagram:
 \begin{align}\label{D:Dim_7_GS_9}
 (\RS^3\times \RS^3, \pm\Delta\D^*_{2m}, (\RS^1\times 1)\D^*_{2m}, \pm\Delta\RS^3). 
 \end{align}
 If the local group at the orbit of codimension $2$  is trivial, then by Proposition~\ref{P:Orbifold fundamental group}, $\pi_1^{orb}(Q)=D^*_{2m}$.  Consider Diagram~\eqref{D:Dim_7_Manifold_2} with $n=2$ and $q=0$, and choose $\Gamma=D^*_{2m}\subseteq N$, which acts orbitwise on $M$. Then, $\RS^3\times \RS^3$ acts on $M/\Gamma$ giving the same diagram as \eqref{D:Dim_7_GS_8}. Therefore $Q$ is a good orbifold. 
 
 If the orbit of codimension $2$ has $\Z_2$ as its local group, we  consider Diagram~\eqref{D:Dim_7_Manifold_2} with $n=1$ and $q=0$, and choose  $\Gamma=\Delta D^*_{2m}.\langle (-1, 1)\rangle \subseteq N$. Thus $\RS^3\times \RS^3$ acts on $M/\Gamma$ with the same diagram as \eqref{D:Dim_7_GS_8}, i.e. $Q$ is good. 
 \\

 
  \noindent \textbf{Case 7.10.} The  diagram in this case is
 \begin{align}\label{D:Dim_7_GS_10}
 (\RS^3\times \RS^3, \pm\Delta\mathbb{Z}_k, \{(e^{ip\theta}, e^{iq\theta})\}, \pm\Delta\RS^3), 
 \end{align}
 where $p$ is even, $q$ is odd, and consequently $k$ is odd.  
 If the local group  at the orbit of codimension $2$  is trivial, then by Proposition~\ref{P:Orbifold fundamental group}, $\pi_1^{orb}(Q)=0$, that is, $Q$ is simply-connected and, therefore, bad. Assume that the local group is $\Z_k$. Then choose $\Delta \Z_k\subseteq N\cap \{(e^{ip\theta}, e^{iq\theta})\}$, which acts on $M$ orbitwise. As a result, $\RS^3\times \RS^3$ acts on $M/\Gamma$ giving Diagram~\eqref{D:Dim_7_GS_10}. 
 
 Note that if $q=p+1$, then the universal cover is the Eschenburg space 
\[ 
E^7_p=\diag(z, z, z^p)\setminus\SU(3)/\diag(1, 1,  \bar{z}^{p+2}),
\]
where $\SU(2)\times\SU(2)$ acts on $E^7_p$ with the first factor acting on the left and the
second on the right, both as the upper $\SU(2)$ block in $\SU(3)$ (see \cite[Page~175]{H}).
\\


 \noindent \textbf{Case 7.11.} The diagram here is
 \begin{align}\label{D:Dim_7_GS_11}
 (\RS^3\times \RS^3, \pm\Delta\mathbb{Z}_k, \{(e^{ip\theta}, e^{-iq\theta})\}, \pm\Delta\RS^3), 
 \end{align}
  where $p$ is even, $q$ is odd, and consequently $k$ is odd. This diagram is similar to Diagram \eqref{D:Dim_7_GS_10} in the previous case, and the results follow from a similar argument. 
\\

  
   \noindent \textbf{Case 7.12.} We consider the diagram
 \begin{align}\label{D:Dim_7_GS_12}
 (\RS^3\times \RS^3, \pm\Delta D^*_{2m}, \{(e^{ip\theta}, e^{iq\theta})\}\cup \{(je^{ip\theta}, je^{iq\theta})\}, \pm\Delta\RS^3)
 \end{align}
 with $p$ even, $q$  odd, and $m$ odd. 
If the local group  at the orbit of codimension $2$  is trivial, then by Proposition~\ref{P:Orbifold fundamental group}, $\pi_1^{orb}(Q)=\Z_2$. Since  $\pm\Delta D^*_{2m}$ does not inject into $\Z_2$, the orbifold $Q$ is bad.

 Assume the local group is $\Z_m$. Then choose $\Delta D^*_{2m}\subset N\cap\{(e^{ip\theta}, e^{iq\theta})\}\cup \{(je^{ip\theta}, je^{iq\theta})\}$ which acts on $M$ orbitwise. As a result, $\RS^3\times \RS^3$ acts on $M/\Gamma$ giving Diagram~\eqref{D:Dim_7_GS_12}. 
 
As in the Case 7.10, if $q=p+1$, then the universal cover of $Q$ is the Eschenburg space 
\[
 E^7_p=\diag(z, z, z^p)\setminus\SU(3)/\diag(1, 1,  \bar{z}^{p+2}).
 \]


\noindent \textbf{Case 7.13.} This case is similar to Case 7.12, and the same argument gives the result. The diagram under consideration is
 \begin{align}\label{D:Dim_7_GS_13}
 (\RS^3\times \RS^3, \pm\Delta\D^*_{2m}, \{(e^{ip\theta}, e^{-iq\theta})\}\cup \{(je^{ip\theta}, je^{-iq\theta})\}, \pm\Delta\RS^3), 
 \end{align}
  where $p$ is even, $q$ is odd, and consequently $m$ is odd.
  \\
  
  
  \noindent \textbf{Case 7.14.} In this case, we have the following diagram
 \begin{align}\label{D:Dim_7_GS_14}
 (\RS^3\times \RS^3,  \Delta\Gamma,  \RS^3\times \Gamma, \Delta\RS^3),
 \end{align}
   where $\Gamma$ is a finite, non-trivial subgroup of $\RS^3$. Let the local group  at the orbit of codimension $2$ be trivial. Then by Proposition~\ref{P:Orbifold fundamental group}, $\pi_1^{orb}(Q)=\Gamma$. Consider the following diagram from the smooth classification (see \cite[Page~172, $Q_A^7$]{H}):
  \begin{align}\label{D:Dim_7_Manifold_3}
 (\RS^3\times \RS^3,  1,  \RS^3\times 1, \Delta\RS^3).
 \end{align}
This is the diagram of the following action of $\RS^3\times\RS^3$ on $\SP^7\subseteq \mathbb{H}\times\mathbb{H}$:
 \begin{align*}
 (\RS^3\times \RS^3)\times \SP^7&\to \SP^7\\
 ((g, h), (x, y))&\mapsto (gxh^{-1}, hy).
 \end{align*}
 Let $c\colon [-1,1] \to \SP^7$ be a normal geodesic between the non-principal orbits with $c(1)=(1, 0)$. Since $G_{(0, 1)}=\RS^3\times 1$, we have $c(-1)=(0, h_-)=(g_-, h_-)\cdot(0, 1)$, for some $(g_-, h_-)\in  \RS^3\times \RS^3$. Observe that $G_{(\sqrt{2}/2, \sqrt{2}/2)}= 1$. Then, for some $-1<t_0<1$, we have 
 \[
 c(t_0)=((\sqrt{2}/2) g_0h_0^{-1}, (\sqrt{2}/2) h_0)=(g_0, h_0)\cdot(\sqrt{2}/2, \sqrt{2}/2),
 \]
for some $(g_0, h_0)\in  (\RS^3\times \RS^3)$. 
Now define an action of  $\Gamma\subseteq 1\times \RS^3$  on $\SP^7$ as follows:
 \begin{align*}
 ((1, \gamma), (x, y))&\mapsto (x, y\gamma^{-1}).
 \end{align*}
This action commutes with the action of $\RS^3\times \RS^3$ on $\SP^7$, which leads to an $(\RS^3\times \RS^3)$-action on $\SP^7/\Gamma$. By the above explanation, we obtain a diagram as \eqref{D:Dim_7_GS_11} for the normal geodesic $\pi\circ c$, where $\pi: M\to \M/\Gamma$ is the natural projection. Thus $\SP^7$ is the orbifold universal cover of $Q$ and hence $Q$ is good. 
 \\
 
  
  \noindent \textbf{Case 7.15.} In this case, we have the following diagram:
 \begin{align}\label{D:Dim_7_GS_15}
(\RS^3\times \RS^3,  \pm\Delta\Gamma,  \pm\Delta_z\RS^3, \pm\Delta\RS^3),                                                                                                      
 \end{align}
 where $\Gamma=\Z_k$, $4|k$ and $z=j$, or $\Gamma=\D^*_{2m}$, $2|m$ and $z=j$, or $\Gamma=D^*_{2m}$ and $z=i$. 
 
 We consider the following diagram from the smooth classification (see \cite[Case~$Q^7_B$]{Hoelscher}):
  \begin{align}\label{D:Dim_7_Manifold_15}
(\RS^3\times \RS^3,  1,  \Delta_z\RS^3, \Delta\RS^3),                                                                                                      
 \end{align}
 where $z$ is chosen accordingly as above. Note that since in Diagram~\eqref{D:Dim_7_Manifold_15}, $N_{\RS^3\times \RS^3}(1)=\RS^3\times \RS^3$, we can conjugate each isotropy subgroup by $(1,g)$, for any $g\in S^3$,  and still have an equivalent diagram. Now by Proposition~\ref{P:Orbifold fundamental group}, $\pi_1^{orb}(Q)=\pi_1(R_{Q})=\Gamma\times \Z_2$. Then choose $\pm\Delta\Gamma\subset N=\pm\Delta S^3\cap \pm\Delta_z S^3$. Note that  the conditions on $\Gamma$ and $z$ imply that $\pm\Delta\Gamma$ is in fact a subset of $N$.  As before, $\pm\Delta\Gamma\subset N$ acts on $M$ orbitwise and commutes with the $(\RS^3\times \RS^3)$-action on $M$. Therefore,  $\RS^3\times \RS^3$ acts on $M/\pm\Delta\Gamma$ and gives Diagram~\eqref{D:Dim_7_GS_15}. Hence $Q$ is a good orbifold.
 \\

 
 \noindent \textbf{Case 7.16.} Finally, we have the following diagram:
 \begin{align}\label{D:Dim_7_GS_16}
(\RS^3\times \RS^3\times \RS^1, N_{\Delta\RS^3}(\Delta\RS^1)\cdot\mathbb{Z}_k, \T^2\cup (j, j, 1)\T^2,  \Delta\RS^3\cdot\mathbb{Z}_k),
 \end{align}
where $\T^2=\{(e^{i\phi}, e^{i\phi}e^{ip\theta}, e^{i\theta})\}$,
$\Z_k\subseteq \{1, e^{p\theta i}, e^{\theta i}\}$ for $k=1, 2$. Consider the following diagram from the smooth classification:(see \cite[Case~$2_7A2$]{Hoelscher}):
 \begin{align}\label{D:Dim_7_Manifold_4}
 (\RS^3\times \RS^3\times \RS^1, \Delta\RS^1\cdot\mathbb{Z}_k, \T^2,  \Delta\RS^3.\mathbb{Z}_k),
 \end{align}
with the same conditions as \eqref{D:Dim_7_GS_16}. This diagram is in fact the diagram of the cohomogeneity one action of $\RS^3\times \RS^3\times \RS^1$ on a Brieskorn variety $B^7_d$, where $d=p$ in the case that $k=2$ and $d=2p$ in the case that $k=1$ (see \cite[Section~5.2]{Hoelscher}). Now we let the local group at the orbit of codimension $2$ be trivial. Then, by Proposition~\ref{P:Orbifold fundamental group}, $\pi_1^{orb}(Q)=\Z_2$. Choose $(j, j, 1)\in N$, so that $(j, j, 1)\Delta \RS^1$ acts on $M$ orbitwise, via 
\[
(j, j, 1)\Delta \RS^1\cdot gL=g(-j, -j, 1)L,
\]
where $gL$ is a coset in the orbit. The action is well-defined and commutes with the $\RS^3\times \RS^3\times \RS^1$ action on $B^7_d$. Therefore, $\RS^3\times \RS^3\times \RS^1$ acts on $B^7_d/\Z_2$, giving the same diagram as \eqref{D:Dim_7_GS_16}. Thus $B^7_d$ is the orbifold universal cover of $Q$.



\hfill\\
\vspace{1cm}



\begin{table}[htbp]
\begin{center}
\small{
\begin{tabular}{p{9cm}lc}\toprule
Diagram  & Space \hspace{1.5cm} & Orbifold\\
\midrule
$(\RS^3,  \Gamma,  \RS^3, \RS^3)$	&  $\Susp(\RS^3/\Gamma)$ & Yes \\ [.1cm] \mytableextraspace
$(\RS^3,  \Z_k,  \RS^1, \RS^3)$	&  $\CP^2/\Z_k$ & Yes \\ [.1cm] \mytableextraspace
$(\RS^3,  \D_{2m}^*,  N_{\RS^3}(\RS^1), \RS^3)$	&  $\CP^2/ \D_{2m}^*$ & Yes \\ [.1cm] \mytableextraspace
$(\RS^3,  \langle i\rangle,  \{e^{j\theta}\cup i\{e^{j\theta}\}\},	\RS^3)$&  $\CP^2/\langle i \rangle$ & Yes \\ [.1cm] \mytableextraspace
$(\RS^3\times \RS^1, \N_{\RS^3}(\RS^1)\times \Z_k, \N_{\RS^3}(\RS^1)\times \RS^1, \RS^3\times \Z_k)$&  $\RP^2\ast \SP^1$ & Yes \\ 
[.1cm] \mytableextraspace
$(\Spin(4), \N_{\Spin(4)}(\Spin(3)), \Spin(4), \Spin(4))$&  $\Susp(\RP^3)$ & Yes \\
 [.1cm] \mytableextraspace
\bottomrule
\end{tabular}
}
\end{center}
\caption{\label{TB:GROUP_DGMS_D4}Group diagrams in dimension $4$}
\end{table}

\hfill\\
\vspace{2cm}



\begin{table}[htbp]
\begin{center}
\small{
\begin{tabular}{p{9cm}lc}\toprule
Diagram  & Space  & Orbifold\\
\midrule
$(\RS^3\times \RS^1,  \Gamma\times \mathbb{Z}_k,  \Gamma\times \RS^1, \RS^3\times \mathbb{Z}_k)$	&  $(\RS^3/\Gamma)\ast \SP^1$ & Yes \\ [.1cm] \mytableextraspace
$(\RS^3\times \RS^1, \{(e^{\frac{2\pi i}{k}\frac{lk+mps}{m}}, e^{\frac{2\pi i}{k}qs})\}, (\mathbb{Z}_m\times 1)\K^-_0, \RS^3\times \mathbb{Z}_{k/q}),$\newline
$\K^-_0=\{(e^{ip\theta}, e^{iq\theta}\}$, $q|(m, k), (p, k)=1$ 
	&  $\SP^5/\Z_m$ & Yes\\ 
[.1cm] \mytableextraspace
$(\RS^3\times \RS^3, \N_{\RS^3}(\RS^1)\times \RS^1, \N_{\RS^3}(\RS^1)\times \RS^1, \RS^3\times \RS^1)$&  $\RP^2\ast \SP^2$ & Yes \\ 
[.1cm] \mytableextraspace
 $(\RS^3\times \RS^3, \N_{\RS^3}(\RS^1)\times \RS^1, \RS^3\times \RS^1, \RS^3\times \RS^1)$&  $\Susp(\RP^2)\times\SP^2$ & Yes \\ 
[.1cm] \mytableextraspace
$(\RS^3\times \RS^3, \N_{\RS^3}(\RS^1)\times \N_{\RS^3}(\RS^1), \N_{\RS^3}(\RS^1)\times \RS^3, \RS^3\times \N_{\RS^3}(\RS^1))$&  $\RP^2\ast \RP^2$ & Yes \\
[.1cm] \mytableextraspace
$(\SU(3), \UU(2), \SU(3), \SU(3))$& $\Susp(\CP^2)$ & No \\
[.1cm] \mytableextraspace
$(\Spin(5), \N_{\Spin(5)}(\Spin(4)), \Spin(5), \Spin(5))$&  $\Susp(\RP^4)$ & Yes \\
 [.1cm] \mytableextraspace
\bottomrule
\end{tabular}
}
\end{center}
\caption{\label{TB:GROUP_DGMS_D5}Group diagrams in dimension $5$}
\end{table}

\afterpage{%
          \centering 


\begin{table}[p]
\begin{center}
\small{
\begin{tabular}{p{11.2cm}lc}\toprule
Diagram &  Space & Orbifold\\ \mytableextraspace
\midrule
$(\RS^3\times \RS^3, \N_{\RS^3}(\RS^1)\times 1, \N_{\RS^3}(\RS^1)\times \RS^3, \RS^3\times 1)$	&  $\RP^2\ast \SP^3$ & Yes \\ 
[.1cm] \mytableextraspace
$(\RS^3\times \RS^3, \N_{\RS^3}(\RS^1)\times \mathbb{Z}_k, \N_{\RS^3}(\RS^1)\times \RS^1, \RS^3\times \mathbb{Z}_k)$
	&  $(\RP^2\ast\SP^1)$-bundle over   $\SP^2$ & Yes\\ 
[.1cm] \mytableextraspace
$(\RS^3\times \RS^3, \N_{\RS^3}(\RS^1)\times 1, \RS^3\times 1, \RS^3\times 1)$&  $\Susp(\RP^2)\times \SP^3$ & Yes \\ 
[.1cm] \mytableextraspace
 $(\RS^3\times \RS^3, \N_{\RS^3}(\RS^1)\times \Gamma, \N_{\RS^3}(\RS^1)\times \RS^3, \RS^3\times \Gamma)$&  $\RP^2\ast(\RS^3/\Gamma)$ & Yes \\ 
[.1cm] \mytableextraspace
$(\RS^3\times \RS^3, \Delta\RS^1\cup(j, j)\Delta\RS^1, \RS^3\times\N_{\RS^3}(\RS^1), \Delta\RS^3)$&  $\RP^2\ast \SP^3$ & Yes \\
[.1cm] \mytableextraspace
$(\RS^3\times \RS^3, \Delta\RS^1\cup(j, j)\Delta\RS^1, \T^2\cup (j, j)\T^2, \Delta\RS^3)$&  $(\SO(5)/(\SO(2)\SO(3)))/\Z_2$& Yes \\ 
[.1cm] \mytableextraspace
$(\RS^3\times \RS^3, \pm\Delta\RS^1\cup(j,\pm j)\Delta\RS^1, \T^2\cup (j, j)\T^2, \pm\Delta\RS^3)$& $\CP^3/\Z_2$ & Yes \\ 
[.1cm] \mytableextraspace
 $(\RS^3\times \RS^3, \pm\Delta\RS^1\cup(j,\pm j)\Delta\RS^1, \RS^3\times\N_{\RS^3}(\RS^1), \pm\Delta\RS^3)$&  $\RP^2\ast \RP^3$ & Yes \\ 
[.1cm] \mytableextraspace
$(\RS^3\times \RS^3, \Delta\RS^1\cup(j, j)\Delta\RS^1, \Delta\RS^3, \Delta\RS^3)$& $\Susp(\RP^2)\times\SP^3$ & Yes\\
[.1cm] \mytableextraspace
$(\RS^3\times \RS^3, \D^*_{2m}\times\RS^1, \N_{\RS^3}(\RS^1)\times \RS^1, \RS^3\times \RS^1)$&  $\SP^2\times (\CP^2/\D^*_{2m})$ & Yes\\
[.1cm] \mytableextraspace
%
%
$(\RS^3\times \RS^3, \{(e^{ip\theta}\lambda, e^{i\theta})\mid \theta\in \mathbb{R}, \lambda\in\mathbb{Z}_k\}, \T^2, \RS^3\times \RS^1)$&  $(\CP^2/\Z_k)$-bundle over $\SP^2$ & Yes\\ 
[.1cm] \mytableextraspace
$(\RS^3\times \RS^3, \Gamma\times \RS^1, \Gamma\times \RS^3, \RS^3\times \RS^1)$&  $(\RS^3/\Gamma)\ast\SP^2$ & Yes \\ 
[.1cm] \mytableextraspace
 $(\RS^3\times \RS^3, \pm\Delta\RS^1\cup(j,\pm j)\Delta\RS^1, \RS^3\times\N_{\RS^3}(\RS^1), \pm\Delta\RS^3)$&  $\RP^2\ast \RP^3$ & Yes \\ 
[.1cm] \mytableextraspace
$(\RS^3\times \RS^3, \mathbb{Z}_k\times \RS^1, \mathbb{Z}_k\times \RS^3, \RS^3\times \RS^1)$&  $(\RS^3/\mathbb{Z}_k)\ast\SP^2$ & Yes \\
[.1cm] \mytableextraspace
$(\RS^3\times \RS^3, \pm\Delta\RS^1, \pm\Delta\RS^3, \RS^3\times \RS^1)$&  $\RP^3\ast\SP^2$  & Yes \\ 
[.1cm] \mytableextraspace
 $(\RS^3\times \RS^3, \Gamma\times \RS^1, \RS^3\times \RS^1, \RS^3\times \RS^1)$&  $\Susp(\RS^3/\Gamma)\times\SP^2$ & Yes \\ 
[.1cm] \mytableextraspace
$(\RS^3\times \RS^3, \{(e^{ip\theta}\lambda, e^{i\theta})\}, \RS^3\times \RS^1, \RS^3\times \RS^1)$&  $(\SP^4/\Z_k)$-bundle over $\SP^2$ & Yes \\ 
[.1cm] \mytableextraspace
 $(\RS^3\times \RS^3, \{(e^{i\theta}\lambda, e^{i\theta})\}, \RS^1\times \RS^3, \RS^3\times \RS^1)$&  $\CP^3/\Z_k$ & Yes \\ 
[.1cm] \mytableextraspace
$(\RS^3\times \RS^3\times \RS^1, \N_{\RS^3}(\RS^1)\times \RS^1\times \mathbb{Z}_k , \N_{\RS^3}(\RS^1)\times \RS^1\times \RS^1, \RS^3\times \RS^1\times \mathbb{Z}_k)$ & $(\RP^2\ast\SP^1)\times \SP^2$ & Yes \\ 
[.1cm] \mytableextraspace
$(\SU(3), \mathrm{S}(\UU(2)\mathbb{Z}_k) , \UU(2), \SU(3))$&  $\CP^3/\Z_k$ & Yes \\ 
 [.1cm] \mytableextraspace
$(\SU(3), \mathrm{S}(\UU(2)\mathbb{Z}_k) , \SU(3), \SU(3))$&  $\Susp(\SP^5/\mathbb{Z}_k)$ & Yes\\ 
[.1cm] \mytableextraspace
 $(\SU(3)\times\RS^1, \UU(2)\times\mathbb{Z}_k, \UU(2)\times \RS^1, \SU(3)\times\mathbb{Z}_k)$&  $\CP^2\ast\SP^1$ & No \\ 
[.1cm] \mytableextraspace
$(\Sp(2)\times\RS^1, \N_{\Sp(2)}(\Sp(1)\Sp(1))\times\mathbb{Z}_k, \N_{\Sp(2)}(\Sp(1)\Sp(1)) \times\RS^1, \Sp(2)\times\mathbb{Z}_k)$ & $\RP^4\ast\SP^1$ & Yes\\
[.1cm] \mytableextraspace
$(\Spin(6), \N_{\Spin(6)}(\Spin(5)), \Spin(6), \Spin(6))$&  $\Susp(\RP^5)$& Yes\\
 [.1cm] \mytableextraspace
\bottomrule
\end{tabular}
}
\end{center}
\caption{\label{TB:GROUP_DGMS_D6}  Group diagrams in dimension $6$}
\end{table}

    \clearpage
}

\afterpage{%
          \centering 


\begin{table}[tbp]
\begin{center}
\small{
\begin{tabular}{p{10cm}lc}\toprule
 Diagram & Space & Orbifold \\ \mytableextraspace
\midrule 
$(\RS^3\times \RS^3, \Gamma\times\mathbb{Z}_k, \Gamma\times \RS^1, \RS^3\times \mathbb{Z}_k)$	&  $((\RS^3/\Gamma)\ast\SP^1)$-bundle over   $\SP^2$ & Yes\\ 
[.1cm] \mytableextraspace
$(\RS^3\times \RS^3, \mathbb{Z}_k\times 1, \RS^1\times 1, \RS^3\times 1)$	& $(\CP^2/\Z_k)\times\SP^3$  & Yes\\ 
[.1cm] \mytableextraspace
$(\RS^3\times \RS^3, \D^{*}_{2m}\times 1, N_{\RS^3}(\RS^1)\times 1, \RS^3\times 1)$	& $(\CP^2/\Z_m)\times\SP^3$ & Yes \\ 
[.1cm] \mytableextraspace
$(\RS^3\times \RS^3, \{(e^{\frac{lk+mps}{km}2\pi i}, e^{\frac{2\pi qsi}{k}})\}, (\mathbb{Z}_m\times 1)\K^-_0, \RS^3\times \mathbb{Z}_{k/(k,q)})$
	&  $((\SP^5/\Z_q)/\Z_m\Z_k)$-bundle over $\SP^2$ & Yes\\ 
[.1cm] \mytableextraspace
$(\RS^3\times \RS^3, \Gamma\times 1, \Gamma\times \RS^3, \RS^3\times 1)$&  $\RS^3\ast (\RS^3/\Gamma)$ & Yes \\ 
[.1cm] \mytableextraspace
 $(\RS^3\times \RS^3,\mathbb{Z}_2\times 1, \pm \Delta\RS^3, \RS^3\times 1)$&  $\mathbb{R}P^3\ast\SP^3$ & Yes \\ 
[.1cm] \mytableextraspace
$(\RS^3\times \RS^3, \Gamma\times 1, \RS^3\times 1, \RS^3\times 1)$& $\Susp(\RS^3/\Gamma)\times \RS^3$ & Yes \\ 
[.1cm] \mytableextraspace
$(\RS^3\times \RS^3, \Gamma\times \Lambda, \Gamma\times \RS^3, \RS^3\times \Lambda)$& $(\RS^3/\Gamma)\ast(\RS^3/\Lambda)$ & Yes \\ 
[.1cm] \mytableextraspace
$(\RS^3\times \RS^3, \pm \Delta\Lambda, \pm\Delta\RS^3, \RS^3\times \Lambda )$& $(\mathbb{R}P^3)\ast(\RS^3/\Lambda)$ & Yes\\
[.1cm] \mytableextraspace
$(\RS^3\times \RS^3, \Delta\mathbb{Z}_k, \RS^1\times \mathbb{Z}_k, \Delta\RS^3)$&  -- & Yes \\ 
[.1cm] \mytableextraspace
$(\RS^3\times \RS^3, \Delta\D^*_{2m}, (\RS^1\times 1)\Delta\D^*_{2m}, \Delta\RS^3)$& -- & Yes\\ 
[.1cm] \mytableextraspace
$(\RS^3\times \RS^3, \Delta\mathbb{Z}_k, \{(e^{ip\theta}, e^{iq\theta})\}, \Delta\RS^3)$ \newline where $k|(p-q)$ and if $k$ is even $p$, $q$ are odd & -- & Yes  \\ 
[.1cm] \mytableextraspace
$(\RS^3\times \RS^3, \Delta\mathbb{Z}_k, \{(e^{ip\theta}, e^{-iq\theta})\}, \Delta\RS^3)$ \newline 
where $k|(p+q)$ and if $k$ is even $p$, $q$ are odd. & -- & Yes\\ 
 [.1cm] \mytableextraspace
$(\RS^3\times \RS^3, \Delta\D^*_{2m}, \{(e^{ip\theta}, e^{iq\theta})\}\cup \{(je^{ip\theta}, je^{iq\theta})\}, \Delta\RS^3)$ \newline where $m|(p-q)$ and if $m$ is even $p$, $q$ are odd
& -- & Yes \\ 
[.1cm] \mytableextraspace
$(\RS^3\times \RS^3, \Delta\D^*_{2k}, \{(e^{ip\theta}, e^{-iq\theta})\}\cup \{(je^{ip\theta}, je^{-iq\theta})\}, \Delta\RS^3)$ \newline where $k|(p+q)$ and if $k$ is even $p$, $q$ are odd
&  -- & Yes\\ 
[.1cm] \mytableextraspace
$(\RS^3\times \RS^3, \pm\Delta\mathbb{Z}_k, \RS^1\times \mathbb{Z}_k, \pm\Delta\RS^3)$& -- & Yes\\ 
[.1cm] \mytableextraspace
$(\RS^3\times \RS^3, \pm\Delta\D^*_{2m}, (\RS^1\times 1)\Delta\D^*_{2m}, \pm\Delta\RS^3) 
$ &-- & Yes\\
[.1cm] \mytableextraspace
$(\RS^3\times \RS^3, \pm\Delta\mathbb{Z}_k, \{(e^{ip\theta}, e^{iq\theta})\}, \pm\Delta\RS^3)$  &-- & Yes\\
[.1cm] \mytableextraspace
$ (\RS^3\times \RS^3, \pm\Delta\mathbb{Z}_k, \{(e^{ip\theta}, e^{-iq\theta})\}, \pm\Delta\RS^3)$ 
& -- & Yes\\
[.1cm] \mytableextraspace
$(\RS^3\times \RS^3, \pm\Delta\D^*_{2k}, \{(e^{ip\theta}, e^{iq\theta})\}\cup \{(je^{ip\theta}, je^{iq\theta})\}, \pm\Delta\RS^3)
$ & -- & Yes \\
[.1cm] \mytableextraspace
$(\RS^3\times \RS^3, \pm\Delta\D^*_{2k}, \{(e^{ip\theta}, e^{-iq\theta})\}\cup \{(je^{ip\theta}, je^{-iq\theta})\}, \pm\Delta\RS^3)$& -- & Yes\\
[.1cm] \mytableextraspace
$(\RS^3\times \RS^3,  \Delta\Gamma,  \RS^3\times \Gamma, \Delta\RS^3)$ & $\SP^7/\Gamma$ & Yes\\
[.1cm] \mytableextraspace
$(\RS^3\times \RS^3,  \Delta\Gamma,  \Delta\RS^3, \Delta\RS^3)$& $\Susp(\RS^3/\Gamma)\times \SP^3$ & Yes\\
 [.1cm] \mytableextraspace
$(\RS^3\times \RS^3,  \pm\Delta\Z_{2k},  \pm\Delta_j\RS^3, \pm\Delta\RS^3)$&  -- & Yes\\  
 [.1cm] \mytableextraspace
$(\RS^3\times \RS^3,  \pm\Delta\D^*_{2k},  \pm\Delta_j\RS^3, \pm\Delta\RS^3)$&  -- & Yes\\  
 [.1cm] \mytableextraspace
 $(\RS^3\times \RS^3,  \pm\Delta\D^*_{2k},  \pm\Delta_i\RS^3, \pm\Delta\RS^3)$&  -- & Yes\\ 
 [.1cm] \mytableextraspace
\bottomrule
\end{tabular}
}
\end{center}
\caption{\label{TB:GROUP_DGMS_D7_S3xS3} Group diagrams for $\RS^3\times \RS^3$   in dimension $7$ }
\end{table}

    \clearpage
}

\afterpage{
          \centering 

\begin{table}[tbp]
\begin{center}
\small{
\begin{tabular}{p{10cm}lc}
\toprule
 Diagram & Space & Orbifold \\ \mytableextraspace
\midrule 

$(\RS^3\times \RS^3\times \RS^1, \N(\RS^1)\times\mathbb{Z}_k, \N(\RS^1)\times\T^1, \RS^3\times \mathbb{Z}_k)$ & $(\RP^2\ast\SP^1)$-bundle over $\SP^2$  & Yes  
\\ 
[.1cm] \mytableextraspace
 $(\RS^3\times \RS^3\times \RS^1, \N_{\Delta\RS^3}(\Delta\RS^1)\mathbb{Z}_k, \T^2\cup (j, j, 1)\T^2,  \Delta\RS^3.\mathbb{Z}_k)$&  $B^7_d/\Z_2$ & Yes\\ 
[.1cm] \mytableextraspace
$(\RS^3\times \RS^3\times \RS^1, \Gamma\times\RS^1\times\mathbb{Z}_k, \Gamma\times\T^2,  \RS^3\times\RS^1\times\mathbb{Z}_k)$ &$(\RS^3/\Gamma\ast\SP^1)\times\SP^2$ & Yes\\
[.1cm] \mytableextraspace
$(\RS^3\times \RS^3\times \RS^3, \N_{\RS^3}(\RS^1)\times \T^2, \N_{\RS^3}(\RS^1)\times \RS^3\times \RS^1,  \RS^3\times\T^2)$& $(\RP^2\ast\SP^2)\times\SP^2$ & Yes\\
[.1cm] \mytableextraspace
$(\RS^3\times \RS^3\times \RS^3, \N_{\RS^3}(\RS^1)\times \T^2, \RS^3\times\T^2,  \RS^3\times\T^2)$& $\Susp(\RP^2)\times(\SP^2\times \SP^2)$ & Yes\\
[.1cm] \mytableextraspace
$(\RS^3\times \RS^3\times \RS^3, \N_{\RS^3}(\RS^1)\times\N_{\RS^3}(\RS^1)\times\RS^1, \N_{\RS^3}(\RS^1)\times \RS^3\times \RS^1,  \RS^3\times\N_{\RS^3}(\RS^1)\times \RS^1)$&  $(\RP^2\ast\RP^2)\times\SP^2$ & Yes\\
[.1cm] \mytableextraspace
\bottomrule
\end{tabular}
}
\end{center}
\caption{\label{TB:GROUP_DGMS_D7_S3xS3xS1_S3xS3xS3} Group diagrams for $\RS^3\times \RS^3\times \RS^1$ and $\RS^3\times \RS^3\times \RS^3$ in dimension $7$Œ}
\end{table}

\begin{table}[tbp]
\begin{center}
\small{
\begin{tabular}{p{10cm}lc}
\toprule
 Diagram & Space & Orbifold\\ \mytableextraspace
\midrule 
$(\SU(3), \T^2, \SU(3), \SU(3))$ &$\Susp(\W^6)$  & No\\
[.1cm] \mytableextraspace
$(\SU(3), \T^2\mathbb{Z}_2, \SU(3), \SU(3))$& $\Susp(\W^6/\mathbb{Z}_2)$ & No\\
[.1cm] \mytableextraspace
$(\SU(3), \T^2, \U(2), \SU(3))$& -- & No\\
[.1cm] \mytableextraspace
$(\SU(3), \T^2\mathbb{Z}_2, \U(2), \SU(3))$&  -- & No\\
[.1cm] \mytableextraspace
$(\SU(3), \T^2\mathbb{Z}_2, \U(2), \U(2))$&  $\Susp(\RP^2)$-bundle over $\mathbb{C}P^2$& Yes\\
[.1cm] \mytableextraspace
$(\SU(3)\times \RS^1, \RS(U(2).\mathbb{Z}_k)\times \mathbb{Z}_l, \RS(\UU(2).\mathbb{Z}_k)\times \RS^1, \SU^3\times \mathbb{Z}_l)$&  $(\mathbb{\RS}^5/\Z_k)\ast\SP^1$ & Yes\\
[.1cm] \mytableextraspace
$(\SU(3)\times \RS^3, \UU(2)\times \RS^1, \SU(3)\times\RS^1, \SU(3)\times\RS^1)$ & $\Susp(\CP^2)\times\SP^2$ & No\\
[.1cm] \mytableextraspace
$(\SU(3)\times \RS^3, \UU(2)\times \RS^1, \SU(3)\times\RS^1, \UU(2)\times \RS^3)$& $\mathbb{C}P^2\ast\SP^2$ & No\\
[.1cm] \mytableextraspace
$(\SU(3)\times \RS^3, \UU(2)\times \N_{\RS^3}(\RS^1), \SU(3)\times\N_{\RS^3}(\RS^1), \UU(2)\times \RS^3)$& $\mathbb{C}P^2\ast\RP^2$ & No\\
[.1cm] \mytableextraspace
$(\SU(3)\times \RS^3, \UU(2)\times \N_{\RS^3}(\RS^1), \UU(2)\times \RS^3, \UU(2)\times \RS^3)$&  $\Susp(\RP^2)\times\CP^2$ & Yes\\
[.1cm] \mytableextraspace
\bottomrule
\end{tabular}
}
\end{center}
\caption{\label{TB:GROUP_DGMS_D7_SU(3)_SU(3)xS3}Group diagrams for $\SU(3)$ and $\SU(3)\times \RS^3$  in dimension $7$Œ}
\end{table}
    \clearpage

}

\afterpage{
          \centering 

\begin{table}[p]
\begin{center}
\small{
\begin{tabular}{p{10cm}lc}
\toprule
 Diagram & Space & Orbifold \\ \mytableextraspace
\midrule 
$(\Sp(2), \Sp(1)\SO(2), \Sp(2), \Sp(2))$& $\Susp(\mathbb{C}P^3)$ & No\\
[.1cm] \mytableextraspace
$(\Sp(2), \Sp(1)\SO(2)\mathbb{Z}_2, \Sp(2), \Sp(2))$& $\Susp(\mathbb{C}P^3/\mathbb{Z}_2)$ & No\\
[.1cm] \mytableextraspace
$(\Sp(2), \Sp(1)\SO(2), \Sp(1)\Sp(1), \Sp(2))$&  -- & No\\
[.1cm] \mytableextraspace
$(\Sp(2), \Sp(1)\SO(2)\mathbb{Z}_2, \Sp(1)\Sp(1), \Sp(2))$ &-- & No\\
[.1cm] \mytableextraspace
$(\Sp(2), \Sp(1)\SO(2)\mathbb{Z}_2, \Sp(1)\Sp(1), \Sp(1)\Sp(1))$& $\Susp(\RP^2)$-bundle over $\SP^4$ & Yes\\
[.1cm] \mytableextraspace
$(\Sp(2)\times\RS^3, \Sp(1)\Sp(1)\times\N_{\RS^3}(\RS^1), \Sp(1)\Sp(1)\times\RS^3, \Sp(1)\Sp(1)\times\RS^3)$& $\Susp(\RP^2)\times\SP^4$ & Yes \\
[.1cm] \mytableextraspace
$(\Sp(2)\times\RS^3, \Sp(1)\Sp(1)\times\N_{\RS^3}(\RS^1), \Sp(1)\Sp(1)\times\RS^3, \Sp(2)\times\N_{\RS^3}(\RS^1))$&  $\SP^4\ast\RP^2$ & Yes\\
[.1cm] \mytableextraspace
$(\Sp(2)\times\RS^3, \Sp(1)\Sp(1)\mathbb{Z}_2\times\RS^1, \Sp(1)\Sp(1)\mathbb{Z}_2\times\RS^3, \Sp(2)\times\RS^1)$& $\SP^2\ast\RP^4$ & Yes\\
[.1cm] \mytableextraspace
$(\Sp(2)\times\RS^3, \Sp(1)\Sp(1)\mathbb{Z}_2\times\N_{\RS^3}(\RS^1), \Sp(1)\Sp(1)\mathbb{Z}_2\times\RS^3, \Sp(2)\times\N_{\RS^3}(\RS^1))$& $\RP^4\ast\RP^2$ & Yes\\
[.1cm] \mytableextraspace
$(\Sp(2)\times\RS^3, \Sp(1)\Sp(1)\mathbb{Z}_2\times\RS^1, \Sp(2)\times \RS^1, \Sp(2)\times \RS^1)$& $\Susp(\RP^4)\times\SP^2$ & Yes\\
[.1cm] \mytableextraspace
\bottomrule
\end{tabular}
}
\end{center}
\caption{\label{TB:GROUP_DGMS_D7_Sp(2)_Sp(2)xS3}Group diagrams for $\Sp(2)$ and $\Sp(2)\times \RS^3$  in dimension $7$Œ}
\end{table}

\begin{table}[p]
\begin{center}
\small{
\begin{tabular}{p{10cm}lc}
\toprule
 Diagram & Space\hspace{3.2cm} & Orbifold \\ \mytableextraspace
\midrule 
$(\mathrm{G}_2, \N_{\mathrm{G}_2}(\SU(3)), \mathrm{G}_2, \mathrm{G}_2)$ & $\Susp(\RP^6)$  & Yes\\
[.1cm] \mytableextraspace
$(\SU(4), \UU(3), \SU(4), \SU(4))$& $\Susp(\mathbb{C}P^3)$ & No\\
[.1cm] \mytableextraspace
$(\SU(4)\times \RS^1, \Sp(2)\mathbb{Z}_2\times\mathbb{Z}_k, \Sp(2)\mathbb{Z}_2\times\RS^1, \SU(4)\times\mathbb{Z}_k)$& $\RP^5\ast\SP^1$ & Yes\\
[.1cm] \mytableextraspace
$(\Spin(7), \N_{\Spin(7)}(\Spin(6)), \Spin(7), \Spin(7))$& $\Susp(\RP^6)$ & Yes\\
\bottomrule
\end{tabular}
}
\end{center}
\caption{\label{TB:GROUP_DGMS_D7_G2_SU(4)_SU(3)xS1_Spin(7)} Group diagrams for $\mathrm{G}_2$, $\SU(4)$, $\SU(4)\times \RS^1$ and $\Spin(7)$  in dimension $7$}
\end{table}

    \clearpage
}


\newpage
\bibliography{C1A_Current}

\providecommand{\bysame}{\leavevmode\hbox to3em{\hrulefill}\thinspace}
\providecommand{\MR}{\relax\ifhmode\unskip\space\fi MR }
\providecommand{\MRhref}[2]{%
  \href{http://www.ams.org/mathscinet-getitem?mr=#1}{#2}
}
\providecommand{\href}[2]{#2}
\begin{thebibliography}{10}

\bibitem{Adem}
Alejandro Adem, Johann Leida, and Yongbin Ruan, \emph{Orbifolds and stringy
  topology}, Cambridge University Press, Cambridge, 2007. \MR{2359514}

\bibitem{AW}
Simon Aloff and Nolan~R. Wallach, \emph{An infinite family of distinct
  {$7$}-manifolds admitting positively curved {R}iemannian structures}, Bull.
  Amer. Math. Soc. \textbf{81} (1975), 93--97. \MR{0370624}

\bibitem{Asoh}
Tohl Asoh, \emph{Compact transformation groups on {$Z\sb{2}$}-cohomology
  spheres with orbit of codimension {$1$}}, Hiroshima Math. J. \textbf{11}
  (1981), no.~3, 571--616. \MR{635040}

\bibitem{BB}
L.~Berard-Bergery, \emph{Les vari\'et\'es riemanniennes homog\`enes simplement
  connexes de dimension impaire \`a courbure strictement positive}, J. Math.
  Pures Appl. (9) \textbf{55} (1976), no.~1, 47--67. \MR{0417987}

\bibitem{Ber}
V.~N. Berestovski\u\i, \emph{Homogeneous manifolds with an intrinsic metric.
  {II}}, Sibirsk. Mat. Zh. \textbf{30} (1989), no.~2, 14--28, 225. \MR{997464}

\bibitem{Berger}
Marcel Berger, \emph{Trois remarques sur les vari\'et\'es riemanniennes \`a
  courbure positive}, C. R. Acad. Sci. Paris S\'er. A-B \textbf{263} (1966),
  A76--A78. \MR{0199823}

\bibitem{Bredon}
Glen~E. Bredon, \emph{Introduction to compact transformation groups}, Academic
  Press, New York-London, 1972, Pure and Applied Mathematics, Vol. 46.
  \MR{0413144}

\bibitem{Burago}
Dmitri Burago, Yuri Burago, and Sergei Ivanov, \emph{A course in metric
  geometry}, Graduate Studies in Mathematics, vol.~33, American Mathematical
  Society, Providence, RI, 2001. \MR{1835418}

\bibitem{BGP}
Yu. Burago, M.~Gromov, and G.~Perel{'}man, \emph{A. {D}. {A}leksandrov spaces
  with curvatures bounded below}, Uspekhi Mat. Nauk \textbf{47} (1992),
  no.~2(284), 3--51, 222. \MR{1185284}

\bibitem{DW}
D.~van Dantzig and B.~L. van~der Waerden, \emph{\"{U}ber metrisch homogene
  r{\"a}ume}, Abh. Math. Sem. Univ. Hamburg \textbf{6} (1928), no.~1, 367--376.
  \MR{3069509}

\bibitem{Davis}
Michael~W. Davis, \emph{Lectures on orbifolds and reflection groups},
  Transformation groups and moduli spaces of curves, Int. Press, Somerville,
  MA, 2011, pp.~63--93. \MR{2883685}

\bibitem{De}
Owen Dearricott, \emph{A 7-manifold with positive curvature}, Duke Math. J
  \textbf{158} (2011), 307--346. \MR{2805071}

\bibitem{Dy}
E.~B. Dynkin, \emph{Maximal subgroups of the classical groups}, Trudy Moskov.
  Mat. Ob\v s\v c. \textbf{1} (1952), 39--166. \MR{0049903}

\bibitem{FY}
Kenji Fukaya and Takao Yamaguchi, \emph{Isometry groups of singular spaces},
  Math. Z. \textbf{216} (1994), no.~1, 31--44. \MR{1273464}

\bibitem{GGG}
Fernando Galaz-Garcia and Luis Guijarro, \emph{Isometry groups of {A}lexandrov
  spaces}, Bull. Lond. Math. Soc. \textbf{45} (2013), no.~3, 567--579.
  \MR{3065026}

\bibitem{GGG2}
\bysame, \emph{On three-dimensional {A}lexandrov spaces}, Int. Math. Res. Not.
  IMRN (2015), no.~14, 5560--5576. \MR{3384449}

\bibitem{GS}
Fernando Galaz-Garcia and Catherine Searle, \emph{Cohomogeneity one
  {A}lexandrov spaces}, Transform. Groups \textbf{16} (2011), no.~1, 91--107.
  \MR{2785496}

\bibitem{GGZ}
Fernando Galaz-Garc\'ia and Masoumeh Zarei, \emph{Cohomogeneity one topological
  manifolds revisited}, Math. Z., to appear. Preprint arXiv:1503.09068
  [math.GT] (2015).

\bibitem{Gonzalez}
David Gonzalez~Alvaro, \emph{Topological and geometric consequences of
  curvature and symmetry}, Ph.D. thesis, Departmento de Matematicas la
  Universidad Autonoma de Madrid, 2016.

\bibitem{GVZ}
Karsten Grove, Luigi Verdiani, and Wolfgang Ziller, \emph{An exotic {$T_1\Bbb
  S^4$} with positive curvature}, Geom. Funct. Anal. \textbf{21} (2011), no.~3,
  499--524. \MR{2810857}

\bibitem{ziller5}
Karsten Grove, Burkhard Wilking, and Wolfgang Ziller, \emph{Positively curved
  cohomogeneity one manifolds and 3-{S}asakian geometry}, J. Differential Geom.
  \textbf{78} (2008), no.~1, 33--111. \MR{2406265}

\bibitem{GroveZiller}
Karsten Grove and Wolfgang Ziller, \emph{Cohomogeneity one manifolds with
  positive {R}icci curvature}, Invent. Math. \textbf{149} (2002), no.~3,
  619--646. \MR{1923478}

\bibitem{HS}
John Harvey and Catherine Searle, \emph{Orientation and symmetries of
  {A}lexandrov spaces with applications in positive curvature}, J. Geom. Anal.
  \textbf{27} (2017), no.~2, 1636--1666. \MR{3625167}

\bibitem{HoelscherThesis}
Corey~A. Hoelscher, \emph{Classification of cohomogeneity one manifolds in low
  dimensions}, Ph.D. thesis, University of Pennsylvania, 2007.

\bibitem{Hoelscher}
\bysame, \emph{Classification of cohomogeneity one manifolds in low
  dimensions}, Pacific J. Math. \textbf{246} (2010), no.~1, 129--185.
  \MR{2645881}

\bibitem{H}
\bysame, \emph{Diffeomorphism type of six-dimensional cohomogeneity one
  manifolds}, Ann. Global Anal. Geom. \textbf{38} (2010), no.~1, 1--9.
  \MR{2657838}

\bibitem{Iwata}
Koichi Iwata, \emph{Compact transformation groups on rational cohomology
  {C}ayley projective planes}, T\^ohoku Math. J. (2) \textbf{33} (1981), no.~4,
  429--442. \MR{643227}

\bibitem{Knapp}
Anthony~W. Knapp, \emph{Lie groups beyond an introduction}, second ed.,
  Progress in Mathematics, vol. 140, Birkh\"auser Boston, Inc., Boston, MA,
  2002. \MR{1920389}

\bibitem{KN}
Shoshichi Kobayashi and Katsumi Nomizu, \emph{Foundations of differential
  geometry. {V}ol. {I}}, Wiley Classics Library, John Wiley \& Sons, Inc., New
  York, 1996, Reprint of the 1963 original, A Wiley-Interscience Publication.
  \MR{1393940}

\bibitem{Masuda}
Mikiya Masuda, \emph{Smooth involutions on homotopy {${\bf C}{\rm P}\sp 3$}},
  American Journal of Mathematics \textbf{106} (1984), no.~6, 1487--1501.
  \MR{765588}

\bibitem{Mostert2}
Paul~S. Mostert, \emph{Errata, ``{O}n a compact {L}ie group acting on a
  manifold''}, Ann. of Math. (2) \textbf{66} (1957), 589. \MR{0095897}

\bibitem{Mostert}
\bysame, \emph{On a compact {L}ie group acting on a manifold}, Ann. of Math.
  (2) \textbf{65} (1957), 447--455. \MR{0085460}

\bibitem{Neumann}
Walter~D. Neumann, \emph{{$3$}-dimensional {$G$}-manifolds with
  {$2$}-dimensional orbits}, Proc. {C}onf. on {T}ransformation {G}roups ({N}ew
  {O}rleans, {L}a., 1967), Springer, New York, 1968, pp.~220--222. \MR{0245043}

\bibitem{On}
Arkadi~L. Onishchik, \emph{Topology of transitive transformation groups},
  Johann Ambrosius Barth Verlag GmbH, Leipzig, 1994. \MR{1266842}

\bibitem{Parker}
Jeff Parker, \emph{{$4$}-dimensional {$G$}-manifolds with {$3$}-dimensional
  orbits}, Pacific J. Math. \textbf{125} (1986), no.~1, 187--204. \MR{860758}

\bibitem{Thurston}
William~P. Thurston, \emph{Three-dimensional geometry and topology. vol. 1.},
  Princeton University Press, 1997. \MR{1435975}

\bibitem{TT}
Tomoji Tomuro, \emph{Smooth {${\rm SL}(n,{\bf H})$}-, {${\rm Sp}(n,{\bf
  C})$}-actions on {$(4n-1)$}-manifolds}, Tohoku Math. J. (2) \textbf{44}
  (1992), no.~2, 243--250. \MR{1161615}

\bibitem{Wallach}
Nolan~R. Wallach, \emph{Compact homogeneous {R}iemannian manifolds with
  strictly positive curvature}, Ann. of Math. (2) \textbf{96} (1972), 277--295.
  \MR{0307122}

\bibitem{Watts}
Jordan Watts, \emph{The differential structure of an orbifold}, Rocky Mountain
  J. Math. \textbf{47} (2017), no.~1, 289--327. \MR{3619765}

\bibitem{Wi}
Burkhard Wilking, \emph{The normal homogeneous space {$({\rm SU}(3)\times {\rm
  SO}(3))/{\rm U}^\bullet (2)$} has positive sectional curvature}, Proc. Amer.
  Math. Soc. \textbf{127} (1999), no.~4, 1191--1194. \MR{1469441}

\bibitem{WZ}
Burkhard Wilking and Wolfgang Ziller, \emph{Revisiting homogeneous spaces with
  positive curvature}, Preprint arXiv:1503.06256 [math.DG] (2015).

\bibitem{Wolf}
Joseph~A. Wolf, \emph{Spaces of constant curvature}, sixth ed., AMS Chelsea
  Publishing, Providence, RI, 2011. \MR{2742530}

\bibitem{Yo}
Ichiro Yokota, \emph{Exceptional lie groups}, Preprint arXiv:0902.0431
  [math.DG] (2009).

\bibitem{ZKA}
M.~Zarei, S.~M.~B. Kashani, and H.~Abedi, \emph{On cohomogeneity one nonsimply
  connected 7-manifolds of constant positive curvature}, Bull. Iranian Math.
  Soc. \textbf{42} (2016), no.~3, 565--584. \MR{3518204}

\end{thebibliography}
\bibliographystyle{amsplain}

\end{document}